\numberwithin{equation}{section}
\let\OLDthebibliography\thebibliography
\renewcommand\thebibliography[1]{
  \OLDthebibliography{#1}
  \setlength{\parskip}{0pt}
  \setlength{\itemsep}{0pt plus 0.3ex}
}
\newtheorem{theorem}{Theorem}
\newtheorem{proposition}[theorem]{Proposition}
\newtheorem{corollary}[theorem]{Corollary}
\newtheorem{lemma}[theorem]{Lemma}
\newtheorem{remark}[theorem]{Remark}
\newtheorem{example}[theorem]{Example}
\newtheorem{definition}[theorem]{Definition}
\numberwithin{theorem}{section}
\newcommand{\alphan}{\boldsymbol{\alpha}}
\newcommand{\BBU}{\mathfrak{U}}
\newcommand{\bbeta}{\boldsymbol{\beta}}
\newcommand{\bgamma}{\boldsymbol{\gamma}}
\newcommand\bi{\boldsymbol{i}}
\newcommand\calA{\mathcal{A}}
\newcommand\calL{\mathcal{L}}
\newcommand\calQ{\mathcal{Q}}
\newcommand\calR{\mathcal{R}}
\newcommand{\kappan}{\boldsymbol{\kappa}}
\newcommand{\ulU}{\underline{U}}
\newcommand{\ulz}{\underline{z}}
\newcommand{\zetan}{\boldsymbol{\zeta}}
\newtheorem{notation}{Notation}
\newcommand{\ZZ}{\mathbb{Z}}
\begin{document}

\title{Framed blob monoids}

\author[J. Juyumaya]{Jes\'us Juyumaya}
\address{IMUV, Universidad de Valpara\'{\i}so\\Gran Breta\~na 1111, 2340000 Valpara\'{\i}so, Chile.}
\email{juyumaya@uv.cl}

\author[D. Lobos]{Diego Lobos}
\address{IMUV, Universidad de Valpara\'{\i}so\\Gran Breta\~na 1111, 2340000 Valpara\'{\i}so, Chile.}
\email{diego.lobosm@uv.cl}

\date{}
\keywords{Blob monoid, partition monoid, Framization}
\subjclass{20M05, 20M20, 05B10, 05A19, 03E05}
\thanks{The second author was partially supported by FONDECYT de Postdoctorado 2024, N°3240046, ANID-Chile
and Concurso Subvención a la Instalación en la Academia 2024, N°85240053, ANID, Chile.}
\setcounter{tocdepth}{1}

\begin{abstract}
We introduce and study blob  and framed blob monoids. In particular, several realizations of these monoids are given. We compute the cardinality of the framed blob monoid and derive some combinatorial formulas involving this cardinality.
\end{abstract}
\maketitle

\tableofcontents

\section{Introduction}
The prototype of a framed structure is the so-called framed braid group, which was introduced independently  by Melvin-Tufillaro\cite{MeTuPhReA1991}  and Ko-Smolinsky\cite{KoSm1992}. In \cite{MeTuPhReA1991} the reason for introducing this group  is motivated in the representation of a template for a dynamical system, and in \cite{KoSm1992} is motivated to prove a type Markov theorem  for 3-manifolds.
Moreover, in \cite{
Juyum-Lamb1,
Juyum-Lamb2},  it was  introduced the $p$-adic framed braid group and a deformation of it which yields an invariant for framed knots. For other literature on  framed structures in the context of knot theory, see \cite{FlJuLaJPAA2018, DiFlJPAA2020, GoJuKoLaMRL2017,JuLaWSP2015} for instance. These works provide the main motivation for constructing framizations of the so-called knot monoids, i.e. monoids involved in the study of knots. One class of such objects are the Brauer-type monoids, which include the symmetric group, the Brauer, Jones, Motzkin and inverse symmetric monoids among other, see \cite{
KuMaCECJM2006}. All these monoids can be realized as submonoids of the so-called
partition monoid\cite[Section 2]{KuMaCECJM2006}, cf. \cite{
HaRaEJC2005, Martin-PartAlg}. The main monoid studied in this work is the {\it blob monoid}, which is here realized as a submonoid of the partition monoid.

The blob algebra is a two-parameters algebra that can be considered an extension of the Temperley–Lieb algebra and was defined independently by Martin-Saleur and tom Dieck, see \cite{MaSaLMP1994} and \cite{DiJRM1994}, respectively. The definition given by Martin-Saleur is motivated by reasons specific to statistical mechanics and uses certain diagrams that we call Martin-Saleur diagrams. Tom Dieck's approach was completely different, his motivation was to define an extension of the Jones polynomial as Jones did but using the blob algebra instead of the Temperley-Lieb algebra, see \cite[Section 11]{DiJRM1994}.
 Furthermore, it is worth noting that recently in \cite{LoPlRyJA2021} a degenerate version of the blob algebra appears as a certain endomorphism algebra coming from the Elias-Williamson  diagrammatic category, see \cite{EW}.

Now, a specialization of the blob algebra yields the monoid algebra of a certain monoid $Bl_{n}$ which we call blob monoid, and is an extension of the Jones monoid  defined in \cite{LaFiCA2006}.
This article begins a systematic study of the blob monoid in terms of its framization and deframization, in the sense of  \cite{AiJuPa2024}, cf. \cite{Aicardi-Arcis-Juyum-2023,  AiArJu2024, LaPoarXiv2023}. Therefore, the main objective here is to carry out a first study of $Bl_{n}$ as well as a framization $Bl_{d,n}$ of it. The definition of $Bl_{d,n}$ is derived by considering the realization of $Bl_n$ by means of Martin-Saleur diagrams and considering that the arcs are provided with beads that move freely on them. This procedure follows the original construction of the framed braid group, see \cite{KoSm1992}, cf. \cite[Section 3]{AiJuPa2024}. From this diagrammatic realization of $Bl_{d,n}$ together the so-called {\it indexing matrices}  we can calculate its cardinality,  as well as the normal forms of its elements. This cardinal is a sum involving certain numbers $\Omega_k^{(n)}$'s, where $k$ is a non-negative integer less than $n+1$. The numbers $\Omega_k^{(n)}$'s are also studied and we find interesting combinatorial formulas, some of which are included in the article.

The organization and main results of the work are presented in the following.
In Section 2, Definition \ref{DefBln}, we introduce the blob monoid $Bl_n$, which is the monoidal face of the so-called blob algebra introduced independently in \cite{MaSaLMP1994, DiJRM1994}. We also show a normal form for the element of $Bl_n$, which is encoded by the so-called indexing matrices; see Subsection 2.3. In Subsection 2.4, $Bl_n$ is realized as the diagrammatic monoid $\mathfrak{Bl}_n$, i.e. the one formed by the Martin-Saleur diagrams, see Theorem \ref{theo-iso-diagBlobMon-algBlobMon}.
The Subsection 2.5 is devoted to the realization of  $Bl_n$ as submonoid of the partition monoid, namely, as the Motzkin blob (Subsection 2.5.2) and the hook blob monoids (Subsection 2.5.3).

In Section 3 we introduced the abacus monoid $\mathfrak{Bl}_{d,n}$ (Definition \ref{def-abacus-blob-mon}) which consists, roughly speaking, in adding beads to the arcs of the elements of $\mathfrak{Bl}_n$. Using a series of technical lemmas and diagrammatic arguments, we obtain Theorem \ref{theo-card-Fblob-mon-first}. This theorem gives the cardinality of $\mathfrak{Bl}_{d,n}$ as a sum involving the numbers $\Omega_k^{(n)}$ mentioned above and is the key in Section 4, as well as in the combinatorial study that gave rise to Section 5.
The goal of Section 4 is the proof of Theorem \ref{theo-iso-AlgFrblob-DiagFrblob}, which says that the monoids $\mathfrak{Bl}_{d,n}$ and $Bl_{d,n}$ are isomorphic. The results of Section 5 relate to several combinatorial formulas derived from the numbers $\Omega_k^{(n)}$'s; in particular, we obtain alternative formulas to compute the numbers $\Omega_k^{(n)}$'s and the  cardinality of $\mathfrak{Bl}_{d,n}$, see Corollary \ref{eq-rel2-omega-vs-theta-numbers} and Theorem \ref{theo-alternative-formula-diag-frBlob}, respectively.

In Section 6 we give another framization $Bl_{d,n}^c$ of $Bl_{n}$, called the connected framed blob monoid, which is not isomorphic to $Bl_{d,n}$, see Definition \ref{def-connect-frm-blob-mon}. This framed monoid arises by adding beads to the hook blob model of $Bl_{n}$,  and the key difference between them is that we have added a new framing generator $z_0$ that commutes with the blob generator, see (\ref{fracon}). Finally, Theorem \ref{theo-card-Fblob-mon-first-conn} and Theorem \ref{theo-alternative-formula-diag-frBlob-conn} give two ways to calculate the cardinality of $Bl_{d,n}^c$.

\section{Preliminaries}
Here we  introduce the blob monoid $Bl_{n}$ and we show how a normal form of its element is encoded using indexing matrices; also,  diagrammatic realizations of $Bl_{n}$ are constructed, see Subsection 2.5.

\subsection{Notation}Throughout the article, if generator subscripts are not specified, it means that they take any value for which the respective generator is defined. We denote by $[a,b]$ the set of integers between  $a$ and $b$. From now on,  $d$ and $n$ denote two positive integers.

\subsection{Blob monoid}
\begin{definition}\label{DefBln}
The blob  monoid $Bl_n$ is the monoid presented by generators  $u_0,u_1, \ldots,  u_{n-1}$ satisfying  the following relations:
\begin{align}
u_i^2=u_i,\quad  & u_iu_j=u_ju_i, \quad \text{for $|i-j|>1$,}\label{Bl1} \\
u_1u_0u_1=u_1, &\quad u_iu_ju_i=u_i,\quad \text{for $|i-j|>1$ and $i,j\not=0$.}\label{Bl2}
\end{align}
 \end{definition}
 \begin{remark}\label{JextBl}\rm
The blob monoid is an extension of the Jones monoid, see  \cite{LaFiCA2006}. To be precise, the Jones monoid is the submonoid generated by the elements $u_i$, with $i\not=0$, which satisfies all the defining relations of $Bl_n$ that do not involve the element $u_0$.
 \end{remark}
\begin{proposition}\label{NormalBlob}
For $j\leq i$, define $U_{i,j} :=u_i u_{i-1}\cdots u_j$. Every element of $Bl_n$ can be written in normal form, that is, in the form
$$
U_{i_1,j_1}\cdots U_{i_p,j_p},
$$
where
\begin{align}\label{CondTij}
\left\{\begin{array}{c}
0\leq i_1<\cdots <i_p\leq n-1,\\
0=j_1=\cdots =j_{r-1}<j_r<\cdots < j_p \leq n-1,\\
j_s \leq i_s.
\end{array}\right.
\end{align}
\end{proposition}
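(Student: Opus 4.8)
The plan is to establish the statement as a spanning result. Let $N\subseteq Bl_n$ be the set consisting of the empty product $1$ together with all products $U_{i_1,j_1}\cdots U_{i_p,j_p}$ subject to \eqref{CondTij}. Since $Bl_n$ is generated by $u_0,\dots,u_{n-1}$ and $1\in N$, it suffices to prove that $N$ is closed under right multiplication by every generator, i.e. that $xu_k\in N$ for all $x\in N$ and $0\le k\le n-1$; an induction on word length then shows $N=Bl_n$, which is the assertion. Only the existence of a normal form is claimed, so no uniqueness (and hence no counting) argument is required here.

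The computation of $xu_k$ rests on a short list of single-run identities deduced directly from \eqref{Bl1}--\eqref{Bl2}, where $U_{i,j}=u_iu_{i-1}\cdots u_j$: one has $U_{i,j}u_j=U_{i,j}$ and $U_{i,j}u_{j-1}=U_{i,j-1}$ (the latter simply prolonging the descent); $U_{i,j}u_{j+1}=U_{i,j+1}$ when $i>j$, using $u_{j+1}u_ju_{j+1}=u_{j+1}$, and in particular $u_1u_0u_1=u_1$ when $j=0$; $U_{i,j}u_k=U_{i,k}U_{k-2,j}$ for $j+2\le k\le i$, using $u_ku_{k-1}u_k=u_k$ together with the far-commutations of \eqref{Bl1}; and $u_k$ commutes with the whole run when $k\le j-2$. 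Writing $x=U_{i_1,j_1}\cdots U_{i_p,j_p}$, the easy case is $k>i_p$: here $xu_k=U_{i_1,j_1}\cdots U_{i_p,j_p}U_{k,k}$ is already of the required form, since $k>i_p\ge j_p$ keeps both the strict increase of the tops and the zero-block-then-increasing shape of the bottoms intact. When $k\le j_p-2$ the generator $u_k$ commutes past the last run and one recurses on the shorter product $U_{i_1,j_1}\cdots U_{i_{p-1},j_{p-1}}u_k$, reattaching $U_{i_p,j_p}$ at the end.

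The substantive case is $j_p-1\le k\le i_p$, where $u_k$ is absorbed into the last run via the identities above. Either the last run is shortened or extended in place, or it splits as $U_{i_p,k}U_{k-2,j_p}$, creating a run of top $k-2<i_p$ that now sits out of order. The far-commutations of \eqref{Bl1} let this low run migrate leftward past every run whose index range it avoids, and when its top becomes adjacent to a neighbour the relation $u_iu_{i\pm1}u_i=u_i$ either merges the two runs or absorbs the intruder; this is exactly the insertion step underlying the classical normal form of the Jones submonoid (Remark \ref{JextBl}). To see that the procedure terminates in an element of $N$, I would induct on the pair $(\text{word length},\ \text{number of inversions among the top indices})$ ordered lexicographically: the single-run identities and merges never increase the length (and typically strictly decrease it), while each commutation strictly decreases the inversion count.

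The only genuinely blob-specific point, and the main thing to watch, is the behaviour of the bottom indices at the value $0$. For a positive common bottom the relation $u_iu_{i+1}u_i=u_i$ forces a collapse (two runs ending in the same $u_j$ with $j>0$ reduce against each other), and this is precisely what yields the strict inequalities $j_r<\cdots<j_p$. At $0$, however, $Bl_n$ has the one-sided relation $u_1u_0u_1=u_1$ but not its mirror $u_0u_1u_0=u_0$, so two runs ending in $u_0$ cannot be reduced, and the normal form must carry the initial block $0=j_1=\cdots=j_{r-1}$. The real labour of the proof is therefore bookkeeping rather than a single hard identity: at each rewriting step one must check that the shape of $(j_1,\dots,j_p)$ and the inequalities $j_s\le i_s$ prescribed by \eqref{CondTij} are preserved, with the descent to $0$ and the collapse of equal positive bottoms handled as separate verifications.
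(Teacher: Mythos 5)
Your proposal is correct and takes essentially the same route as the paper: the paper's proof of Proposition \ref{NormalBlob} is simply a reference to Jones's length-induction argument (showing the set of normal-form words is closed under right multiplication by each generator), which is precisely the scheme you carry out, and your single-run identities together with the one-sided relation $u_1u_0u_1=u_1$ at the blob generator are the correct blob-specific adaptations that force the initial block $j_1=\cdots=j_{r-1}=0$. One incidental remark: relation (\ref{Bl2}) as printed ($u_iu_ju_i=u_i$ for $|i-j|>1$) is a typo for $|i-j|=1$ — combined with the far-commutation it would collapse the monoid — and you have implicitly worked with the intended relation.
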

\begin{proof}Analogous to the proof of \cite[Lemma 4.1.2, Aside 4.1.4]{JoIM1983}. See \cite{DiJRM1994, AlHar-Gon-Pl}  for details.
\end{proof}

In order to provide a useful way to codify the normal form, we introduce the following matrix encoding that will be used frequently during  the paper.

\subsection{Indexing matrices}\label{def-indexing-matrix}
  Let $A=[a_{ij}]$ be a $2\times m$ matrix, with $m\in [1,n]$. We say that $A$ is an \emph{indexing matrix}, if $A=\Big[\!\!\begin{array}{c}\infty\\\infty\end{array}\!\!\Big]$, or if its entries $a_{ij},$ satisfy the following rules:
  \begin{enumerate}
    \item[(i)] $a_{ij}\in [0,n-1]$,
    \item[(ii)] $a_{1j}<a_{1k},$ whenever $j<k$,
    \item[(iii)] $a_{2j}\leq a_{2k},$ whenever $j<k$,
    \item[(iv)] $a_{2j}=a_{2k}$ and $j<k$ implies that $a_{2j}=a_{2k}=0$,
    \item[(v)] $a_{1j}\geq a_{2j},$ for each $j\in [1,m]$.
  \end{enumerate}
 The matrix $\Big[\!\!\begin{array}{c}\infty\\\infty\end{array}\!\!\Big]$ will be called \emph{degenerate} indexing matrix, and the other indexing matrices will be called \emph{non degenerate} indexing matrices.

  The number of columns of an indexing matrix $A$ will be called the \emph{length of} $A$, and it will be denoted by $l(A).$ The number of columns of an indexing matrix $A$ containing a $0$ will be called the \emph{blob-rank} of $A.$
 An indexing matrix  will be called \emph{positive indexing matrix,} if all its entries are positive. Finally, an indexing matrix is called an \emph{initial indexing matrix} if its second row consists only of zeros.

Observe that if $A$ is a non degenerated indexing matrix, then there are an initial indexing matrix $B$ and a positive indexing matrix $C$ such that $A=[B|C]$, that is, $A$ is obtained from $B$ and $C$ by \emph{horizontal concatenation of matrices},
\begin{equation}\label{A=BC}
    A=\left[\begin{matrix}
        b_1&\cdots &b_r&c_{11}&\cdots&c_{1m}\\
        0&\cdots & 0&c_{21}&\cdots&c_{2m}
    \end{matrix}\right].
\end{equation}
The following definition says how to parameterize the normal form of the elements of $Bl_n$ in terms of indexing matrices.
\begin{definition}\label{def-basis-matrix-blob}
  \begin{enumerate}
    \item[(i)] For $A=\Big[\!\!\begin{array}{c}\infty\\\infty\end{array}\!\!\Big]$, we define $U(A)=1$.
    \item[(ii)] For $0\leq j\leq i\leq n-1,$ we define  $U\Big[\!\!\begin{array}{c}i\\j\end{array}\!\!\Big]$ as follows:
     \begin{equation*}
      U\Big[\!\!\begin{array}{c}i\\j\end{array}\!\!\Big]=\left\{\begin{array}{ll}
                              u_i,&\textrm{if}\quad i=j \\
                              \quad \\
                             u_iu_{i-1}\cdots u_{j},& \textrm{if}\quad i>j.
                            \end{array}\right.
     \end{equation*}
     \item[(iii)] If  $A=[a_{ij}]$ is an indexing matrix with $l(A)>1$, the element $U(A)$ is defined by
         \begin{equation*}
           U\left[\begin{matrix}
                       a_{11} & a_{12} & \cdots & a_{1m} \\
                       a_{21} & a_{22} & \cdots & a_{2m}
                     \end{matrix}\right]:=
           U\left[\begin{matrix}
            a_{11} \\
            a_{21}
          \end{matrix}\right]U\left[\begin{matrix}
            a_{12} \\
            a_{22}
          \end{matrix}\right]\cdots U\left[\begin{matrix}
            a_{1m} \\
            a_{2m}
          \end{matrix}\right].
         \end{equation*}
     \end{enumerate}
\end{definition}
\begin{remark}\label{nthCatalan}\rm
As usual we denote $\mathrm{C}_n=\frac{1}{n+1}\binom{2n}{n}$  the $n-$th Catalan number.  Recall that there are $\binom{2n}{n}$ indexing matrices and that there are $\mathrm{C}_n$  positive indexing matrices, including the degenerated one.
Also note that if $A$ is a positive indexing matrix, then $U(A)$ belongs to the so called Jones monoid \cite{JoIM1983}. Moreover, the  $U(A)$ correspond to the normal form of the elements of the Jones monoid.
\end{remark}

\subsection{Martin-Saleur blob monoid}

 Recall that a Temperley-Lieb diagram or TL-diagram, on $n$ points is a finite graph $D$ immersed at the rectangular frame $R:=\{(x,y)\in\mathbb{R}^2; 0\leq y\leq 1\}$. The set of vertices consists of $2n$ points located on the boundary of $R$, which are called \emph{boundary points}
of $D$. There are $n$ of those points contained in $\mathbb{R}\times \{0\}$ called \emph{bottom boundary points} and the other ones contained in $\mathbb{R}\times \{1\}$ are called
\emph{top boundary points}.
The edges of $D$ are contained in the interior of $R$ and are called \emph{arcs}; these arcs connect pairs of boundary points with the restriction that they never intersect each other.
The connected components of the complement of a TL-diagram in  $R$ are called regions.
\begin{equation}\label{eq-tikz-TL-diagram-1}
   \begin{tikzpicture}[xscale=0.3,yscale=0.3]
\node[] at (-1,1.5) {$D=$};
\draw[thick] (1,0) to [out=90,in=90](4,0);
\draw[thick] (2,0) to [out=90,in=90](3,0);
\draw[thick] (5,0) to [out=90,in=90](6,0);
\draw[thick] (7,0) to [out=90,in=270](3,3);
\draw[thick] (8,0) to [out=90,in=270](8,3);
\draw[thick] (2,3) to [out=270,in=270](1,3);
\draw[thick] (4,3) to [out=270,in=270](7,3);
\draw[thick] (5,3) to [out=270,in=270](6,3);
\draw[thick] (9,0) to [out=90,in=270](9,3);
\end{tikzpicture}
\end{equation}
Note that in a TL-diagram $D,$ there are at least one and at most two not bounded regions.
When we are in the former case, we differentiate between the left and right regions, which are called \emph{left side} and \emph{right side} of $D$, respectively. If there is only one not bounded region we called it the \emph{left side} anyway.

If an arc of $D$ is contained in the boundary of the left region of $D,$ we say that it is \emph{exposed to the left side of the diagram}. In the following figure we have painted in red the arcs that are exposed to the left side of $D:$
\begin{equation*}
    \begin{tikzpicture}[xscale=0.3,yscale=0.3]
\node[] at (-1,1.5) {$D=$};
\draw[thick,red] (1,0) to [out=90,in=90](4,0);
\draw[thick,black] (2,0) to [out=90,in=90](3,0);
\draw[thick,red] (5,0) to [out=90,in=90](6,0);
\draw[thick,red] (7,0) to [out=90,in=270](3,3);
\draw[thick,black] (8,0) to [out=90,in=270](8,3);
\draw[thick,red] (2,3) to [out=270,in=270](1,3);
\draw[thick,black] (4,3) to [out=270,in=270](7,3);
\draw[thick,black] (5,3) to [out=270,in=270](6,3);
\draw[thick,black] (9,0) to [out=90,in=270](9,3);
\end{tikzpicture}
\end{equation*}

\begin{definition}\label{def-blob-TL-diagram}
   A blobbed TL-diagram is a TL-diagram that may have some decorations (blobs) on the arcs, according to the following rules:
    \begin{enumerate}
        \item[(i)] Only arcs exposed to the left side of the diagram may be decorated by a blob.
        \item[(ii)] Each arc has  at  most one blob.
    \end{enumerate}
\end{definition}
Note that the definition of blobbed TL-diagrams includes the classical TL-diagrams.
 The following are examples of blobbed TL-diagram.

\begin{equation}\label{eq-tikz-blobbed-diagram}
\begin{tikzpicture}[xscale=0.3,yscale=0.3]
\node[] at (-1,1.5) {$D=$};
\draw[thick] (1,0) to [out=90,in=90](4,0);
\draw[thick] (2,0) to [out=90,in=90](3,0);
\draw[thick] (5,0) to [out=90,in=90](6,0);
\draw[thick] (7,0) to [out=90,in=270](3,3);
\draw[thick] (8,0) to [out=90,in=270](8,3);
\draw[thick] (2,3) to [out=270,in=270](1,3);
\draw[thick] (4,3) to [out=270,in=270](7,3);
\draw[thick] (5,3) to [out=270,in=270](6,3);
\draw[thick] (9,0) to [out=90,in=270](9,3);

\draw[fill] (5,1.5) circle [radius=0.15];
\draw[fill] (2.5,0.9) circle [radius=0.15];
\node[] at (10,1.5) {$;$};
\end{tikzpicture} \quad \begin{tikzpicture}[xscale=0.3,yscale=0.3]
\node[] at (-1,1.5) {$D'=$};
\draw[thick] (1,0) to [out=90,in=90](2,0);
\draw[thick] (3,0) to [out=90,in=270](7,3);
\draw[thick] (4,0) to [out=90,in=90](5,0);
\draw[thick] (6,0) to [out=90,in=90](7,0);

\draw[thick] (8,0) to [out=90,in=270](8,3);
\draw[thick] (2,3) to [out=270,in=270](1,3);
\draw[thick] (4,3) to [out=270,in=270](3,3);
\draw[thick] (5,3) to [out=270,in=270](6,3);
\draw[thick] (9,0) to [out=90,in=270](9,3);

\draw[fill] (5,1.5) circle [radius=0.15];
\draw[fill] (1.5,2.7) circle [radius=0.15];
\draw[fill] (1.5,0.3) circle [radius=0.15];
\end{tikzpicture}
\end{equation}

\begin{definition}\label{def-diag-blob-monoid}
The \emph{diagrammatic blob monoid} $\mathfrak{Bl}_n$ consists of the blobbed TL-diagrams at $n$ points with the usual concatenation product of diagrams, but neglecting all loops, whether they contain blobs or not, and leaving only one blob on all arcs containing some blobs.
\end{definition}

   Below we show the product $DD'$ of the diagrams of (\ref{eq-tikz-blobbed-diagram}).
   \begin{equation}\label{eq-tikz-blobbed-product}
    \begin{tikzpicture}[xscale=0.3,yscale=0.3]
\node[] at (-2,3) {$DD'=$};
\draw[thick] (1,0+3) to [out=90,in=90](4,0+3);
\draw[thick] (2,0+3) to [out=90,in=90](3,0+3);
\draw[thick] (5,0+3) to [out=90,in=90](6,0+3);
\draw[thick] (7,0+3) to [out=90,in=270](3,3+3);
\draw[thick] (8,0+3) to [out=90,in=270](8,3+3);
\draw[thick] (2,3+3) to [out=270,in=270](1,3+3);
\draw[thick] (4,3+3) to [out=270,in=270](7,3+3);
\draw[thick] (5,3+3) to [out=270,in=270](6,3+3);
\draw[thick] (9,0+3) to [out=90,in=270](9,3+3);

\draw[fill] (5,1.5+3) circle [radius=0.15];
\draw[fill] (2.5,0.9+3) circle [radius=0.15];
\node[] at (11,3) {$=$};
\draw[thin,dashed] (0.5,3)--(9.5,3);

\draw[thick] (1,0) to [out=90,in=90](2,0);
\draw[thick] (3,0) to [out=90,in=270](7,3);
\draw[thick] (4,0) to [out=90,in=90](5,0);
\draw[thick] (6,0) to [out=90,in=90](7,0);
\draw[thick] (8,0) to [out=90,in=270](8,3);
\draw[thick] (2,3) to [out=270,in=270](1,3);
\draw[thick] (4,3) to [out=270,in=270](3,3);
\draw[thick] (5,3) to [out=270,in=270](6,3);
\draw[thick] (9,0) to [out=90,in=270](9,3);

\draw[fill] (5,1.5) circle [radius=0.15];
\draw[fill] (1.5,2.7) circle [radius=0.15];
\draw[fill] (1.5,0.3) circle [radius=0.15];
\draw[thick] (2+12,3+1.5) to [out=270,in=270](1+12,3+1.5);
\draw[thick] (4+12,3+1.5) to [out=270,in=270](7+12,3+1.5);
\draw[thick] (5+12,3+1.5) to [out=270,in=270](6+12,3+1.5);
\draw[thick] (1+12,1.5) to [out=90,in=90](2+12,1.5);
\draw[thick] (3+12,1.5) to [out=90,in=270](3+12,4.5);
\draw[thick] (4+12,1.5) to [out=90,in=90](5+12,1.5);
\draw[thick] (6+12,1.5) to [out=90,in=90](7+12,1.5);
\draw[thick] (8+12,1.5) to [out=90,in=270](8+12,4.5);
\draw[thick] (9+12,1.5) to [out=90,in=270](9+12,4.5);
\draw[fill] (1.5+12,0.3+1.5) circle [radius=0.15];
\draw[fill] (3+12,3) circle [radius=0.15];
\end{tikzpicture}
\end{equation}

Recall that  $\mathfrak{Bl}_n$ is generated by $\mathfrak{u}_0,\mathfrak{u}_1,\ldots, \mathfrak{u}_{n-1}$, where:
\begin{equation*}
    \begin{tikzpicture}[xscale=0.3,yscale=0.3]
\node[] at (-1,1.5) {$\mathfrak{u}_0:=$};
\draw[thick] (1,0) to [out=90,in=270](1,3);
\draw[fill] (1,1.5) circle [radius=0.15];
\draw[thick] (2,0) to [out=90,in=270](2,3);
\node[] at (3,1.5) {$\cdots$};
\draw[thick] (4,0) to [out=90,in=270](4,3);
\draw[thick] (5,0) to [out=90,in=270](5,3);
\draw[thick] (6,0) to [out=90,in=270](6,3);
\draw[thick] (7,0) to [out=90,in=270](7,3);
\node[] at (8,1.5) {$\cdots$};
\draw[thick] (9,0) to [out=90,in=270](9,3);
\node[below] at (1,0) {\small$1$};
\node[below] at (9,0) {\small$n$};
\node[] at (9.5,1.4) {};

\end{tikzpicture}\qquad
\begin{tikzpicture}[xscale=0.3,yscale=0.3]
\node[] at (-1,1.5) {$\mathfrak{u}_i:=$};
\draw[thick] (1,0) to [out=90,in=270](1,3);
\draw[thick] (2,0) to [out=90,in=270](2,3);
\node[] at (3,1.5) {$\cdots$};
\draw[thick] (4,0) to [out=90,in=270](4,3);
\draw[thick] (5,0) to [out=90,in=90](6,0);
\draw[thick] (5,3) to [out=270,in=270](6,3);
\draw[thick] (7,0) to [out=90,in=270](7,3);
\node[] at (8,1.5) {$\cdots$};
\draw[thick] (9,0) to [out=90,in=270](9,3);
\node[below] at (5,0) {\small$i$};
\node[below] at (1,0) {\small$1$};
\node[below] at (9,0) {\small$n$};
\node[] at (9.5,1.4) {};
\node[] at (14,1.4) {with $i\not= 0.$};

\end{tikzpicture}
\end{equation*}
These elements $\mathfrak{u}_i$'s satisfy the defining relations
 of $Bl_n$, i.e. we have:
\begin{align}\label{eq-all-def-blob-mon-diag}
\mathfrak{u}_i^2=\mathfrak{u}_i,\quad  & \mathfrak{u}_i\mathfrak{u}_j=\mathfrak{u}_j\mathfrak{u}_i,\quad \text{for $|i-j|>1$,} \\
\label{eq-all-def-blob-mon-diag1}
\mathfrak{u}_1\mathfrak{u}_0\mathfrak{u}_1=\mathfrak{u}_1, &\quad \mathfrak{u}_i\mathfrak{u}_j\mathfrak{u}_i=\mathfrak{u}_i,\quad \text{for $|i-j|>1$ and $i,j\not=0$.}
\end{align}
Moreover, we will conclude the subsection by proving in Theorem \ref{theo-iso-diagBlobMon-algBlobMon} that these generators and relations define a presentation for $\mathfrak{Bl}_n$. For this purpose, we start by associating to each indexing matrix $A$ an element $\BBU( A)\in \mathfrak{Bl}_n$. To be precise, the association is the one given in Definition \ref{def-basis-matrix-blob}, but
changing $u_i$ by $\mathfrak{u}_i$. In this way the elementary indexing matrices are as in the following figure.
\begin{figure}[ht]
\begin{tikzpicture}[xscale=0.4,yscale=0.4]
\draw[thick] (1,0) to [out=90,in=90](2,0);
\draw[fill] (1.5,0.3) circle [radius=0.15];
\draw[thick] (3,0) to [out=90,in=270](1,3);
\draw[thick] (4,0) to [out=90,in=270](2,3);
\node[] at (4,1.5) {$\cdots$};
\draw[thick] (5,3) to [out=270,in=90](7,0);
\draw[thick] (6,3) to [out=270,in=90](8,0);
\draw[thick] (8,3) to [out=270,in=90](10,0);
\draw[thick] (11.3,3) to [out=270,in=90](11.3,0);
\node[] at (8,1.5) {$\cdots$};
\draw[thick] (10,3) to [out=270,in=270](9,3);
\node[] at (12,1.5) {$\cdots$};
\draw[thick] (13,3) to [out=270,in=90](13,0);
\node[below] at (10,0) {\small$ i+1$};
\node[below] at (1,0) {\small$1$};
\node[above] at (9,3) {\small$i$};
\node[below] at (13,0) {\small$n$};
\node[] at (14,1.5) {$;$};
\end{tikzpicture} \quad
\begin{tikzpicture}[xscale=0.4,yscale=0.4]
\draw[thick] (1,0) to [out=90,in=270](1,3);
\draw[thick] (2,0) to [out=90,in=270](2,3);
\node[] at (3,1.5) {$\cdots$};
\draw[thick] (4,0) to [out=90,in=270](4,3);
\draw[thick] (5,0) to [out=90,in=90](6,0);
\draw[thick] (5,3) to [out=270,in=90](7,0);
\draw[thick] (6,3) to [out=270,in=90](8,0);
\draw[thick] (8,3) to [out=270,in=90](10,0);
\draw[thick] (11.3,3) to [out=270,in=90](11.3,0);
\node[] at (8,1.5) {$\cdots$};
\draw[thick] (10,3) to [out=270,in=270](9,3);
\node[] at (12,1.5) {$\cdots$};
\draw[thick] (13,3) to [out=270,in=90](13,0);
\node[below] at (5,0) {\small$j$};
\node[above] at (5,3) {\small$j$};
\node[below] at (10,0) {\small$i+1$};
\node[below] at (1,0) {\small$ 1$};
\node[above] at (9,3) {\small$i$};
\node[below] at (13,0) {\small$n$};
\end{tikzpicture}
\caption{On the left is $\BBU\left[\begin{matrix}
           i \\
           0
          \end{matrix}\right]$ and on the right is $\BBU\left[\begin{matrix}
           i \\
           j
          \end{matrix}\right]$ with $0\not= j<i.$}
\end{figure}

\begin{example}  For $n=6$ and $A=\left[\begin{matrix}
              3 & 4 & 5 \\
              0 & 0 & 3
            \end{matrix}\right]$, we have $\BBU(A) =
           \BBU\left[\begin{matrix}
            3 \\
            0
          \end{matrix}\right]\BBU\left[\begin{matrix}
           4 \\
           0
          \end{matrix}\right]\BBU\left[\begin{matrix}
           5\\
            3
          \end{matrix}\right]$. Then:
\begin{equation*}
\begin{tikzpicture}[xscale=0.5,yscale=0.5]
\node[] at (-2,-1) {$\BBU(A)=$};
\draw[thick] (1,0) to [out=90,in=90](2,0);
\draw[fill] (1.5,0.3) circle [radius=0.15];
\draw[thick] (3,0) to [out=90,in=270](1,2);
\draw[thick] (4,0) to [out=90,in=270](2,2);
\draw[thick] (3,2) to [out=270,in=270](4,2);
\draw[thick] (5,2) to [out=270,in=90](5,0);
\draw[thick] (6,2) to [out=270,in=90](6,0);
\draw[thin] (0.5,0)--(6.5,0);
\draw[thick] (1,0) to [out=270,in=90](3,-2);
\draw[thick] (1,-2) to [out=90,in=90](2,-2);
\draw[fill] (1.5,-1.7) circle [radius=0.15];
\draw[thick] (2,0) to [out=270,in=90](4,-2);
\draw[thick] (3,0) to [out=270,in=90](5,-2);
\draw[thick] (4,0) to [out=270,in=270](5,0);
\draw[thick] (6,0) to [out=270,in=90](6,-2);
\draw[thin] (0.5,-2)--(6.5,-2);
\draw[thick] (1,-2) to [out=270,in=90](1,-4);
\draw[thick] (2,-2) to [out=270,in=90](2,-4);
\draw[thick] (3,-2) to [out=270,in=90](5,-4);
\draw[thick] (3,-4) to [out=90,in=90](4,-4);
\draw[thick] (5,-2) to [out=270,in=270](6,-2);
\draw[thick] (4,-2) to [out=270,in=90](6,-4);
\node[] at (8,-1) {$=$};
\draw[thick] (11,-2.5) to [out=90,in=90](12,-2.5);
\draw[fill] (11.5,-2.2) circle [radius=0.15];
\draw[thick] (13,-2.5) to [out=90,in=90](14,-2.5);
\draw[fill] (15.5,-2.2) circle [radius=0.15];
\draw[thick] (15,-2.5) to [out=90,in=90](16,-2.5);
\draw[thick] (12,0.5) to [out=270,in=270](15,0.5);
\draw[thick] (13,0.5) to [out=270,in=270](14,0.5);
\draw[thick] (11,0.5) to [out=270,in=270](16,0.5);

\end{tikzpicture}
\end{equation*}
\end{example}
\begin{example}\label{ex-matrix-form-tikz-blobbed-diagram}
   For the diagrams of (\ref{eq-tikz-blobbed-diagram}), we have
    \begin{equation*}
        D=\BBU\left[\begin{matrix}
        1 & 2 & 5 & 6 \\
        0 & 0 & 2 & 5
        \end{matrix}\right],\quad D'=
       \BBU \left[\begin{matrix}
            0&1&3&5&6\\
            0&0&0&4&6
        \end{matrix}\right]
    \end{equation*}
  and
    \begin{equation*}
        DD'=\BBU\left[\begin{matrix}
            1&2&5&6\\
            0&0&4&6
        \end{matrix}\right].
    \end{equation*}
\end{example}

\begin{remark}\rm\label{remark-relation-blobrank-and-blobbed-arcs}
 Note that the number of blobbed arcs in the element $\BBU(A)$ corresponds to the blob-rank of the indexing matrix $A.$
\end{remark}

\begin{theorem}\label{theo-iso-diagBlobMon-algBlobMon}
The map $u_i\mapsto \mathfrak{u}_i$ defines a monoid isomorphism $\phi:Bl_n\longrightarrow \mathfrak{Bl}_n$.
\end{theorem}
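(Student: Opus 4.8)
The plan is to show that $\phi$ is a surjective monoid homomorphism and then deduce injectivity from the normal form together with a diagrammatic faithfulness statement. First, since the generators $\mathfrak{u}_0,\dots,\mathfrak{u}_{n-1}$ of $\mathfrak{Bl}_n$ satisfy exactly the defining relations (\ref{eq-all-def-blob-mon-diag})--(\ref{eq-all-def-blob-mon-diag1}) of $Bl_n$ listed in Definition \ref{DefBln}, the universal property of the presentation guarantees that $u_i\mapsto\mathfrak{u}_i$ extends to a monoid homomorphism $\phi$. Surjectivity is immediate, because the $\mathfrak{u}_i$ generate $\mathfrak{Bl}_n$, so every diagram is a product of the $\mathfrak{u}_i=\phi(u_i)$.

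For injectivity I would avoid proving uniqueness of the normal form inside $Bl_n$ directly, and instead combine existence of the normal form with faithfulness on the diagram side. Concretely, Proposition \ref{NormalBlob} and the parameterization of Definition \ref{def-basis-matrix-blob} show that every $x\in Bl_n$ equals $U(A)$ for some indexing matrix $A$, and since $\phi$ is a homomorphism sending $u_i$ to $\mathfrak{u}_i$ we have $\phi(U(A))=\BBU(A)$. Thus if $\phi(U(A))=\phi(U(B))$ then $\BBU(A)=\BBU(B)$, so it suffices to prove that the assignment $A\mapsto\BBU(A)$ from indexing matrices to $\mathfrak{Bl}_n$ is injective: then $A=B$, hence $U(A)=U(B)$ and $\phi$ is injective. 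Together with surjectivity this yields the isomorphism (and, as a byproduct, the uniqueness of the normal form).

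The main obstacle is therefore the injectivity of $A\mapsto\BBU(A)$, a purely combinatorial statement that I would prove by describing an explicit inverse reading an indexing matrix off a blobbed TL-diagram $D=\BBU(A)$. Forgetting the blobs leaves an ordinary TL-diagram, i.e.\ a non-crossing perfect matching of the $2n$ boundary points; by the classical correspondence for the Jones monoid recalled in Remark \ref{nthCatalan}, this matching recovers, and is recovered from, the positive part $C$ in the factorization $A=[B\mid C]$ of (\ref{A=BC}). The blob decorations are handled separately: by Remark \ref{remark-relation-blobrank-and-blobbed-arcs} the blobbed arcs of $D$ are exactly as many as the blob-rank of $A$, and reading which exposed arcs carry a blob recovers the columns of $A$ whose second entry is $0$, namely the initial part $B$. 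Checking that these two pieces of data fit together consistently---so that distinct indexing matrices always produce either a different underlying matching or a different blob pattern---is the technical heart; I expect it to proceed by induction on the length $l(A)$, peeling off the leftmost factor $\BBU[\begin{smallmatrix}a_{11}\\a_{21}\end{smallmatrix}]$ and tracking how concatenation with the remaining generators moves and merges blobs. Once this faithfulness is established, the three ingredients above assemble into the stated isomorphism.
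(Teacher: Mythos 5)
Your first two paragraphs agree with the paper: the relations give a homomorphism, surjectivity is immediate, and injectivity reduces (via existence of the normal form $x=U(A)$) to showing that distinct indexing matrices give distinct diagrams. The paper, however, closes this last step with a one-line counting argument that you avoid: since $\phi$ is onto and every element of $Bl_n$ equals some $U(A)$, one has $|Bl_n|\leq\binom{2n}{n}=|\mathfrak{Bl}_n|$, and a surjection between finite sets of these cardinalities is a bijection. The injectivity of $A\mapsto\BBU(A)$ then comes out as a corollary (Corollary \ref{NFBlob}) rather than being an input.

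The route you propose instead has a genuine error in its key step. You claim that forgetting the blobs of $D=\BBU(A)$ yields the TL-diagram encoded by the positive part $C$ of the factorization $A=[B|C]$, and that the blob pattern separately recovers $B$. This is false: for $A=\Big[\!\!\begin{array}{c}1\\0\end{array}\!\!\Big]$ (so $C$ is empty), $\BBU(A)=\mathfrak{u}_1\mathfrak{u}_0$ is the diagram with a blobbed cup at the bottom points $1,2$ and a cap at the top points $1,2$; erasing the blob gives the diagram of $\mathfrak{u}_1=\BBU\Big[\!\!\begin{array}{c}1\\1\end{array}\!\!\Big]$, whose positive indexing matrix is $\Big[\!\!\begin{array}{c}1\\1\end{array}\!\!\Big]$, not the empty one. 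In general the underlying matching of $\BBU(A)$ depends on all of $A$, so the two pieces of data you want to read off do not split along $B$ and $C$, and the induction you defer to (``I expect it to proceed by\ldots'') is precisely the nontrivial part and cannot proceed as described once the splitting fails. Either carry out a correct reading procedure for the whole matrix $A$ at once, or replace this step by the paper's cardinality comparison, which needs only the standard count $|\mathfrak{Bl}_n|=\binom{2n}{n}$ of blobbed TL-diagrams together with Remark \ref{nthCatalan}.
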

 \begin{proof}  From (\ref{eq-all-def-blob-mon-diag}) and (\ref{eq-all-def-blob-mon-diag1}) follows that  $\phi$ is a monoid epimorphism. So to finish the proof it is enough to show that  $|Bl_n| \leq |\mathfrak{Bl}_n|$.
Now, using the defining relations of $Bl_n$, each element of $Bl_n$ can be  written in the form $U(A)$ for some indexing matrix $A$. Then,  $|Bl_n|\leq\binom{2n}{n}=|\mathfrak{Bl}_n|$.
\end{proof}

\begin{corollary}\label{NFBlob}
The generators $\mathfrak{u}_0, \mathfrak{u}_1, \ldots , \mathfrak{u}_{n-1}$  together with the relations   (\ref{eq-all-def-blob-mon-diag})-(\ref{eq-all-def-blob-mon-diag1}) define a presentation for $\mathfrak{Bl}_n$.  In particular, $\BBU(A)$ is in normal form, for  all $A$  indexing matrix.
\end{corollary}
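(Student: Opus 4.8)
The plan is to obtain both assertions as immediate consequences of Theorem \ref{theo-iso-diagBlobMon-algBlobMon} together with the counting already performed inside its proof; no new diagrammatic work should be needed.

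For the presentation statement, I would argue by transport of structure. The relations (\ref{eq-all-def-blob-mon-diag})-(\ref{eq-all-def-blob-mon-diag1}) satisfied by $\mathfrak{u}_0,\dots,\mathfrak{u}_{n-1}$ are precisely the images, under the substitution $u_i\mapsto\mathfrak{u}_i$, of the defining relations (\ref{Bl1})-(\ref{Bl2}) of $Bl_n$. Since $\phi\colon Bl_n\to\mathfrak{Bl}_n$ is an isomorphism and $Bl_n$ is by Definition \ref{DefBln} presented by $u_0,\dots,u_{n-1}$ subject to (\ref{Bl1})-(\ref{Bl2}), carrying this presentation across $\phi$ yields that $\mathfrak{Bl}_n$ is presented by $\mathfrak{u}_0,\dots,\mathfrak{u}_{n-1}$ subject to (\ref{eq-all-def-blob-mon-diag})-(\ref{eq-all-def-blob-mon-diag1}).

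For the normal-form statement, I would combine a cardinality equality with a count of indexing matrices. The proof of Theorem \ref{theo-iso-diagBlobMon-algBlobMon} shows that $\phi$ is surjective and that $|Bl_n|\le\binom{2n}{n}=|\mathfrak{Bl}_n|$; surjectivity then forces equality, so $|Bl_n|=|\mathfrak{Bl}_n|=\binom{2n}{n}$ and $\phi$ is a bijection. By Proposition \ref{NormalBlob} and Definition \ref{def-basis-matrix-blob} every element of $Bl_n$ equals $U(A)$ for some indexing matrix $A$, while by Remark \ref{nthCatalan} there are exactly $\binom{2n}{n}$ indexing matrices. A surjection between two finite sets of equal size $\binom{2n}{n}$ is a bijection, so $A\mapsto U(A)$ is injective: distinct indexing matrices give distinct elements, which is exactly the assertion that $U(A)$ is a normal form. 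Since $\BBU(A)=\phi(U(A))$ by construction (the definition of $\BBU(A)$ is that of $U(A)$ with $u_i$ replaced by $\mathfrak{u}_i$, and $\phi(u_i)=\mathfrak{u}_i$), applying the bijection $\phi$ shows that $A\mapsto\BBU(A)$ is also a bijection, so $\BBU(A)$ is in normal form for every indexing matrix $A$.

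I do not anticipate a genuine obstacle, since the corollary is essentially a bookkeeping consequence of the isomorphism. The one point that needs to be stated carefully is that the two independent counts---the number of indexing matrices and the cardinality of $\mathfrak{Bl}_n$---are both $\binom{2n}{n}$, so that the pigeonhole step upgrading ``every element is some $U(A)$'' to ``the $U(A)$ are pairwise distinct'' is valid; everything else is transport along $\phi$.
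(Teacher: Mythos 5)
Your proposal is correct and follows exactly the route the paper intends: the corollary is stated without proof precisely because it is the immediate bookkeeping consequence of Theorem \ref{theo-iso-diagBlobMon-algBlobMon}, namely transporting the presentation of $Bl_n$ along the isomorphism $\phi$ and using the equality $|Bl_n|=\binom{2n}{n}=|\mathfrak{Bl}_n|$ (forced by surjectivity together with the bound from the $U(A)$'s) to upgrade the surjection $A\mapsto U(A)$ to a bijection. Your only additional care — checking that the two counts agree so the pigeonhole step is valid — is exactly the content already established in the theorem's proof.
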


\subsection{Blob monoid as partition monoid }
The goal of this subsection is to show two ways of realizing the blob monoid as a submonoid of the partition monoid: one realization is called the Motzkin blob monoid and the other the hook blob monoid.

\subsubsection{Partition and abacus monoids}
As usual we denote by  $\mathfrak{C}_n$, the partition monoid, that is, the set formed by the partitions  of $[1,n]\cup[1',n']$ with the product by concatenation, see \cite[Section 2]{KuMaCECJM2006} and \cite[Section 1]{HaRaEJC2005}  for details. For example, for $\alpha = \{\{1,2\}, \{3,3'\},\{4,1',4'\},\{2'\}\}$,
$\beta= \{\{1,3'\}, \{2,4\},\{3,1',2',4'\}\}\in \mathfrak{C}_4$, the product $\alpha\beta$ is depicted below.
\begin{center}
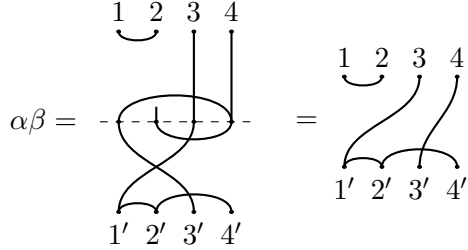
\begin{figure}[ht]
$$
\begin{tikzpicture}[xscale=0.5,yscale=0.4]
\node[] at (-1,0) {$\alpha\beta=$};
\draw[thick] (1,3) to [out=270,in=270](2,3);
\draw[thick](3,0)--(3,3);
\draw[thick] (1,0) to [out=90,in=90](4,0);
\draw[thick](4,0)--(4,3);
\draw[thick] (2,0) to [out=90,in=270](2,0.5);
\draw[dashed](0.5,0)--(4.5,0);
\node[above] at (1,3) {$1$};
\node[above] at (2,3) {$2$};
\node[above] at (3,3) {$3$};
\node[above] at (4,3) {$4$};
\draw[fill] (1,3) circle [radius=0.05];
\draw[fill] (2,3) circle [radius=0.05];
\draw[fill] (3,3) circle [radius=0.05];
\draw[fill] (4,3) circle [radius=0.05];
\draw[fill] (1,0) circle [radius=0.05];
\draw[fill] (2,0) circle [radius=0.05];
\draw[fill] (3,0) circle [radius=0.05];
\draw[fill] (4,0) circle [radius=0.05];
\draw[thick] (3,-3) to [out=90,in=270](1,0);
\draw[thick] (2,0) to [out=270,in=270](4,0);
\draw[thick] (1,-3) to [out=90,in=270](3,0);
\draw[thick] (1,-3) to [out=90,in=90](2,-3);
\draw[thick] (2,-3) to [out=90,in=90](4,-3);
\node[below] at (1,-3) {$1'$};
\node[below] at (2,-3) {$2'$};
\node[below] at (3,-3) {$3'$};
\node[below] at (4,-3) {$4'$};
\draw[fill] (1,-3) circle [radius=0.05];
\draw[fill] (2,-3) circle [radius=0.05];
\draw[fill] (3,-3) circle [radius=0.05];
\draw[fill] (4,-3) circle [radius=0.05];

\node[] at (6,0) {$=$};
\draw[thick] (1+6,1.5) to [out=270,in=270](2+6,1.5);
\draw[thick] (3+6,1.5) to [out=270,in=90](1+6,-1.5);
\draw[thick] (4+6,1.5) to [out=270,in=90](3+6,-1.5);

\node[below] at (1+6,-1.5) {$1'$};
\node[below] at (2+6,-1.5) {$2'$};
\node[below] at (3+6,-1.5) {$3'$};
\node[below] at (4+6,-1.5) {$4'$};
\node[above] at (1+6,1.5) {$1$};
\node[above] at (2+6,1.5) {$2$};
\node[above] at (3+6,1.5) {$3$};
\node[above] at (4+6,1.5) {$4$};
\draw[fill] (1+6,1.5) circle [radius=0.05];
\draw[fill] (2+6,1.5) circle [radius=0.05];
\draw[fill] (3+6,1.5) circle [radius=0.05];
\draw[fill] (4+6,1.5) circle [radius=0.05];
\draw[fill] (1+6,-1.5) circle [radius=0.05];
\draw[fill] (2+6,-1.5) circle [radius=0.05];
\draw[fill] (3+6,-1.5) circle [radius=0.05];
\draw[fill] (4+6,-1.5) circle [radius=0.05];

\draw[thick] (1+6,-1.5) to [out=90,in=90](2+6,-1.5);
\draw[thick] (2+6,-1.5) to [out=90,in=90](4+6,-1.5);
\end{tikzpicture}
$$
\caption{The product $\alpha\beta$.}\label{Fig01}
\end{figure}
\end{center}

Recall that the Jones monoid  can be realized as the submonoid of $\mathfrak{C}_n$ formed by the planar partitions having only blocks of cardinality 2. The {\it elementary tangle} $U_i$, where $i\in [1,n-1]$,  is the partition with blocks: $\{i,i+1,i', (i+1)'\}$ and
$\{k, k'\}$ for $k\not= i $. Cf.  \cite[Section 2]{KuMaCECJM2006}.

\begin{center}
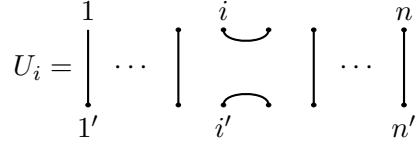
\begin{figure}[ht]
\begin{equation*}
\begin{tikzpicture}[xscale=0.6,yscale=0.5]
\node[] at (1,1) {$U_i=$};
\draw[thick] (2,0)--(2,2);
\node[] at (3,1) {$\cdots$};
\draw[thick] (4,0)--(4,2);
\draw[thick] (5,0) to [out=90,in=90](6,0);
\draw[thick] (7,0)--(7,2);
\node[] at (8,1) {$\cdots$};
\draw[thick] (9,0)--(9,2);
\draw[thick] (5,2) to [out=270,in=270](6,2);

\draw[fill] (4,2) circle [radius=0.05];
\draw[fill] (5,2) circle [radius=0.05];
\draw[fill] (6,2) circle [radius=0.05];
\draw[fill] (7,2) circle [radius=0.05];
\draw[fill] (9,2) circle [radius=0.05];
\draw[fill] (2,0) circle [radius=0.05];
\draw[fill] (4,0) circle [radius=0.05];
\draw[fill] (5,0) circle [radius=0.05];
\draw[fill] (6,0) circle [radius=0.05];
\draw[fill] (7,0) circle [radius=0.05];
\draw[fill] (9,0) circle [radius=0.05];
\node[below] at (2,0) {$1'$};
\node[below] at (9,0) {$n'$};
\node[below] at (5,0) {$i'$};
\node[above] at (2,2) {$1$};
\node[above] at (5,2) {$i$};
\node[above] at (9,2) {$n$};
\end{tikzpicture}
\end{equation*}
\caption{The elementary tangle $U_i$.}\label{TangleUi}
\end{figure}
\end{center}
\subsubsection{Motzkin blob monoid}
Define $U_0$ as the partition of $\mathfrak{C}_n$ formed by the blocks $\{1\},\{1'\}$, and $\{i,i'\}$ with $i\not= 1$.
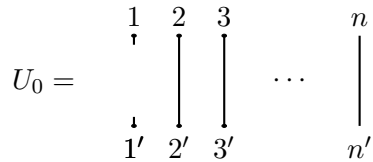
\begin{figure}[ht]
\begin{center}
\begin{equation*}
 \begin{tikzpicture}[xscale=0.6,yscale=0.4]
\node[] at (-1,1.5) {$U_0=$};
\draw[fill] (1,3) circle [radius=0.05];
\draw[thick] (1,3)--(1,2.7);
\draw[fill] (2,3) circle [radius=0.05];
\draw[fill] (3,3) circle [radius=0.05];
\draw[fill] (1,0) circle [radius=0.05];
\draw[thick] (1,0)--(1,0.3);
\draw[fill] (2,0) circle [radius=0.05];
\draw[fill] (3,0) circle [radius=0.05];
\draw[thick] (2,0) to [out=90,in=270](2,3);
\draw[thick] (3,0) to [out=90,in=270](3,3);
\draw[thick] (6,3)--(6,0);
\node[] at (4.5,1.5) {$\cdots$};
\node[below] at (1,0) {$1'$};

\node[below] at (1,0) {$1'$};
\node[below] at (2,0) {$2'$};
\node[below] at (3,0) {$3'$};
\node[below] at (6,0) {$n'$};
\node[above] at (1,3) {$1$};
\node[above] at (2,3) {$2$};
\node[above] at (3,3) {$3$};
\node[above] at (6,3) {$n$};
\end{tikzpicture}
\end{equation*}
\caption{The diagram for $U_0$.} \label{U0}
\end{center}
\end{figure}

The {\it Motzkin blob monoid}, denoted  $\mathfrak{M}_n^{(1)}$, is defined  as  the  submonoid of $\mathfrak{C}_n$ generated by $U_0, U_1,\ldots,U_{n-1}$.

\begin{theorem}\label{theo-iso-algblobmon-rookblobmon}
    The map $\mathfrak{u}_i\mapsto U_i$ defines a monoid isomorphism  $ \mathfrak{Bl}_n\rightarrow \mathfrak{M}_n^{(1)}.$
\end{theorem}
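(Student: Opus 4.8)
The plan is to exploit the presentation of $\mathfrak{Bl}_n$ obtained in Corollary \ref{NFBlob}. First I would check that the assignment $\psi\colon\mathfrak{u}_i\mapsto U_i$ respects the defining relations (\ref{eq-all-def-blob-mon-diag})--(\ref{eq-all-def-blob-mon-diag1}), so that $\psi$ extends to a monoid homomorphism $\mathfrak{Bl}_n\to\mathfrak{C}_n$ whose image is, by the very definition of $\mathfrak{M}_n^{(1)}$, all of $\mathfrak{M}_n^{(1)}$; hence $\psi$ is automatically an epimorphism onto $\mathfrak{M}_n^{(1)}$. To finish, since Theorem \ref{theo-iso-diagBlobMon-algBlobMon} together with Remark \ref{nthCatalan} gives $|\mathfrak{Bl}_n|=\binom{2n}{n}$, it suffices to prove that $\psi$ is injective, equivalently that $|\mathfrak{M}_n^{(1)}|\ge\binom{2n}{n}$.

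The relation check splits into two parts. For indices $i,j\ne 0$ the elements $U_i$ are the elementary tangles of Figure \ref{TangleUi}, and the identities $U_i^2=U_i$, $U_iU_j=U_jU_i$ for $|i-j|>1$, and the braid-type relation $U_iU_jU_i=U_i$ for adjacent $i,j$, are the classical Jones relations inside the partition monoid. The only new relations involve $U_0$: from the block description of $U_0$ in Figure \ref{U0} one reads off that the points $1$ and $1'$ are isolated, whence $U_0^2=U_0$, and that $U_0$ commutes with $U_i$ for $i\ge 2$ because their non-trivial blocks have disjoint supports. The essential relation is $U_1U_0U_1=U_1$; composing the three partitions one checks that the defect created by $U_0$ at position $1$ is reabsorbed by the cup--cap of $U_1$, leaving exactly the block $\{1,2,1',2'\}$ together with the through-strands $\{k,k'\}$ for $k\ge 3$, that is $U_1$.

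For injectivity I would argue with normal forms. By Corollary \ref{NFBlob} every element of $\mathfrak{Bl}_n$ is $\BBU(A)$ for a unique indexing matrix $A$, so $\mathfrak{M}_n^{(1)}=\{\psi(\BBU(A))\mid A\text{ an indexing matrix}\}$ and it is enough to show that $A\mapsto\psi(\BBU(A))$ is injective. The plan is to describe the partition $\psi(\BBU(A))$ explicitly and to read $A$ back off from it. Writing $A=[B|C]$ as in (\ref{A=BC}), the positive block $C$ contributes the planar pair-partition coming from the generators $U_i$ with $i\ge 1$, which already determines $C$ via the classical Temperley--Lieb normal-form bijection; the initial block $B$ records, through the defects created by the $U_0$'s, precisely which left-exposed arcs are blobbed, hence the blob-rank and the positions stored in the first row of $B$ (cf. Remark \ref{remark-relation-blobrank-and-blobbed-arcs}). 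Matching the singleton/defect pattern of $\psi(\BBU(A))$ with the remaining cup--cap connectivity then recovers both $B$ and $C$, yielding the desired injectivity.

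The main obstacle is exactly this last step: unlike the generators with $i\ge 1$, the generator $U_0$ does not preserve the class of pair-partitions, so intermediate products may acquire blocks of size three and singletons (for example $U_1U_0=\{\{1,2,2'\},\{1'\},\{3,3'\},\dots\}$), and one must track carefully how these defects propagate under concatenation to guarantee that two distinct indexing matrices never yield the same final block structure. I expect the cleanest route is to prove directly that the defect pattern of $\psi(\BBU(A))$ determines the decomposition $A=[B|C]$, thereby producing a well-defined inverse of $A\mapsto\psi(\BBU(A))$ on $\mathfrak{M}_n^{(1)}$ and completing the proof that $\psi$ is an isomorphism.
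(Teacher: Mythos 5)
Your overall strategy coincides with the paper's: verify the defining relations to obtain an epimorphism $\phi:\mathfrak{Bl}_n\to\mathfrak{M}_n^{(1)}$ (the relation checks and the surjectivity argument are exactly as in the paper), then prove injectivity by showing that the image partition determines the preimage. Where you diverge is in the injectivity step, and the divergence matters because you have correctly identified it as the hard part and left it unfinished. You propose to compute $\psi(\BBU(A))$ by multiplying out the generators in the normal form and tracking how the singleton ``defects'' created by $U_0$ propagate through the concatenations, and then to read $A$ back off the resulting block structure; this forces you to control non-pair blocks in all intermediate products. The paper sidesteps that bookkeeping by describing the map globally on diagrams: for a blobbed TL-diagram $D$, the image $\widetilde D$ is obtained by erasing every blob, whereupon each blobbed arc is cut into two pieces that contract (by homotopy) to the two singleton blocks at its endpoints, while unblobbed arcs survive as two-element blocks. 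Injectivity is then immediate from the picture: the two-element blocks of $\widetilde D$ return the unblobbed arcs, and the singletons are precisely the endpoints of the blobbed, hence left-exposed, arcs, which can be re-paired in only one way compatible with planarity and left-exposure (consecutively along the boundary of the left region). Your worry about defect propagation is not misplaced --- the paper's claim that $D\mapsto\widetilde D$ is ``the diagrammatic version of $\phi$'' still requires checking compatibility with concatenation --- but the global formulation reduces the whole issue to that single clean claim instead of an induction over normal forms, and it is the step you would need to supply to complete your route.
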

\begin{proof}
  It is routine to verify that the generators $U_0,\dots,U_{n-1}$ satisfies  the relations (\ref{eq-all-def-blob-mon-diag})-(\ref{eq-all-def-blob-mon-diag1}). For example:
    \begin{equation*}
       \begin{tikzpicture}[xscale=0.5,yscale=0.3]
\node[] at (-0.5,3) {$U_0^2=$};
\draw[thick] (1,0)--(1,1);
\draw[thick] (1,2)--(1,4);
\draw[thick] (1,5)--(1,6);
\draw[thick] (2,0) to [out=90,in=270](2,3);
\node[] at (3,3) {$\cdots$};

\draw[thick] (4,0+3) to [out=90,in=270](4,3+3);
\draw[thick] (2,0+3) to [out=90,in=270](2,3+3);
\draw[thick] (4,0) to [out=90,in=270](4,3+3);
\node[below] at (4,0) {$n'$};
\node[below] at (1,0) {$1'$};
\node[below] at (2,0) {$2'$};
\node[above] at (1,6) {$1$};
\node[above] at (2,6) {$2$};
\node[above] at (4,6) {$n$};
\draw[fill] (1,6) circle [radius=0.05];
\draw[fill] (2,6) circle [radius=0.05];
\draw[fill] (4,6) circle [radius=0.05];
\draw[fill] (1,0) circle [radius=0.05];
\draw[fill] (2,0) circle [radius=0.05];
\draw[fill] (4,0) circle [radius=0.05];
\node[] at (5.5,3) {$=U_0,$};

\end{tikzpicture} \quad \begin{tikzpicture}[xscale=0.5,yscale=0.3]
\node[] at (-1.5,2) {$U_1U_0U_1=$};
\draw[fill] (1,5) circle [radius=0.05];
\draw[thick] (1,3)--(1,2.7);
\draw[fill] (2,5) circle [radius=0.05];
\draw[fill] (3,5) circle [radius=0.05];
\draw[fill] (1,-1) circle [radius=0.05];
\draw[thick] (1,1)--(1,1.3);
\draw[fill] (2,-1) circle [radius=0.05];
\draw[fill] (3,-1) circle [radius=0.05];
\draw[fill] (6,-1) circle [radius=0.05];
\draw[fill] (6,5) circle [radius=0.05];
\draw[thick] (2,1) to [out=90,in=270](2,3);
\draw[thick] (3,-1) to [out=90,in=270](3,5);
\draw[thick] (6,5)--(6,-1);
\node[] at (4.5,2) {$\cdots$};

\node[below] at (1,-1) {$1'$};
\node[below] at (2,-1) {$2'$};
\node[below] at (3,-1) {$3'$};
\node[below] at (6,-1) {$n'$};
\node[above] at (1,5) {$1$};
\node[above] at (2,5) {$2$};
\node[above] at (3,5) {$3$};
\node[above] at (6,5) {$n$};
\node[] at (8,2) {$=U_1$};
\draw[thick] (1,3) to [out=90,in=90](2,3);
\draw[thick] (1,5) to [out=270,in=270](2,5);
\draw[thick] (1,-1) to [out=90,in=90](2,-1);
\draw[thick] (1,1) to [out=270,in=270](2,1);
\end{tikzpicture}
    \end{equation*}
Thus, by Corollary \ref{NFBlob} the map $\mathfrak{u}_i\mapsto U_i$ defines a monoid epimorphism $\phi:{\mathfrak{Bl}}_n\rightarrow\mathfrak{M}_n^{(1)}.$
 Note that if in a blobbed TL-diagram $D$ we remove the blobs, then the respective arcs are divided into pieces of arcs that, by homotopy, become points (singular blocks), thus obtaining a new diagram $\widetilde{D}$ corresponding to an element of $\mathfrak{M}_n^{(1)}$. It is clear that the map $D\mapsto \widetilde{D}$ is the diagrammatic version of the epimorphisms $\phi$ and that it is also injective, so the proof follows.
\end{proof}
\begin{remark}\rm
The above theorem says that diagrams of $\mathfrak{M}_n^{(1)}$ are obtained by replacing the arcs that connect $k$ with $l$  and that   have  some blobs,  by a piece of arc at $k$ and another piece of arc at $l$, where $k,l\in [1,n]\cup[1',n']$.
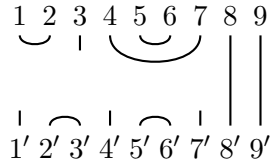
\begin{figure}[ht]
\begin{equation*}
\begin{tikzpicture}[xscale=0.4,yscale=0.4]
\draw[thick] (1,0)--(1,0.5);
\draw[thick] (4,0)--(4,0.5);

\draw[thick] (2,0) to [out=90,in=90](3,0);
\draw[thick] (5,0) to [out=90,in=90](6,0);
\draw[thick] (7,0)--(7,0.5);
\draw[thick] (3,3)--(3,2.5);
\draw[thick] (8,0) to [out=90,in=270](8,3);
\draw[thick] (2,3) to [out=270,in=270](1,3);
\draw[thick] (4,3) to [out=270,in=270](7,3);
\draw[thick] (5,3) to [out=270,in=270](6,3);
\draw[thick] (9,0) to [out=90,in=270](9,3);

\node[below] at (1,0) {$1'$};
\node[below] at (2,0) {$2'$};
\node[below] at (3,0) {$3'$};
\node[below] at (4,0) {$4'$};
\node[below] at (5,0) {$5'$};
\node[below] at (6,0) {$6'$};
\node[below] at (7,0) {$7'$};
\node[below] at (8,0) {$8'$};
\node[below] at (9,0) {$9'$};
\node[above] at (1,3) {$1$};
\node[above] at (2,3) {$2$};
\node[above] at (3,3) {$3$};
\node[above] at (4,3) {$4$};
\node[above] at (5,3) {$5$};
\node[above] at (6,3) {$6$};
\node[above] at (7,3) {$7$};
\node[above] at (8,3) {$8$};
\node[above] at (9,3) {$9$};

\end{tikzpicture}
\end{equation*}
\caption{Martin-Saleur blob diagram $D$ of (\ref{eq-tikz-blobbed-diagram}) as an element of $\mathfrak{M}_9^{(1)}$.}
\end{figure}
\end{remark}
\subsubsection{Hook blob monoid}\label{ssec-Hook-blob}
To define the  hook blob monoid,  we need the following notation. Denote by  $\underline{\mathfrak{C}}_n$ the partition monoid of the set $[1,n]\cup [1',n']\cup \{0,0'\}$. Note that each partition $P\in\mathfrak{C}_n$ defines  a partition $\underline{P}\in\underline{\mathfrak{C}}_n$, which is formed by the blocks of $P$ and the block $\{0, 0'\}$. In fact, this correspondence defines a monoid homomorphism from $\mathfrak{C}_n$
into $\underline{\mathfrak{C}}_n$. Now, for $i\in[1,n-1]$ we denote by $\underline{U}_i$ the elementary tangle $U_i$ as element of $\underline{\mathfrak{C}}_n$.
In addition, we define   $\underline{U}_0$ as  the partition  of $\underline{\mathfrak{C}}_n$ formed by the blocks:
$$\{0,1,0',1'\},\{2,2'\},\ldots,\{n,n'\}.$$
\begin{figure}[ht]
\begin{center}
\begin{equation*}
 \begin{tikzpicture}[xscale=0.6,yscale=0.4]
\node[] at (-1,1.5) {$\underline{U}_0=$};

\draw[fill] (0,3) circle [radius=0.05];
\draw[thick] (0,3)--(0,0);
\draw[fill] (0,0) circle [radius=0.05];
\draw[thick] (0,0) to [out=90,in=90](1,0);
\draw[thick] (0,3) to [out=270,in=270](1,3);

\node[below] at (0,0) {$0'$};
\node[above] at (0,3) {$0$};

\draw[fill] (1,3) circle [radius=0.05];
\draw[fill] (2,3) circle [radius=0.05];
\draw[fill] (3,3) circle [radius=0.05];
\draw[fill] (1,0) circle [radius=0.05];
\draw[fill] (2,0) circle [radius=0.05];
\draw[fill] (3,0) circle [radius=0.05];
\draw[thick] (2,0) to [out=90,in=270](2,3);
\draw[thick] (3,0) to [out=90,in=270](3,3);
\draw[thick] (6,3)--(6,0);
\node[] at (4.5,1.5) {$\cdots$};
\node[below] at (1,0) {$1'$};

\node[below] at (1,0) {$1'$};
\node[below] at (2,0) {$2'$};
\node[below] at (3,0) {$3'$};
\node[below] at (6,0) {$n'$};
\node[above] at (1,3) {$1$};
\node[above] at (2,3) {$2$};
\node[above] at (3,3) {$3$};
\node[above] at (6,3) {$n$};
\end{tikzpicture}
\end{equation*}
\caption{hook-diagram for $\underline{U}_0$} \label{underlineU0}
\end{center}
\end{figure}

 The {\it hook blob monoid} $\mathfrak{H}_n$ is defined as  the submonoid of $\underline{\mathfrak{C}}_{n}$ generated by $\underline{U}_0, \underline{U}_1,\dots,\underline{U}_n.$ The diagram associated to any element of $\mathfrak{H}_n$ will be called \emph{hook-diagram}.
\begin{theorem}\label{theo-iso-algblobmon-hookblobmon}
    The map $\mathfrak{u}_i\mapsto \underline{U}_i$ defines a monoids  isomorphism from  $\mathfrak{Bl}_n$ to $\mathfrak{H}_n.$
\end{theorem}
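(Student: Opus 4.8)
The plan is to follow the template of Theorem \ref{theo-iso-algblobmon-rookblobmon}: first produce an epimorphism by verifying relations, then establish injectivity by constructing a diagrammatic inverse. First I would check that the generators $\underline{U}_i$ of $\mathfrak{H}_n$ satisfy the defining relations (\ref{eq-all-def-blob-mon-diag})--(\ref{eq-all-def-blob-mon-diag1}) of $\mathfrak{Bl}_n$. For the relations involving only $\underline{U}_i$ with $i\geq 1$ this is immediate, since the extra block $\{0,0'\}$ is inert and the computation reduces to the Temperley--Lieb relations already verified for the tangles $U_i$ in Theorem \ref{theo-iso-algblobmon-rookblobmon}. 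The relations $\underline{U}_0^2=\underline{U}_0$, $\underline{U}_1\underline{U}_0\underline{U}_1=\underline{U}_1$, and $\underline{U}_0\underline{U}_j=\underline{U}_j\underline{U}_0$ for $j>1$ are then checked by a direct concatenation of hook-diagrams, exactly as in the figures of Theorem \ref{theo-iso-algblobmon-rookblobmon} but keeping track of the hook block attached at $0,0'$. By Corollary \ref{NFBlob}, the generators $\mathfrak{u}_i$ and these relations present $\mathfrak{Bl}_n$, so the assignment $\mathfrak{u}_i\mapsto\underline{U}_i$ extends to a monoid epimorphism $\psi:\mathfrak{Bl}_n\to\mathfrak{H}_n$.

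It remains to prove that $\psi$ is injective, and for this I would exhibit its diagrammatic inverse. The first observation is that in every generator the point $0$ is joined to $0'$: inside the block $\{0,1,0',1'\}$ of $\underline{U}_0$, and inside the inert block $\{0,0'\}$ of each $\underline{U}_i$ with $i\geq 1$. Since the $0$-strand is thus a through-strand in every generator, concatenation preserves this property, so in any element of $\mathfrak{H}_n$ the points $0$ and $0'$ lie in a common block $B_0$. I would then describe $\psi$ at the diagram level as the map $D\mapsto\underline{D}$ that merges all blobbed arcs of a blobbed TL-diagram $D$ into the single block $B_0=\{0,0'\}\cup\{\text{endpoints of the blobbed arcs of }D\}$, while every unblobbed arc (a block of cardinality $2$) is left untouched; by Remark \ref{remark-relation-blobrank-and-blobbed-arcs} the number of merged arcs is the blob-rank of the corresponding indexing matrix. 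The inverse map deletes $0,0'$ and reinterprets the remaining endpoints of $B_0$ as the left-exposed arcs of a blobbed TL-diagram, each carrying a single blob.

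The crux, and the step I expect to be the main obstacle, is the faithfulness of $D\mapsto\underline{D}$: one must argue that the unordered set $B_0\setminus\{0,0'\}$ of boundary points, together with the planar data of the unblobbed arcs, determines the blobbed arcs of $D$ uniquely. This rests on the fact, visible in the normal form $\BBU(A)$, that the blobbed arcs are precisely the ones exposed to the left side and are therefore forced by planarity into a nested configuration; their endpoints can then be paired back by the standard ``innermost-first'' planar rule, so that collapsing them into $B_0$ loses no information. Once this is settled the recovery map is a well-defined two-sided inverse of $\psi$, proving injectivity. Alternatively, I would finish by a cardinality count: since $\psi$ is surjective and, by Theorem \ref{theo-iso-diagBlobMon-algBlobMon}, $|\mathfrak{Bl}_n|=\binom{2n}{n}$, it suffices to show $|\mathfrak{H}_n|\geq\binom{2n}{n}$ by exhibiting $\binom{2n}{n}$ distinct hook-diagrams (one for each indexing matrix), whence $\psi$ is forced to be a bijection.
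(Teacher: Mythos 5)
Your proposal is correct, and its overall architecture (check the relations to get an epimorphism, then exhibit a diagrammatic inverse) is the same as the paper's; the difference lies in how injectivity is obtained. The paper does not work directly with blobbed TL-diagrams: it composes with the isomorphism $\mathfrak{Bl}_n\cong\mathfrak{M}_n^{(1)}$ of Theorem \ref{theo-iso-algblobmon-rookblobmon} and describes the induced map $\mathfrak{M}_n^{(1)}\to\mathfrak{H}_n$ as ``attach every singleton block to $\{0,0'\}$''. Since the Motzkin model has already cut each blobbed arc into two singleton blocks, the inverse is simply ``delete $0,0'$ and turn the remaining members of that block back into singletons''; no pairing has to be reconstructed, so injectivity is immediate. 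Your version merges whole blobbed arcs into the block $B_0$ and therefore must recover the pairing from the unordered set $B_0\setminus\{0,0'\}$. That recovery does work, but for the opposite reason to the one you state: arcs exposed to the left side are pairwise \emph{non-nested} (an arc nested inside another is screened from the left region), so in the linear order $1<\cdots<n<n'<\cdots<1'$ obtained by cutting the boundary at the left edge they occupy consecutive disjoint intervals, and the blobbed arcs are recovered by pairing the sorted endpoints of $B_0\setminus\{0,0'\}$ consecutively. Your ``innermost-first'' rule happens to return the same pairing for such a sequential family, so the argument survives, but the configuration is not nested. With that correction your direct argument is complete; the paper's detour through $\mathfrak{M}_n^{(1)}$ buys injectivity for free at the cost of relying on Theorem \ref{theo-iso-algblobmon-rookblobmon}. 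Note also that your cardinality fallback does not really avoid the work: showing that the $\binom{2n}{n}$ hook-diagrams $\underline{U}(A)$ are pairwise distinct is the same combinatorial content as the injectivity you are trying to establish.
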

\begin{proof}
 Analogous to the proof of Theorem \ref{theo-iso-algblobmon-rookblobmon}. For example, the hook-diagrams below show how two relations are respected by the map:
    \begin{equation*}
    \begin{tikzpicture}[xscale=0.5,yscale=0.3]
\node[] at (-1.5,3) {$\underline{U}_0^2=$};
\draw[thick] (0,0) to [out=90,in=90](1,0);
\draw[thick] (0,3) to [out=270,in=270](1,3);
\draw[thick] (0,0) to [out=90,in=270](0,3);
\draw[thick] (2,0) to [out=90,in=270](2,3);
\node[] at (3,3) {$\cdots$};

\draw[thick] (4,0+3) to [out=90,in=270](4,3+3);
\draw[thick] (0,0+3) to [out=90,in=90](1,0+3);
\draw[thick] (0,3+3) to [out=270,in=270](1,3+3);
\draw[thick] (0,0+3) to [out=90,in=270](0,3+3);
\draw[thick] (2,0+3) to [out=90,in=270](2,3+3);
\draw[thick] (4,0) to [out=90,in=270](4,3+3);
\node[below] at (0,0) {$0'$};
\node[below] at (4,0) {$n'$};
\node[below] at (1,0) {$1'$};
\node[below] at (2,0) {$2'$};
\node[below] at (0,0) {$0'$};
\node[above] at (1,6) {$1$};
\node[above] at (2,6) {$2$};
\node[above] at (0,6) {$0$};
\node[above] at (4,6) {$n$};
\draw[fill] (0,6) circle [radius=0.05];
\draw[fill] (0,0) circle [radius=0.05];
\draw[fill] (1,6) circle [radius=0.05];
\draw[fill] (2,6) circle [radius=0.05];
\draw[fill] (4,6) circle [radius=0.05];
\draw[fill] (1,0) circle [radius=0.05];
\draw[fill] (2,0) circle [radius=0.05];
\draw[fill] (4,0) circle [radius=0.05];
\node[] at (5.5,3) {$=\underline{U}_0,$};

\end{tikzpicture}  \quad \begin{tikzpicture}[xscale=0.5,yscale=0.2]
\node[] at (-2.5,1.5) {$\underline{U}_1\underline{U}_0\underline{U}_1=$};
\draw[thick] (0,0) to [out=90,in=90](1,0);
\draw[thick] (1,3) to [out=90,in=90](2,3);
\draw[thick] (1,0) to [out=270,in=270](2,0);
\draw[thick] (1,6) to [out=270,in=270](2,6);
\draw[thick] (1,-3) to [out=90,in=90](2,-3);
\draw[thick] (0,3) to [out=270,in=270](1,3);
\draw[thick] (0,-3) to [out=90,in=270](0,6);
\draw[thick] (2,0) to [out=90,in=270](2,3);
\node[] at (4,1.5) {$\cdots$};
\draw[thick] (5,-3) to [out=90,in=270](5,6);
\draw[thick] (3,-3) to [out=90,in=270](3,6);
\node[below] at (0,0-3) {$0'$};
\node[below] at (5,0-3) {$n'$};
\node[below] at (1,0-3) {$1'$};
\node[below] at (2,0-3) {$2'$};
\node[below] at (3,0-3) {$3'$};
\node[below] at (0,0-3) {$0'$};
\node[above] at (1,3+3) {$1$};
\node[above] at (2,3+3) {$2$};
\node[above] at (3,3+3) {$3$};
\node[above] at (0,3+3) {$0$};
\node[above] at (5,3+3) {$n$};
\draw[fill] (0,3+3) circle [radius=0.05];
\draw[fill] (0,3) circle [radius=0.05];
\draw[fill] (0,0) circle [radius=0.05];
\draw[fill] (0,0-3) circle [radius=0.05];
\draw[fill] (1,3+3) circle [radius=0.05];
\draw[fill] (2,3+3) circle [radius=0.05];
\draw[fill] (5,3+3) circle [radius=0.05];
\draw[fill] (1,0-3) circle [radius=0.05];
\draw[fill] (2,0-3) circle [radius=0.05];
\draw[fill] (5,0-3) circle [radius=0.05];
\draw[fill] (5,3+3) circle [radius=0.05];
\draw[fill] (3,3+3) circle [radius=0.05];
\draw[fill] (3,-3) circle [radius=0.05];
\node[] at (6.5,1.4) {$=\underline{U}_1$};

\end{tikzpicture} 
    \end{equation*}
The above mapping and the Theorem \ref{theo-iso-algblobmon-rookblobmon} imply that we have a  monoid epimorphism $\psi :  \mathfrak{M}_n^{(1)}\rightarrow  \mathfrak{H}_n$ such that $ U_i\mapsto \underline{U}_i$. Now, given $P\in\mathfrak{M}_n^{(1)}$, we define $P'\in \mathfrak{H}_n$ as the partition obtained by connecting all singleton blocks of $P$ to the block $\{0,0'\}$. Then one can check that map $P\mapsto P'$ is the diagrammatic version of the monoid epimorphism $\psi$, which is in fact injective. Hence the proof follows.
\end{proof}
For any indexing matrix $A$ we denote by $\underline{U}(A)$ the image of the element $\BBU(A)\in \mathfrak{Bl}_{n}$ via the isomorphism $\mathfrak{Bl}_{n}\rightarrow  \mathfrak{H}_{n}$ of Theorem \ref{theo-iso-algblobmon-hookblobmon}

\section{Abacus blob monoid}\label{ssec-abacus-blob-mon}
In this section we introduce the abacus blob monoid, which, roughly speaking, is the Martin-Saleur  abacus  monoid in which the arcs are  provided  by beads. This monoid is studied in diagrammatic terms, obtaining in particular the normal form of its elements and their cardinality, see Corollary \ref{coro-normal-form-framed-blob-mon} and Theorem \ref{theo-card-Fblob-mon-first}, respectively.
\begin{definition}
A  $d$-abacus  blob diagram or abacus blob diagram, is a  blobbed TL-diagram , whose arcs may have some beads (not filled circles). Beads can appear in any arc subject to the following rules:
    \begin{enumerate}
        \item[(i)] If an arc has no blobs, then it can contain at most $d-1$ beads  sliding freely on the arc.
        \item[(ii)] If an arc is decorated by a blob, then the blob will be considered as an obstruction, so the arc will be considered divided in two components. In this case, each component of the arc can contain at most $d-1$ beads that slide freely on the component.
\end{enumerate}
\end{definition}
 Given a   $d$-abacus  blob diagram $D^{\circ}$, we denote by  $D$ the diagram obtained by erasing  the beads  on it, will be called, the {\it base diagram} of  $D^{\circ}$.
\begin{figure}[ht]
$$
\begin{tikzpicture}[xscale=0.5,yscale=0.5]
\draw[thick] (1,0) to [out=90,in=90](4,0);
\draw[thick] (2,0) to [out=90,in=90](3,0);
\draw[thick] (5,0) to [out=90,in=90](6,0);
\draw[thick] (7,0) to [out=90,in=270](3,3);
\draw[thick] (8,0) to [out=90,in=270](8,3);
\draw[thick] (2,3) to [out=270,in=270](1,3);
\draw[thick] (4,3) to [out=270,in=270](7,3);
\draw[thick] (5,3) to [out=270,in=270](6,3);
\draw[thick] (9,0) to [out=90,in=270](9,3);

\draw[fill] (5,1.5) circle [radius=0.15];
\draw[fill] (2.5,0.9) circle [radius=0.15];
\draw[] (1.7,0.73) circle [radius=0.2];
\draw[] (5.5,0.28) circle [radius=0.2];
\draw[] (5.5,2.73) circle [radius=0.2];
\draw[] (6,1.2) circle [radius=0.2];
\draw[] (4,1.8) circle [radius=0.2];
\draw[] (3.3,2.2) circle [radius=0.2];
\draw[] (9,1.8) circle [radius=0.2];
\draw[] (9,1.2) circle [radius=0.2];
\draw[] (8,1.5) circle [radius=0.2];

\draw[thick] (1,-1) to (1,0);
\draw[thick] (2,-1) to (2,0);
\draw[thick] (3,-1) to (3,0);
\draw[thick] (4,-1) to (4,0);
\draw[thick] (5,-1) to (5,0);
\draw[thick] (6,-1) to (6,0);
\draw[thick] (7,-1) to (7,0);
\draw[thick] (8,-1) to (8,0);
\draw[thick] (9,-1) to (9,0);

\draw[thick] (1,3) to (1,4);
\draw[thick] (2,3) to (2,4);
\draw[thick] (3,3) to (3,4);
\draw[thick] (4,3) to (4,4);
\draw[thick] (5,3) to (5,4);
\draw[thick] (6,3) to (6,4);
\draw[thick] (7,3) to (7,4);
\draw[thick] (8,3) to (8,4);
\draw[thick] (9,3) to (9,4);

\end{tikzpicture} 
$$
\caption{The 3-abacus blob diagram $D^\circ$.}\label{ex-dframed-blob-diagram}
\end{figure}
Note that the base diagram of $D^{\circ}$ is the diagram $D$ of (\ref{eq-tikz-blobbed-diagram}).
Finally, we  declare that two abacus blob diagrams  are equal if  their base diagrams  are equal and the number of beads (modulo $d$) at each corresponding component of arcs are the same. This fact will be used explicitly or implicitly throughout the article.
The motivation for providing the arcs with beads comes from the so-called abacus monoid \cite[Section 3]{
AiJuPa2024}, which we will explain below.
Set $\ZZ/d\ZZ =\{0,1,\ldots , d-1\}$ the group of integers modulo $d$. In multiplicative notation, this group is written as  $C_d =\{1, z, z^2,\ldots , z^{d-1}\}$. The direct product group $C_d^n =C_d\times\cdots \times C_d $ can be presented as
\begin{equation}\label{PreCdn}
 C_d^n = \langle z_1, \ldots ,z_n ; z_iz_j = z_jz_i, \quad z_i^d=1\rangle.
\end{equation}
This group can be regarded as the framization of the the trivial subgroup
$\{1_{\mathfrak{C}_n}\}$ of $\mathfrak{C}_n$. In this diagrammatic context, $z_i^k$ means that line $i$ is framed with $k\in\ZZ/d\ZZ$ and the others lines framed with $0$. The $z_i^k$ can also be interpreted by putting $k$ counts on line $i$ and the others without counts.
The  handling in this last interpretation resembles the abacus, thus   we call $\mathrm{C}_d^n$ the abacus monoid and we denote $\mathfrak{z}_i^k$ instead of $z_i^k$ to distinguish the abacus interpretation from usual framing interpretation.
\begin{figure}[ht]
\begin{center}
 \begin{equation*}
   \begin{tikzpicture}[xscale=0.4,yscale=0.4]
\node[] at (-1,1.5) {$\mathfrak{z}_i^k=$};
\draw[thick] (1,0) to [out=90,in=270](1,3);
\draw[thick] (2,0) to [out=90,in=270](2,3);
\node[] at (3,1.5) {$\cdots$};
\draw[thick] (4,0) to [out=90,in=270](4,3);
\draw[thick] (5,0) to [out=90,in=270](5,3);
\draw[thick] (6,0) to [out=90,in=270](6,3);
\draw[thick] (7,0) to [out=90,in=270](7,3);
\node[] at (8,1.5) {$\cdots$};
\draw[thick] (9,0) to [out=90,in=270](9,3);
\draw[] (5,1.5) circle [radius=0.25];
\node[] at (5.5,1.6) {\small$k$};
\node[] at (9.5,1.4) {};
\end{tikzpicture}
\hspace{1.5cm}
\begin{tikzpicture}[xscale=0.4,yscale=0.4]
\node[] at (-1,1.5) {$z_i^k=$};
\draw[thick] (1,0) to [out=90,in=270](1,3);
\draw[thick] (2,0) to [out=90,in=270](2,3);
\node[] at (3,1.5) {$\cdots$};
\draw[thick] (4,0) to [out=90,in=270](4,3);
\draw[thick] (5,0) to [out=90,in=270](5,3);
\draw[thick] (6,0) to [out=90,in=270](6,3);
\draw[thick] (7,0) to [out=90,in=270](7,3);
\node[] at (8,1.5) {$\cdots$};
\draw[thick] (9,0) to [out=90,in=270](9,3);
\node[below] at (1,4) {\small$0$};
\node[below] at (2,4) {\small$0$};
\node[below] at (4,4) {\small$0$};
\node[below] at (5,4) {\small$k$};
\node[below] at (6,4) {\small$0$};
\node[below] at (7,4) {\small$0$};
\node[below] at (9,4) {\small$0$};
\node[] at (9.5,1.4) {};
\end{tikzpicture} 
 \end{equation*}
\caption{The diagrams of $\mathfrak{z}_i^k$ and $z_i^k$, respectively.} \label{Fig03}
\end{center}
\end{figure}
The symbol $\bigcirc k$ means $k$ beads.

\begin{definition}\label{def-abacus-blob-mon}
The abacus blob monoid $\mathfrak{Bl}_{d,n},$ is the monoid  formed  by
the $d$-abacus blob diagrams, on $n$ points, with the usual product by concatenation,  and  the  following rules:
\begin{enumerate}
\item[(i)] Loops, containing or not a decoration (blob and/or beads) will be erased from the diagram.
\item[(ii)] The following local relations are imposed. \begin{equation*}
\begin{tikzpicture}[xscale=0.5,yscale=0.5]
\draw[thick] (1,0) to [out=90,in=270](1,3);
\draw[fill] (1,1) circle [radius=0.15];
\draw[fill] (1,2) circle [radius=0.15];
\node[] at (2,1.5) {$=$};
\draw[thick] (3,0) to [out=90,in=270](3,3);
\draw[fill] (3,1.5) circle [radius=0.15];
\end{tikzpicture}
{\hspace{3cm}}
\begin{tikzpicture}[xscale=0.4,yscale=0.5]
\draw[thick] (6,0) to [out=90,in=270](6,3);
\draw[] (6,1.5) circle [radius=0.25];
\node[] at (6.6,1.5) {$k$};
\draw[fill] (6,2.5) circle [radius=0.15];
\draw[fill] (6,0.5) circle [radius=0.15];

\node[] at (7.8,1.5) {$=$};

\draw[thick] (9,0) to [out=90,in=270](9,3);
\draw[fill] (9,1.5) circle [radius=0.15];

\end{tikzpicture} 
\end{equation*}
\end{enumerate}
\end{definition}

\begin{remark}\rm
Since any blob TL-diagram can be regarded as an abacus blob TL-diagram (with no beads), it follows that $\mathfrak{Bl}_{n}$ can be regarded as a submonoid of $\mathfrak{Bl}_{d,n}$ for all $d, n$. Also, $\mathfrak{Bl}_{d,n} $ has as submonoid the abacus monoid $C_d^n$. Moreover, the monoid $\mathfrak{Bl}_{d,n}$ is
generated by $\mathfrak{Bl}_{n}\cup C_d^n$.
\end{remark}
Define now the elements $\mathfrak{z}_i, \mathfrak{u}_i\in \mathfrak{Bl}_{d,n} $ as show the following figure.
\begin{figure}[ht]
\begin{equation*}
    \begin{tikzpicture}[xscale=0.3,yscale=0.3]
\node[] at (-1,1.5) {$\mathfrak{z}_i=$};
\draw[thick] (1,0) to [out=90,in=270](1,3);
\draw[thick] (2,0) to [out=90,in=270](2,3);
\node[] at (3,1.5) {$\cdots$};
\draw[thick] (4,0) to [out=90,in=270](4,3);
\draw[thick] (5,0) to [out=90,in=270](5,3);
\draw[thick] (6,0) to [out=90,in=270](6,3);
\draw[thick] (7,0) to [out=90,in=270](7,3);
\node[] at (8,1.5) {$\cdots$};
\draw[thick] (9,0) to [out=90,in=270](9,3);
\node[below] at (5,0) {$i$};
\node[below] at (1,0) {$1$};
\node[below] at (9,0) {$n$};
\draw[] (5,1.5) circle [radius=0.25];
\node[] at (9.5,1.4) {};
\end{tikzpicture}
\qquad
\begin{tikzpicture}[xscale=0.3,yscale=0.3]
\node[] at (-1,1.5) {$\mathfrak{u}_0=$};
\draw[thick] (1,0) to [out=90,in=270](1,3);
\draw[fill] (1,1.5) circle [radius=0.15];
\draw[thick] (2,0) to [out=90,in=270](2,3);
\node[] at (3,1.5) {$\cdots$};
\draw[thick] (4,0) to [out=90,in=270](4,3);
\draw[thick] (5,0) to [out=90,in=270](5,3);
\draw[thick] (6,0) to [out=90,in=270](6,3);
\draw[thick] (7,0) to [out=90,in=270](7,3);
\node[] at (8,1.5) {$\cdots$};
\draw[thick] (9,0) to [out=90,in=270](9,3);
\node[below] at (1,0) {$1$};
\node[below] at (9,0) {$n$};
\node[] at (9.5,1.4) {};

\end{tikzpicture}
\qquad
\begin{tikzpicture}[xscale=0.3,yscale=0.3]
\node[] at (-1,1.5) {$\mathfrak{u}_i=$};
\draw[thick] (1,0) to [out=90,in=270](1,3);
\draw[thick] (2,0) to [out=90,in=270](2,3);
\node[] at (3,1.5) {$\cdots$};
\draw[thick] (4,0) to [out=90,in=270](4,3);
\draw[thick] (5,0) to [out=90,in=90](6,0);
\draw[thick] (5,3) to [out=270,in=270](6,3);
\draw[thick] (7,0) to [out=90,in=270](7,3);
\node[] at (8,1.5) {$\cdots$};
\draw[thick] (9,0) to [out=90,in=270](9,3);
\node[below] at (5,0) {$i$};
\node[below] at (1,0) {$1$};
\node[below] at (9,0) {$n$};
\node[] at (9.5,1.4) {};

\end{tikzpicture} 
\end{equation*}
\caption{The diagram of the generators $\mathfrak{z}_i$, $\mathfrak{u}_0$ and $\mathfrak{u}_i$, respectively.}\label{eq-blobbed-generators-Frblob-mon}
\end{figure}

\begin{lemma}\label{lemma-pre-presentation-frblob-digrams}The elements $\mathfrak{z}_i$'s, and $\mathfrak{u}_i$'s $\in \mathfrak{Bl}_{d,n}$ satisfy, respectively,  the
defining relations of  $C_d^n$ and $\mathfrak{Bl}_{n,}$, together with  the following relations:
\begin{align}
  \mathfrak{z}_i\mathfrak{u}_i = \mathfrak{z}_{i+1}\mathfrak{u}_i, &\qquad \mathfrak{u}_i \mathfrak{z}_i = \mathfrak{u}_i\mathfrak{z}_{i+1},\\
\mathfrak{u}_0 \mathfrak{z}_i =\mathfrak{z}_i \mathfrak{u}_0 \quad \text{if $i\not=1$,}&\qquad \mathfrak{u}_i \mathfrak{z}_j =\mathfrak{z}_j \mathfrak{u}_i \quad \text{if $j\not=i,i+1$,}\\
\mathfrak{u}_i \mathfrak{z}_i^k \mathfrak{u}_i=\mathfrak{u}_i, & \qquad \mathfrak{u}_1 \mathfrak{z}_1^k \mathfrak{u}_0 \mathfrak{u}_1=\mathfrak{u}_1, \qquad \mathfrak{u}_1 \mathfrak{u}_0 \mathfrak{z}_1^k \mathfrak{u}_1=\mathfrak{u}_1.
\end{align}
\end{lemma}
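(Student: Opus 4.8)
The plan is to verify each relation directly in the diagram calculus of $\mathfrak{Bl}_{d,n}$, reading off the left- and right-hand sides as abacus blob diagrams and reducing them by the rules of Definition \ref{def-abacus-blob-mon}. The relations split naturally into four groups, three of which are immediate and one of which carries the real content.

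First, the defining relations of $C_d^n$ hold for the $\mathfrak{z}_i$'s for formal reasons: beads placed on distinct through-strands can be drawn in either vertical order, giving $\mathfrak{z}_i\mathfrak{z}_j = \mathfrak{z}_j\mathfrak{z}_i$, while stacking $d$ beads on a single strand produces $0$ beads by the mod-$d$ identification declared after Figure \ref{ex-dframed-blob-diagram}, so $\mathfrak{z}_i^d = 1$. Next, since $\mathfrak{Bl}_n$ sits inside $\mathfrak{Bl}_{d,n}$ as the bead-free diagrams, the relations (\ref{eq-all-def-blob-mon-diag})--(\ref{eq-all-def-blob-mon-diag1}) for the $\mathfrak{u}_i$'s are inherited verbatim. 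The commutation relations $\mathfrak{u}_0\mathfrak{z}_i = \mathfrak{z}_i\mathfrak{u}_0$ (for $i\neq 1$) and $\mathfrak{u}_i\mathfrak{z}_j = \mathfrak{z}_j\mathfrak{u}_i$ (for $j\neq i,i+1$) hold because in each case the bead sits on a strand disjoint from the support of the relevant $\mathfrak{u}$-generator, so the two factors occupy separate columns and may be slid past one another.

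The bead-sliding relations $\mathfrak{z}_i\mathfrak{u}_i = \mathfrak{z}_{i+1}\mathfrak{u}_i$ and $\mathfrak{u}_i\mathfrak{z}_i = \mathfrak{u}_i\mathfrak{z}_{i+1}$ are the first nontrivial point. In the concatenation the bead, whether initially on strand $i$ or on strand $i+1$, is absorbed onto the cap (resp.\ cup) of $\mathfrak{u}_i$, which is a single arc joining the points $i$ and $i+1$; since beads slide freely along an arc, both products yield the same diagram. The heart of the proof is the last group of absorption relations. For $\mathfrak{u}_i\mathfrak{z}_i^k\mathfrak{u}_i = \mathfrak{u}_i$, stacking the three factors makes the $k$ beads land on the closed loop formed by the cup of the upper $\mathfrak{u}_i$ and the cap of the lower $\mathfrak{u}_i$; rule (i) of Definition \ref{def-abacus-blob-mon} erases this loop together with its beads, leaving exactly $\mathfrak{u}_i$. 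For $\mathfrak{u}_1\mathfrak{z}_1^k\mathfrak{u}_0\mathfrak{u}_1 = \mathfrak{u}_1$ and $\mathfrak{u}_1\mathfrak{u}_0\mathfrak{z}_1^k\mathfrak{u}_1 = \mathfrak{u}_1$ one starts from the diagram underlying $\mathfrak{u}_1\mathfrak{u}_0\mathfrak{u}_1 = \mathfrak{u}_1$ and tracks where the blob of $\mathfrak{u}_0$ and the $k$ beads of $\mathfrak{z}_1^k$ come to rest: the beads end up squeezed against the blob, and the local relation (ii) of Definition \ref{def-abacus-blob-mon} absorbs them, reducing the configuration back to $\mathfrak{u}_1$.

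I expect the main obstacle to be purely bookkeeping in the two $\mathfrak{u}_0$-relations: one must check that, after the homotopies implicit in the concatenation, the beads genuinely lie on the arc segment adjacent to the blob, rather than on a free loop or a through-strand, so that precisely the second local relation of Definition \ref{def-abacus-blob-mon} applies. Once the bead placement is pinned down, each identity reduces to a single application of rule (i) or rule (ii), and the verification is complete.
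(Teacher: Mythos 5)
Your overall strategy --- a direct diagrammatic verification of each relation in $\mathfrak{Bl}_{d,n}$ --- is exactly the paper's approach: the paper declares that the lemma ``follows from a direct check'' and only works out $\mathfrak{u}_1\mathfrak{u}_0\mathfrak{z}_1^{k}\mathfrak{u}_1=\mathfrak{u}_1$ as a sample. Your treatment of the $C_d^n$ relations, the inherited $\mathfrak{Bl}_n$ relations, the commutations, the bead-sliding identities $\mathfrak{z}_i\mathfrak{u}_i=\mathfrak{z}_{i+1}\mathfrak{u}_i$ and $\mathfrak{u}_i\mathfrak{z}_i=\mathfrak{u}_i\mathfrak{z}_{i+1}$, and the loop argument for $\mathfrak{u}_i\mathfrak{z}_i^k\mathfrak{u}_i=\mathfrak{u}_i$ is all correct and in line with the paper.

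There is, however, one concrete misstep in your account of the two $\mathfrak{u}_0$-relations. In the concatenation $\mathfrak{u}_1\mathfrak{z}_1^k\mathfrak{u}_0\mathfrak{u}_1$ (and likewise $\mathfrak{u}_1\mathfrak{u}_0\mathfrak{z}_1^k\mathfrak{u}_1$) the blob and the $k$ beads do \emph{not} come to rest on an open arc segment of the resulting diagram: they sit on the closed loop formed by the lower arc of the first $\mathfrak{u}_1$, the strands $1$ and $2$ of the middle factors, and the upper arc of the second $\mathfrak{u}_1$. Rule (ii) of Definition \ref{def-abacus-blob-mon} cannot be the operative relation here in any case, since it absorbs beads trapped between \emph{two} blobs on a single strand, and this configuration contains only one blob. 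The correct mechanism is rule (i): the entire decorated loop, blob and beads included, is erased, leaving $\mathfrak{u}_1$; this is precisely the computation the paper displays. Your stated worry --- that one must check the beads lie ``on the arc segment adjacent to the blob, rather than on a free loop'' --- is therefore backwards: they do lie on a free loop, and that is exactly why the relation holds. With this correction the verification goes through unchanged.
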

\begin{proof}
The proof follows from a direct check, so we will only show, as an example, that  $\mathfrak{u}_1\mathfrak{u}_0\mathfrak{z}_1^k\mathfrak{u}_1=\mathfrak{u} _1$. We have:
 \begin{equation*}
    \begin{tikzpicture}[xscale=0.6,yscale=0.3]
\node[] at (2,0.5) {$\mathfrak{u}_1\mathfrak{u}_0\mathfrak{z}_1^{k}\mathfrak{u}_1=$};
\draw[thick] (7,0) to [out=90,in=270](7,3);
\draw[thick] (5,-1) to [out=90,in=270](5,2);
\draw[thick] (6,-1) to [out=90,in=270](6,2);
\draw[thick] (5,2) to [out=90,in=90](6,2);
\draw[thick] (5,3) to [out=270,in=270](6,3);
\node[] at (8,0.5) {$\cdots$};
\draw[thick] (9,-2) to [out=90,in=270](9,3);
\draw[] (5,-0.3) circle [radius=0.25];
\node[] at (5.5,-0.2) {\small $k$};
\draw[fill] (5,0.9) circle [radius=0.15];
\draw[thick] (7,-2) to [out=90,in=270](7,3);
\draw[thick] (5,-2) to [out=90,in=90](6,-2);
\draw[thick] (5,-1) to [out=270,in=270](6,-1);
\node[] at (10,0.5) {$=$};

\draw[thick] (11,2.5) to [out=90,in=270](11,3);
\draw[thick] (12,2.5) to [out=90,in=270](12,3);
\draw[thick] (11,2.5) to [out=270,in=270](12,2.5);
\node[] at (14,0.5) {$\cdots$};
\draw[thick] (15,-2) to [out=90,in=270](15,3);
\draw[thick] (13,-2) to [out=90,in=270](13,3);
\draw[thick] (11,-2) to [out=90,in=270](11,-1.5);
\draw[thick] (12,-2) to [out=90,in=270](12,-1.5);
\draw[thick] (11,-1.5) to [out=90,in=90](12,-1.5);
\node[] at (16.3,0.5) {$=\,\, \mathfrak{u}_1$};
\end{tikzpicture} 
\end{equation*}
\end{proof}

\begin{notation}\label{Not1}
 We call  exponent vector to  $\alphan:=(\alpha_1,\dots,\alpha_n)\in (\ZZ/d\ZZ)^n$ and we  set   $\zetan^{\alphan}:=\mathfrak{z}_1^{\alpha_1}\cdots \mathfrak{z}_{n}^{\alpha_n}\in C_d^n$.
\end{notation}
\begin{lemma}\label{lemma-diag-normal-form-FrBlob-1}
   Each element $D^{\circ}\in \mathfrak{Bl}_{d,n}$ can be written in  the  form $\zetan^{\alphan}$ or $\zetan^{\alphan}D\zetan^{\bbeta},$ where  $D$ is  the base diagram  of $D^{\circ}$. In particular, $\mathfrak{Bl}_{d,n}$ is generated by $\mathfrak{z}_1,\dots,\mathfrak{z}_n,  \mathfrak{u}_0,\mathfrak{u}_1,\dots,\mathfrak{u}_{n-1}$.
\end{lemma}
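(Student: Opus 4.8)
The plan is to exploit the diagrammatic description of equality in $\mathfrak{Bl}_{d,n}$: two abacus blob diagrams are equal precisely when their base diagrams coincide and, on each corresponding arc component, the numbers of beads agree modulo $d$. Thus, writing $D^{\circ}$ in the stated form amounts to producing exponent vectors $\alphan,\bbeta\in(\ZZ/d\ZZ)^n$ for which $\zetan^{\alphan}D\zetan^{\bbeta}$ has base $D$ and reproduces, component by component, the bead counts of $D^{\circ}$. Since the two framing factors have the identity diagram as base, composing with them does not alter the arcs, so the base of $\zetan^{\alphan}D\zetan^{\bbeta}$ is automatically $D$; the whole problem therefore reduces to matching bead counts on the components of $D$.

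First I would record the combinatorial fact underlying everything: in a blobbed TL-diagram every arc component carries at least one boundary point, top or bottom. Indeed, an arc joins two boundary points and, by Definition \ref{def-blob-TL-diagram}, carries at most one blob; a blob splits its arc into two components, each retaining exactly one of the two boundary points, while an unblobbed arc is a single component retaining both. Hence every bead of $D^{\circ}$ sits on a component reachable from the top boundary (to be fed by a left factor $\zetan^{\alphan}$) or from the bottom boundary (to be fed by a right factor $\zetan^{\bbeta}$). I would then build $\alphan,\bbeta$ explicitly: set all entries to $0$ and, for each component $C$ with bead count $m_C$, choose one boundary point of $C$; if it is the top point $i$ add $m_C$ to $\alpha_i$, and if it is the bottom point $j$ add $m_C$ to $\beta_j$. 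As the components partition the $2n$ boundary points, distinct components select distinct boundary points and these assignments do not interfere.

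It then remains to check that $\zetan^{\alphan}D\zetan^{\bbeta}$ carries exactly $m_C$ beads on each component $C$. In the product, the $\alpha_i$ beads placed at a top point $i$ slide down the arc leaving $i$, and the $\beta_j$ beads placed at a bottom point $j$ slide up the arc leaving $j$; this sliding is exactly what the relations of Lemma \ref{lemma-pre-presentation-frblob-digrams} encode. Each packet lands on the unique component containing its boundary point, and when that arc is blobbed the blob acts as an obstruction, confining the packet to the sub-component on its side. Consequently the two endpoints of a blobbed arc never mix their beads, and the collapsing relation erasing beads trapped between two blobs is never triggered, since each component meets at most one blob. By the choice of $\alphan,\bbeta$, the component $C$ receives $m_C$ beads from its chosen endpoint and none from any other, matching $D^{\circ}$; the equality criterion then gives $\zetan^{\alphan}D\zetan^{\bbeta}=D^{\circ}$. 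When $D$ is the identity diagram the two framings commute and merge into one factor, yielding the form $\zetan^{\alphan}$.

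The step I expect to be the main obstacle is precisely this last verification: one must be certain that beads fed from the boundary reach the intended components and survive the local relations, which is exactly where the boundary-point fact and the isolating role of the blob are needed. Granting the normal form, the generation statement is immediate: $D\in\mathfrak{Bl}_n$ is a word in $\mathfrak{u}_0,\dots,\mathfrak{u}_{n-1}$ by Corollary \ref{NFBlob}, while $\zetan^{\alphan}$ and $\zetan^{\bbeta}$ are words in $\mathfrak{z}_1,\dots,\mathfrak{z}_n$ by Notation \ref{Not1}, so every element of $\mathfrak{Bl}_{d,n}$ is a word in $\mathfrak{z}_1,\dots,\mathfrak{z}_n,\mathfrak{u}_0,\dots,\mathfrak{u}_{n-1}$.
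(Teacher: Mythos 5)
Your argument is correct and follows essentially the same route as the paper, which simply observes that beads slide freely on arc components and can therefore be pushed to the boundary points and extracted as framing factors $\zetan^{\alphan}$ on top and $\zetan^{\bbeta}$ on the bottom. Your elaboration — that every component of a (possibly blobbed) arc retains at least one boundary point, so each bead packet can be routed through a distinct top or bottom point without interference — is exactly the combinatorial fact the paper leaves implicit and illustrates only by example.
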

\begin{proof}
The proof follows directly from the facts that the beads slide freely on the arcs,  components  of an arc with  blob and the rules of  Definition \ref{def-abacus-blob-mon}.
\begin{figure}[ht]
\begin{equation*}
    \begin{tikzpicture}[xscale=0.5,yscale=0.5]
\draw[thick] (1,0) to [out=90,in=90](4,0);
\draw[thick] (2,0) to [out=90,in=90](3,0);
\draw[thick] (5,0) to [out=90,in=90](6,0);
\draw[thick] (7,0) to [out=90,in=270](3,3);
\draw[thick] (8,0) to [out=90,in=270](8,3);
\draw[thick] (2,3) to [out=270,in=270](1,3);
\draw[thick] (4,3) to [out=270,in=270](7,3);
\draw[thick] (5,3) to [out=270,in=270](6,3);
\draw[thick] (9,0) to [out=90,in=270](9,3);

\draw[fill] (5,1.5) circle [radius=0.15];
\draw[fill] (2.5,0.9) circle [radius=0.15];
\draw[] (1.7,0.73) circle [radius=0.2];
\draw[] (5.5,0.28) circle [radius=0.2];
\draw[] (5.5,2.73) circle [radius=0.2];
\draw[] (6,1.2) circle [radius=0.2];
\draw[] (4,1.8) circle [radius=0.2];
\draw[] (3.3,2.2) circle [radius=0.2];
\draw[] (9,1.8) circle [radius=0.2];
\draw[] (9,1.2) circle [radius=0.2];
\draw[] (8,1.5) circle [radius=0.2];

\draw[thick] (1,-1) to (1,0);
\draw[thick] (2,-1) to (2,0);
\draw[thick] (3,-1) to (3,0);
\draw[thick] (4,-1) to (4,0);
\draw[thick] (5,-1) to (5,0);
\draw[thick] (6,-1) to (6,0);
\draw[thick] (7,-1) to (7,0);
\draw[thick] (8,-1) to (8,0);
\draw[thick] (9,-1) to (9,0);

\draw[thick] (1,3) to (1,4);
\draw[thick] (2,3) to (2,4);
\draw[thick] (3,3) to (3,4);
\draw[thick] (4,3) to (4,4);
\draw[thick] (5,3) to (5,4);
\draw[thick] (6,3) to (6,4);
\draw[thick] (7,3) to (7,4);
\draw[thick] (8,3) to (8,4);
\draw[thick] (9,3) to (9,4);

\end{tikzpicture}\quad \begin{tikzpicture}[xscale=0.5,yscale=0.5]
\node[] at (-1,1.5) {$=$};
\draw[thick] (1,0) to [out=90,in=90](4,0);
\draw[thick] (2,0) to [out=90,in=90](3,0);
\draw[thick] (5,0) to [out=90,in=90](6,0);
\draw[thick] (7,0) to [out=90,in=270](3,3);
\draw[thick] (8,0) to [out=90,in=270](8,3);
\draw[thick] (2,3) to [out=270,in=270](1,3);
\draw[thick] (4,3) to [out=270,in=270](7,3);
\draw[thick] (5,3) to [out=270,in=270](6,3);
\draw[thick] (9,0) to [out=90,in=270](9,3);
\draw[thick] (1,-1) to (1,0);
\draw[thick] (2,-1) to (2,0);
\draw[thick] (3,-1) to (3,0);
\draw[thick] (4,-1) to (4,0);
\draw[thick] (5,-1) to (5,0);
\draw[thick] (6,-1) to (6,0);
\draw[thick] (7,-1) to (7,0);
\draw[thick] (8,-1) to (8,0);
\draw[thick] (9,-1) to (9,0);

\draw[thick] (1,3) to (1,4);
\draw[thick] (2,3) to (2,4);
\draw[thick] (3,3) to (3,4);
\draw[thick] (4,3) to (4,4);
\draw[thick] (5,3) to (5,4);
\draw[thick] (6,3) to (6,4);
\draw[thick] (7,3) to (7,4);
\draw[thick] (8,3) to (8,4);
\draw[thick] (9,3) to (9,4);

\draw[dashed] (0.8,3) to (9.2,3);
\draw[dashed] (0.8,0) to (9.2,0);

\draw[fill] (5,1.5) circle [radius=0.15];
\draw[fill] (2.5,0.9) circle [radius=0.15];
\draw[] (1,-0.3) circle [radius=0.2];
\draw[] (5,-0.3) circle [radius=0.2];
\draw[] (5,3.2) circle [radius=0.2];
\draw[] (7,-0.7) circle [radius=0.2];
\draw[] (7,-0.2) circle [radius=0.2];
\draw[] (3,3.8) circle [radius=0.2];
\draw[] (3,3.3) circle [radius=0.2];
\draw[] (9,3.2) circle [radius=0.2];
\draw[] (9,3.7) circle [radius=0.2];
\draw[] (8,3.7) circle [radius=0.2];

\end{tikzpicture} 
\end{equation*}
\caption{Example for the digramm of Fig. \ref{ex-dframed-blob-diagram}.}\label{eq-tikz-quasi-normalized-pearls+blob-diag}
\end{figure}
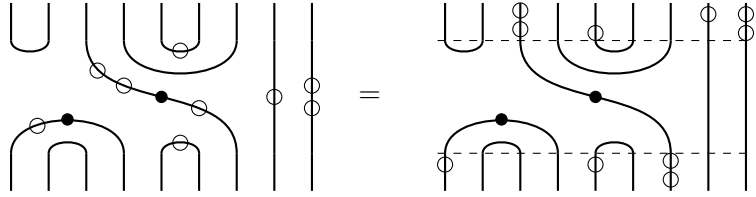
For example if $D^{\circ},$ is the element described in Fig. \ref{ex-dframed-blob-diagram}, then  $D^{\circ}=\mathfrak{z}_3^2\,\mathfrak{z}_5\,\mathfrak{z}_8\,\mathfrak{z}_9^2\,D\,\mathfrak{z}_1\,\mathfrak{z}_5\,\mathfrak{z}_7^2$ as we can see in Fig 10. The last assertion follows from the fact that the base diagram $D$ belongs to the submodule $\mathfrak{Bl}_{n}$ of $\mathfrak{Bl}_{d,n}$
\end{proof}
We now proceed to construct a normal form for the elements of $\mathfrak{Bl}_{d,n}$, which for $d=1$  is that of
Corollary \ref{NFBlob}. We begin by recalling that there exists a one-to-one correspondence $A\mapsto \BBU(A)$ between the set of all indexing matrices and the monoid $\mathfrak{Bl}_{n}.$ We will use this correspondence and the following five lemmas to find a normal form for the elements of $\mathfrak{Bl}_{d,n} $.

\begin{lemma}\label{lemma-normal-form-FrBlob-1}
\begin{enumerate} For  $D^{\circ}\in \mathfrak{Bl}_{d,n} $, we have:
    \item[(i)] $D^{\circ} =\zetan^{\alphan}=\zetan^{\alphan}\BBU\Big[\!\!\begin{array}{c}\infty\\\infty\end{array}\!\!\Big]$ or  $D^{\circ}=\zetan^{\alphan}\BBU(A)\zetan^{\bbeta}$, where  $\alphan,\bbeta$ are exponent vectors and  $A$ is a non-degenerate indexing matrix.
   \item[(ii)] If $\zetan^{\alphan}\BBU(A)\zetan^{\bbeta}=\zetan^{\bgamma}\BBU(A')\zetan^{\kappan}$, with  $\alphan,\bbeta,\bgamma,\kappan$  are exponent vectors,  and  $A,A'$ are indexing matrices, then  $A=A'.$
    \end{enumerate}
\end{lemma}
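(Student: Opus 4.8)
The plan is to reduce both parts to the bijection $A\mapsto\BBU(A)$ between indexing matrices and $\mathfrak{Bl}_n$ furnished by Corollary \ref{NFBlob}, using the \emph{base diagram} as a well-defined invariant of elements of $\mathfrak{Bl}_{d,n}$. By the equality convention adopted for abacus blob diagrams, two elements of $\mathfrak{Bl}_{d,n}$ can be equal only if their base diagrams coincide, so the assignment $D^{\circ}\mapsto D$ (erase all beads) is a well-defined map $\pi\colon \mathfrak{Bl}_{d,n}\to\mathfrak{Bl}_n$. Moreover $\pi$ is a monoid homomorphism: in a concatenation product, beads affect neither the connectivity of the stacked diagram, nor the loop-removal, nor the choice of a single blob per arc, so erasing beads commutes with the product of Definition \ref{def-abacus-blob-mon}.

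For part (i), I would start from Lemma \ref{lemma-diag-normal-form-FrBlob-1}, which writes $D^{\circ}=\zetan^{\alphan}$ or $D^{\circ}=\zetan^{\alphan}D\zetan^{\bbeta}$, where $D$ is the base diagram of $D^{\circ}$ and $D\in\mathfrak{Bl}_n$. By the bijection of Corollary \ref{NFBlob} there is a unique indexing matrix $A$ with $D=\BBU(A)$, and $A$ equals the degenerate matrix $\Big[\!\!\begin{array}{c}\infty\\\infty\end{array}\!\!\Big]$ exactly when $D$ is the identity diagram, i.e.\ $\BBU(A)=1$. The first case of the lemma occurs precisely then, giving $D^{\circ}=\zetan^{\alphan}=\zetan^{\alphan}\BBU\Big[\!\!\begin{array}{c}\infty\\\infty\end{array}\!\!\Big]$; in the second case $D$ is non-trivial, so $A$ is non-degenerate and $D^{\circ}=\zetan^{\alphan}\BBU(A)\zetan^{\bbeta}$, as required.

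For part (ii), the key observation is that $\pi\bigl(\zetan^{\alphan}\BBU(A)\zetan^{\bbeta}\bigr)=\BBU(A)$. Indeed $\pi(\zetan^{\alphan})=\pi(\zetan^{\bbeta})=1$, since each $\mathfrak{z}_i$ is the identity diagram carrying a single bead, while $\pi(\BBU(A))=\BBU(A)$ because $\BBU(A)$ carries no beads. Applying $\pi$ to the assumed equality $\zetan^{\alphan}\BBU(A)\zetan^{\bbeta}=\zetan^{\bgamma}\BBU(A')\zetan^{\kappan}$ yields $\BBU(A)=\BBU(A')$ in $\mathfrak{Bl}_n$, whence $A=A'$ by the injectivity of $A\mapsto\BBU(A)$ (Corollary \ref{NFBlob}). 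Equivalently, and without invoking the homomorphism property, one can argue directly that the base diagram of $\zetan^{\alphan}\BBU(A)\zetan^{\bbeta}$ is $\BBU(A)$, because multiplying by the framings only deposits beads on the boundary strands of $\BBU(A)$ and leaves its underlying blobbed TL-diagram unchanged; the equality convention then forces the two base diagrams to agree.

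I expect the only point requiring genuine care to be the verification that $\pi$ respects the product, equivalently that left/right multiplication by the abelian framing factors $\zetan^{\bullet}$ never alters the base diagram. Concretely, one must confirm that neither the loop-removal rule nor the blob-merging rule of Definition \ref{def-abacus-blob-mon} depends on the bead decorations, so that erasing beads commutes with concatenation. Once this is established, both parts are short, with the degenerate matrix handled uniformly through the convention $\BBU\Big[\!\!\begin{array}{c}\infty\\\infty\end{array}\!\!\Big]=1$.
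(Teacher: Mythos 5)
Your proposal is correct and follows essentially the same route as the paper: part (i) is deduced from Lemma \ref{lemma-diag-normal-form-FrBlob-1} together with the correspondence $A\mapsto\BBU(A)$, and part (ii) from the fact that equal $d$-abacus blob diagrams have equal base diagrams, so that $\BBU(A)=\BBU(A')$ forces $A=A'$. Your extra care in checking that erasing beads is compatible with concatenation is a reasonable elaboration of what the paper leaves implicit.
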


\begin{proof}
The first statement is a direct consequence of Lemma \ref{lemma-diag-normal-form-FrBlob-1}.
The second statement follows from the fact that if two $d$-abacus blob diagrams are equal, then their corresponding basis diagrams are equal.
\end{proof}
The proof of the following two lemmas follow directly by drawing the products implied in the claims of the lemmas and again the fact that two $d$-abacus blob diagrams are equal, if and only if their corresponding basis diagrams are equal and the number of beads modulo $d$ is the same on each component. For example, claim (iv) of Lemma \ref{zA0U} follows directly from the following diagram.
\begin{equation*}
    \begin{tikzpicture}[xscale=0.5,yscale=0.3]
\node[] at (-1,1.5) {$\BBU\Big[\!\!\begin{array}{c}i \\0\end{array}\!\!\Big]\mathfrak{z}_1=$};
\draw[thick] (1,0) to [out=90,in=90](2,0);
\draw[fill] (1.5,0.3) circle [radius=0.15];
\draw[thick] (3,0) to [out=90,in=270](1,3);
\draw[thick] (4,0) to [out=90,in=270](2,3);
\node[] at (4,1.5) {$\cdots$};
\draw[thick] (5,3) to [out=270,in=90](7,0);
\draw[thick] (6,3) to [out=270,in=90](8,0);
\draw[thick] (8,3) to [out=270,in=90](10,0);
\draw[thick] (11.3,3) to [out=270,in=90](11.3,0);
\node[] at (8,1.5) {$\cdots$};
\draw[thick] (10,3) to [out=270,in=270](9,3);
\node[] at (12,1.5) {$\cdots$};
\draw[thick] (13,3) to [out=270,in=90](13,0);
\node[below] at (2,-1) {$2$};
\node[below] at (1,-1) {$1$};
\node[above] at (9,4) {$i$};
\node[below] at (13,-1) {$n$};


\draw[thick](1,3) to (1,4);
\draw[thick](2,3) to (2,4);
\draw[thick](5,3) to (5,4);
\draw[thick](6,3) to (6,4);
\draw[thick](8,3) to (8,4);
\draw[thick](9,3) to (9,4);
\draw[thick](10,3) to (10,4);
\draw[thick](11.3,3) to (11.3,4);
\draw[thick](13,3) to (13,4);
\draw[thick](1,0) to (1,-1);
\draw[thick](2,0) to (2,-1);
\draw[thick](4,0) to (4,-1);
\draw[thick](3,0) to (3,-1);
\draw[thick](7,0) to (7,-1);
\draw[thick](8,0) to (8,-1);
\draw[thick](10,0) to (10,-1);
\draw[thick](11.3,0) to (11.3,-1);
\draw[thick](13,0) to (13,-1);
\draw[] (1,-0.5) circle [radius=0.15];
\end{tikzpicture} 
\end{equation*}

\begin{lemma}\label{zU} Set $A =\Big[\!\!\begin{array}{c}i\\j\end{array}\!\!\Big]$, where $i,j\in [1,n-1]$ and $j<i$.  Then:
\begin{enumerate}
\item[(i)]  $\mathfrak{z}_k\BBU(A) =  \BBU(A)\mathfrak{z}_k$ if and only if $k<j$ or  $k>i+1.$
\item[(ii)] $\mathfrak{z}_k\BBU(A) =  \BBU(A)\mathfrak{z}_{k+2}$ if and only if $k\in [j,i-1]$.
\item[(iii)] $\mathfrak{z}_k\BBU(A)=\mathfrak{z}_p\BBU(A)$ with $k\neq p$ if and only if
$k,p\in\{i,i+1\}$.
\item[(iv)] $\BBU(A)\mathfrak{z}_k=\BBU(A)\mathfrak{z}_p$ with $k\neq p$ if and only if
$k,p\in\{j,j+1\}$.
\end{enumerate}
\end{lemma}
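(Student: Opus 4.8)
The plan is to prove all four items by a single diagrammatic reading, using that $A=\Big[\begin{smallmatrix}i\\j\end{smallmatrix}\Big]$ has $j\ge 1$, so its second entry is nonzero and the base diagram of $\BBU(A)$ carries \emph{no} blob: it is the plain Temperley--Lieb staircase pictured for $\BBU\Big[\begin{smallmatrix}i\\j\end{smallmatrix}\Big]$ with $0\neq j<i$. I would first record its connectivity explicitly. The positions $1,\dots,j-1$ and $i+2,\dots,n$ carry vertical through strands (top-$m$ joined to bottom-$m$); the bottom points $j,j+1$ are joined by a cup; the top points $i,i+1$ are joined by a cap; and the remaining through strands form the staircase joining top-$k$ to bottom-$(k+2)$ for each $k\in[j,i-1]$. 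Because there is no blob, each arc is a single component. Recalling the convention that in a product $XY$ the factor $X$ sits on top, multiplying $\BBU(A)$ on the left by $\mathfrak{z}_k$ deposits one extra bead on the arc incident to the top point $k$, while multiplying on the right by $\mathfrak{z}_k$ deposits one extra bead on the arc incident to the bottom point $k$. Finally, by the equality criterion for abacus blob diagrams stated after Definition \ref{def-abacus-blob-mon} (and the free sliding of beads, Lemma \ref{lemma-diag-normal-form-FrBlob-1}), two of our diagrams coincide if and only if they place their extra bead on the same component, since all the base diagrams are that of $\BBU(A)$.

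With this dictionary, (i) and (ii) are immediate. For (i), $\mathfrak{z}_k\BBU(A)=\BBU(A)\mathfrak{z}_k$ holds exactly when top-$k$ and bottom-$k$ lie on one arc; the only arcs joining a top point to the equally-indexed bottom point are the vertical strands, and these occur precisely for $k<j$ or $k>i+1$. For (ii), the staircase relation top-$k\leftrightarrow$ bottom-$(k+2)$, valid exactly for $k\in[j,i-1]$, lets the bead from $\mathfrak{z}_k$ on top slide down that through strand to bottom-$(k+2)$, giving $\mathfrak{z}_k\BBU(A)=\BBU(A)\mathfrak{z}_{k+2}$; here $k+2\in[j+2,i+1]\subseteq[1,n]$, so the right side is defined. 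For the converse of (ii) I would split the complementary range: if $k<j$ or $k>i+1$, item (i) rewrites the claim as $\BBU(A)\mathfrak{z}_k=\BBU(A)\mathfrak{z}_{k+2}$, which fails by (iv) because $\{k,k+2\}\not\subseteq\{j,j+1\}$; and if $k\in\{i,i+1\}$ the left bead lands on the cap whereas $\mathfrak{z}_{k+2}$ (with $k+2>i+1$) lands on a vertical strand, a different component.

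Items (iii) and (iv) are the top and bottom mirror statements that two distinct boundary points on the same side carry a bead on a common arc precisely when they are the two feet of the unique arc lying entirely on that side. On top this is the cap $\{i,i+1\}$: a bead on it may be introduced from either end, so $\mathfrak{z}_i\BBU(A)=\mathfrak{z}_{i+1}\BBU(A)$, while any other top point is a foot of a through strand whose other foot is at the bottom, forcing distinct components; this gives (iii). The cup $\{j,j+1\}$ is symmetrically the unique arc with both feet on the bottom, giving (iv). The only delicate part, and where I expect the real work, is the bookkeeping of the ``only if'' directions: one must invoke the equality criterion to certify that a bead placed on two genuinely different components yields two different monoid elements, and separately treat the cap endpoints $k\in\{i,i+1\}$ in (ii) and the cup endpoints in (iii)--(iv); once the staircase picture is fixed, every remaining step is a direct reading of that picture.
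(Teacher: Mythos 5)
Your proof is correct and takes essentially the same route as the paper: the paper disposes of this lemma (together with Lemma \ref{zA0U}) with the one-line remark that the claims follow by drawing the products and invoking the equality criterion for $d$-abacus blob diagrams (same base diagram, same bead count modulo $d$ on each component), which is precisely the component-tracking argument you carry out. Your explicit listing of the arcs of $\BBU(A)$ (verticals, cup at bottom $\{j,j+1\}$, cap at top $\{i,i+1\}$, staircase top-$k$ to bottom-$(k+2)$) and the bead-placement dictionary is just a fully written-out version of what the paper leaves to the reader.
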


\begin{lemma}\label{zA0U}
Set $A =\Big[\!\!\begin{array}{c}i\\0\end{array}\!\!\Big]$, where $i\in [1,n-1]$.  Then following relations hold in $\mathfrak{Bl}_{d,n} $.
\begin{enumerate}
\item[(i)] $\mathfrak{z}_k\BBU(A)=  \BBU(A)\mathfrak{z}_k $ if and only if $k>i+1$.
\item[(ii)] $\mathfrak{z}_k\BBU(A)=  \BBU(A)\mathfrak{z}_{k+2} $ if and only if $k\in[1,i-1]$.
\item[(iii)] $\mathfrak{z}_k\BBU(A)=\mathfrak{z}_p\BBU(A),$ with $k\neq p$ if and only if $k,p\in\{i,i+1\}.$
\item [(iv)] $\BBU(A)\mathfrak{z}_k=\BBU(A)\mathfrak{z}_p$ implies $k=p,$ if and only if, $k=1$ or $k=2.$
\end{enumerate}
\end{lemma}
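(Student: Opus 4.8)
The plan is to argue entirely at the level of diagrams, using the equality criterion recalled just after Lemma \ref{lemma-diag-normal-form-FrBlob-1}: two $d$-abacus blob diagrams coincide precisely when they have the same base diagram and carry the same number of beads, counted modulo $d$, on each connected component. Since beads never alter the base diagram, all four products in the statement share the base diagram $\BBU\Big[\!\!\begin{array}{c}i\\0\end{array}\!\!\Big]$, so every claim reduces to deciding, for a single bead, on which component of that diagram it comes to rest.

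First I would pin down the base diagram of $\BBU\Big[\!\!\begin{array}{c}i\\0\end{array}\!\!\Big]=\mathfrak{u}_i\mathfrak{u}_{i-1}\cdots\mathfrak{u}_1\mathfrak{u}_0$, namely the one drawn on the left of the figure in Subsection 2.4 and in the display preceding the lemma. Its connected components are: the top arc joining top points $i$ and $i+1$; the bottom arc joining bottom points $1$ and $2$, which carries the blob; the through-strands joining top point $k$ to bottom point $k+2$ for $1\le k\le i-1$; and the trivial strands joining top point $k$ to bottom point $k$ for $k\ge i+2$. The dictionary I would set up is that left multiplication by $\mathfrak{z}_k$ deposits one bead on the top endpoint at position $k$, while right multiplication by $\mathfrak{z}_k$ deposits one bead on the bottom endpoint at position $k$; by the sliding rule of Definition \ref{def-abacus-blob-mon} each such bead then ranges freely over the whole component carrying that endpoint, unless it is trapped between two blobs.

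With this dictionary claims (i)--(iii) are immediate. For (i), the bead of $\mathfrak{z}_k\BBU(A)$ (on top point $k$) and that of $\BBU(A)\mathfrak{z}_k$ (on bottom point $k$) occupy a common component exactly when those two endpoints lie on one strand, i.e. on the trivial strands with $k\ge i+2$, which is the stated condition $k>i+1$; otherwise they sit on distinct components and the diagrams differ. For (ii), the through-strand from top point $k$ to bottom point $k+2$ is a single component precisely for $1\le k\le i-1$, and along it the bead of $\mathfrak{z}_k\BBU(A)$ slides to the location of the bead of $\BBU(A)\mathfrak{z}_{k+2}$; for any other $k$ the two beads lie on different components. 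For (iii), two distinct top positions share a component only on the top arc, whose endpoints are exactly $i$ and $i+1$, yielding $\mathfrak{z}_i\BBU(A)=\mathfrak{z}_{i+1}\BBU(A)$ and no further coincidence. In each ``only if'' direction I would note that a lone bead on an un-blobbed segment is never annihilated, so the bead-count vectors modulo $d$ genuinely distinguish the diagrams whenever the beads occupy different components.

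The hard part will be (iv), which is precisely where the blob enters and where the statement diverges from Lemma \ref{zU}(iv). The key observation is that, in contrast with the unblobbed bottom arc of Lemma \ref{zU}, here the blob lies on the arc joining bottom points $1$ and $2$ and acts as an obstruction that, by rule (ii) of Definition \ref{def-abacus-blob-mon}, splits this arc into two components. Hence bottom points $1$ and $2$ lie on different components, so $\BBU(A)\mathfrak{z}_1\neq\BBU(A)\mathfrak{z}_2$; this is exactly what the displayed diagram for $\BBU\Big[\!\!\begin{array}{c}i\\0\end{array}\!\!\Big]\mathfrak{z}_1$ makes visible, the bead resting between bottom point $1$ and the blob. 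Since every remaining bottom point already occupies its own component, no two distinct bottom positions can share a component, so $\BBU(A)\mathfrak{z}_k=\BBU(A)\mathfrak{z}_p$ forces $k=p$; the positions $1$ and $2$ are singled out as the only ones that would have collapsed were the blob absent, which is the content being contrasted with Lemma \ref{zU}(iv).
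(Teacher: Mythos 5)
Your proof is correct and follows essentially the same route as the paper, which disposes of this lemma by declaring that it "follows directly by drawing the products" and invoking the equality criterion for $d$-abacus blob diagrams (same base diagram, same bead count mod $d$ on each component), illustrating claim (iv) with exactly the diagram you describe: the bead of $\BBU\Big[\!\!\begin{array}{c}i\\0\end{array}\!\!\Big]\mathfrak{z}_1$ trapped between bottom point $1$ and the blob. Your component-by-component bookkeeping, and in particular your reading of the awkwardly phrased item (iv) as saying that the blob splits the bottom arc so that positions $1$ and $2$ no longer collide (in contrast with Lemma \ref{zU}(iv)), matches the intended argument.
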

\begin{proposition}\label{prop-interaction-z-BBU}
If $j\in [1,n]$ and $A=[B|C]$ is a non-degenerate indexing matrix, then in the product $\mathfrak{z}_j\BBU(A)$ only one of the following cases occurs.
    \begin{enumerate}
          \item[(i)] \label{sub-critical-case1-coro-interaction-z-BBU} For $j=1$ and $b_1=0,$ we have $\mathfrak{z}_j\BBU(A)\neq \BBU(A)\mathfrak{z}_i$ for all $i$. Furthermore,  $\mathfrak{z}_j\BBU(A)=\mathfrak{z}_i\BBU(A)$ if and only if $i=1$.
         \item[(ii)] \label{critical-case1-coro-interaction-z-BBU}  There is  $s \in [1,r]$ and  $k\in \{1,2\}$ such that:
        \begin{equation*}
        \mathfrak{z}_j\BBU(A)=\BBU\left[\begin{matrix}
                b_1\\
                0
            \end{matrix}\right]\cdots \BBU\left[\begin{matrix}
                b_s\\
                0
    \end{matrix}\right]\mathfrak{z}_k\BBU\left[\begin{matrix}
         b_{s+1}\\
                0
            \end{matrix}\right]\cdots \BBU\left[\begin{matrix}
                b_r\\
                0
            \end{matrix}\right]\BBU(C).
        \end{equation*}
          \item[(iii)] \label{Not-critical-case1-coro-interaction-z-BBU} There is $i\in[1,n] $ s.t.  $i\neq j$ and $\mathfrak{z}_j\BBU(A)=\mathfrak{z}_i\BBU(A).$
         \item[(iv)] \label{Not2-critical-case1-coro-interaction-z-BBU} There is $k\in[1, n]$ s.t.  $\mathfrak{z}_j\BBU(A)=\BBU(A)\mathfrak{z}_k.$
    \end{enumerate}
Furthermore, in  the product $\BBU(A)\mathfrak{z}_j$ only one of the following situations occurs:
    \begin{enumerate}
        \item[(v)] \label{critical-case2-coro-interaction-z-BBU}  There is $s\in[1,r]$ and $k\in \{1,2\}$ such that:
        \begin{equation}\label{FormCase(v)}
        \BBU(A)\mathfrak{z}_j=\BBU\left[\begin{matrix}
                b_{1}\\
                0
    \end{matrix}\right]\cdots \BBU\left[\begin{matrix}
                b_s\\
                0
    \end{matrix}\right]\mathfrak{z}_k\BBU\left[\begin{matrix}
                b_{s+1}\\
                0
    \end{matrix}\right]\cdots \BBU\left[\begin{matrix}
                b_{r}\\
                0
    \end{matrix}\right]\BBU(C).
        \end{equation}
          \item[(vi)] \label{Not-critical-case2-coro-interaction-z-BBU} There is $i\in[1,n]$ such that $i\neq j$ and $\BBU(A)\mathfrak{z}_j=\BBU(A)\mathfrak{z}_i.$
         \item[(vii)] \label{Not2-critical-case2-coro-interaction-z-BBU} There is  $k\in[1,n]$ such that $\BBU(A)\mathfrak{z}_j=\mathfrak{z}_k\BBU(A).$
    \end{enumerate}
\end{proposition}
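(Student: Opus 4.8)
The plan is to exploit the factorisation $A=[B\mid C]$ of (\ref{A=BC}), which gives $\BBU(A)=\BBU\left[\begin{smallmatrix} b_1\\ 0\end{smallmatrix}\right]\cdots\BBU\left[\begin{smallmatrix} b_r\\ 0\end{smallmatrix}\right]\BBU(C)$, and to move the single bead $\mathfrak{z}_j$ across this word one elementary factor at a time. Reading products left-to-right as top-to-bottom (so that in $\mathfrak{z}_j\BBU(A)$ the bead sits on the top boundary, as fixed by the diagram preceding Lemma \ref{zU}), each elementary move is governed by Lemma \ref{zA0U} for an initial factor $\left[\begin{smallmatrix} b_s\\ 0\end{smallmatrix}\right]$ and by Lemma \ref{zU} for a column of the positive block $C$. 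I would organise this as an induction on the number $r+l(C)$ of elementary factors, rewriting at each step the leftmost occurrence of a $\mathfrak{z}$ using the relevant part of these two lemmas.

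For $\mathfrak{z}_j\BBU(A)$ the inductive step has exactly three flavours: the bead commutes past the next factor, possibly with the index shift $k\mapsto k+2$ (parts (i),(ii) of both lemmas); the bead reaches a cap and, by parts (iii),(iv), is re-expressed as a bead at a neighbouring position on the same side; or the bead meets a blobbed component. The four cases of the statement are then read off from where this process halts. If $b_1=0$ and $j=1$ the bead sits on strand $1$ immediately above the blob of $\mathfrak{u}_0$, which is an obstruction it can neither cross nor be slid off, giving case (i). If the bead, after descending through $B$, is reflected by a cap inside $C$ back up into the initial block and there meets one of the blobbed cups of $B$, it is caught at one of the two sides of that blob, i.e. at a local position $k\in\{1,2\}$ between two consecutive initial factors, which is the critical case (ii). If it is reflected all the way back to the top boundary we obtain $\mathfrak{z}_j\BBU(A)=\mathfrak{z}_i\BBU(A)$ with $i\neq j$, case (iii); and if it traverses every factor we obtain $\mathfrak{z}_j\BBU(A)=\BBU(A)\mathfrak{z}_k$, case (iv).

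For $\BBU(A)\mathfrak{z}_j$ I would run the mirror argument, commuting the bead upward from the bottom boundary. The essential difference is that the blobbed cups of $B$ have both feet on the bottom boundary of their factor, so a bead coming from below can reach a blob \emph{directly}, without the detour through $C$; trapping at such a blob (again at local positions $1,2$) is case (v). Reflection by a cap back to the bottom gives (vi), and passage through to the top gives (vii). This also explains why there are only three cases here and no analogue of (i): the blob of $\mathfrak{u}_0$ lives on strand $1$ of the leftmost factor, so it is the \emph{last} blob a bottom bead could reach and never produces an isolated immovable configuration.

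Finally I would prove that exactly one case occurs. Exhaustiveness is the termination of the rewriting above; exclusivity follows because the terminal words place the bead in diagrammatically distinct spots (immovable on strand $1$; trapped at an interior blob; at the extreme left; at the extreme right), and two abacus blob diagrams are equal if and only if their base diagrams coincide and the bead-counts modulo $d$ agree on each arc-component, the same criterion already used in Lemma \ref{lemma-normal-form-FrBlob-1}. The hard part will be the bookkeeping in the blob-trapping branch: one must check that the bead's trajectory (down through $B$, possibly reflected in $C$, back up through $B$, or the mirror of this) is well defined and terminates, and that the two sides of a blob really are the \emph{local} positions $1$ and $2$ between the relevant factors, so that the index $k$ in (ii) and (v) genuinely lies in $\{1,2\}$. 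Separating this branch cleanly from cases (iii),(iv),(vi),(vii), i.e. showing that a bead absorbed onto a blob can be rewritten neither as a pure left-bead nor as a pure right-bead, is exactly where the equality criterion for abacus blob diagrams must be invoked.
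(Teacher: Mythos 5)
Your plan is essentially the paper's proof: both arguments reduce everything to the single-column Lemmas \ref{zU} and \ref{zA0U}, induct on the number of columns of $A$, track where the bead ends up (passes through, is reflected, or is trapped at a blob), and settle exclusivity with the criterion that two $d$-abacus blob diagrams are equal iff their base diagrams agree and the bead counts modulo $d$ agree on each arc component. The one point you flag as the hard part --- that the bounce-back trajectory is well defined and terminates --- is precisely where the paper's organization improves on your left-to-right sweep: it peels off the \emph{last} column, writes $\mathfrak{z}_j\BBU(A)=\BBU(A')\mathfrak{z}_k\BBU(A'')$ via the inductive hypothesis, and when the bead is reflected at the cap of $A''$ (so $\mathfrak{z}_k\BBU(A'')=\mathfrak{z}_i\BBU(A'')$ with $i\neq k$) it feeds the resulting product $\BBU(A')\mathfrak{z}_i$ into the inductive hypothesis for the \emph{right-sided} cases (v)--(vii) on the strictly shorter matrix $A'$, each of which translates back into one of (ii), (iii), (iv). This mutual recursion between the two halves of the statement is what prevents the rewriting from oscillating between $\mathfrak{z}_i$ and $\mathfrak{z}_{i+1}$ at a cap; if you keep the sweep formulation, prove both halves simultaneously by induction on $l(A)$ so that a reflected bead is always handled by an already-established shorter instance.
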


\begin{proof}Suppose we are not in  case (i).
The proof is by induction on the length  $l$ of $A=[a_{i,j}]$. When  $l=1$, the proof follows from Lemma \ref{zU} and Lemma \ref{zA0U}.  Suppose the statement is true for $l'\in [1, l-1]$. We will show how to proceed only with the product
$\mathfrak{z}_j\BBU(A)$ since the other product is proven in a completely analogous way. We can write $\mathfrak{z}_j\BBU(A) = \mathfrak{z}_j\BBU(A')\BBU(A'')$, where $A'$ is obtained by deleting the last column of $A$ and $A''$ is the last column of $A$. Note that cases (i) to (iii) resulting from applying the inductive hypothesis to $\mathfrak{z}_j\BBU(A')$ are inherited towards $\mathfrak{z}_j\BBU(A)$. Hence, it remains to study only when applying the induction hypothesis on $\mathfrak{z}_j\BBU(A')$ we obtain case (iv), which says that there only  one $k$ such that $\mathfrak{z}_j\BBU(A')=\BBU(A')\mathfrak{z}_k.$ Then we have:
        \begin{equation*}
           \mathfrak{z}_j\BBU(A)=\BBU(A')\mathfrak{z}_k\BBU\left[\begin{matrix}
                a_{1l}\\
                a_{2l}
    \end{matrix}\right].
        \end{equation*}
        For the product $\mathfrak{z}_k\BBU\left[\begin{matrix}
                a_{1l}\\
                a_{2l}
    \end{matrix}\right]$ we distinguish between $k\in \{a_{1l},a_{1l}+1\} $ or not.
     If $k\notin \{a_{1l},a_{1l}+1\},$ then by  Lemma \ref{zU}, there is $i$ such that $\mathfrak{z}_k\BBU\left[\begin{matrix}
                a_{1l}\\
                a_{2l}
    \end{matrix}\right]=\BBU\left[\begin{matrix}
                a_{1l}\\
                a_{2l}
    \end{matrix}\right]\mathfrak{z}_i$ and therefore $ \mathfrak{z}_j\BBU(A)=\BBU(A) \mathfrak{z}_i$; we thus arrive at case (iv) of the proposition. If  $k\in \{a_{1l},a_{1l}+1\},$ we can assume without lost of generality that $k=a_{1l}+1.$ If $i=a_{1l},$ then we have $\mathfrak{z}_k\BBU\left[\begin{matrix}
                a_{1l}\\
                a_{2l}
    \end{matrix}\right]=\mathfrak{z}_i\BBU\left[\begin{matrix}
                a_{1l}\\
                a_{2l}
    \end{matrix}\right]$, with $i\neq k,$ so
            \begin{equation*}
            \mathfrak{z}_j\BBU(A)=\BBU(A')\mathfrak{z}_i\BBU\left[\begin{matrix}
                a_{1l}\\
                a_{2l}
    \end{matrix}\right].
            \end{equation*}
 Using the inductive hypothesis on  $\BBU(A')\mathfrak{z}_i$, we derive the analysis to  the exclusive case  (v) to case (vii) of the proposition.

{\it Case (v).} In this case, there is  $s\in [1,r]$ and $k\in\{1,2\},$ such that
$\BBU(A')\mathfrak{z}_i$ is in the form (\ref{FormCase(v)}), so we deduce:
\begin{equation*}
    \mathfrak{z}_j\BBU(A)=\BBU(A')\mathfrak{z}_i\BBU\left[\begin{matrix}
                a_{1l}\\
                a_{2l}
    \end{matrix}\right]=\BBU\left[\begin{matrix}
                b_1\\
                0
    \end{matrix}\right]\cdots\BBU\left[\begin{matrix}
                b_s\\
               0
    \end{matrix}\right]\mathfrak{z}_k\cdots\BBU\left[\begin{matrix}
                b_r\\
                0
    \end{matrix}\right]\BBU(C),
\end{equation*} that is the case (ii) for $\mathfrak{z}_j\BBU(A)$.

  {\it Case (vi).} In this case there is  $p\neq i=a_{il}$ such that $\BBU(A')\mathfrak{z}_i=\BBU(A')\mathfrak{z}_p$. Thus the inductive hypothesis impies that $p\neq k=a_{1l}+1$ since $\mathfrak{z}_j\BBU(A')=\BBU(A')\mathfrak{z}_k,$. Hence we have that there is a $q$ such that $\mathfrak{z}_p\BBU\left[\begin{matrix}
                a_{1l}\\
                a_{2l}
    \end{matrix}\right]=\BBU\left[\begin{matrix}
                a_{1l}\\
                a_{2l}
    \end{matrix}\right]\mathfrak{z}_q.$ Therefore
            \begin{equation*}
            \mathfrak{z}_j\BBU(A)=\BBU(A')\left(\mathfrak{z}_p\BBU\left[\begin{matrix}
                a_{1l}\\
                a_{2l}
    \end{matrix}\right]\right)=\BBU(A')\left(\BBU\left[\begin{matrix}
                a_{1l}\\
                a_{2l}
    \end{matrix}\right]\mathfrak{z}_q\right)=\BBU(A)\mathfrak{z}_q,
            \end{equation*}
            that is, we are in case (iv) for $\mathfrak{z}_j\BBU(A).$

{\it Case (vii).}       In this case there is  $p\in[1,n]$ such that $\BBU(A')\mathfrak{z}_i=\mathfrak{z}_p\BBU(A').$ Then
                \begin{equation*}
            \mathfrak{z}_j\BBU(A)=\BBU(A')\mathfrak{z}_i\BBU\left[\begin{matrix}
                a_{1l}\\
                a_{2l}
    \end{matrix}\right]=\mathfrak{z}_p\BBU(A).
                \end{equation*} Note that $p\neq j$ since $\mathfrak{z}_j\BBU(A')=\BBU(A')\mathfrak{z}_k$ and by induction applied on $A',$ we have for any $q\notin\{j,k\}$ that $\mathfrak{z}_q\BBU(A')\neq \BBU(A')\mathfrak{z}_k$ and $\mathfrak{z}_j\BBU(A')\neq \BBU(A')\mathfrak{z}_q.$ Therefore we are in case (iii) for $\mathfrak{z}_j\BBU(A).$

\end{proof}
Proposition \ref{prop-interaction-z-BBU}  allows the definition of certain matrices $L(A)$ and $R(A)$, which are defined below. As usual, from now on $M^t$  denotes the transpose of the matrix $M$.
\begin{definition}\label{def-left-right-Z-matrices}
Let $A=[B|C]$ be a degenerate indexing matrix, we define the  matrices  $L(A)$ and $R(A)$ of size   $3\times n$, as follows.
\begin{enumerate}
\item[(i)]Depending on the product $\mathfrak{z}_j\BBU(A),$ the $j$-th column of $L(A)$ is equal to:
\begin{enumerate}
\item  $\big[j\,\,\, j \,\,\, 0 \big]^t$  if we are in case (i) or (ii) of Proposition \ref{prop-interaction-z-BBU}.
\item $\big[ i\,\,\, j\,\,\, 0\big]^t$ if we are in the case (iii) of Proposition \ref{prop-interaction-z-BBU}.
\item $\big[ j\,\,\,j \,\,\, k \big]^t$ if we are in the case (iv) of Proposition \ref{prop-interaction-z-BBU}.
\end{enumerate}
\item [(ii)]  Depending on the product $\BBU(A)\mathfrak{z}_j,$ the $j$-th column of $R(A)$ is equal to:
\begin{enumerate}
\item  $\left[0\,\, j \,\, j\right]^t$ if we are in the case (v) of Proposition \ref{prop-interaction-z-BBU}.
\item $\left[0\,\, j \,\, i\right]^t$ if we are in the case (vi) of Proposition \ref{prop-interaction-z-BBU}.
\item $\left[k\,\, j \,\, 0\right]^t$ if we are in the case (vii) of Proposition \ref{prop-interaction-z-BBU}.
\end{enumerate}
\end{enumerate}
\end{definition}

\begin{example}\label{ex-left-right-Z-matrices-V0}
Let $n=7$ and consider the indexing matrix
    \begin{equation*}
        A=\left[\begin{matrix}
            1 & 2 & 3 & 6 \\
            0 & 1 & 2 & 4
        \end{matrix}\right].
    \end{equation*}
Lemmas \ref{zU} and \ref{zA0U} say:
 \item \begin{equation*}
            \mathfrak{z}_3\BBU(A)=\BBU\left[\begin{matrix}
                1\\
                0
            \end{matrix}\right]\left(\mathfrak{z}_3\BBU\left[\begin{matrix}
                2\\
                1
            \end{matrix}\right]\right)\BBU\left[\begin{matrix}
                3&6\\
                2&4
            \end{matrix}\right]=\left(\BBU\left[\begin{matrix}
                1\\
                0
            \end{matrix}\right]\mathfrak{z}_2\right)\BBU\left[\begin{matrix}
                2\\
                1
            \end{matrix}\right]\BBU\left[\begin{matrix}
                3&6\\
                2&4
            \end{matrix}\right].
        \end{equation*}
        Which in diagrammatic terms look like:
        \begin{equation*}
            \begin{tikzpicture}[xscale=0.5,yscale=0.35]
\draw[thick] (1,4) to [out=270,in=270](2,4);
\draw[thick] (1,2) to [out=90,in=90](2,2);
\draw[fill] (1.5,2.3) circle [radius=0.15];
\draw[thick] (3,4) to (3,2);
\draw[thick] (4,4) to (4,2);
\draw[thick] (5,4) to (5,2);
\draw[thick] (6,4) to (6,2);
\draw[thick] (7,4) to (7,2);
\draw[] (3,4.5) circle [radius=0.17];
\draw[thick] (1,4) to (1,5);
\draw[thick] (2,4) to (2,5);
\draw[thick] (3,4) to (3,5);
\draw[thick] (4,4) to (4,5);
\draw[thick] (5,4) to (5,5);
\draw[thick] (6,4) to (6,5);
\draw[thick] (7,4) to (7,5);
\draw[thick] (2,2) to [out=270,in=270](3,2);
\draw[thick] (1,0) to [out=90,in=90](2,0);
\draw[thick] (1,2) to (3,0);
\draw[thick] (4,2) to (4,0);
\draw[thick] (5,2) to (5,0);
\draw[thick] (6,2) to (6,0);
\draw[thick] (7,2) to (7,0);
\draw[thick] (3,0) to [out=270,in=270](4,0);
\draw[thick] (2,-2) to [out=90,in=90](3,-2);
\draw[thick] (1,-2) to (1,0);
\draw[thick] (2,0) to (4,-2);
\draw[thick] (5,-2) to (5,0);
\draw[thick] (6,-2) to (6,0);
\draw[thick] (7,-2) to (7,0);

\draw[thick] (6,-2) to [out=270,in=270](7,-2);
\draw[thick] (4,-4) to [out=90,in=90](5,-4);
\draw[thick] (1,-2) to (1,-4);
\draw[thick] (2,-2) to (2,-4);
\draw[thick] (3,-2) to (3,-4);
\draw[thick] (4,-2) to (6,-4);
\draw[thick] (5,-2) to (7,-4);
\end{tikzpicture} \quad\begin{tikzpicture}[xscale=0.5,yscale=0.35]
\node[] at (-2,0) {$=$};
\draw[thick] (1,4) to [out=270,in=270](2,4);
\draw[thick] (1,2) to [out=90,in=90](2,2);
\draw[fill] (1.5,2.3) circle [radius=0.15];
\draw[thick] (3,4) to (3,2);
\draw[thick] (4,4) to (4,2);
\draw[thick] (5,4) to (5,2);
\draw[thick] (6,4) to (6,2);
\draw[thick] (7,4) to (7,2);
\draw[] (2,2) circle [radius=0.17];
\draw[thick] (1,4) to (1,5);
\draw[thick] (2,4) to (2,5);
\draw[thick] (3,4) to (3,5);
\draw[thick] (4,4) to (4,5);
\draw[thick] (5,4) to (5,5);
\draw[thick] (6,4) to (6,5);
\draw[thick] (7,4) to (7,5);
\draw[thick] (2,2) to [out=270,in=270](3,2);
\draw[thick] (1,0) to [out=90,in=90](2,0);
\draw[thick] (1,2) to (3,0);
\draw[thick] (4,2) to (4,0);
\draw[thick] (5,2) to (5,0);
\draw[thick] (6,2) to (6,0);
\draw[thick] (7,2) to (7,0);
\draw[thick] (3,0) to [out=270,in=270](4,0);
\draw[thick] (2,-2) to [out=90,in=90](3,-2);
\draw[thick] (1,-2) to (1,0);
\draw[thick] (2,0) to (4,-2);
\draw[thick] (5,-2) to (5,0);
\draw[thick] (6,-2) to (6,0);
\draw[thick] (7,-2) to (7,0);

\draw[thick] (6,-2) to [out=270,in=270](7,-2);
\draw[thick] (4,-4) to [out=90,in=90](5,-4);
\draw[thick] (1,-2) to (1,-4);
\draw[thick] (2,-2) to (2,-4);
\draw[thick] (3,-2) to (3,-4);
\draw[thick] (4,-2) to (6,-4);
\draw[thick] (5,-2) to (7,-4);
\end{tikzpicture} 
        \end{equation*}

        that is, we are in case (ii) of Proposition \ref{prop-interaction-z-BBU}.  Similarly, we have:

     \begin{eqnarray*}
        \mathfrak{z}_4\BBU(A)  &=&\BBU\left[\begin{matrix}
                1\\
                0
            \end{matrix}\right]\left(\BBU\left[\begin{matrix}
                2\\
                1
            \end{matrix}\right]\mathfrak{z}_3\right)\BBU\left[\begin{matrix}
                3\\
                2
            \end{matrix}\right]\BBU\left[\begin{matrix}
                6\\
                4
            \end{matrix}\right]\ref{prop-interaction-z-BBU}. ght]
         =\BBU\left[\begin{matrix}
                1\\
                0
                \end{matrix}\right]\mathfrak{z}_1\BBU\left[\begin{matrix}
                2&3&6\\
                1&2&4
            \end{matrix}\right]. \\
        \end{eqnarray*}
        again, this is case (ii) of Proposition  \ref{prop-interaction-z-BBU}.
         \begin{equation*}
\begin{tikzpicture}[xscale=0.5,yscale=0.35]
\draw[thick] (1,4) to [out=270,in=270](2,4);
\draw[thick] (1,2) to [out=90,in=90](2,2);
\draw[fill] (1.5,2.3) circle [radius=0.15];
\draw[thick] (3,4) to (3,2);
\draw[thick] (4,4) to (4,2);
\draw[thick] (5,4) to (5,2);
\draw[thick] (6,4) to (6,2);
\draw[thick] (7,4) to (7,2);
\draw[] (4,4.5) circle [radius=0.17];
\draw[thick] (1,4) to (1,5);
\draw[thick] (2,4) to (2,5);
\draw[thick] (3,4) to (3,5);
\draw[thick] (4,4) to (4,5);
\draw[thick] (5,4) to (5,5);
\draw[thick] (6,4) to (6,5);
\draw[thick] (7,4) to (7,5);
\draw[thick] (2,2) to [out=270,in=270](3,2);
\draw[thick] (1,0) to [out=90,in=90](2,0);
\draw[thick] (1,2) to (3,0);
\draw[thick] (4,2) to (4,0);
\draw[thick] (5,2) to (5,0);
\draw[thick] (6,2) to (6,0);
\draw[thick] (7,2) to (7,0);
\draw[thick] (3,0) to [out=270,in=270](4,0);
\draw[thick] (2,-2) to [out=90,in=90](3,-2);
\draw[thick] (1,-2) to (1,0);
\draw[thick] (2,0) to (4,-2);
\draw[thick] (5,-2) to (5,0);
\draw[thick] (6,-2) to (6,0);
\draw[thick] (7,-2) to (7,0);

\draw[thick] (6,-2) to [out=270,in=270](7,-2);
\draw[thick] (4,-4) to [out=90,in=90](5,-4);
\draw[thick] (1,-2) to (1,-4);
\draw[thick] (2,-2) to (2,-4);
\draw[thick] (3,-2) to (3,-4);
\draw[thick] (4,-2) to (6,-4);
\draw[thick] (5,-2) to (7,-4);
\end{tikzpicture} \quad\begin{tikzpicture}[xscale=0.5,yscale=0.35]
\node[] at (-2,0) {$=$};
\draw[thick] (1,4) to [out=270,in=270](2,4);
\draw[thick] (1,2) to [out=90,in=90](2,2);
\draw[fill] (1.5,2.3) circle [radius=0.15];
\draw[thick] (3,4) to (3,2);
\draw[thick] (4,4) to (4,2);
\draw[thick] (5,4) to (5,2);
\draw[thick] (6,4) to (6,2);
\draw[thick] (7,4) to (7,2);
\draw[] (1.2,1.8) circle [radius=0.17];
\draw[thick] (1,4) to (1,5);
\draw[thick] (2,4) to (2,5);
\draw[thick] (3,4) to (3,5);
\draw[thick] (4,4) to (4,5);
\draw[thick] (5,4) to (5,5);
\draw[thick] (6,4) to (6,5);
\draw[thick] (7,4) to (7,5);
\draw[thick] (2,2) to [out=270,in=270](3,2);
\draw[thick] (1,0) to [out=90,in=90](2,0);
\draw[thick] (1,2) to (3,0);
\draw[thick] (4,2) to (4,0);
\draw[thick] (5,2) to (5,0);
\draw[thick] (6,2) to (6,0);
\draw[thick] (7,2) to (7,0);
\draw[thick] (3,0) to [out=270,in=270](4,0);
\draw[thick] (2,-2) to [out=90,in=90](3,-2);
\draw[thick] (1,-2) to (1,0);
\draw[thick] (2,0) to (4,-2);
\draw[thick] (5,-2) to (5,0);
\draw[thick] (6,-2) to (6,0);
\draw[thick] (7,-2) to (7,0);

\draw[thick] (6,-2) to [out=270,in=270](7,-2);
\draw[thick] (4,-4) to [out=90,in=90](5,-4);
\draw[thick] (1,-2) to (1,-4);
\draw[thick] (2,-2) to (2,-4);
\draw[thick] (3,-2) to (3,-4);
\draw[thick] (4,-2) to (6,-4);
\draw[thick] (5,-2) to (7,-4);
\end{tikzpicture} 
\end{equation*}
In a similar way, we obtain:  $$\mathfrak{z}_1\BBU(A)=\mathfrak{z}_2\BBU(A), \quad          \mathfrak{z}_5\BBU(A)=\BBU(A)\mathfrak{z}_7, \quad \text{and}\quad         \mathfrak{z}_6\BBU(A)=\mathfrak{z}_7\BBU(A).$$

 On the other hand we have:
    $$\BBU(A)\mathfrak{z}_1=\BBU(A)\mathfrak{z}_6, \quad
            \BBU(A)\mathfrak{z}_2=\BBU(A)\mathfrak{z}_3, \quad
            \BBU(A)\mathfrak{z}_4=\BBU(A)\mathfrak{z}_5 \quad \text{and}\quad
            \BBU(A)\mathfrak{z}_7=\mathfrak{z}_5\BBU(A).$$


    Thus we obtain:
    \begin{equation*}
        L(A)=\left[\begin{matrix}
            2 & 1 & 3 & 4 & 5 & 7 & 6 \\
            1 & 2 & 3 & 4 & 5 & 6 & 7 \\
            0 & 0 & 0 & 0 & 7 & 0 & 0
        \end{matrix}\right]\quad\text{and}\quad  R(A)=\left[\begin{matrix}
            0 & 0 & 0 & 0 & 0 & 0 & 5 \\
            1 & 2 & 3 & 4 & 5 & 6 & 7 \\
            6 & 3 & 2 & 5 & 4 & 1 & 0
        \end{matrix}\right].
    \end{equation*}
\end{example}

\begin{definition}\label{def-langle-ranlge-notation}
  Let $A=[B|C]$ be a non degenerate indexing matrix $A=[B|C]$. Put  $L(A)=[l_{ij}]$ and $R(A)=[r_{ij}]$, we  set:
$$
L_0(A):=\{\min\{l_{1j},l_{2j}\}\,;\, j\in[1,n]\}, \quad R_0(A):=\{
\min\{r_{2j},r_{3j}\}\,;\, j\in[1,n]\}\setminus\{0\},
$$
and we define the subsets $\mathrm{Exp}_L(A)$ and $\mathrm{Exp}_R(A)$ of $(\mathbb{Z}/d\mathbb{Z})^n$ by:
\begin{eqnarray*}
\mathrm{Exp}_L(A) &=&\{(\alpha_1, \ldots, \alpha_n)\,;\,\alpha_i =0\quad  \text{if}\quad i\not\in L_0(A) \},\\
\mathrm{Exp}_R(A) &=&\{(\alpha_1, \ldots, \alpha_n)\,;\,\alpha_i =0\quad  \text{if}\quad i\not\in R_0(A) \}.
\end{eqnarray*}
Finally, for $\alphan :=(\alpha_1,\ldots, \alpha_n)\in \mathrm{Exp}_L(A)$ (resp. $\mathrm{Exp}_R(A)$) we set
$$
\langle \alphan ,A \rangle =\prod_{j\in L_0(A)}\mathfrak{z}_j^{\alpha_j} \quad \text{(resp. $\langle A, \alphan \rangle =\prod_{j\in R_0(A)}\mathfrak{z}_j^{\alpha_j}$).}
$$
\end{definition}

\begin{notation}
    In  the case $A$ is the degenerate matrix, we set $L_0(A)= [1,n]$, $\mathrm{Exp}_L(A) = (\mathbb{Z}/d\mathbb{Z})^n$ and $R_0(A)=\emptyset$, $\mathrm{Exp}_R(A) =\{(0,\ldots , 0)\}.$
\end{notation}
\rm 

\begin{example}\label{ex-left-right-Z-matrices-2}
Set $d=3$ and keep the notation from Example \ref{ex-left-right-Z-matrices-V0}. Then we have

    \begin{equation*}
        L_0(A)=\{1,3,4,5,6\},\quad R_0(A)=\{1,2,4\}.
    \end{equation*}
For $\alphan=(2,0,1,1,2,1,0)\in\mathrm{Exp}_L(A) $ and $\bbeta=(1,2,0,1,0,0,0)\in\mathrm{Exp}_R(A)$, we have:
    \begin{equation*}
        \langle \alphan, A \rangle =\mathfrak{z}_1^{2}\mathfrak{z}_3\mathfrak{z}_4\mathfrak{z}_5^{2}\mathfrak{z}_6,\quad \langle  A,\bbeta  \rangle=\mathfrak{z}_1\mathfrak{z}_2^{2}\mathfrak{z}_4.
    \end{equation*}
    Note that
    \begin{equation*}
        |\mathrm{Exp}_L(A)|=3^5,\quad\text{and}\quad |\mathrm{Exp}_R(A)|=3^3.
    \end{equation*}
\end{example}

 \begin{corollary}\label{coro-normal-form-framed-blob-mon}
 Every element of $ \mathfrak{Bl}_{d,n} $ can be written in the  normal form
    $\langle \alphan,A \rangle\BBU(A)\langle A, \bbeta \rangle$,
    where $A$ is an indexing matrix,  $\alphan\in\mathrm{Exp}_L(A), \bbeta\in \mathrm{Exp}_R(A)$.
\end{corollary}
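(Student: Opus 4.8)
The plan is to promote the two-sided framed expression supplied by Lemma~\ref{lemma-normal-form-FrBlob-1}(i) into the claimed normal form by sliding each framing generator to an index permitted by the matrices $L(A)$ and $R(A)$ of Definition~\ref{def-left-right-Z-matrices}. Fix $D^{\circ}\in\mathfrak{Bl}_{d,n}$. If $D^{\circ}=\zetan^{\alphan}$, then the associated indexing matrix is the degenerate one, and by the notational convention $L_0(A)=[1,n]$, $\mathrm{Exp}_L(A)=(\ZZ/d\ZZ)^n$, $R_0(A)=\emptyset$ and $\mathrm{Exp}_R(A)=\{(0,\dots,0)\}$, so that $\langle\alphan,A\rangle=\zetan^{\alphan}$, $\BBU(A)=1$ and $\langle A,(0,\dots,0)\rangle=1$; thus $D^{\circ}$ is already in normal form. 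Otherwise Lemma~\ref{lemma-normal-form-FrBlob-1}(i) writes $D^{\circ}=\zetan^{\bgamma}\BBU(A)\zetan^{\bdelta}$ with $A$ a non-degenerate indexing matrix, and the task becomes moving $\zetan^{\bgamma}$ onto $L_0(A)$ and $\zetan^{\bdelta}$ onto $R_0(A)$.

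First I would normalize the left factor. Since the $\mathfrak{z}_k$ pairwise commute, it suffices to push each generator $\mathfrak{z}_j$ individually against $\BBU(A)$ and then recombine, so the analysis reduces to the four cases of Proposition~\ref{prop-interaction-z-BBU} for the product $\mathfrak{z}_j\BBU(A)$. In cases (i) and (ii) the factor stays on the left at the index $j=\min\{l_{1j},l_{2j}\}\in L_0(A)$ (no rewriting is needed, the middle-insertion identity of case (ii) being merely an alternative representation of $\mathfrak{z}_j\BBU(A)$); in case (iii) the relation $\mathfrak{z}_j\BBU(A)=\mathfrak{z}_{l_{1j}}\BBU(A)$ lets me realize the factor at the smaller index $\min\{l_{1j},l_{2j}\}\in L_0(A)$; and in case (iv) I simply leave $\mathfrak{z}_j$ on the left, which is legitimate because $\min\{l_{1j},l_{2j}\}=j\in L_0(A)$. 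Generators landing on a common index combine by adding exponents modulo $d$, using that $\mathfrak{z}_p\BBU(A)=\mathfrak{z}_m\BBU(A)$ forces $\mathfrak{z}_p^{a}\BBU(A)=\mathfrak{z}_m^{a}\BBU(A)$ (peel off one factor and commute). Hence $\zetan^{\bgamma}\BBU(A)=\langle\alphan',A\rangle\BBU(A)$ for some $\alphan'\in\mathrm{Exp}_L(A)$.

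The right factor $\BBU(A)\zetan^{\bdelta}$ is treated symmetrically through cases (v)--(vii). Cases (v) and (vi) keep $\mathfrak{z}_j$ on the right at the index $\min\{r_{2j},r_{3j}\}\in R_0(A)$ (again merging exponents), while case (vii) yields $\BBU(A)\mathfrak{z}_j=\mathfrak{z}_{r_{1j}}\BBU(A)$ and forces the factor across to the left. Here one uses the compatibility built into Definition~\ref{def-left-right-Z-matrices}: the relation $\BBU(A)\mathfrak{z}_j=\mathfrak{z}_{r_{1j}}\BBU(A)$ is exactly the case-(iv) relation for the left index $r_{1j}$, so $r_{1j}\in L_0(A)$. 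Such crossed factors commute past $\BBU(A)$ and through the already-formed $\langle\alphan',A\rangle$ (all framings commute), and are absorbed there, upgrading $\alphan'$ to the final $\alphan\in\mathrm{Exp}_L(A)$. What remains on the right is supported on $R_0(A)$, giving $\langle A,\bbeta\rangle$ with $\bbeta\in\mathrm{Exp}_R(A)$, so the element now reads $\langle\alphan,A\rangle\BBU(A)\langle A,\bbeta\rangle$.

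The step I expect to be the main obstacle is termination: case (iv) offers a rightward move and case (vii) a leftward one, so one must exclude an endless ping-pong of a framing across $\BBU(A)$. The resolution is to fix the conventions one-directionally --- in case (iv) I never take the optional crossing (keeping $\mathfrak{z}_j$ on the left is permitted since $j\in L_0(A)$), so the only transit is the forced leftward case-(vii) transit, and once a factor sits at an index of $L_0(A)$ on the left it is final. Thus the whole reduction is a single sweep: normalize the left factor, then sweep the right factor outward from $\BBU(A)$, depositing case-(vii) survivors into the left framing. Finally, I would invoke that by Proposition~\ref{prop-interaction-z-BBU} exactly one case applies to each product, so the rewriting is well defined and terminates in the asserted normal form.
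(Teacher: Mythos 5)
Your proposal is correct and takes essentially the same route as the paper: the paper's proof simply cites Lemma \ref{lemma-diag-normal-form-FrBlob-1}, Lemma \ref{lemma-normal-form-FrBlob-1} and Proposition \ref{prop-interaction-z-BBU} for existence, and your case-by-case sweep (left factors land on $L_0(A)$ via cases (i)--(iv), right factors either stay on $R_0(A)$ via (v)--(vi) or make a single forced leftward transit via (vii)) is exactly the argument those citations compress, with the termination issue handled correctly by your one-directional convention. The only ingredient the paper's proof records that you omit is the uniqueness of the expression (resting on Lemma \ref{lemma-normal-form-FrBlob-1}(ii) and the bead-counting equality criterion for abacus diagrams), which the literal statement does not demand but which is what justifies calling it a normal form and is used later in the cardinality count of Theorem \ref{theo-card-Fblob-mon-first}.
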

\begin{proof}
From Lemmas \ref{lemma-diag-normal-form-FrBlob-1}, \ref{lemma-normal-form-FrBlob-1} and Proposition ,\ref{prop-interaction-z-BBU}, it follows that every element can be written in the form $\langle \alphan,A \rangle\BBU(A)\langle A, \bbeta \rangle$.
The uniqueness of the writing is a consequence of Lemma \ref{lemma-normal-form-FrBlob-1}, Proposition \ref{prop-interaction-z-BBU} and Definition \ref{def-langle-ranlge-notation} (Note that implicitly we are using the fact that two $d$-abacus blob diagrams are equal, if and only if their corresponding basis diagrams are equal and the number of beads modulo $d$ is the same on each component).
\end{proof}

\begin{example}\label{ex-left-right-Z-matrices-3}
Keeping the   notations from Example \ref{ex-left-right-Z-matrices-2}, recalling that
 that:
 \begin{equation*}
        \mathfrak{z}_2\BBU(A)=\mathfrak{z}_1\BBU(A),\quad \BBU(A)\mathfrak{z}_5=\BBU(A)\mathfrak{z}_4,\quad \BBU(A)z_7=z_5\BBU(A).
\end{equation*}
 Thus, for  $X=\mathfrak{z}_1\mathfrak{z}_2\mathfrak{z}_3\BBU(A)\mathfrak{z}_5\mathfrak{z}_7,$ we have:
    \begin{equation*}
X=\mathfrak{z}_3(((z_1z_2\BBU(A))\mathfrak{z}_7)\mathfrak{z}_5)=\mathfrak{z}_3\mathfrak{z}_1^2((\BBU(A)\mathfrak{z}_7)\mathfrak{z}_5)=\mathfrak{z}_3\mathfrak{z}_1^2\mathfrak{z}_5(\BBU(A)\mathfrak{z}_5)=\mathfrak{z}_1^2\mathfrak{z}_3\mathfrak{z}_5\BBU(A)\mathfrak{z}_4.
    \end{equation*}
 This last expression is the normal form of $X$. In fact we have:
    \begin{equation*}
        X=\langle  \alphan,A \rangle\BBU(A)\langle A,\bbeta\rangle,
    \end{equation*}
    where $\alphan=(2,0,1,0,1,0,0)\in \mathrm{Exp}_{L}(A)$ and $\bbeta=(0,0,0,1,0,0,0)\in \mathrm{Exp}_{R}(A)$.
\end{example}

\begin{example}
    Let $D^{\circ}$ the diagram of Fig.\ref{ex-dframed-blob-diagram}. The rightmost diagram in Fig.\ref{eq-tikz-quasi-normalized-pearls+blob-diag} corresponds to the normal form of $D^{\circ}$. From  Example \ref{ex-matrix-form-tikz-blobbed-diagram} follows that the normal form of  $D^{\circ}$ is
    \begin{equation*}
\mathfrak{z}_3^2\mathfrak{z}_5\mathfrak{z}_8\mathfrak{z}_9^2\left(\BBU\left[\begin{matrix}
            1&2&5&6\\
            0&0&2&5
        \end{matrix}\right]\right)\mathfrak{z}_1\mathfrak{z}_5\mathfrak{z}_7^2.
    \end{equation*}
\end{example}

Let $A=[B|C]$ be  a non degenerated indexing matrix with blob-rank with $r$. Define
\begin{equation*}
\calL(A):=\{(k,1):k\in L_0(A)\},\quad \calR(A):=\{(k,0):k\in R_0(A)\}.
\end{equation*}

Note that  $\calL(A) \cap {\calR(A)}=\emptyset$ and $|\calL (A)\cup\calR(A)|=|L_0 (A)|+|R_0(A)|.$
 Note also that we can write:
\begin{equation}\label{DecomA}
\BBU(A)=\BBU\left[\begin{matrix}
                b_1\\
                0
            \end{matrix}\right]\BBU(A'),\quad \text{where}
            \quad
                A'=\left[\begin{matrix}
                    a_{12}&\cdots& a_{1l}\\
                    a_{22}&\cdots &a_{2l}
                \end{matrix}\right]=\left[\begin{matrix}
                    b_2&\cdots &b_r&c_{11}&\cdots &c_{1m}\\
                    0&\cdots &0&c_{21}&\cdots &c_{2m}
                \end{matrix}\right].
            \end{equation}

\begin{lemma}\label{lemma-injection-psi}
There is an injective function $\psi:\calL(A')\cup \calR(A')\rightarrow \calL(A)\cup \calR(A)$.
\end{lemma}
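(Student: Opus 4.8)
\subsection*{Proof proposal}

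The plan is to identify $\calL(A)\cup\calR(A)$ with the set of bead-components of the base diagram of $\BBU(A)$ and to build $\psi$ from the factorization $\BBU(A)=\BBU\left[\begin{smallmatrix}b_1\\0\end{smallmatrix}\right]\BBU(A')$ of (\ref{DecomA}). First I would observe that each element of $\calL(A)\cup\calR(A)$ records exactly one bead-component: by Remark \ref{remark-relation-blobrank-and-blobbed-arcs} the diagram has $n$ arcs and $r$ blobs, and by Definition \ref{def-abacus-blob-mon}(ii) each blob splits its arc into two pieces, so there are $n+r$ bead-components in all. The tag $1$ marks those meeting the top boundary (the left framings recorded by $L(A)$, through-strands included, cf. Proposition \ref{prop-interaction-z-BBU}), and the tag $0$ marks those meeting the bottom boundary only (the right framings $R(A)$, case (vii) being excluded). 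This already gives $|\calL(A)\cup\calR(A)|=|L_0(A)|+|R_0(A)|=n+r$, and since $A'$ has blob-rank $r-1$ the same count yields $|\calL(A')\cup\calR(A')|=n+(r-1)$, so an injection exists on cardinality grounds; the work is to produce a canonical one.

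For the explicit map I would let $\psi$ send the bead-component of $\BBU(A')$ indexed by a given element of $\calL(A')\cup\calR(A')$ to the bead-component of $\BBU(A)$ that contains it after gluing $\BBU\left[\begin{smallmatrix}b_1\\0\end{smallmatrix}\right]$ on top. The transport of framings across the new column is governed by Lemma \ref{zA0U}: a framing reaching the gluing interface at position $k$ is carried to $k$ when $k\ge b_1+2$ and to $k-2$ when $k\in[3,b_1+1]$, whereas the two positions $k\in\{1,2\}$ land on the blobbed cup of the new column. This index transport is injective on $[3,n]$ with image $[1,n]\setminus\{b_1,b_1+1\}$, so every component of $\BBU(A')$ whose interface points avoid $\{1,2\}$ is carried to a distinct component of $\BBU(A)$; in particular a purely bottom component is untouched, so the $\calR(A')$-part of $\psi$ is essentially an inclusion and, by the tag, its image is disjoint from that of the $\calL(A')$-part.

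The delicate point, which I expect to be the main obstacle, is the interaction with the blobbed cup $\{1,2\}$ of $\BBU\left[\begin{smallmatrix}b_1\\0\end{smallmatrix}\right]$. When an $A'$-component reaches the interface at $1$ or $2$ it is joined through this cup, but the blob is an obstruction, so the enlarged graph-component is re-split by the blob into two bead-components of $\BBU(A)$; this separating role of the blob is exactly what keeps two distinct $A'$-components from being merged, hence what forces $\psi$ to be injective. The only genuinely new bead-component is the top cap of $\BBU\left[\begin{smallmatrix}b_1\\0\end{smallmatrix}\right]$, and it is the single element of $\calL(A)$ left outside the image, matching $|L_0(A)|+|R_0(A)|=|L_0(A')|+|R_0(A')|+1$. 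The one configuration still to be checked is the degenerate one in which a single $A'$-component meets both interface positions $1$ and $2$ and would close into a blobbed loop; I would rule this out using the monotonicity conditions (ii)--(v) on the columns of $A=[B|C]$ from Subsection \ref{def-indexing-matrix}, and in any case the cardinality count above guarantees that an injection persists. Assembling the interface transport, the blob-split at $\{1,2\}$, and the untouched bottom components then produces $\psi$, with injectivity secured by the blob.
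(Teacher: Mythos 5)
Your argument is correct in substance but takes a genuinely different route from the paper's. The paper builds $\psi$ purely algebraically: it splits $\calL(A')$ according to the four exclusive cases of Lemma \ref{zA0U} for $\mathfrak{z}_k\BBU\big[\begin{smallmatrix}b_1\\0\end{smallmatrix}\big]$, defines $\psi(k,1)$ to be $(k-2,1)$, $(k,1)$, or --- in the two boundary cases $k\in\{1,2\}$ --- either $(p,0)$ or $(p,1)$ depending on which alternative of Proposition \ref{prop-interaction-z-BBU} occurs for $\mathfrak{z}_k\BBU(A')$, and deduces injectivity from the mutual exclusiveness of those alternatives. You instead identify $\calL(A)\cup\calR(A)$ with the set of bead-components of the diagram $\BBU(A)$ and count these as $n+r$ via Remark \ref{remark-relation-blobrank-and-blobbed-arcs}, so that $|\calL(A')\cup\calR(A')|=n+r-1<n+r=|\calL(A)\cup\calR(A)|$ and an injection exists by counting alone; note this proves Proposition \ref{lemma-equation-n+r} outright and would make Lemma \ref{lemma-almost-surj-psi} and the induction there unnecessary, which is a genuine simplification. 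Your explicit $\psi$ (send a component of $\BBU(A')$ to the component of $\BBU(A)$ containing it after gluing) agrees in spirit with the paper's map, and your reason for injectivity is the right one: every component of $\BBU\big[\begin{smallmatrix}b_1\\0\end{smallmatrix}\big]$ meets the gluing interface in at most one point precisely because the blob splits the bottom cup, so no two $A'$-components can be merged.

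Two steps are asserted rather than proved and would need to be written out. First, the identification of $\calL(A)\cup\calR(A)$ with bead-components amounts to checking, against Definition \ref{def-left-right-Z-matrices} and the cases of Proposition \ref{prop-interaction-z-BBU}, that each component meeting the top contributes exactly one element (the minimum of its top endpoints) to $L_0(A)$, each component meeting only the bottom contributes exactly one element to $R_0(A)$, and through-strands are removed from $R_0(A)$ by the $\setminus\{0\}$; this is true but is the real content of your count and is currently only stated. Second, you leave open the configuration in which a single $A'$-component joins interface points $1$ and $2$ and would close into a blobbed loop; the paper disposes of this by observing that $a_{12}>a_{11}=b_1$ forces the partner of top point $1$ of $\BBU(A')$ to lie beyond position $3$. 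Your fallback to the cardinality count does rescue the bare statement of the lemma, but not the explicit containment map, so this case must be excluded if you want the concrete $\psi$ that the paper reuses in Lemma \ref{lemma-almost-surj-psi}.
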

\begin{proof}
It is enough check the case $b_1>0$, since the case $b_1=0$ follows analogously.
Because  $R_0(A')\subset R_0(A)$, we can define $\psi(k,0)=(k,0)$. We will  define $\psi(k,1)$ according to the following four cases:  (i) $2<k\leq b_1+1$, (ii) $k>b_1+1$, (iii) $k=1$ or (iv) $k=2$, which are the exclusive cases of Lemma \ref{zA0U}. Thus, in virtue of this lemma, we can define $\psi(k,1)=(k-2,1)$ and $\psi(k,1)=(k,1)$ in the cases (i) and (ii), respectively. In the case (iii) we have two exclusive possibilities: The first possibility is that  there is $p$ such that $\mathfrak{z}_1\BBU(A')=\BBU(A')\mathfrak{z}_p.$
\begin{equation*}
    \begin{tikzpicture}[xscale=0.5,yscale=0.3]
\node[] at (-1,1.5) {$\BBU\Big[\!\!\begin{array}{c}b_1 \\0\end{array}\!\!\Big]$};
\draw[thick] (1,0) to [out=90,in=90](2,0);
\draw[fill] (1.5,0.3) circle [radius=0.15];
\draw[thick] (3,0) to [out=90,in=270](1,3);
\draw[thick] (4,0) to [out=90,in=270](2,3);
\node[] at (4,1.5) {$\cdots$};
\draw[thick] (5,3) to [out=270,in=90](7,0);
\draw[thick] (6,3) to [out=270,in=90](8,0);
\draw[thick] (8,3) to [out=270,in=90](10,0);
\draw[thick] (11.3,3) to [out=270,in=90](11.3,0);
\node[] at (8,1.5) {$\cdots$};
\draw[thick] (10,3) to [out=270,in=270](9,3);
\node[] at (12,1.5) {$\cdots$};
\draw[thick] (13,3) to [out=270,in=90](13,0);
\draw[dashed] (-3,-1) to (13.5,-1);
\node[above] at (1,4) {$1$};
\node[above] at (9,4) {$b_1$};
\node[above] at (13,4) {$n$};


\draw[thick](1,3) to (1,4);
\draw[thick](2,3) to (2,4);
\draw[thick](5,3) to (5,4);
\draw[thick](6,3) to (6,4);
\draw[thick](8,3) to (8,4);
\draw[thick](9,3) to (9,4);
\draw[thick](10,3) to (10,4);
\draw[thick](11.3,3) to (11.3,4);
\draw[thick](13,3) to (13,4);
\draw[thick](1,0) to (1,-1);
\draw[thick](2,0) to (2,-1);
\draw[thick](4,0) to (4,-1);
\draw[thick](3,0) to (3,-1);
\draw[thick](7,0) to (7,-1);
\draw[thick](8,0) to (8,-1);
\draw[thick](10,0) to (10,-1);
\draw[thick](11.3,0) to (11.3,-1);
\draw[thick](13,0) to (13,-1);
\node[] at (-1,-2.5) {$\BBU(A')$};
\draw[thick] (1,-1) to [out=270,in=90](8,-4);
\draw[dashed] (-3,-4) to (13.5,-4);
\node[below] at (8,-4) {$p$};
\end{tikzpicture} 
\end{equation*}
So we define $\psi(1,1)=(p,0)$; note that in this situation we have $p\notin R_0(A')$ but  $p\in R_0(A),$ so $\psi(1,1)=(p,0)$ is well defined. The other possibility is that
there exist $i> 1,$ such that $\mathfrak{z}_1\BBU(A')=\mathfrak{z}_i\BBU(A'),$ hence $i\notin L_0(A')$. By definition  $a_{12}>a_{11}=b_1,$ which implies that $i>3.$ Therefore by Lemma \ref{zA0U}, $\BBU\left[\begin{matrix}
                b_1\\
                0
            \end{matrix}\right]\mathfrak{z}_i=\mathfrak{z}_p\BBU\left[\begin{matrix}
                b_1\\
                0
            \end{matrix}\right]$ for a unique  $p\in L_0\left[\begin{matrix}
                b_1\\
                0
            \end{matrix}\right]$ such that $p\neq b_1,b_1+1.$
        \begin{equation*}
\begin{tikzpicture}[xscale=0.5,yscale=0.3]
\node[] at (-1,1.5) {$\BBU\Big[\!\!\begin{array}{c}b_1 \\0\end{array}\!\!\Big]$};
\draw[thick] (1,0) to [out=90,in=90](2,0);
\draw[fill] (1.5,0.3) circle [radius=0.15];
\draw[thick] (3,0) to [out=90,in=270](1,3);
\draw[thick] (4,0) to [out=90,in=270](2,3);
\node[] at (4,1.5) {$\cdots$};
\draw[thick] (5,3) to [out=270,in=90](7,0);
\draw[thick] (6,3) to [out=270,in=90](8,0);
\draw[thick] (8,3) to [out=270,in=90](10,0);
\draw[thick] (11.3,3) to [out=270,in=90](11.3,0);
\node[] at (8,1.5) {$\cdots$};
\draw[thick] (10,3) to [out=270,in=270](9,3);
\node[] at (12,1.5) {$\cdots$};
\draw[thick] (13,3) to [out=270,in=90](13,0);
\draw[dashed] (-3,-1) to (13.5,-1);
\node[above] at (1,4) {$1$};
\node[above] at (9,4) {$b_1$};
\node[above] at (13,4) {$n$};


\draw[thick](1,3) to (1,4);
\draw[thick](2,3) to (2,4);
\draw[thick](5,3) to (5,4);
\draw[thick](6,3) to (6,4);
\draw[thick](8,3) to (8,4);
\draw[thick](9,3) to (9,4);
\draw[thick](10,3) to (10,4);
\draw[thick](11.3,3) to (11.3,4);
\draw[thick](13,3) to (13,4);
\draw[thick](1,0) to (1,-1);
\draw[thick](2,0) to (2,-1);
\draw[thick](4,0) to (4,-1);
\draw[thick](3,0) to (3,-1);
\draw[thick](7,0) to (7,-1);
\draw[thick](8,0) to (8,-1);
\draw[thick](10,0) to (10,-1);
\draw[thick](11.3,0) to (11.3,-1);
\draw[thick](13,0) to (13,-1);
\node[] at (-1,-2.5) {$\BBU(A')$};
\draw[thick] (1,-1) to [out=270,in=270](8,-1);
\draw[dashed] (-3,-4) to (13.5,-4);
\node[below] at (8.2,-1) {$i$};
\node[above] at (6,4) {$p$};
\end{tikzpicture} 
\end{equation*}

            Thus,  we define $\psi(1,1)=(p,1).$ The defintion of $\psi(2,1)$ follows analogously. Finally,  by the exclusiveness of the cases described in Proposition \ref{prop-interaction-z-BBU} it follows that $\psi$ is injective.
            \end{proof}

\begin{lemma}\label{lemma-almost-surj-psi}
 The following equation holds:
 \begin{equation*}
|\calL(A)\cup\calR(A)|-|\calL(A')\cup\calR(A')|=1.
 \end{equation*}
\end{lemma}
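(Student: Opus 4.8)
The plan is to reduce the statement to the single combinatorial identity
\[
|L_0(A)|+|R_0(A)| = n + r,
\]
valid for every indexing matrix $A$ of blob-rank $r$. Granting this, the lemma is immediate: by (\ref{DecomA}) the matrix $A'$ is obtained from $A$ by deleting its first column, and this column is the leftmost column of the initial block $B$, hence one of the columns containing a $0$; since the positive block $C$ has no zero entry, the blob-rank of $A'$ is $r-1$. Using $|\calL(A)\cup\calR(A)|=|L_0(A)|+|R_0(A)|$ (and the same for $A'$), the identity then gives
\[
|\calL(A)\cup\calR(A)|-|\calL(A')\cup\calR(A')| = (n+r)-(n+(r-1)) = 1 .
\]

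To establish the identity I would count, in two ways, the elements of $\mathfrak{Bl}_{d,n}$ whose base diagram equals $\BBU(A)$; it suffices to do so for one fixed $d\geq 2$, because $L_0(A)$ and $R_0(A)$ are defined purely combinatorially from $A$ and $n$ and are independent of $d$. First, by Corollary \ref{coro-normal-form-framed-blob-mon} every element of $\mathfrak{Bl}_{d,n}$ has a unique normal form $\langle\alphan,A\rangle\BBU(A)\langle A,\bbeta\rangle$, whose base diagram is $\BBU(A)$; hence the elements with base $\BBU(A)$ are parametrized bijectively by $\mathrm{Exp}_L(A)\times\mathrm{Exp}_R(A)$, giving $d^{|L_0(A)|}\cdot d^{|R_0(A)|}$ of them. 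Second, by the equality convention for abacus blob diagrams, such an element is precisely the datum of a bead-count modulo $d$ on each component of $\BBU(A)$. By Remark \ref{remark-relation-blobrank-and-blobbed-arcs} the diagram $\BBU(A)$ has exactly $r$ blobbed arcs; as each of the $n$ arcs contributes one component except the blobbed ones, which are split by their blob into two, the total number of components is $(n-r)+2r=n+r$. Thus there are $d^{\,n+r}$ such elements, and comparing the two counts yields $|L_0(A)|+|R_0(A)|=n+r$.

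Two points deserve care, and I expect them to be the only delicate ones. The passage to bead-counts on components relies on the stated equality convention for abacus blob diagrams, and the normal-form count relies squarely on the \emph{uniqueness} half of Corollary \ref{coro-normal-form-framed-blob-mon}, so that distinct pairs $(\alphan,\bbeta)$ yield distinct elements; the boundary case, where $A$ is the single blob column and $A'$ is the degenerate matrix, must also be checked, but there the conventions give $L_0(A')=[1,n]$ and $R_0(A')=\emptyset$, so $|L_0(A')|+|R_0(A')|=n$, in agreement with its blob-rank $0$. Alternatively, and in closer keeping with the present set-up, one could argue directly from the injectivity of $\psi$ in Lemma \ref{lemma-injection-psi} by exhibiting the unique element of $\calL(A)\cup\calR(A)$ missing from its image --- the degree of freedom carried by the new blobbed arc produced by the factor $\BBU\left[\begin{matrix}b_1\\0\end{matrix}\right]$ --- but the case-by-case bookkeeping of Lemma \ref{lemma-injection-psi} makes the two-way count above the cleaner route.
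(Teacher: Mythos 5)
Your proof is correct, but it takes a genuinely different route from the paper's. The paper proves the lemma by a diagrammatic case analysis: it shows that the injection $\psi$ of Lemma \ref{lemma-injection-psi} hits every element of $\calL(A)\cup\calR(A)$ except $(b_1,1)$, so that $\psi(\calL(A')\cup\calR(A'))=(\calL(A)\cup\calR(A))\setminus\{(b_1,1)\}$ and the two cardinalities differ by exactly one. You instead prove the identity $|L_0(A)|+|R_0(A)|=n+r$ outright --- which is precisely the paper's Proposition \ref{lemma-equation-n+r} --- by double-counting the elements of $\mathfrak{Bl}_{d,n}$ with base diagram $\BBU(A)$: once via the uniqueness of the normal form of Corollary \ref{coro-normal-form-framed-blob-mon}, giving $d^{|L_0(A)|+|R_0(A)|}$, and once via bead assignments on the $n+r$ components, giving $d^{n+r}$. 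This inverts the paper's logical order, since the paper deduces Proposition \ref{lemma-equation-n+r} from the present lemma by induction on $r$; but your argument is not circular, because Corollary \ref{coro-normal-form-framed-blob-mon} is established earlier and independently of Lemmas \ref{lemma-injection-psi} and \ref{lemma-almost-surj-psi}, and because $L_0(A)$ and $R_0(A)$ do not depend on $d$, so comparing exponents for a single $d\geq 2$ is legitimate. What each approach buys: yours is shorter and makes the map $\psi$ and its near-surjectivity logically unnecessary for the cardinality formula of Theorem \ref{theo-card-Fblob-mon-first}, while the paper's case analysis yields the finer fact that the unique element missed by $\psi$ is $(b_1,1)$, i.e.\ it localizes the extra degree of freedom at the new blobbed arc --- the picture your closing remark gestures at. You are also right to flag the boundary case where $A'$ is degenerate: the conventions $L_0(A')=[1,n]$ and $R_0(A')=\emptyset$ give $|\calL(A')\cup\calR(A')|=n$, consistent with blob-rank $0$.
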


\begin{proof}
It is enough to prove that
\begin{equation*}
|\calL(A)\cup\calR(A)|-|\psi\left(\calL(A')\cup\calR(A')\right)|=1,
 \end{equation*}
 where $\psi$  is the (injective) function of the proof of Lemma \ref{lemma-injection-psi}.
Let  $(p,x)\in\calL(A)\cup \calR(A)$ such that $(p,x)\neq (b_1,1).$ Then by the construction of $\psi$, we have the following situations:
\begin{itemize}
\item If $(p,x)=(p,0)\in \calR(A')\cap \calR(A),$ then $(p,0)\in \psi(\calL(A')\cup \calR(A'))$ since in that case $\psi(p,0)=(p,0).$
\begin{equation*}
    \begin{tikzpicture}[xscale=0.5,yscale=0.3]
\node[] at (-1,1.5) {$\BBU\Big[\!\!\begin{array}{c}b_1 \\0\end{array}\!\!\Big]$};
\draw[thick] (1,0) to [out=90,in=90](2,0);
\draw[fill] (1.5,0.3) circle [radius=0.15];
\draw[thick] (3,0) to [out=90,in=270](1,3);
\draw[thick] (4,0) to [out=90,in=270](2,3);
\node[] at (4,1.5) {$\cdots$};
\draw[thick] (5,3) to [out=270,in=90](7,0);
\draw[thick] (8,3) to [out=270,in=90](10,0);
\draw[thick] (11.3,3) to [out=270,in=90](11.3,0);
\node[] at (8,1.5) {$\cdots$};
\draw[thick] (10,3) to [out=270,in=270](9,3);
\node[] at (12,1.5) {$\cdots$};
\draw[thick] (13,3) to [out=270,in=90](13,0);
\draw[dashed] (-3,-1) to (13.5,-1);
\node[above] at (1,4) {$1$};
\node[above] at (9,4) {$b_1$};
\node[above] at (13,4) {$n$};


\draw[thick](1,3) to (1,4);
\draw[thick](2,3) to (2,4);
\draw[thick](5,3) to (5,4);
\draw[thick](8,3) to (8,4);
\draw[thick](9,3) to (9,4);
\draw[thick](10,3) to (10,4);
\draw[thick](11.3,3) to (11.3,4);
\draw[thick](13,3) to (13,4);
\draw[thick](1,0) to (1,-1);
\draw[thick](2,0) to (2,-1);
\draw[thick](4,0) to (4,-1);
\draw[thick](3,0) to (3,-1);
\draw[thick](7,0) to (7,-1);
\draw[thick](10,0) to (10,-1);
\draw[thick](11.3,0) to (11.3,-1);
\draw[thick](13,0) to (13,-1);
\node[] at (-1,-2.5) {$\BBU(A')$};
\draw[thick] (7,-4) to [out=90,in=90](10,-4);
\draw[dashed] (-3,-4) to (13.5,-4);
\node[below] at (7,-4) {$p$};
\end{tikzpicture} 
\end{equation*}
\item If $(p,x)=(p,0)\in \calR(A)$ but $(p,0)\notin\calR(A'),$ then, $\BBU(A')\mathfrak{z}_{p}=\mathfrak{z}_j\BBU(A')$ for $j\in\{1,2\}$. Note that in particular $(j,1)\in\calL(A')$.
\begin{equation*}
    \begin{tikzpicture}[xscale=0.5,yscale=0.3]
\node[] at (-1,1.5) {$\BBU\Big[\!\!\begin{array}{c}b_1 \\0\end{array}\!\!\Big]$};
\draw[thick] (1,0) to [out=90,in=90](2,0);
\draw[fill] (1.5,0.3) circle [radius=0.15];
\draw[thick] (3,0) to [out=90,in=270](1,3);
\draw[thick] (4,0) to [out=90,in=270](2,3);
\node[] at (4,1.5) {$\cdots$};
\draw[thick] (5,3) to [out=270,in=90](7,0);
\draw[thick] (6,3) to [out=270,in=90](8,0);
\draw[thick] (8,3) to [out=270,in=90](10,0);
\draw[thick] (11.3,3) to [out=270,in=90](11.3,0);
\node[] at (8,1.5) {$\cdots$};
\draw[thick] (10,3) to [out=270,in=270](9,3);
\node[] at (12,1.5) {$\cdots$};
\draw[thick] (13,3) to [out=270,in=90](13,0);
\draw[dashed] (-3,-1) to (13.5,-1);
\node[above] at (1,4) {$1$};
\node[above] at (9,4) {$b_1$};
\node[above] at (13,4) {$n$};


\draw[thick](1,3) to (1,4);
\draw[thick](2,3) to (2,4);
\draw[thick](5,3) to (5,4);
\draw[thick](6,3) to (6,4);
\draw[thick](8,3) to (8,4);
\draw[thick](9,3) to (9,4);
\draw[thick](10,3) to (10,4);
\draw[thick](11.3,3) to (11.3,4);
\draw[thick](13,3) to (13,4);
\draw[thick](1,0) to (1,-1);
\draw[thick](2,0) to (2,-1);
\draw[thick](4,0) to (4,-1);
\draw[thick](3,0) to (3,-1);
\draw[thick](7,0) to (7,-1);
\draw[thick](8,0) to (8,-1);
\draw[thick](10,0) to (10,-1);
\draw[thick](11.3,0) to (11.3,-1);
\draw[thick](13,0) to (13,-1);
\node[] at (-1,-2.5) {$\BBU(A')$};
\draw[thick] (1,-1) to [out=270,in=90](8,-4);
\draw[dashed] (-3,-4) to (13.5,-4);
\node[below] at (8,-4) {$p$};
\end{tikzpicture} 
\end{equation*}
In that case we have $\psi(j,1)=(p,0),$ and then $(p,0)\in \psi(\calL(A')\cup \calR(A')).$
\item If $(p,x)=(p,1)\in \calL(A)$ with $1\leq p<b_1$ or $p>b_1+1$ then there is a $2<i\leq n$ such that $\mathfrak{z}_p\BBU\left[\begin{matrix}
b_1\\
0
\end{matrix}\right]=\BBU\left[\begin{matrix}
b_1\\
0
\end{matrix}\right]\mathfrak{z}_i,$ and we have two exclusive cases:
\begin{itemize}
\item If $(i,1)\in\calL(A'),$ then $\psi(i,1)=(p,1),$ so
$(p,1)\in \psi(\calL(A')\cup \calR(A')).$
\begin{equation*}
    \begin{tikzpicture}[xscale=0.5,yscale=0.3]
\node[] at (-1,1.5) {$\BBU\Big[\!\!\begin{array}{c}b_1 \\0\end{array}\!\!\Big]$};
\draw[thick] (1,0) to [out=90,in=90](2,0);
\draw[fill] (1.5,0.3) circle [radius=0.15];
\draw[thick] (3,0) to [out=90,in=270](1,3);
\draw[thick] (4,0) to [out=90,in=270](2,3);
\node[] at (4,1.5) {$\cdots$};
\draw[thick] (5,3) to [out=270,in=90](7,0);
\draw[thick] (8,3) to [out=270,in=90](10,0);
\draw[thick] (11.3,3) to [out=270,in=90](11.3,0);
\node[] at (8,1.5) {$\cdots$};
\draw[thick] (10,3) to [out=270,in=270](9,3);
\node[] at (12,1.5) {$\cdots$};
\draw[thick] (13,3) to [out=270,in=90](13,0);
\draw[dashed] (-3,-1) to (13.5,-1);
\node[above] at (1,4) {$1$};
\node[above] at (9,4) {$b_1$};
\node[above] at (13,4) {$n$};


\draw[thick](1,3) to (1,4);
\draw[thick](2,3) to (2,4);
\draw[thick](5,3) to (5,4);
\draw[thick](8,3) to (8,4);
\draw[thick](9,3) to (9,4);
\draw[thick](10,3) to (10,4);
\draw[thick](11.3,3) to (11.3,4);
\draw[thick](13,3) to (13,4);
\draw[thick](1,0) to (1,-1);
\draw[thick](2,0) to (2,-1);
\draw[thick](4,0) to (4,-1);
\draw[thick](3,0) to (3,-1);
\draw[thick](7,0) to (7,-1);
\draw[thick](10,0) to (10,-1);
\draw[thick](11.3,0) to (11.3,-1);
\draw[thick](13,0) to (13,-1);
\node[] at (-1,-2.5) {$\BBU(A')$};
\draw[thick] (7,-1) to [out=270,in=270](10,-1);
\draw[dashed] (-3,-4) to (13.5,-4);
\node[below] at (6.8,-1) {$i$};
\node[above] at (5,4) {$p$};
\end{tikzpicture} 
\end{equation*}
\item If $(i,1)\notin \calL(A'),$ then  $\mathfrak{z}_i\BBU(A')=\mathfrak{z}_j\BBU(A')$ for $j\in\{1,2\},$ then $(j,1)\in \calL(A')$ and by definition $\psi(j,1)=(p,1),$ so
$(p,1)\in \psi(\calL(A')\cup \calR(A')).$
For example:
\begin{equation*}
   \begin{tikzpicture}[xscale=0.5,yscale=0.3]
\node[] at (-1,1.5) {$\BBU\Big[\!\!\begin{array}{c}b_1 \\0\end{array}\!\!\Big]$};
\draw[thick] (1,0) to [out=90,in=90](2,0);
\draw[fill] (1.5,0.3) circle [radius=0.15];
\draw[thick] (3,0) to [out=90,in=270](1,3);
\draw[thick] (4,0) to [out=90,in=270](2,3);
\node[] at (4,1.5) {$\cdots$};
\draw[thick] (5,3) to [out=270,in=90](7,0);
\draw[thick] (6,3) to [out=270,in=90](8,0);
\draw[thick] (8,3) to [out=270,in=90](10,0);
\draw[thick] (11.3,3) to [out=270,in=90](11.3,0);
\node[] at (8,1.5) {$\cdots$};
\draw[thick] (10,3) to [out=270,in=270](9,3);
\node[] at (12,1.5) {$\cdots$};
\draw[thick] (13,3) to [out=270,in=90](13,0);
\draw[dashed] (-3,-1) to (13.5,-1);
\node[above] at (1,4) {$1$};
\node[above] at (9,4) {$b_1$};
\node[above] at (13,4) {$n$};


\draw[thick](1,3) to (1,4);
\draw[thick](2,3) to (2,4);
\draw[thick](5,3) to (5,4);
\draw[thick](6,3) to (6,4);
\draw[thick](8,3) to (8,4);
\draw[thick](9,3) to (9,4);
\draw[thick](10,3) to (10,4);
\draw[thick](11.3,3) to (11.3,4);
\draw[thick](13,3) to (13,4);
\draw[thick](1,0) to (1,-1);
\draw[thick](2,0) to (2,-1);
\draw[thick](4,0) to (4,-1);
\draw[thick](3,0) to (3,-1);
\draw[thick](7,0) to (7,-1);
\draw[thick](8,0) to (8,-1);
\draw[thick](10,0) to (10,-1);
\draw[thick](11.3,0) to (11.3,-1);
\draw[thick](13,0) to (13,-1);
\node[] at (-1,-2.5) {$\BBU(A')$};
\draw[thick] (1,-1) to [out=270,in=270](8,-1);
\draw[dashed] (-3,-4) to (13.5,-4);
\node[below] at (8.2,-1) {$i$};
\node[above] at (6,4) {$p$};
\end{tikzpicture} 
\end{equation*}
\end{itemize}
\end{itemize}
With the above analysis we conclude that
\begin{equation*}
\psi(\calL(A')\cup \calR(A'))=  \left(\calL(A)\cup \calR(A)\right)-\{(b_1,1)\}.
\end{equation*}
Therefore,
$
|\calL(A)\cup\calR(A)|-|\psi\left(\calL(A')\cup\calR(A')\right)|=1
,$
as desired.
\end{proof}

\begin{proposition}\label{lemma-equation-n+r}
If $A=[B|C]$ is a non degenerated indexing matrix with blob-rank $r\geq 0,$ then
    \begin{equation*}
        |L_0(A)|+|R_0(A)|= n+r\quad \text{and}\quad
        |\mathrm{Exp}_L(A)
        |\cdot|\mathrm{Exp}_R(A)|= d^{n+r}.
    \end{equation*}
\end{proposition}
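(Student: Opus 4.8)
The plan is to deduce the second identity from the first, and to establish the first by induction on the blob-rank $r$, with Lemma \ref{lemma-almost-surj-psi} providing the inductive step. The second identity is purely formal: by Definition \ref{def-langle-ranlge-notation} the set $\mathrm{Exp}_L(A)$ consists of the vectors in $(\ZZ/d\ZZ)^n$ whose coordinates are free exactly on $L_0(A)$ and zero elsewhere, so $|\mathrm{Exp}_L(A)| = d^{|L_0(A)|}$, and likewise $|\mathrm{Exp}_R(A)| = d^{|R_0(A)|}$; thus $|\mathrm{Exp}_L(A)|\cdot|\mathrm{Exp}_R(A)| = d^{|L_0(A)|+|R_0(A)|}$, which becomes $d^{n+r}$ the moment the first identity is in hand. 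Moreover, since every element of $\calL(A)$ has second coordinate $1$ while every element of $\calR(A)$ has second coordinate $0$, these sets are disjoint and $|L_0(A)|+|R_0(A)| = |\calL(A)\cup\calR(A)|$; so it is enough to prove $|\calL(A)\cup\calR(A)| = n+r$.

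For the inductive step I would assume $r\geq 1$. Then the leftmost column of $A$ is a blob column, and the decomposition (\ref{DecomA}) writes $A = [b_1\,|\,A']$ where $A'$ is a non-degenerate indexing matrix of blob-rank $r-1$ (or the degenerate matrix, when $C$ is empty and $r=1$). Lemma \ref{lemma-almost-surj-psi} gives $|\calL(A)\cup\calR(A)| = |\calL(A')\cup\calR(A')| + 1$, and applying the induction hypothesis to $A'$ (or, in the degenerate case, the convention $L_0 = [1,n]$, $R_0 = \emptyset$, which gives $n$) yields $|\calL(A')\cup\calR(A')| = n + (r-1)$. Hence $|\calL(A)\cup\calR(A)| = n+r$, completing the step.

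The crux is the base case $r = 0$, in which $A = C$ is positive and $\BBU(A)$ is a blob-free Jones-type diagram built only from through-strands, caps (arcs joining two top points) and cups (arcs joining two bottom points); writing $p$ for the number of through-strands and $q$ for the common number of caps and cups, one has $p + 2q = n$. Here only cases (iii)--(iv) of Proposition \ref{prop-interaction-z-BBU} occur on the left and (vi)--(vii) on the right, and I would read off $L(A)$, $R(A)$ accordingly. A cap on top points $a<b$ puts both $j=a$ and $j=b$ in case (iii) with $\mathfrak{z}_a\BBU(A) = \mathfrak{z}_b\BBU(A)$, so both columns contribute the single value $a$ to $L_0(A)$; a through-strand with top endpoint $j$ falls in case (iv), contributing $j$. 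These values are distinct and disjoint from the cap minima, so $|L_0(A)| = q + p$. Dually, each cup contributes its smaller endpoint to $R_0(A)$, whereas a through-strand seen from the bottom falls in case (vii), whose column $[\,k\;\,j\;\,0\,]^t$ gives $\min\{r_{2j},r_{3j}\} = 0$ and is discarded; hence $|R_0(A)| = q$. Adding, $|L_0(A)|+|R_0(A)| = (p+q)+q = p+2q = n$.

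The hard part will be exactly this base case: one must check, straight from Definition \ref{def-left-right-Z-matrices}, that each through-strand is counted precisely once — its bead normalized to the top and yielding a discarded $0$ at the bottom — so that caps, cups and through-strands together exhaust all $n$ boundary points with no double counting. Once that is verified, the inductive step is immediate from Lemma \ref{lemma-almost-surj-psi}, and the two displayed identities follow as above.
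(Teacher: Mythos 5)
Your proposal is correct and follows essentially the same route as the paper: the second identity is reduced to the first via $|\mathrm{Exp}_L(A)|=d^{|L_0(A)|}$ and $|\mathrm{Exp}_R(A)|=d^{|R_0(A)|}$, and the first is proved by induction on the blob-rank, with Lemma \ref{lemma-almost-surj-psi} supplying the inductive step through the decomposition (\ref{DecomA}). Your base case replaces the paper's abstract appeal to the exclusive cases of Proposition \ref{prop-interaction-z-BBU} by an explicit count over caps, cups and through-strands ($|L_0(A)|=p+q$, $|R_0(A)|=q$, $p+2q=n$), which is the same count made concrete.
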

\begin{proof}
Note that the second  equality follows from the first and the facts that $|\textrm{Exp}_L(A)|=d^{|L_0(A)|}$ and $|\textrm{Exp}_R(A)|=d^{|R_0(A)|}.$ The first  equality will be proven by induction on the rank $r$ of $A$.
If $r=0$, the  Proposition \ref{prop-interaction-z-BBU} says that we have the following two exclusive cases for $\mathfrak{z}_j\BBU(A)$, with $j\in[1,n]$:
\begin{itemize}
\item There is $k\in[1,n]$ such that $\mathfrak{z}_j\BBU(A)=\BBU(A)\mathfrak{z}_k.$ In this case $j\in L_0(A)$ and $k\notin R_0(A).$

\item There is $i\neq j$ such that $\mathfrak{z}_j\BBU(A)=\mathfrak{z}_i\BBU(A).$ In this case we have $|\{i,j\}\cap L_0(A)|=1.$
\end{itemize}
Analogously, for each $k\in[1,n]$ we have the following two exclusive possibilities for $\BBU(A)\mathfrak{z}_k$:
 \begin{itemize}
\item There is  $j$ such that $\BBU(A)\mathfrak{z}_k$ such that $\BBU(A)\mathfrak{z}_k=\mathfrak{z}_j\BBU(A).$ In this case $j\in L_0(A)$ and $k\notin R_0(A).$
\item There is $i\neq k$ such that $\BBU(A)\mathfrak{z}_k=\BBU(A)\mathfrak{z}_i.$ In this case we have $|\{i,k\}\cap R_0(A)|=1.$
\end{itemize}
  Thus we deduce that  $|L_0(A)|+|R_0(A)|=n$, that is, the claim is true for $r=0$.

  Suppose now that the claim  is true for $r-1,$ with $r>0$.  We can write: $\BBU(A)=\BBU\left[\begin{matrix}
                b_1\\
                0
            \end{matrix}\right]\BBU(A')$, where $A'$ is as in (\ref{DecomA}).
 By inductive hypothesis we have $|L_0(A')|+|R_0(A')|=n+r-1,$ since
 the blob-rank of $A'$ is $r-1.$ Now,  by Lemma \ref{lemma-almost-surj-psi}, and the fact that $|L_0(A')|+|R_0(A')|=|\calL(A')\cup\calR(A')|,$  we have:
 \begin{equation*}
     |L_0(A)|+|R_0(A)|=|\calL(A)\cup\calR(A)|=|\calL(A')\cup\calR(A')|+1=n+r.
 \end{equation*} As desired.
\end{proof}

\begin{remark}\rm
Note that Proposition \ref{lemma-equation-n+r} is also holds for the degenerate case. In fact, if $A$ is a degenerate index matrix, then its blob-rank is $0$, so
$|\calL(A)|=n, |\calR(A)|=0,  |\textrm{Exp}_L(A)|=d^n,\quad |\textrm{Exp}_R(A)|=1.$ Thus the proposition remains valid for the degenerate case as well.
\end{remark}

Next, we will count the number of all indexing arrays with a given blob-rank.

\begin{definition}\label{def-Omega-numbers}
    Given $r\in [0,n]$, we define the number $\Omega_{r}^{(n)}$ as the number of all indexing matrices with blob-rank equal to $r.$ We set $\Omega_{0}^{(0)}=1.$
\end{definition}

\begin{theorem}\label{theo-card-Fblob-mon-first}
    The cardinality of the abacus blob monoid  is given by:
    \begin{equation}\label{eq-dim-Frblob-mon-first}
    |\mathfrak{Bl}_{d,n} |= d^{n}\sum_{k=0}^{n}\Omega_k^{(n)}d^k.
    \end{equation}

\end{theorem}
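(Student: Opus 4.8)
The plan is to count the elements of $\mathfrak{Bl}_{d,n}$ directly from the uniqueness of the normal form. By Corollary \ref{coro-normal-form-framed-blob-mon}, every element of $\mathfrak{Bl}_{d,n}$ is written uniquely as $\langle \alphan,A \rangle\BBU(A)\langle A, \bbeta \rangle$, where $A$ ranges over all indexing matrices, $\alphan\in\mathrm{Exp}_L(A)$ and $\bbeta\in\mathrm{Exp}_R(A)$. Hence the assignment of the triple $(\alphan,A,\bbeta)$ to each element is a bijection, and so
\[
|\mathfrak{Bl}_{d,n}|=\sum_{A}|\mathrm{Exp}_L(A)|\cdot|\mathrm{Exp}_R(A)|,
\]
where the sum runs over all indexing matrices $A$, the degenerate one included.

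Next I would evaluate each summand using Proposition \ref{lemma-equation-n+r}: if $A$ has blob-rank $r$, then $|\mathrm{Exp}_L(A)|\cdot|\mathrm{Exp}_R(A)|=d^{n+r}$. The Remark following that proposition guarantees the same identity for the degenerate matrix, whose blob-rank is $0$ and which therefore contributes $d^{n}$. Thus every summand depends on $A$ only through its blob-rank, so the entire sum collapses to a sum over blob-ranks.

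I would then partition the set of indexing matrices according to blob-rank. Since the blob-rank of any indexing matrix lies in $[0,n]$ and, by Definition \ref{def-Omega-numbers}, there are exactly $\Omega_r^{(n)}$ indexing matrices of blob-rank $r$, grouping the previous sum by blob-rank yields
\[
|\mathfrak{Bl}_{d,n}|=\sum_{r=0}^{n}\Omega_r^{(n)}\,d^{n+r}=d^{n}\sum_{r=0}^{n}\Omega_r^{(n)}d^{r},
\]
which, after renaming $r$ to $k$, is exactly the claimed formula \eqref{eq-dim-Frblob-mon-first}.

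The genuinely hard work has already been done in the normal form of Corollary \ref{coro-normal-form-framed-blob-mon} and in the count $|L_0(A)|+|R_0(A)|=n+r$ of Proposition \ref{lemma-equation-n+r}. For the present theorem the only point requiring care is the bookkeeping of the degenerate matrix: one must confirm that it lies in the blob-rank-$0$ class and contributes $d^{n}$, precisely what the Remark after Proposition \ref{lemma-equation-n+r} provides, so that the partition by blob-rank is simultaneously exhaustive and disjoint and no indexing matrix is counted twice or omitted.
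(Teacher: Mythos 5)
Your proposal is correct and follows essentially the same route as the paper: decompose $\mathfrak{Bl}_{d,n}$ via the normal form of Corollary \ref{coro-normal-form-framed-blob-mon}, apply Proposition \ref{lemma-equation-n+r} to get $d^{n+r}$ per matrix of blob-rank $r$, and group by blob-rank using $\Omega_k^{(n)}$. Your explicit check that the degenerate matrix sits in the blob-rank-$0$ class and contributes $d^n$ is a small point the paper leaves to the remark following Proposition \ref{lemma-equation-n+r}, but it does not change the argument.
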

\begin{proof}
 Denote $O_k$ the set of all the indexing matrices with blob-rank equal to $k,$ then Corollary \ref{coro-normal-form-framed-blob-mon} implies that $\mathfrak{Bl}_{d,n}$ can be decomposed, as set,  in the following disjoint union:
    \begin{equation*}
        \mathfrak{Bl}_{d,n}=\bigcup_{k=0}^{n}\left\{\langle \zetan, A\rangle^{\alphan}\BBU(A)\langle A,\zetan\rangle^{\bbeta}: A\in O_k, \alphan\in \textrm{Exp}_L(A),\bbeta\in\textrm{Exp}_R(A)
        \right\}.
    \end{equation*} From Proposition \ref{lemma-equation-n+r} we obtain
    \begin{equation*}
|\mathfrak{Bl}_{d,n}|=\sum_{k=0}^{n}\Omega_{k}^{(n)}|\textrm{Exp}_L(A)||\textrm{Exp}_R(A)|=\sum_{k=0}^{n}\Omega_{k}^{(n)}d^{n+k}.
    \end{equation*}
So the proof is finished.
\end{proof}

\section{Framed blob monoid}
The goal of this section is to show a presentation of the  abacus blob monoid. To do so, we introduce the monoid {\it framed blob monoid} $Bl_{d,n}$, which is defined by generators and relations, and prove that $Bl_{d,n}$ and $\mathfrak{Bl}_{d,n}$ are isomorphic, see Theorem \ref{theo-iso-AlgFrblob-DiagFrblob}.

\begin{definition}\label{def-framed-blob-mon}
 The $d$-framization (or simply framed blob monoid)
 $Bl_{d,n}$ of  $Bl_n$, is the monoid generated by $u_0,u_1, \ldots,  u_{n-1}$, $z_1, \ldots , z_{n}$ satisfying the defining relations of $Bl_n$ together with the following relations:
\begin{align}
  z_i^d =1, &\qquad  z_iz_j =z_jz_i,\label{Bld1}\\
  z_iu_i = z_{i+1}u_i, &\qquad u_i z_i = u_iz_{i+1},\label{Bld2}\\
u_0 z_i =z_i u_0 \quad \text{if $i\not=1$,}&\qquad u_i z_j =z_j u_i \quad \text{if $j\not=i,i+1$,}\label{Bld3}\\
u_i z_i^k u_i=u_i, & \qquad u_1 z_1^k u_0 u_1=u_1, \qquad u_1 u_0 z_1^k u_1=u_1.\label{Bld4}
\end{align}
\end{definition}
\begin{remark}\rm Note that the abacus monoid $C_{d}^n$ and the blob monoid $Bl_{n}$ are submonoids of $Bl_{d,n}$.
\end{remark}

Using the defining relations of $Bl_{d,n}$ it follows that any element of $Bl_{d,n}$ can be decomposed in the form $ztz'$, where $z,z'\in C_d^n$ and $t\in Bl_{n}$. To improve this decomposition we need to introduce the following notation.
\begin{notation}(Cf. Notation \ref{Not1})
 For the  exponent vector  $\alphan:=(\alpha_1,\dots,\alpha_n)\in (\ZZ/d\ZZ)^n$,  we set  $Z^{\alphan}:=z_1^{\alpha_1}\cdots z_{n}^{\alpha_n}$.
\end{notation}
\begin{lemma}\label{lemma-normal-form-FrBlob-2}
  Every  $X\in Bl_{d,n}$ can be written in the form
   $$
   X=Z^{\alphan}=Z^{\alphan}U\Big[\!\!\begin{array}{c}\infty\\\infty\end{array}\!\!\Big]
\quad \text{or}\quad  X=Z^{\alphan}U(A)Z^{\bbeta},$$ where $\alphan,\bbeta$ are exponent vectors and $A$ is an  indexing matrix.
\end{lemma}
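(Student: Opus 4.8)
The plan is to reduce the statement to two normal forms already at hand: the normal form for $Bl_n$ from Proposition \ref{NormalBlob} (equivalently Corollary \ref{NFBlob}), and the explicit description of the abelian group $C_d^n$ in (\ref{PreCdn}). Concretely, I will first show that every $X\in Bl_{d,n}$ admits a representative of the shape $z\,t\,z'$ with $z,z'\in C_d^n$ and $t\in Bl_n$, and then rewrite each of the three factors in the desired canonical notation.

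First I would establish the decomposition $X=z\,t\,z'$. Writing $X$ as a word in the generators $u_0,\dots,u_{n-1},z_1,\dots,z_n$, the idea is to transport every occurrence of a framing generator $z_j$ towards one of the two ends of the word. Relations (\ref{Bld3}) let a framing generator commute past a $u$ whose indices do not interact with it, relations (\ref{Bld2}) allow us to adjust the subscript of a framing generator sitting immediately next to a $u_i$ (replacing $z_i$ by $z_{i+1}$ and vice versa), and the absorption relations (\ref{Bld4}) remove a framing generator that becomes trapped between two caps. Iterating these moves, no framing generator can remain irreducibly in the interior, so all of them are gathered on the left and on the right, leaving a word $t$ in the $u_i$ alone in the middle; this is exactly the algebraic counterpart of sliding beads to the endpoints of their arcs in Lemma \ref{lemma-diag-normal-form-FrBlob-1}.

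Once $X=z\,t\,z'$ is available, the remaining steps are routine. By Proposition \ref{NormalBlob} and Corollary \ref{NFBlob}, the middle factor can be written as $t=U(A)$ for a (possibly degenerate) indexing matrix $A$. Since $C_d^n$ is the abelian group presented in (\ref{PreCdn}), the relations $z_iz_j=z_jz_i$ and $z_i^d=1$ let me rewrite $z$ and $z'$ as $z=Z^{\alphan}$ and $z'=Z^{\bbeta}$ for exponent vectors $\alphan,\bbeta\in(\ZZ/d\ZZ)^n$. This yields $X=Z^{\alphan}U(A)Z^{\bbeta}$. Finally, if $A=\Big[\!\!\begin{array}{c}\infty\\\infty\end{array}\!\!\Big]$ is the degenerate matrix then $U(A)=1$, and collapsing $Z^{\alphan}Z^{\bbeta}$ by commutativity and $z_i^d=1$ into a single $Z^{\bgamma}$ produces the first stated form $X=Z^{\bgamma}=Z^{\bgamma}U\Big[\!\!\begin{array}{c}\infty\\\infty\end{array}\!\!\Big]$; otherwise $A$ is non-degenerate and we land in the second form.

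The step I expect to be the main obstacle is the first one, the decomposition $X=z\,t\,z'$: one must argue that the rewriting process always terminates with every framing generator pushed to an end, and in particular that a $z$ which cannot commute past an adjacent $u$ is genuinely eliminated by an absorption relation in (\ref{Bld4}) rather than getting stuck in the middle. The cleanest way to make this rigorous is to track the interaction of a single $z_j$ with a normal-form block $U(A)$ exactly as in Proposition \ref{prop-interaction-z-BBU}, whose case analysis shows that $z_jU(A)$ and $U(A)z_j$ can always be moved to one side or merged, and then to induct on the number of framing generators lying in the interior of the word. The subsequent bookkeeping with $U(A)$ and $Z^{\alphan},Z^{\bbeta}$ is then immediate.
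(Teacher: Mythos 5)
Your proposal is correct and follows essentially the same route as the paper, which writes $X=Z^{\alphan_0}u_{i_1}Z^{\alphan_1}\cdots u_{i_m}Z^{\alphan_m}$ and uses the identity $u_iZ^{\alphan}u_j=Z^{\bbeta}u_iu_jZ^{\bgamma}$ (a consequence of (\ref{Bld2}) and (\ref{Bld4})) recursively to gather all framing generators at the two ends before applying the normal form $U(A)$ to the middle word in $Bl_n$. Just note that the rigorous termination argument should rest on this local identity rather than on Proposition \ref{prop-interaction-z-BBU}, which concerns the diagrammatic monoid $\mathfrak{Bl}_{d,n}$; its abstract counterpart, Proposition \ref{prop-abstract-interaction-z-BBU}, is established only after this lemma.
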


\begin{proof}
   Let $X\in Bl_{d,n}$ and  assume that $X\neq Z^{\alphan}$. Then $$X=Z^{\alphan_0}u_{i_1}Z^{\alphan_1}u_{i_2}\cdots Z^{\alphan_{m-1}}u_{i_m}Z^{\alphan_{m}}.$$ for certain indices $i_j$ and vectors of exponents $\alphan_0,\dots ,\alphan_m.$ For every exponent vector $\alphan$ we have \begin{equation}\label{eq-drel-plus-framed-blob-mon-1} u_iZ^{\alphan}u_j=Z^{\bbeta}u_iu_jZ^{ \bgamma} \end{equation} since (\ref{Bld2}) and (\ref{Bld4}).This last equation and a recursive argument imply that
   \begin{equation}
        X=Z^{\alphan}u_{i_1}u_{i_2}\cdots u_{i_m}Z^{\bbeta}
    \end{equation}
    for appropriate vector of exponents $\alphan,\bbeta.$
    Now the product  $Y=u_{i_1}\cdots u_{i_m}\in Bl_n$ can be written in its normal, that is,  $Y= U(A)$ for an appropriate indexing matrix $A$.
    This concludes the proof.
\end{proof}
Our next goal is to give a upper bound  for the cardinality of $Bl_n$. For this we need several technical results and definitions. We  start with the following two lemmas which give the commutation relations between framing elements and  $U(A)\in Bl_n$.

\begin{lemma}\label{zUAbs} Set $A =\Big[\!\!\begin{array}{c}i\\j\end{array}\!\!\Big]$, where $i,j\in [1,n-1]$ with $j<i$.  The following relations hold in $Bl_{d,n}$.
\begin{enumerate}
\item[(i)]
\begin{align}\label{eq1-lemma-interaction-z-BBU-case1}
    z_kU(A)=\left\{\begin{array}{ll}
                                U(A)z_k & \textrm{if}\quad k<j\quad \textrm{or}\quad k>i+1 \\
                                 \quad & \quad \\
                                 U(A)z_{k+2}&\textrm{if}\quad j\leq k\leq i-1\\
                                 \quad & \quad \\
                                 z_{i+1}U(A)&\textrm{if}\quad k=i\\
                                 \quad & \quad \\
                                 z_{i}U(A)&\textrm{if}\quad k=i+1.
                               \end{array}\right.
\end{align}
\item[(ii)]
 \begin{align}\label{eq2-lemma-interaction-z-BBU-case1}
    U(A)z_k=\left\{\begin{array}{ll}
                                 z_kU(A) & \textrm{if}\quad k<j\quad \textrm{or}\quad k>i+1 \\
                                 \quad & \quad \\
                                 z_{k-2}U(A)&\textrm{if}\quad j+2\leq k\leq i+1\\
                                 \quad & \quad \\
                                U(A)z_{j+1}&\textrm{if}\quad k=j\\
                                 \quad & \quad \\
                                 U(A)z_{j}&\textrm{if}\quad k=j+1.
                               \end{array}\right.
  \end{align}
\end{enumerate}
\end{lemma}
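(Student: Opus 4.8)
The plan is to derive all the identities directly from the defining relations of $Bl_{d,n}$ — only (\ref{Bld2}) and (\ref{Bld3}) are needed, not (\ref{Bld4}), since $j\geq 1$ means $u_0$ never enters — by pushing the framing generator across the word $U(A)=u_iu_{i-1}\cdots u_j$ one letter at a time. This must be done algebraically because the isomorphism with the diagrammatic monoid $\mathfrak{Bl}_{d,n}$, where the analogue Lemma \ref{zU} was read off from pictures, is not yet available. For part (i) I would split on the position of $k$ relative to $[j,i+1]$. When $k<j$ or $k>i+1$, every letter $u_m$ of $U(A)$ has $m\in[j,i]$, hence $m\neq k,k-1$, so $z_k$ commutes with each $u_m$ by (\ref{Bld3}) and $z_kU(A)=U(A)z_k$. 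The two cap values $k=i$ and $k=i+1$ are immediate from the slide relation (\ref{Bld2}): since $z_iu_i=z_{i+1}u_i$ and $U(A)=u_i(u_{i-1}\cdots u_j)$, both $z_iU(A)$ and $z_{i+1}U(A)$ collapse to $z_{i+1}u_i(u_{i-1}\cdots u_j)$.

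The heart of the argument, and where I expect the only real obstacle, is the main case $j\leq k\leq i-1$, in which the framing index must increase by two. This cannot be achieved by commuting $z_k$ past the letters one at a time, because after the slide $z_ku_k=z_{k+1}u_k$ the new index $k+1$ is again blocked by $u_k$. I would instead first isolate the local two-letter identity
\begin{equation*}
z_k\,u_{k+1}u_k=u_{k+1}u_k\,z_{k+2},
\end{equation*}
proving it by reducing both sides to the common word $u_{k+1}z_{k+1}u_k$: on the left, commute $z_k$ past $u_{k+1}$ via (\ref{Bld3}) and then apply $z_ku_k=z_{k+1}u_k$; on the right, commute $z_{k+2}$ past $u_k$ via (\ref{Bld3}) and then apply $u_{k+1}z_{k+1}=u_{k+1}z_{k+2}$. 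With this identity in hand, every letter of $u_i\cdots u_{k+2}$ has index $\geq k+2$, so $z_k$ commutes to just left of the block $u_{k+1}u_k$; applying the two-letter identity and then commuting $z_{k+2}$ past the tail $u_{k-1}\cdots u_j$ (all indices $\leq k-1$) yields $z_kU(A)=U(A)z_{k+2}$, with the edge cases $k=i-1$ (empty head) and $k=j$ (empty tail) handled vacuously.

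For part (ii) I would observe that its two main cases are mere rearrangements of part (i): writing $m=k-2$ with $j\leq m\leq i-1$, the identity $z_mU(A)=U(A)z_{m+2}$ is exactly $U(A)z_k=z_{k-2}U(A)$, covering $j+2\leq k\leq i+1$. The commuting range $k<j$ or $k>i+1$ is identical to part (i), and the two boundary values $k=j$ and $k=j+1$ follow from $u_jz_j=u_jz_{j+1}$ applied to the rightmost letter of $U(A)$. Thus, once the sandwich identity above is secured, everything else is index bookkeeping: identifying, for each $k$, which letters of $U(A)$ commute with the travelling framing generator and where the single slide step must be inserted.
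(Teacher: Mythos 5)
Your proposal is correct and follows essentially the same route as the paper: the paper's proof also reduces everything to the local identity $z_ku_{k+1}u_k=u_{k+1}u_kz_{k+2}$, derived from (\ref{Bld2}) and (\ref{Bld3}) by the very chain $z_ku_{k+1}u_k=u_{k+1}z_ku_k=u_{k+1}z_{k+1}u_k=u_{k+1}z_{k+2}u_k=u_{k+1}u_kz_{k+2}$ (you merely present it as meeting in the middle), and then moves $z_k$ through $U(A)$ until it reaches the block $u_{k+1}u_k$. The remaining cases, which the paper leaves as "direct consequences," are handled correctly in your index bookkeeping.
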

\begin{proof}
All relations are a direct consequence of the defining relations of $Bl_{d,n}$. As an example we prove in details $ z_kU(A)= U(A)z_{k+2}$. Notice that for $k\in [j+1,i]$, we  have:
$z_ku_{k+1}u_{k}  \stackrel {(\ref{Bld3})}{=} u_{k+1} z_k u_{k} \stackrel {(\ref{Bld2})}{=} u_{k+1} z_{k+1} u_{k}\stackrel {(\ref{Bld2})}{=} u_{k+1} z_{k+2}u_k
\stackrel {(\ref{Bld3})}{=} u_{k+1} u_kz_{k+2}$, hence,
\begin{equation}\label{zkRight}
z_ku_{k+1}u_{k}=u_{k+1} u_kz_{k+2}.
\end{equation}
Thus, the proof is obtained by moving $z_k$ in $z_kU(A)$ to the right until it meets $u_{k+1}u_{k}$ and then applying (\ref{zkRight}) and (\ref{Bld3}).
\end{proof}

\begin{lemma}\label{zA0Abs}
Set $A =\Big[\!\!\begin{array}{c}i\\0\end{array}\!\!\Big]$, where $i\in [1,n-1]$.  The following relations hold in $Bl_{d,n}$.
\begin{enumerate}
\item[(i)]
 \begin{align}\label{eq3-lemma-interaction-z-BBU-case1}
    z_kU(A)=\left\{\begin{array}{ll}
                                 U(A)z_k & \textrm{if}\quad k>i+1 \\
                                 \quad & \quad \\
                                 U(A)z_{k+2}&\textrm{if}\quad 1\leq k\leq i-1\\
                                 \quad & \quad \\
                                 z_{i+1}U(A)&\textrm{if}\quad k=i\\
                                 \quad & \quad \\
                                 z_{i}U(A)&\textrm{if}\quad k=i+1.
                                 \end{array}\right.
  \end{align}
\item[(ii)]
\begin{align}\label{eq4-lemma-interaction-z-BBU-case1}
    U(A)z_k=\left\{\begin{array}{ll}
                                 z_kU(A) & \textrm{if}\quad k>i+1 \\
                                 \quad & \quad \\
                                 z_{k-2}U(A)&\textrm{if}\quad 3\leq k\leq i+1.
                               \end{array}\right.
  \end{align}
\end{enumerate}
\end{lemma}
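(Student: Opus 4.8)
The plan is to mirror the proof of Lemma \ref{zUAbs}, exploiting that for the matrix $A$ of the statement Definition \ref{def-basis-matrix-blob} gives the explicit word
\[
U(A)=u_iu_{i-1}\cdots u_1u_0 .
\]
The only new feature compared with Lemma \ref{zUAbs} is the terminal factor $u_0$, so the whole argument hinges on the observation that every framing generator $z_m$ that is ever required to slide past $u_0$ carries an index $m\geq 2$, and hence commutes with $u_0$ by the first relation in (\ref{Bld3}).

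For part (i) I would argue according to the value of $k$. When $k>i+1$, the generator $z_k$ commutes with each $u_m$ for $1\leq m\leq i$ (since $k\neq m,m+1$) by (\ref{Bld3}), and with $u_0$ as well (since $k\neq 1$); sliding it all the way to the right gives $z_kU(A)=U(A)z_k$. When $1\leq k\leq i-1$, I would first move $z_k$ rightward past $u_i,\dots,u_{k+2}$ (again by (\ref{Bld3}), as $k<m$ there), then apply the already established local move (\ref{zkRight}), namely $z_ku_{k+1}u_k=u_{k+1}u_kz_{k+2}$, and finally let $z_{k+2}$ slide past the remaining block $u_{k-1}\cdots u_1u_0$; this last step is legitimate because $k+2>m+1$ for $m\leq k-1$ and $k+2\neq 1$, so (\ref{Bld3}) applies throughout and we obtain $z_kU(A)=U(A)z_{k+2}$. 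For the two boundary values $k=i$ and $k=i+1$ only the leftmost factor $u_i$ is touched: applying $z_iu_i=z_{i+1}u_i$, the first relation of (\ref{Bld2}), gives $z_iU(A)=z_{i+1}U(A)$ and, read in the other direction, $z_{i+1}U(A)=z_iU(A)$.

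For part (ii) the case $k>i+1$ is identical to the commuting case above. For $3\leq k\leq i+1$ I would simply reindex part (i): writing $k'=k-2\in[1,i-1]$, the identity $z_{k'}U(A)=U(A)z_{k'+2}$ proved in part (i) reads $z_{k-2}U(A)=U(A)z_k$, that is $U(A)z_k=z_{k-2}U(A)$, as claimed.

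I do not anticipate a genuine obstacle: every identity collapses, through repeated use of (\ref{Bld2}) and (\ref{Bld3}), onto the single local relation (\ref{zkRight}), which was already justified. The one point demanding care is exactly the interaction with $u_0$. Observe that part (ii) deliberately omits the values $k=1,2$: these are precisely the indices for which a framing would arrive at $u_0$ carrying index $1$ and thus fail to commute with it, and they are the ``critical'' cases governed instead by the relations (\ref{Bld4}) and handled separately. Keeping this index bookkeeping straight is the only real subtlety.
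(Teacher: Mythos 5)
Your argument is correct and is essentially the paper's own proof: the paper disposes of this lemma with ``analogous to Lemma \ref{zUAbs}'', whose proof is exactly the sliding argument via the local relation (\ref{zkRight}) that you reproduce, and your added bookkeeping that every $z_m$ reaching the terminal $u_0$ has $m\geq 2$ (hence commutes with it by (\ref{Bld3})) is precisely the point that makes the analogy go through. Your observation on why $k=1,2$ are excluded from part (ii) is also consistent with the paper's treatment of those critical cases elsewhere.
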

\begin{proof}
Analogous to the proof of  Lemma \ref{zUAbs}.
\end{proof}

\begin{proposition}\label{prop-abstract-interaction-z-BBU}
Given a non degenerated indexing matrix $A=[B|C],$
   then:
\begin{enumerate}
   \item[(i)] For each $1\leq j\leq n$ we have that  one of the following sentences is true with respect to the product $z_jU(A):$
    \begin{enumerate}
         \item \label{abstract-sub-critical-case1-coro-interaction-z-BBU} $j=1$ and $b_1=0,$ then
         \begin{equation*}
        z_jU(A)=\ulz_1U \Big[\!\!\begin{array}{c}0\\0\end{array}\!\!\Big] U\left[\begin{matrix}
             b_2&\cdots &b_r&c_{11}&\cdots&c_{1r}\\
            0 &\cdots &0&c_{21}&\cdots &c_{2r}
        \end{matrix}\right].
        \end{equation*}
        \item \label{abstract-critical-case1-coro-interaction-z-BBU}  There is an $1\leq s \leq r$ and a $k\in \{1,2\}$ such that:
        \begin{equation*}
        z_jU(A)=U  \Big[\!\!\begin{array}{c}b_1\\0\end{array}\!\!\Big]\cdots U  \Big[\!\!\begin{array}{c}b_s\\0\end{array}\!\!\Big]z_kU  \Big[\!\!\begin{array}{c}b_{s+1}\\0\end{array}\!\!\Big]\cdots U
  \Big[\!\!\begin{array}{c}b_r\\0\end{array}\!\!\Big] U(C).
        \end{equation*}
         \item \label{abstract-Not-critical-case1-coro-interaction-z-BBU} There is a $1\leq i\leq n$ such that $i\neq j$ and $z_jU(A)=z_iU(A).$
        \item \label{abstract-Not2-critical-case1-coro-interaction-z-BBU} There is a $1\leq k\leq n$ such that $z_jU(A)=U(A)z_k.$
    \end{enumerate}
     \item[(ii)] For each $1\leq j\leq n$ we have that  one of the following sentences is true with respect to the product $U(A)\ulz_j:$
    \begin{enumerate}
        \item \label{abstract-critical-case2-coro-interaction-z-BBU}  There is an $1\leq s \leq r$ and a $k\in \{1,2\}$ such that:
        \begin{equation*}
        U(A)z_j=U  \Big[\!\!\begin{array}{c}b_1\\0\end{array}\!\!\Big]\cdots U  \Big[\!\!\begin{array}{c}b_s\\0\end{array}\!\!\Big]z_kU  \Big[\!\!\begin{array}{c}b_{s+1}\\0\end{array}\!\!\Big]\cdots U
  \Big[\!\!\begin{array}{c}b_r\\0\end{array}\!\!\Big] U(C).
        \end{equation*}
         \item \label{abstract-Not-critical-case2-coro-interaction-z-BBU} There is a $1\leq i\leq n$ such that $i\neq j$ and $U(A)z_j=U(A)z_i.$
        \item \label{abstract-Not2-critical-case2-coro-interaction-z-BBU} There is a $1\leq k\leq n$ such that $U(A)z_j=z_kU(A).$
    \end{enumerate}
\end{enumerate}
\end{proposition}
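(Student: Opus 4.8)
The plan is to prove Proposition~\ref{prop-abstract-interaction-z-BBU} by reproducing, at the abstract level, the inductive argument used for its diagrammatic counterpart Proposition~\ref{prop-interaction-z-BBU}. The crucial observation is that the defining relations of $Bl_{d,n}$ in Definition~\ref{def-framed-blob-mon} are exactly the relations that drove the diagrammatic proof: the single-column commutation rules of Lemmas~\ref{zU} and~\ref{zA0U} are matched here by the explicit formulas of Lemmas~\ref{zUAbs} and~\ref{zA0Abs}, which hold in $Bl_{d,n}$ purely formally. Thus I would argue by induction on the length $l=l(A)$, treating the products $z_jU(A)$ and $U(A)z_j$ simultaneously, since the subtle subcase genuinely feeds one product into the other.

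For the base case $l=1$ I would write $A=\Big[\!\!\begin{array}{c}i\\j\end{array}\!\!\Big]$ and split into subcases. When $i=j=0$, so $U(A)=u_0$, the relation $u_0z_i=z_iu_0$ for $i\neq1$ in~(\ref{Bld3}) yields case~(iv) whenever $j\neq1$, while $j=1$ is exactly case~(i) (no relation simplifies $z_1u_0$). When $i=j\geq1$, so $U(A)=u_i$, relations~(\ref{Bld2}) and~(\ref{Bld3}) give the outcomes directly: $z_iu_i=z_{i+1}u_i$ and $z_{i+1}u_i=z_iu_i$ land in case~(iii), and commutation otherwise gives case~(iv), symmetrically for $U(A)z_j$. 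The remaining subcases $1\leq j<i$ and $j=0<i$ are handled verbatim by Lemmas~\ref{zUAbs} and~\ref{zA0Abs}, whose four-line formulas sort each $z_kU(A)$ and $U(A)z_k$ into one of cases~(ii)--(iv) (resp.~(v)--(vii)).

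For the inductive step I would factor off the last column, writing $U(A)=U(A')U(A'')$ with $A''=\Big[\!\!\begin{array}{c}a_{1l}\\a_{2l}\end{array}\!\!\Big]$, and apply the hypothesis to $z_jU(A')$. Cases~(i)--(iii) for $z_jU(A')$ are inherited unchanged by $z_jU(A)$, so the only work is the case $z_jU(A')=U(A')z_k$, which gives $z_jU(A)=U(A')z_kU(A'')$. I would then resolve $z_kU(A'')$ by Lemmas~\ref{zUAbs} and~\ref{zA0Abs}: if $k\notin\{a_{1l},a_{1l}+1\}$ the block swaps $z_k$ to some $z_i$ on its right, producing case~(iv); if $k\in\{a_{1l},a_{1l}+1\}$, say $k=a_{1l}+1$, then $z_kU(A'')=z_iU(A'')$ with $i=a_{1l}$, and I would feed $U(A')z_i$ back into the hypothesis for the \emph{right} product. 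Its three outcomes~(v)--(vii) translate respectively into case~(ii), case~(iv), and case~(iii) for $z_jU(A)$, exactly as in the diagrammatic argument; the product $U(A)z_j$ is symmetric.

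The main obstacle is the bookkeeping around the \emph{critical} cases~(ii) and~(v), where $z_k$ is absorbed into the chain of initial blocks $U\Big[\!\!\begin{array}{c}b_1\\0\end{array}\!\!\Big]\cdots U\Big[\!\!\begin{array}{c}b_r\\0\end{array}\!\!\Big]$ and lodged at a specific position $s$ with reduced index $k\in\{1,2\}$. In the diagrammatic proof the mutual exclusivity of the cases was automatic from the rule that two abacus blob diagrams agree iff their base diagrams and bead counts agree; abstractly I have no such criterion, but I do not need full exclusivity, since the statement only asserts that \emph{some} case holds. What I do need—for instance the inequalities $p\neq k$ and $p\neq j$ invoked when closing cases~(vi) and~(vii)—I would extract instead from the deterministic index arithmetic in Lemmas~\ref{zUAbs} and~\ref{zA0Abs}, since each elementary move sends a given $z_k$ to a uniquely determined $z_{k'}$. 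This makes the entire reduction algorithmic, so that each product terminates in precisely one of the listed forms, completing the induction.
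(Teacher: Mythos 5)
Your proposal follows essentially the same route as the paper: the published proof is literally a one-line reference carrying out the induction on $l(A)$ from Proposition \ref{prop-interaction-z-BBU} verbatim, with Lemmas \ref{zUAbs} and \ref{zA0Abs} substituted for Lemmas \ref{zU} and \ref{zA0U}. Your extra care about the missing exclusivity (which the diagrammatic version gets for free from equality of base diagrams, and which the paper silently elides) is a genuine refinement rather than a divergence, and your workaround via the deterministic index formulas is sound.
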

\begin{proof}
 Analogous to the proof of Proposition \ref{prop-interaction-z-BBU}, but using Lemma \ref{zUAbs} and Lemma \ref{zA0Abs} instead of Lemma \ref{zU} and Lemma \ref{zA0U}, respectively.
\end{proof}

\begin{corollary}\label{coro-abstract-normal-form-framed-blob-mon}
   Every $X\in Bl_{d,n}$ can be written in the form
   $$X=\langle \alphan,A\rangle U(A)\langle A, \bbeta\rangle,$$ where $A$ is an indexing matrix  and  $\alphan\in\mathrm{Exp}_L(A)$ and $\bbeta\in \mathrm{Exp}_R(B).$
\end{corollary}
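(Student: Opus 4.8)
The plan is to mirror, at the level of the abstract presentation, the argument that yielded the diagrammatic normal form in Corollary \ref{coro-normal-form-framed-blob-mon}. The starting point is Lemma \ref{lemma-normal-form-FrBlob-2}: every $X\in Bl_{d,n}$ is either of the form $Z^{\alphan}$, or of the form $Z^{\alphan}U(A)Z^{\bbeta}$ with $A$ an indexing matrix and $\alphan,\bbeta$ exponent vectors. The degenerate case $X=Z^{\alphan}$ is disposed of immediately: taking $A=\Big[\!\!\begin{array}{c}\infty\\\infty\end{array}\!\!\Big]$ and using the convention $L_0(A)=[1,n]$, $\mathrm{Exp}_L(A)=(\ZZ/d\ZZ)^n$, $\mathrm{Exp}_R(A)=\{(0,\dots,0)\}$, one gets $X=\langle\alphan,A\rangle U(A)\langle A,(0,\dots,0)\rangle$. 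So it suffices to treat $X=Z^{\alphan}U(A)Z^{\bbeta}$ with $A$ non-degenerate.

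First I would normalize the left factor $Z^{\alphan}U(A)$. Processing the generators $z_j$ one at a time and invoking Proposition \ref{prop-abstract-interaction-z-BBU}(i), each $z_j$ falls into exactly one of four cases: in cases (i) and (ii) the generator contributes an index $j\in L_0(A)$ and is kept as a left framing; in case (iii) the identity $z_jU(A)=z_iU(A)$ lets me replace $z_j$ by the canonical representative $z_{\min\{i,j\}}$, whose index again lies in $L_0(A)$; and in case (iv) the relation $z_jU(A)=U(A)z_k$ transports $z_j$ across $U(A)$, where it joins the right framing. Since the $z$'s commute by relation (\ref{Bld1}), the surviving left framings can be gathered, following Definition \ref{def-langle-ranlge-notation} (read with $z_j$ in place of $\mathfrak{z}_j$), into a single factor $\langle\alphan',A\rangle$ with $\alphan'\in\mathrm{Exp}_L(A)$.

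Symmetrically, I would normalize the right factor, now of the form $U(A)Z^{\bbeta'}$ with $\bbeta'$ enlarged by whatever was moved across in case (iv), using Proposition \ref{prop-abstract-interaction-z-BBU}(ii): case (v) keeps a framing with index in $R_0(A)$, case (vi) collapses to a canonical representative in $R_0(A)$, and case (vii) sends a framing back to the left. Collecting the surviving right framings produces $\langle A,\bbeta''\rangle$ with $\bbeta''\in\mathrm{Exp}_R(A)$, giving $X=\langle\alphan',A\rangle U(A)\langle A,\bbeta''\rangle$ as claimed.

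The main obstacle is purely organizational: one must check that the alternating left/right reductions terminate and that no framing survives with an index outside $L_0(A)\cup R_0(A)$. I expect no genuine difficulty here, because Proposition \ref{prop-abstract-interaction-z-BBU} is the verbatim algebraic counterpart of Proposition \ref{prop-interaction-z-BBU} (its proof is already stated to be analogous), and the combinatorial data $L_0$, $R_0$, $\mathrm{Exp}_L$, $\mathrm{Exp}_R$ depend only on which case occurs, not on whether one works diagrammatically or abstractly. Consequently the whole reduction, together with the bookkeeping of which generators survive on which side, transfers word-for-word from the proof of Corollary \ref{coro-normal-form-framed-blob-mon}, using no relations beyond (\ref{Bld1})--(\ref{Bld4}).
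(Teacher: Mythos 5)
Your proposal is correct and follows essentially the same route as the paper: the paper's proof simply declares the argument to be that of Corollary \ref{coro-normal-form-framed-blob-mon} with Lemma \ref{lemma-normal-form-FrBlob-2} substituted for Lemma \ref{lemma-diag-normal-form-FrBlob-1} and Proposition \ref{prop-abstract-interaction-z-BBU} for Proposition \ref{prop-interaction-z-BBU}, which is precisely the substitution you make. Your write-up merely spells out the case-by-case bookkeeping that the paper leaves implicit.
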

\begin{proof}
Completely analogous to the proof of Corollary  \ref{coro-normal-form-framed-blob-mon}.
To be precise, we only need to use Lemma \ref{lemma-normal-form-FrBlob-2} instead  Lemma \ref{lemma-diag-normal-form-FrBlob-1}  and Proposition \ref{prop-abstract-interaction-z-BBU} instead Proposition \ref{prop-interaction-z-BBU}, during the proof of Corollary \ref{coro-normal-form-framed-blob-mon}.
\end{proof}

\begin{theorem}\label{theo-iso-AlgFrblob-DiagFrblob}
    The map $u_i\mapsto  \mathfrak{u}_i, z_i\mapsto \mathfrak{z}_i$ defines an isomorphism  $\Phi:Bl_{d,n}\rightarrow \mathfrak{Bl}_{d,n}.$
\end{theorem}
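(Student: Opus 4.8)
The plan is to mirror the strategy used for Theorem \ref{theo-iso-diagBlobMon-algBlobMon}: realize $\Phi$ as a surjective homomorphism and then close the gap by a cardinality comparison, exploiting that the \emph{exact} size of $\mathfrak{Bl}_{d,n}$ is already known while for $Bl_{d,n}$ only an upper bound is required.

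First I would check that $\Phi$ is a well-defined monoid homomorphism. This is immediate from Lemma \ref{lemma-pre-presentation-frblob-digrams}, which asserts precisely that the diagrammatic elements $\mathfrak{u}_i,\mathfrak{z}_i\in\mathfrak{Bl}_{d,n}$ satisfy all the defining relations (\ref{Bld1})--(\ref{Bld4}) of $Bl_{d,n}$. By the universal property of the presentation in Definition \ref{def-framed-blob-mon}, the assignment $u_i\mapsto\mathfrak{u}_i$, $z_i\mapsto\mathfrak{z}_i$ therefore extends to a monoid homomorphism $\Phi$. Next I would establish surjectivity: by Lemma \ref{lemma-diag-normal-form-FrBlob-1} the monoid $\mathfrak{Bl}_{d,n}$ is generated by $\mathfrak{z}_1,\dots,\mathfrak{z}_n,\mathfrak{u}_0,\dots,\mathfrak{u}_{n-1}$, all of which lie in the image of $\Phi$, so $\Phi$ is an epimorphism and in particular $|Bl_{d,n}|\geq|\mathfrak{Bl}_{d,n}|$.

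To finish, I would prove the reverse inequality $|Bl_{d,n}|\leq|\mathfrak{Bl}_{d,n}|$ by counting normal forms. By Corollary \ref{coro-abstract-normal-form-framed-blob-mon}, every element of $Bl_{d,n}$ can be written as $\langle\alphan,A\rangle U(A)\langle A,\bbeta\rangle$ with $A$ an indexing matrix, $\alphan\in\mathrm{Exp}_L(A)$ and $\bbeta\in\mathrm{Exp}_R(A)$. Hence $|Bl_{d,n}|$ is bounded above by the number of such triples $(\alphan,A,\bbeta)$, namely $\sum_A|\mathrm{Exp}_L(A)|\cdot|\mathrm{Exp}_R(A)|$. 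Grouping the indexing matrices by their blob-rank and invoking Proposition \ref{lemma-equation-n+r}, so that $|\mathrm{Exp}_L(A)|\cdot|\mathrm{Exp}_R(A)|=d^{n+k}$ whenever $A$ has blob-rank $k$, together with Definition \ref{def-Omega-numbers}, this sum equals $\sum_{k=0}^{n}\Omega_k^{(n)}d^{n+k}=d^{n}\sum_{k=0}^{n}\Omega_k^{(n)}d^{k}$, which by Theorem \ref{theo-card-Fblob-mon-first} is exactly $|\mathfrak{Bl}_{d,n}|$. Combining the two inequalities yields $|Bl_{d,n}|=|\mathfrak{Bl}_{d,n}|$, so the surjection $\Phi$ between finite sets of equal cardinality is a bijection, hence a monoid isomorphism.

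The only delicate point worth highlighting is that the normal form in Corollary \ref{coro-abstract-normal-form-framed-blob-mon} is asserted only to \emph{exist} on the abstract side $Bl_{d,n}$, whereas its \emph{uniqueness} is established only diagrammatically, in Corollary \ref{coro-normal-form-framed-blob-mon}. This asymmetry is exactly what makes the argument go through: existence of normal forms on the abstract side gives the upper bound $|Bl_{d,n}|\leq|\mathfrak{Bl}_{d,n}|$, uniqueness on the diagrammatic side pins down $|\mathfrak{Bl}_{d,n}|$ precisely, and the resulting bijection $\Phi$ then transports uniqueness back to $Bl_{d,n}$ at no extra cost. Consequently the entire substance of the theorem is already carried by the previously proven normal-form and counting results; the main obstacle is not in this proof but in those lemmas, and no further input is needed here beyond assembling them.
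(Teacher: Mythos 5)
Your proposal is correct and follows essentially the same route as the paper: establish that $\Phi$ is an epimorphism via Lemma \ref{lemma-pre-presentation-frblob-digrams} and Lemma \ref{lemma-diag-normal-form-FrBlob-1}, then obtain $|Bl_{d,n}|\leq|\mathfrak{Bl}_{d,n}|$ by combining the abstract normal form of Corollary \ref{coro-abstract-normal-form-framed-blob-mon} with the counts in Proposition \ref{lemma-equation-n+r} and Theorem \ref{theo-card-Fblob-mon-first}. Your closing observation about the asymmetry between existence of normal forms on the abstract side and uniqueness on the diagrammatic side is a fair and accurate gloss on why the cardinality argument closes the gap.
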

\begin{proof}The Lemma \ref{lemma-pre-presentation-frblob-digrams} and the Lemma \ref{lemma-diag-normal-form-FrBlob-1} say that $\Phi$ is an epimorphism. Therefore, the  proof follows by proving that $|Bl_{d,n}|\leq |\mathfrak{Bl}_{d,n}|.$ Now, this inequality is a consequence of Corollary \ref{coro-abstract-normal-form-framed-blob-mon}, the Proposition \ref{lemma-equation-n+r} and Theorem \ref{theo-card-Fblob-mon-first}.
\end{proof}

\section{Some combinatorics}
 This section consists of two subsections showing some combinatorial formulas for the numbers appearing in the article. The first subsection provides a formula for computing the numbers $\Omega_r^{(n)}$ and we also show a Pascal triangle for computing them, see Remark \ref{Pascal1}.
The second subsection is devoted to proving the Theorem \ref{theo-alternative-formula-diag-frBlob}, which provides a formula for computing the cardinality of $\mathfrak{Bl}_{d,n}$ in terms of certain numbers $\chi_n^{(n)}$. These numbers can be computed using a Pascal triangle as shown in Remark   \ref{Pascal2}; also we show an explicit formula for compute them.

\subsection{Some formulas for $\Omega_k^{(n)}$}
To obtain a recursive formula for $\Omega_n$, we start by observing that if $A=[B|C]$ with
\begin{equation*}
       B=\left[\begin{matrix}
        b_{1}&\cdots& b_{r}\\
        0 &\cdots & 0
    \end{matrix}\right],\quad
    C=\left[\begin{matrix}
        c_{11} & \cdots & c_{1m} \\
        c_{21}& \cdots & c_{2m}
    \end{matrix}\right]
\end{equation*}
then  $c_{11}>b_{r}.$ This  motivates the following definition.

\begin{definition}\label{def-theta-numbers}
    For $k\in[1, n-1]$ we define $\theta_{k}^{(n)}$ as the number of positive indexing matrices $C=[c_{ij}]$ such that $c_{11}=k.$ We set  $\theta_1^{(1)}:=1.$
\end{definition}

\begin{lemma}\label{lemma-recur-theta-numbers}
For $n\geq 2$ we have:
\begin{enumerate}
\item[(i)] $\theta_1^{(n)} = \mathrm{C}_{n-1}$,
\item[(ii)] $\theta_{n-1}^{(n)}=n-1$,
\item[(iii)] $\theta_{k}^{(n)} =\theta_{k-1}^{(n-1)}+\theta_{k+1}^{(n)}$
for $k\in[2,n-2]$ with  $n>2$.
\end{enumerate}
\end{lemma}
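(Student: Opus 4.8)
The plan is to argue combinatorially, working directly with positive indexing matrices. Recall that a positive indexing matrix with entries in $[1,n-1]$ is a $2\times m$ array whose two rows are strictly increasing (the second row cannot repeat a value, by rule (iv), since all entries are positive) and whose columns satisfy $c_{2s}\le c_{1s}$ by rule (v); by Remark~\ref{nthCatalan} there are exactly $\mathrm{C}_n$ of them when we include the degenerate one, and $c_{11}$ is simply the smallest first-row entry. Thus $\theta_k^{(n)}$ counts the non-degenerate such matrices whose smallest first-row entry equals $k$.

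First I would dispose of the two boundary identities. For (i), if $c_{11}=1$ then rule (v) forces $c_{21}=1$, so the first column is $\bigl[\begin{smallmatrix}1\\1\end{smallmatrix}\bigr]$; deleting this column and subtracting $1$ from every remaining entry is a bijection onto the positive indexing matrices with entries in $[1,n-2]$, whose inverse adds $1$ to each entry and prepends $\bigl[\begin{smallmatrix}1\\1\end{smallmatrix}\bigr]$. Since that target set has $\mathrm{C}_{n-1}$ elements, $\theta_1^{(n)}=\mathrm{C}_{n-1}$. For (ii), if $c_{11}=n-1$ then a strictly increasing first row bounded by $n-1$ forces a single column $\bigl[\begin{smallmatrix}n-1\\c_{21}\end{smallmatrix}\bigr]$ with $c_{21}\in[1,n-1]$, giving exactly $n-1$ matrices.

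The substance of the lemma is (iii), which I would prove by partitioning the matrices counted by $\theta_k^{(n)}$ according to the value of the entry $c_{21}$. Those with $c_{21}\ge 2$ form a set that subtracting $1$ from \emph{every} entry carries bijectively onto the matrices counted by $\theta_{k-1}^{(n-1)}$ (entries in $[1,n-2]$, smallest first-row entry $k-1$); here the hypothesis $k\ge 2$ guarantees positivity of the images and $k\le n-2$ keeps everything in range. Those with $c_{21}=1$ I would biject with $\theta_{k+1}^{(n)}$ by a case split on whether $k+1$ already occurs in the first row: if it does not, replace the entry $c_{11}=k$ by $k+1$ (legitimate since then the next first-row entry is $\ge k+2$), which yields a matrix with $c_{11}=k+1$ and $c_{21}=1$; if it does, delete the first column $\bigl[\begin{smallmatrix}k\\1\end{smallmatrix}\bigr]$, which yields a matrix with $c_{11}=k+1$ and $c_{21}\ge 2$. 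Adding $|\{c_{21}=1\}|=\theta_{k+1}^{(n)}$ and $|\{c_{21}\ge 2\}|=\theta_{k-1}^{(n-1)}$ gives the recursion.

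The main obstacle is verifying that this second map is genuinely a bijection onto \emph{all} of $\theta_{k+1}^{(n)}$: I expect to check that the two cases have disjoint images (respectively the matrices counted by $\theta_{k+1}^{(n)}$ with $c_{21}=1$ and with $c_{21}\ge 2$) and that each case is invertible — the first inverted by replacing $k+1$ by $k$ in the first row, the second by prepending $\bigl[\begin{smallmatrix}k\\1\end{smallmatrix}\bigr]$ — with the constraint $k\le n-2$ ensuring $k+1\le n-1$ remains an admissible entry throughout. Once these invertibility and range checks are in place, the three claims follow; I would confirm the whole scheme on the small cases $n=4,5$ (e.g.\ $\theta_2^{(4)}=\theta_1^{(3)}+\theta_3^{(4)}=2+3=5$) to be sure the partition and the two target halves are not interchanged.
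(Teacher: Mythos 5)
Your proof is correct, and for part (iii) it takes a genuinely different route from the paper. Parts (i) and (ii) coincide with the paper's argument (delete the forced first column $\bigl[\begin{smallmatrix}1\\1\end{smallmatrix}\bigr]$ and shift for (i); observe the single-column form for (ii)). For the recursion, the paper does not prove (iii) by a direct bijection: it first establishes the summation identity $\theta_k^{(n)}=1+\sum_{j=k-1}^{n-2}\theta_j^{(n-1)}$ for $k\in[2,n-1]$ via a diagrammatic encoding of positive indexing matrices as planar partial matchings (classifying matrices by their first column $\bigl[\begin{smallmatrix}k\\x\end{smallmatrix}\bigr]$ with $x=1$ or $x\in[2,k]$, and renumbering), and then obtains (iii) by subtracting the instance of this identity for $k+1$ from the one for $k$. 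Your treatment of the $c_{21}\ge 2$ stratum (subtract $1$ from every entry, landing in $\Theta_{k-1}^{(n-1)}$) is essentially the same as the paper's handling of the columns $\bigl[\begin{smallmatrix}k\\x\end{smallmatrix}\bigr]$ with $x\ge 2$; the new content is your explicit two-case bijection between $\{c_{21}=1\}$ and $\Theta_{k+1}^{(n)}$, whose case analysis, image disjointness, and inverses all check out (the hypotheses $2\le k\le n-2$ are used exactly where you invoke them). Your version is more self-contained and avoids the telescoping step, but note that the paper's intermediate identity is recorded separately as Corollary \ref{coro-recur-theta-numbers} and reused later (in Lemma \ref{lemma-rel1-omega-vs-theta-numbers} and Corollary \ref{eq-rel2-omega-vs-theta-numbers}), so the paper's longer route buys a byproduct that your argument does not supply.
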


\begin{proof}
  Let $\calA_n$ be the set of all indexing matrices and let $\calA_{n}^{+}$ the set of all the positive indexing matrices. We have  $|\calA_{n}^{+}|=\mathrm{C}_n$, see Remark \ref{nthCatalan}.  Denote by  $\Theta_{k}^{(n)}$  the set of all the positive indexing matrices $C=[c_{ij}]$ such that $c_{11}=k$. Let's prove the first claim:
If $k=1,$ and $[c_{ij}]\in\Theta_1^{(n)},$ then $c_{21}=1,$ that is, the matrices of $\Theta_1^{(n)}$ are:
      \begin{equation*}
          \left[\begin{matrix}
              1 \\1
          \end{matrix}\right]
          \quad\textrm{or in the form }\quad C=\left[\begin{matrix}
              1& c_{12}&\cdots c_{1m}\\
               1& c_{22}&\cdots c_{2m}
          \end{matrix}\right].
      \end{equation*}
Now, counting the number of matrices $C$ is equivalent to counting the number of the following positive non degenerate positive indexed matrices:
      \begin{equation*}
          D= \left[\begin{matrix}
              d_{11}\ldots  d_{1,m-1}\\
                 d_{12}\ldots  d_{2,m-1}
          \end{matrix}\right],\quad \text{with  $d_{ij}:=c_{i,j+1}-1$.}
      \end{equation*}
   Thus, one deduces  that $\theta_{1}^{(n)}=|\Theta_{1}^{(n)}|=\mathrm{C}_{n-1}$.

   For the second claim, note that $\Theta_{n-1}^{(n)}$ consists of matrices that have only one column, so they have the form $C=
\Big[\!\!\begin{array}{c}n-1\\ x\end{array}\!\!\Big]$, where $ x\in[1,n-1],$ therefore $\theta_{n-1}^{(n)}=|\Theta^{(n)}_{n-1}|=n-1.$

Finally, to prove the third claim, we start by observing that the non degenerate positive indexing matrices  are parametrized by the partitions of $N=\{1,\dots,n-1\}\cup\{1',\dots,(n-1)'\}$ whose blocks have cardinality 1 or 2. To be precise, the  matrix  $C=[c_{ij}]$ is parametrized by the partition $P_C$, which  is  formed by blocks $P_j=\{c_{1j},c_{2j}'\}$ and the singleton blocks $\{a\}$, where  $a$ is  not an entry of $C$.
 Note that for blocks $\{a,b'\}$ we have $a\geq b$.
As usual, we represent the partition using diagrams. Below is an example of $C$ and its respective $P_C$.
\begin{equation*}
\begin{tikzpicture}[xscale=0.5,yscale=0.4]
\node[] at (-6,1.5) {$C=\left(\begin{matrix}
2&3&\cdots & n-1\\
1&3 &\cdots & n-1
\end{matrix}\right)\text{\quad and\quad  }
P_C=$};
\draw[thick] (1,0) to [out=90,in=270](2,3);
\draw[thick](1,3)--(1,2.5);
\draw[thick](2,0)--(2,0.5);
\draw[thick] (3,0) to [out=90,in=270](3,3);
\draw[thick] (6,0) to [out=90,in=270](6,3);
\node[below] at (1,0) {$1'$};
\node[below] at (2,0) {$2'$};
\node[below] at (3,0) {$3'$};
\node[below] at (6,0) {$(n-1)'$};
\node[above] at (1,3) {$1$};
\node[above] at (2,3) {$2$};
\node[above] at (3,3) {$3$};
\node[above] at (6,3) {$n-1$};
\node[] at (4.5,1.5) {$\cdots$};
\end{tikzpicture} 
\end{equation*}
Using this diagrammatical representation, we obtain
      \begin{equation}\label{eq-recur-theta-numbers-1}
        \theta_k^{(n)}=1+ \sum_{j=k-1}^{n-2}\theta_{j}^{(n-1)}\quad \text{ for $k\in[2,n-1]$,}
      \end{equation}
where the summand 1  counts the matrix $\Big[\!\!\begin{array}{c}k\\1\end{array}\!\!\Big]$. We will now study the summation above. Note that any other matrix $D\in \Theta_k^{(n)}$ will start either with the same column $\Big[\!\!\begin{array}{c}k\\1\end{array}\!\!\Big]$ or with a column of the form $\Big[\!\!\begin{array}{c}k\\x\end{array}\!\!\Big]$, with $x\in[2,k]$.
In diagrammatic terms we have:
      \begin{equation*}
          \begin{tikzpicture}[xscale=0.5,yscale=0.4]
\node[] at (-1,1.5) {$P_D=$};
\draw[thick] (1,0) to [out=90,in=270](5,3);
\draw[thick](1,3)--(1,2.5);
\draw[thick](2,3)--(2,2.5);
\draw[thick](3,3)--(3,2.5);
\draw[fill] (10,3) circle [radius=0.05];
\draw[fill] (2,0) circle [radius=0.05];
\draw[fill] (3,0) circle [radius=0.05];
\draw[fill] (5,0) circle [radius=0.05];
\draw[fill] (7,0) circle [radius=0.05];
\draw[fill] (7,3) circle [radius=0.05];
\draw[fill] (10,0) circle [radius=0.05];
\node[below] at (1,0) {$1'$};
\node[below] at (2,0) {$2'$};
\node[below] at (3,0) {$3'$};
\node[below] at (5,0) {$k'$};
\node[below] at (7,0) {$(k+1)'$};
\node[above] at (1,3) {$1$};
\node[above] at (2,3) {$2$};
\node[above] at (3,3) {$3$};
\node[above] at (5,3) {$k$};
\node[above] at (7,3) {$k+1$};
\node[above] at (10,3) {$n-1$};
\node[below] at (10,0) {$(n-1)'$};
\node[] at (4,3) {$\cdots$};
\node[] at (4,0) {$\cdots$};
\node[] at (8.5,0) {$\cdots$};
\node[] at (8.5,3) {$\cdots$};
\node[] at (12,1) {or};
\end{tikzpicture} \quad \begin{tikzpicture}[xscale=0.5,yscale=0.4]
\node[] at (-1,1.5) {$P_D=$};
\draw[thick] (3,0) to [out=90,in=270](5,3);
\draw[thick](1,3)--(1,2.5);
\draw[thick](2,3)--(2,2.5);
\draw[thick](3,3)--(3,2.5);
\draw[fill] (10,3) circle [radius=0.05];
\draw[thick](1,0)--(1,0.5);
\draw[fill] (5,0) circle [radius=0.05];
\draw[fill] (7,0) circle [radius=0.05];
\draw[fill] (7,3) circle [radius=0.05];
\draw[fill] (10,0) circle [radius=0.05];
\node[below] at (1,0) {$1'$};
\node[below] at (3,0) {$x'$};
\node[below] at (5,0) {$k'$};
\node[below] at (7,0) {$(k+1)'$};
\node[above] at (1,3) {$1$};
\node[above] at (2,3) {$2$};
\node[above] at (3,3) {$3$};
\node[above] at (5,3) {$k$};
\node[above] at (7,3) {$k+1$};
\node[above] at (10,3) {$n-1$};
\node[below] at (10,0) {$(n-1)'$};
\node[] at (4,3) {$\cdots$};
\node[] at (4,0) {$\cdots$};
\node[] at (8.5,0) {$\cdots$};
\node[] at (8.5,3) {$\cdots$};
\end{tikzpicture} 
      \end{equation*}
where in both cases the first column of the matrix $D$ is represented by the arc connecting the points $k$ with $1'$ or $x'$, respectively. The other columns of the matrix will be represented by connection between points in the top with points in the bottom that satisfy the defining  conditions of the indexing matrix, that is, non-intersecting arcs going down vertically or from right to left (see subsection \ref{def-indexing-matrix}).

Now, to relate all such possible indexing matrices $D$ to a smaller case, we can re-enumerate our diagrams, subtracting one at each point, as follows:
\begin{equation*}
      \begin{tikzpicture}[xscale=0.5,yscale=0.4]
\node[] at (-1,1.5) {$P_D=$};
\draw[thick] (1,0) to [out=90,in=270](5,3);
\draw[thick](1,3)--(1,2.5);
\draw[thick](2,3)--(2,2.5);
\draw[thick](3,3)--(3,2.5);
\draw[fill] (10,3) circle [radius=0.05];
\draw[fill] (2,0) circle [radius=0.05];
\draw[fill] (3,0) circle [radius=0.05];
\draw[fill] (5,0) circle [radius=0.05];
\draw[fill] (7,0) circle [radius=0.05];
\draw[fill] (7,3) circle [radius=0.05];
\draw[fill] (10,0) circle [radius=0.05];
\node[below] at (1,0) {$0'$};
\node[below] at (2,0) {$1'$};
\node[below] at (3,0) {$2'$};
\node[below] at (5,0) {$(k-1)'$};
\node[below] at (7,0) {$k'$};
\node[above] at (1,3) {$0$};
\node[above] at (2,3) {$1$};
\node[above] at (3,3) {$2$};
\node[above] at (5,3) {$k-1$};
\node[above] at (7,3) {$k$};
\node[above] at (10,3) {$n-2$};
\node[below] at (10,0) {$(n-2)'$};
\node[] at (4,3) {$\cdots$};
\node[] at (4,0) {$\cdots$};
\node[] at (8.5,0) {$\cdots$};
\node[] at (8.5,3) {$\cdots$};
\node[] at (12,1) {or};
\end{tikzpicture} \quad \begin{tikzpicture}[xscale=0.5,yscale=0.4]
\node[] at (-1,1.5) {$P_D=$};
\draw[thick] (3,0) to [out=90,in=270](5,3);
\draw[thick](1,3)--(1,2.5);
\draw[thick](2,3)--(2,2.5);
\draw[thick](3,3)--(3,2.5);
\draw[fill] (10,3) circle [radius=0.05];
\draw[thick](1,0)--(1,0.5);
\draw[fill] (5,0) circle [radius=0.05];
\draw[fill] (7,0) circle [radius=0.05];
\draw[fill] (7,3) circle [radius=0.05];
\draw[fill] (10,0) circle [radius=0.05];
\node[below] at (1,0) {$0'$};
\node[below] at (3,0) {$y'$};
\node[below] at (5,0) {$(k-1)'$};
\node[below] at (7,0) {$k'$};
\node[above] at (1,3) {$0$};
\node[above] at (2,3) {$1$};
\node[above] at (3,3) {$2$};
\node[above] at (5,3) {$k-1$};
\node[above] at (7,3) {$k$};
\node[above] at (10,3) {$n-2$};
\node[below] at (10,0) {$(n-2)'$};
\node[] at (4,3) {$\cdots$};
\node[] at (4,0) {$\cdots$};
\node[] at (8.5,0) {$\cdots$};
\node[] at (8.5,3) {$\cdots$};
\end{tikzpicture} 
  \end{equation*}
  In the first case, we do not consider the arc from $k-1$ to $0',$ then we can see that, to complete our diagram we have exactly $\sum_{j=k}^{n-2}\theta_j^{(n-1)}$ possibilities. In the second case we consider  the arc from $k-1$ to $y',$ for $1\leq y\leq k-1,$ then we are considering all the indexing matrices in the set $\Theta_{k-1}^{(n-1)},$ therefore we obtain other $\theta_{k-1}^{(n-1)}$ possibilities to form one of the desired matrices $D.$ Putting all together we obtain (\ref{eq-recur-theta-numbers-1}).
  If we take $k\in [2,n-2]$ then (\ref{eq-recur-theta-numbers-1}) is valid for $k$ and $k+1.$ In the former case the equation looks like:
  \begin{equation}\label{eq-recur-theta-numbers-2}
        \theta_{k+1}^{(n)}=1+ \sum_{j=k}^{n-2}\theta_{j}^{(n-1)}.
      \end{equation}
Therefore, if we subtract (\ref{eq-recur-theta-numbers-2}) to (\ref{eq-recur-theta-numbers-1}), we obtain $\theta_{k}^{(n)}-\theta_{k+1}^{(n)}=\theta_{k-1}^{(n-1)}$, as desired.

\end{proof}
A consequence of the proof of Lemma \ref{lemma-recur-theta-numbers} is the following corollary.
\begin{corollary}\label{coro-recur-theta-numbers}
    For each $n>3$ and each $k\in [2,n-2]$ we have:
    \begin{equation*}
            \theta_k^{(n)}=1+\sum_{j=k-1}^{n-2}\theta_j^{(n-1)}.
        \end{equation*}
\end{corollary}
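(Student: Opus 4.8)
The plan is to recognize that the asserted identity is nothing other than equation (\ref{eq-recur-theta-numbers-1}), which was already obtained inside the proof of Lemma \ref{lemma-recur-theta-numbers}. There that equation was established for all $k\in[2,n-1]$; the corollary merely records it on the smaller range $k\in[2,n-2]$, the hypothesis $n>3$ serving only to guarantee that this range is nonempty. Thus no new combinatorics is required, and the statement follows by directly quoting (\ref{eq-recur-theta-numbers-1}).

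For completeness I would reproduce the bijective count that yields (\ref{eq-recur-theta-numbers-1}). First I recall that a positive non-degenerate indexing matrix $C=[c_{ij}]$ is encoded by the partition $P_C$ of $\{1,\dots,n-1\}\cup\{1',\dots,(n-1)'\}$ into singletons and arcs $\{a,b'\}$ with $a\geq b$, and that the condition $c_{11}=k$ means the leftmost arc of $P_C$ emanates from the top vertex $k$. I would then split the set $\Theta_k^{(n)}$ according to the lower endpoint of that leftmost arc: either it lands at $1'$, or at some $x'$ with $x\in[2,k]$; the single-column matrix $\Big[\!\!\begin{array}{c}k\\1\end{array}\!\!\Big]$ is treated separately.

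Next I would shift every label down by one, exactly as in the proof of the lemma. In the first case, after deleting the arc from $k-1$ to $0'$, the remaining diagram is an arbitrary positive indexing matrix whose first top vertex lies in $\{k,\dots,n-2\}$, contributing $\sum_{j=k}^{n-2}\theta_j^{(n-1)}$ possibilities; in the second case the shifted diagram ranges precisely over $\Theta_{k-1}^{(n-1)}$, contributing $\theta_{k-1}^{(n-1)}$ possibilities. Adding the isolated matrix counted by the summand $1$ produces
\[
\theta_k^{(n)}=1+\theta_{k-1}^{(n-1)}+\sum_{j=k}^{n-2}\theta_j^{(n-1)}=1+\sum_{j=k-1}^{n-2}\theta_j^{(n-1)},
\]
which is the claim. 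Since the underlying bijection is already carried out in Lemma \ref{lemma-recur-theta-numbers}, there is no genuine obstacle here; the only point requiring care is bookkeeping the summation index and the extra $+1$ term, just as in the proof of that lemma.
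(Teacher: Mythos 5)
Your proposal is correct and follows exactly the paper's route: the paper proves the corollary by simply observing that it is the identity (\ref{eq-recur-theta-numbers-1}), established for all $k\in[2,n-1]$ inside the proof of Lemma \ref{lemma-recur-theta-numbers}, restricted to the subrange $k\in[2,n-2]$. Your reproduction of the underlying count (splitting $\Theta_k^{(n)}$ by the lower endpoint of the leftmost arc, relabelling, and adding the lone single-column matrix) matches the lemma's argument term for term.
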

\begin{remark}\rm
The Lemma \ref{lemma-recur-theta-numbers} yields the following Pascal triangle, which allows computing $\theta_k^{(n)}$'s recursively as follows:
\begin{equation}\label{eq-tikz-theta-pascal}
    \begin{tikzpicture}[xscale=0.3,yscale=0.4]

\node[] at (0,2) {$\theta_k^{(n)}$};
\node[] at (0,0) {$1$};
\node[] at (-2,-2) {$2$};
\node[] at (2,-2) {$2$};
\node[] at (-4,-4) {$5$};
\node[] at (0,-4) {$5$};
\node[] at (4,-4) {$3$};
\node[] at (-6,-6) {$14$};
\node[] at (-2,-6) {$14$};
\node[] at (2,-6) {$9$};
\node[] at (6,-6) {$4$};
\node[] at (-8,-8) {$42$};
\node[] at (-4,-8) {$42$};
\node[] at (0,-8) {$28$};
\node[] at (4,-8) {$14$};
\node[] at (8,-8) {$5$};
\node[] at (-10,-10) {$132$};
\node[] at (-6,-10) {$132$};
\node[] at (-2,-10) {$90$};
\node[] at (2,-10) {$48$};
\node[] at (6,-10) {$20$};
\node[] at (10,-10) {$6$};
\node[] at (-20,0) {$n=2$};
\node[] at (-20,-2) {$n=3$};
\node[] at (-20,-4) {$n=4$};
\node[] at (-20,-6) {$n=5$};
\node[] at (-20,-8) {$n=6$};
\node[] at (-20,-10) {$n=7$};
\draw[->] (-1.5,-2.5)--(-0.5,-3.5);
\draw[->] (0.5,-4.5)--(1.5,-5.5);
\draw[->] (-3.5,-4.5)--(-2.5,-5.5);
\draw[->] (2.5,-6.5)--(3.5,-7.5);
\draw[->] (-1.5,-6.5)--(-0.5,-7.5);
\draw[->] (-5.5,-6.5)--(-4.5,-7.5);
\draw[->] (4.5,-8.5)--(5.5,-9.5);
\draw[->] (0.5,-8.5)--(1.5,-9.5);
\draw[->] (-3.5,-8.5)--(-2.5,-9.5);
\draw[->] (-7.5,-8.5)--(-6.5,-9.5);
\draw[->] (3-2,-2)--(1-2,-2);
\draw[->] (3,-4)--(1,-4);
\draw[->] (3-4,-4)--(1-4,-4);
\draw[->] (5,-6)--(3,-6);
\draw[->] (1,-6)--(-1,-6);
\draw[->] (-3,-6)--(-5,-6);
\draw[->] (7,-8)--(5,-8);
\draw[->] (3,-8)--(1,-8);
\draw[->] (-1,-8)--(-3,-8);
\draw[->] (-5,-8)--(-7,-8);
\draw[->] (9,-10)--(7,-10);
\draw[->] (5,-10)--(3,-10);
\draw[->] (1,-10)--(-1,-10);
\draw[->] (-3,-10)--(-5,-10);
\draw[->] (-7,-10)--(-9,-10);
\end{tikzpicture} 
\end{equation}
\end{remark}

\begin{lemma}\label{lemma-rel1-omega-vs-theta-numbers}
    For all $n\geq 1$ we have $\Omega_0^{(n)}= \mathrm{C}_n$,  $\Omega_n^{(n)}=1$ and
\begin{equation}\label{eq-lemma-rel1-omega-vs-theta-numbers}
\Omega_r^{(n)}=\sum_{i=r-1}^{n-1}\binom{i}{r-1}\left(1+\sum_{j=i+1}^{n-1}\theta_j^{(n)}\right), \qquad r \in [1,n-1].
\end{equation}
\end{lemma}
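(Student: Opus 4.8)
The plan is to count indexing matrices of a prescribed blob-rank directly, exploiting the canonical decomposition $A=[B\mid C]$ of (\ref{A=BC}) into an initial block $B$ (second row all zeros) and a positive block $C$. Since the blob-rank of $A$ is the number of columns carrying a $0$, and those are precisely the columns of $B$ (the entries of $C$ being strictly positive), a matrix has blob-rank $r$ if and only if its initial block $B$ has exactly $r$ columns. Thus counting blob-rank-$r$ matrices amounts to counting the admissible pairs $(B,C)$.

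First I would dispose of the two boundary cases. For $r=0$ the block $B$ is empty, so $A=C$ ranges over the positive indexing matrices (including the degenerate one); by Remark \ref{nthCatalan} there are $\mathrm{C}_n$ of them, giving $\Omega_0^{(n)}=\mathrm{C}_n$. For $r=n$ the block $B$ has $n$ columns whose top row is strictly increasing in $[0,n-1]$, forcing it to equal $(0,1,\dots,n-1)$ with $C$ empty; this is the unique such matrix, so $\Omega_n^{(n)}=1$.

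For the main range $r\in[1,n-1]$ I would parametrize by $i:=b_r$, the largest top-row entry of $B$. Writing $B=\left[\begin{smallmatrix} b_1 & \cdots & b_r\\ 0 & \cdots & 0\end{smallmatrix}\right]$ with $0\le b_1<\cdots<b_r=i\le n-1$, the remaining entries $b_1<\cdots<b_{r-1}$ form an arbitrary $(r-1)$-subset of $\{0,\dots,i-1\}$, contributing $\binom{i}{r-1}$ choices, and the existence of $r-1$ smaller entries forces $i\ge r-1$. Independently, the concatenation rule underlying (\ref{A=BC}) requires the top rows to interleave strictly, so the positive block $C$ is either empty or satisfies $c_{11}>i$; by Definition \ref{def-theta-numbers} the number of such $C$ is $1+\sum_{j=i+1}^{n-1}\theta_j^{(n)}$. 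Since the two choices are independent, multiplying and summing over $i\in[r-1,n-1]$ yields exactly (\ref{eq-lemma-rel1-omega-vs-theta-numbers}).

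The argument is essentially bookkeeping, so I do not anticipate a genuine obstacle; the only points demanding care are verifying that $A\mapsto(B,C)$ is a bijection onto the pairs constrained by $b_r<c_{11}$, and checking the extreme value $i=n-1$, where the inner sum is empty and only $C=\varnothing$ survives, consistent with $1+0=1$. As a cross-check I would re-sum (\ref{eq-lemma-rel1-omega-vs-theta-numbers}) by interchanging the two summations and invoking the hockey-stick identity $\sum_{i=r-1}^{m}\binom{i}{r-1}=\binom{m+1}{r}$; this recasts the formula as $\Omega_r^{(n)}=\binom{n}{r}+\sum_{j=r}^{n-1}\binom{j}{r}\theta_j^{(n)}$, the count obtained instead by parametrizing over $c_{11}$, thereby confirming consistency.
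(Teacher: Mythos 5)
Your proof is correct and takes essentially the same approach as the paper: both decompose $A=[B\mid C]$, parametrize the initial block by its largest top-row entry $i=b_r$ to get the factor $\binom{i}{r-1}$, and count the positive block via Definition \ref{def-theta-numbers} subject to $c_{11}>b_r$. The only cosmetic difference is that the paper counts the cases $C=\varnothing$ and $C\neq\varnothing$ as two separate sums before recombining, whereas you fold them into the single factor $1+\sum_{j=i+1}^{n-1}\theta_j^{(n)}$ from the outset.
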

\begin{proof}
Since $\Omega_0^{(n)}$ is the number of positive indexing matrices, it follows  that $\Omega_0^{(n)}=\mathrm{C}_n$, see Remark \ref{nthCatalan}. Now, if $r=n$ then the only indexing matrix with blob-rank  $n,$ is
       \begin{equation*}
           A=\left[\begin{matrix}
               0 & 1& \cdots & n-1 \\
               0 & 0 & \cdots & 0
           \end{matrix}\right].
       \end{equation*}
Hence $\Omega_n^{(n)}=1.$

Finally, suppose that $r \in [1,n-1]$. Under this assumption, any indexing matrix with blob-rank $r$ is either an initial matrix with blob rank $r,$ or one of the form $A=[B|C],$ where $B$ is an initial indexing matrix of blob-rank  $r$ and $C$ is a non degenerated positive indexing matrix, that is, we have the following cases:
       \begin{equation*}
           A=\left[\begin{matrix}
               a_1 &\cdots & a_r \\
               0 & \cdots & 0
           \end{matrix}\right]\quad \textrm{or}\quad
           A=\left[\begin{matrix}
               b_1 & \cdots & b_r& c_{11}&\cdots &c_{1m}\\
               0 & \cdots &0 & c_{21}&\cdots & c_{2m}
           \end{matrix}\right].
       \end{equation*}
For the first case note that $0\leq a_1<a_2<\cdots<a_r\leq n-1$. In particular, $r-1\leq a_r\leq n-1.$ Now, for each choice of $r-1\leq a_r\leq n-1,$ we have exactly $\binom{a_r}{r-1}$ possible choices for $0\leq a_1<a_2<\cdots<a_{r-1}.$
We then deduce that there are $\sum_{i=r-1}^{n-1}\binom{i}{r-1}$ possible initial indexing matrices that have blob rank $r$.
For the second case,  $A=[B|C],$ we have the restrictions $0\leq b_1<b_2<\cdots<b_r<c_{11}<\cdots c_{1m}.$ In particular, $r-1\leq b_r<n-1$. Then we have  $\sum_{i=r-1}^{n-2}\binom{i}{r-1}$ possible choices for the initial matrix $B.$ Now, using Definition \ref{def-theta-numbers}, for each choice of $B,$  there are $\sum_{j=b_r+1}^{n-1}\theta_{j}^{(n)}$  positive indexing matrices $C,$ such that $A=[B|C]$ is an indexing matrix. Therefore we obtain
       \begin{equation*}
           \Omega_r^{(n)}=\sum_{i=r-1}^{n-1}\binom{i}{r-1}\left(1+\sum_{j=i+1}^{n-1}\theta_j^{(n)}\right)
       \end{equation*}
       as desired. Note that for $r\in [1, n-1]$ and $ i=n-1$, we have $\sum_{j=i+1}^{n-1}\theta_{j}^{(n)}=0$.
\end{proof}
 A combination of Lemma \ref{lemma-rel1-omega-vs-theta-numbers} with Corollary \ref{coro-recur-theta-numbers} leaves the following corollary.
\begin{corollary}\label{eq-rel2-omega-vs-theta-numbers} For $n>1$, we have
\begin{equation*}
   \Omega_r^{(n)}=\sum_{i=r-1}^{n-1}\binom{i}{r-1}\theta_{i+2}^{(n+1)},\quad
    r\in [1, n-1].
\end{equation*}
\end{corollary}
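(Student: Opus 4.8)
The plan is to read the corollary directly off the explicit formula for $\Omega_r^{(n)}$ established in Lemma \ref{lemma-rel1-omega-vs-theta-numbers} by identifying each inner factor with a single $\theta$-number. Concretely, Lemma \ref{lemma-rel1-omega-vs-theta-numbers} gives, for $r\in[1,n-1]$,
\[
\Omega_r^{(n)}=\sum_{i=r-1}^{n-1}\binom{i}{r-1}\left(1+\sum_{j=i+1}^{n-1}\theta_j^{(n)}\right),
\]
so it suffices to prove the pointwise identity $1+\sum_{j=i+1}^{n-1}\theta_j^{(n)}=\theta_{i+2}^{(n+1)}$ for every $i\in[r-1,n-1]$ and then substitute term by term.

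This identity is exactly the recursion (\ref{eq-recur-theta-numbers-1}) derived in the proof of Lemma \ref{lemma-recur-theta-numbers} (restated as Corollary \ref{coro-recur-theta-numbers}): applying it with $n$ replaced by $n+1$ and $k$ replaced by $i+2$ yields $\theta_{i+2}^{(n+1)}=1+\sum_{j=i+1}^{n-1}\theta_j^{(n)}$. First I would verify the index constraint: since (\ref{eq-recur-theta-numbers-1}) is valid for $k\in[2,n-1]$, its $(n+1)$-version is valid for $k=i+2\in[2,n]$, i.e. for all $i\in[0,n-2]$, which (as $r\ge 1$) covers the whole range $i\in[r-1,n-2]$. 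The endpoint $i=n-2$ is consistent with the rest of the triangle, since it reproduces $\theta_n^{(n+1)}=1+\theta_{n-1}^{(n)}=1+(n-1)=n$, in agreement with Lemma \ref{lemma-recur-theta-numbers}(ii).

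The one term requiring separate attention — and the only genuine obstacle — is the top index $i=n-1$, for which the interior sum is empty and the factor equals $1$, while (\ref{eq-recur-theta-numbers-1}) no longer applies. Here I would invoke the boundary convention $\theta_{n+1}^{(n+1)}=1$, which extends the normalization $\theta_1^{(1)}=1$ of Definition \ref{def-theta-numbers} and is precisely the value forced by the recursion at the corner of the Pascal triangle (\ref{eq-tikz-theta-pascal}); with it the pointwise identity holds for $i=n-1$ as well. Substituting $1+\sum_{j=i+1}^{n-1}\theta_j^{(n)}=\theta_{i+2}^{(n+1)}$ into the formula above then gives $\Omega_r^{(n)}=\sum_{i=r-1}^{n-1}\binom{i}{r-1}\theta_{i+2}^{(n+1)}$, completing the proof. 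I expect the bookkeeping at $i=n-1$ to be the subtle point; the remainder is a direct substitution.
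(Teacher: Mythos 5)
Your proof is correct and follows essentially the same route as the paper, which simply combines Lemma \ref{lemma-rel1-omega-vs-theta-numbers} with the recursion of Corollary \ref{coro-recur-theta-numbers} (equivalently (\ref{eq-recur-theta-numbers-1}) shifted to $n+1$). You are in fact more careful than the paper at the top term $i=n-1$, where the identity requires the boundary value $\theta_{n+1}^{(n+1)}=1$, a convention the paper never states explicitly (Definition \ref{def-theta-numbers} only covers $k\in[1,n-1]$); flagging and justifying it as you do is the right call.
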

\begin{remark}\label{Pascal1}\rm
Note that, combining the Lemma \ref{lemma-recur-theta-numbers}, Corollary \ref{coro-recur-theta-numbers} and (\ref{eq-tikz-theta-pascal}), we obtain the following Pascal triangle to recursively calculate the $\Omega_k^{(n)}$'s.

\begin{equation}\label{eq-tikz-Omega-pascal}
    \begin{tikzpicture}[xscale=0.3,yscale=0.4]
\node[] at (0,2) {$\Omega_k^{(n)}:$};
\node[] at (0,0) {$1$};
\node[] at (-2,-2) {$1$};
\node[] at (2,-2) {$1$};
\node[] at (-4,-4) {$2$};
\node[] at (0,-4) {$3$};
\node[] at (4,-4) {$1$};
\node[] at (-6,-6) {$5$};
\node[] at (-2,-6) {$9$};
\node[] at (2,-6) {$5$};
\node[] at (6,-6) {$1$};
\node[] at (-8,-8) {$14$};
\node[] at (-4,-8) {$28$};
\node[] at (0,-8) {$20$};
\node[] at (4,-8) {$7$};
\node[] at (8,-8) {$1$};
\node[] at (-10,-10) {$42$};
\node[] at (-6,-10) {$90$};
\node[] at (-2,-10) {$75$};
\node[] at (2,-10) {$35$};
\node[] at (6,-10) {$9$};
\node[] at (10,-10) {$1$};
\node[] at (-20,0) {$n=0$};
\node[] at (-20,-2) {$n=1$};
\node[] at (-20,-4) {$n=2$};
\node[] at (-20,-6) {$n=3$};
\node[] at (-20,-8) {$n=4$};
\node[] at (-20,-10) {$n=5$};
\draw[->] (-0.5,-0.5)--(-1.5,-1.5);
\draw[->] (1.5,-2.5)--(0.5,-3.5);
\draw[->] (-2.5,-2.5)--(-3.5,-3.5);
\draw[->] (3.5,-4.5)--(2.5,-5.5);
\draw[->] (-0.5,-4.5)--(-1.5,-5.5);
\draw[->] (-4.5,-4.5)--(-5.5,-5.5);
\draw[->] (5.5,-6.5)--(4.5,-7.5);
\draw[->] (1.5,-6.5)--(0.5,-7.5);
\draw[->] (-2.5,-6.5)--(-3.5,-7.5);
\draw[->] (-6.5,-6.5)--(-7.5,-7.5);
\draw[->] (-8.5,-8.5)--(-9.5,-9.5);
\draw[->] (-4.5,-8.5)--(-5.5,-9.5);
\draw[->] (-0.5,-8.5)--(-1.5,-9.5);
\draw[->] (3.5,-8.5)--(2.5,-9.5);
\draw[->] (7.5,-8.5)--(6.5,-9.5);
\end{tikzpicture} 
\end{equation}
Observe that the  arrows in the above Pascal triangle express the following relation for all $n>1$
\begin{equation}\label{eq-Omega-vs-theta}
\Omega_{k}^{(n)}=\theta_{2k+1}^{(n+k+1)},\quad \textrm{for a fixed}\quad k\in[0, n-1]
\end{equation} See the odd diagonals of the Pascal triangle of (\ref{eq-tikz-theta-pascal}).
\end{remark}
\subsection{Alternative formula}
To state the main result of this section, we need to introduce the numbers $\chi_{k}^{(n)}$, which are the number of TL-diagrams with a given number of arcs exposed on the left side. To be precise, we make the following definition.

\begin{definition}\label{def-chi-numbers}
Given $k\in[1,n]$, we define the number $\chi_{k}^{(n)}$ as the number of TL-diagrams on $n$ points with exactly $k$ arcs exposed to the left side of the diagram.
\end{definition}

For example, in the diagram $D$ of equation (\ref{eq-tikz-TL-diagram-1}) there are exactly four exposed arcs on the left side.

\begin{theorem}\label{theo-alternative-formula-diag-frBlob}
The cardilnality of $\mathfrak{Bl}_{d,n},$ is given by:
\begin{equation}\label{EQ-lemma-lower-bound-for-diag-frBlob}
|\mathfrak{Bl}_{d,n}|=  d^{n}\sum_{k^=1}^{n}\chi_{k}^{(n)}(1+d)^{k}.
\end{equation}
\end{theorem}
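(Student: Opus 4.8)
The plan is to deduce this formula from the one already established in Theorem \ref{theo-card-Fblob-mon-first}, namely $|\mathfrak{Bl}_{d,n}| = d^n \sum_{k=0}^n \Omega_k^{(n)} d^k$, by proving the purely combinatorial identity
\[
\Omega_r^{(n)} = \sum_{k=r}^n \binom{k}{r}\chi_k^{(n)}, \qquad r\in[0,n],
\]
(where $\chi_k^{(n)}=0$ for $k=0$) and then collapsing the resulting double sum with the binomial theorem.

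To establish this identity I would exploit the one-to-one correspondence $A\mapsto \BBU(A)$ between indexing matrices and the blobbed TL-diagrams of $\mathfrak{Bl}_n$ provided by Theorem \ref{theo-iso-diagBlobMon-algBlobMon}. Each blobbed TL-diagram has a well-defined base TL-diagram, obtained by erasing its blobs, and by Definition \ref{def-blob-TL-diagram} the blobs must sit on a subset of the arcs exposed to the left side of this base, each such arc carrying at most one blob. Conversely, for a \emph{fixed} base TL-diagram with exactly $k$ arcs exposed to the left, \emph{every} subset of those $k$ arcs determines a legitimate blobbed TL-diagram; hence the blobbed diagrams sharing a given base are in bijection with the subsets of its left-exposed arcs. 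By Remark \ref{remark-relation-blobrank-and-blobbed-arcs} the blob-rank of $\BBU(A)$ equals the number of blobbed arcs, so a base diagram with $k$ left-exposed arcs contributes exactly $\binom{k}{r}$ indexing matrices of blob-rank $r$. Grouping all base TL-diagrams according to their number $k$ of left-exposed arcs, of which there are $\chi_k^{(n)}$ by Definition \ref{def-chi-numbers}, yields the displayed identity.

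With the identity in hand I would substitute it into Theorem \ref{theo-card-Fblob-mon-first} and interchange the order of summation over the region $0\le r\le k\le n$:
\[
\sum_{r=0}^n \Omega_r^{(n)} d^r
= \sum_{r=0}^n d^r \sum_{k=r}^n \binom{k}{r}\chi_k^{(n)}
= \sum_{k=1}^n \chi_k^{(n)} \sum_{r=0}^k \binom{k}{r} d^r
= \sum_{k=1}^n \chi_k^{(n)} (1+d)^k,
\]
the last step being the binomial theorem, with the outer sum starting at $k=1$ since every TL-diagram has at least one left-exposed arc. Multiplying by $d^n$ gives \eqref{EQ-lemma-lower-bound-for-diag-frBlob}.

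The main obstacle will be the careful justification of the correspondence in the second paragraph: one must check that arbitrary subsets of the left-exposed arcs of a fixed base produce distinct and genuinely admissible blobbed diagrams, so that no collision or overcounting arises, and that the property of being ``exposed to the left'' is exactly the combinatorial datum recorded by $\chi_k^{(n)}$. This is bookkeeping anchored in Definition \ref{def-blob-TL-diagram} and Remark \ref{remark-relation-blobrank-and-blobbed-arcs}, but it is where all the content lies; once settled, the algebra is immediate. As a consistency check, the $r=0$ case of the identity reads $\Omega_0^{(n)} = \sum_{k} \chi_k^{(n)} = \mathrm{C}_n$, the total number of TL-diagrams, in agreement with Lemma \ref{lemma-rel1-omega-vs-theta-numbers}.
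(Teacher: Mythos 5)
Your proposal is correct, but it follows a genuinely different route from the paper. The paper's proof does not use Theorem \ref{theo-card-Fblob-mon-first} at all: it recounts $\mathfrak{Bl}_{d,n}$ directly from the diagrams, grouping the $d$-abacus blob diagrams by the number $k$ of left-exposed arcs of the base TL-diagram and the number $j$ of blobbed arcs among them, and then counting bead placements ($n+j$ arc components, each carrying $0$ to $d-1$ beads, giving $\chi_k^{(n)}\sum_{j}\binom{k}{j}d^{n+j}=d^n\chi_k^{(n)}(1+d)^k$ per class). You instead take Theorem \ref{theo-card-Fblob-mon-first} as input and reduce everything to the bead-free identity $\Omega_r^{(n)}=\sum_{k\geq r}\binom{k}{r}\chi_k^{(n)}$, which follows exactly as you say from the bijection $A\mapsto\BBU(A)$, Remark \ref{remark-relation-blobrank-and-blobbed-arcs}, and the fact that a blobbed TL-diagram is precisely a base TL-diagram together with an arbitrary subset of its left-exposed arcs (Definition \ref{def-blob-TL-diagram}); the interchange of summation and the binomial theorem then finish the job. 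Your worry about ``collisions'' is harmless: distinct blob subsets give distinct blobbed diagrams by definition, and admissibility is exactly the left-exposure condition. What each approach buys: the paper's argument is self-contained and serves as an independent check on Theorem \ref{theo-card-Fblob-mon-first} (the two together yield the corollary $\sum_r\Omega_r^{(n)}d^r=\sum_k\chi_k^{(n)}(1+d)^k$), whereas your argument produces the sharper coefficientwise identity $\Omega_r^{(n)}=\sum_{k= r}^{n}\binom{k}{r}\chi_k^{(n)}$ as an explicit byproduct, which the paper never states and which refines its Corollary 5.11.
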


\begin{proof}
Let $k$ be in $[1,n]$. The Definition \ref{def-chi-numbers} says that the number $\chi_{k}^{(n)}$ is the number of TL-diagrams on $n$ points, with exactly $k$ arcs exposed to the left. Therefore, for each $j\in [0,k]$ there are exactly $\chi_{k}^{(n)}\binom{k}{j}$ different TL-diagrams with $k$ arcs exposed to the left, and with $j$ of them decorated by a blob.
Now, each blobbed arc splits in  two components and each not blobbed arc counts as a component itself. Also, note that each component can contain a number of beads ranging from $0$ to $d-1$. Therefore we have  $\chi_{k}^{(n)}\sum_{j=0}^{k}\binom{k}{j}d^{n-j}d^{2j}=d^{n-k}\chi_{k}^{(n)}(d+d^2)^k$ different $d-$framed blob diagrams with $k$ arcs exposed to the left side of the diagram. Thus, we have:
    \begin{equation*}
    |\mathfrak{Bl}_{d,n}|=d^{n-k}\sum_{k=1}^{n}\chi_{k}^{(n)}(d+d^2)^k=d^n\sum_{k=1}^{n}\chi_{k}^{(n)}(1+d)^k.
    \end{equation*}
\end{proof}
From Theorem \ref{theo-card-Fblob-mon-first}, Theorem \ref{theo-alternative-formula-diag-frBlob} and Theorem \ref{theo-iso-AlgFrblob-DiagFrblob}, we get the following corollary.
\begin{corollary}
     The cardinality of ${Bl}_{d,n}$ is given by:
    \begin{equation}
        |{Bl}_{d,n}|= d^{n}\sum_{k=0}^{n}\Omega_k^{(n)}d^k=  d^{n}\sum_{k^=1}^{n}\chi_{k}^{(n)}(1+d)^{k}.
    \end{equation}
\end{corollary}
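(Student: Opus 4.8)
The plan is to derive the Corollary directly from the three results it cites, since it contains no genuinely new content beyond them. First I would invoke Theorem \ref{theo-iso-AlgFrblob-DiagFrblob}, which furnishes an isomorphism $\Phi\colon Bl_{d,n}\to\mathfrak{Bl}_{d,n}$. Being an isomorphism of monoids, $\Phi$ is in particular a bijection of the underlying sets, so the two monoids have the same cardinality, $|Bl_{d,n}|=|\mathfrak{Bl}_{d,n}|$. Both are finite: $Bl_n$ is finite by Theorem \ref{theo-iso-diagBlobMon-algBlobMon}, and the relations $z_i^d=1$ together with the decomposition of Lemma \ref{lemma-normal-form-FrBlob-2} bound the number of framing factors, so $Bl_{d,n}$ is finite and these cardinalities are honest integers rather than merely formal coincidences.

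With this identification in hand, the two displayed formulas are simply two independent evaluations of the single quantity $|\mathfrak{Bl}_{d,n}|$. The first equality, $|Bl_{d,n}|=d^{n}\sum_{k=0}^{n}\Omega_k^{(n)}d^{k}$, is Theorem \ref{theo-card-Fblob-mon-first} transported across $\Phi$; the second, $|Bl_{d,n}|=d^{n}\sum_{k=1}^{n}\chi_{k}^{(n)}(1+d)^{k}$, is Theorem \ref{theo-alternative-formula-diag-frBlob} transported across $\Phi$. The equality of the two sums is then automatic, being two expressions for one and the same finite cardinal. This completes the argument, and there is essentially no obstacle: all the combinatorial work has already been carried out in the cited theorems, so the Corollary is a bookkeeping step.

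If one instead wanted a proof that stays on the algebraic side and never passes to the diagram monoid, I would mirror the proof of Theorem \ref{theo-card-Fblob-mon-first} using the abstract normal form. Concretely, Corollary \ref{coro-abstract-normal-form-framed-blob-mon} writes every element of $Bl_{d,n}$ uniquely as $\langle\alphan,A\rangle U(A)\langle A,\bbeta\rangle$ with $A$ an indexing matrix, $\alphan\in\mathrm{Exp}_L(A)$ and $\bbeta\in\mathrm{Exp}_R(A)$; summing $|\mathrm{Exp}_L(A)|\cdot|\mathrm{Exp}_R(A)|=d^{n+r}$ from Proposition \ref{lemma-equation-n+r} over the $\Omega_k^{(n)}$ matrices of each blob-rank $k$ reproduces the first formula without reference to diagrams. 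The only point meriting care in either route is to confirm that the normal forms are genuinely unique, so that the count is neither an over- nor an undercount; but this uniqueness is exactly the content of Corollary \ref{coro-abstract-normal-form-framed-blob-mon} (respectively Corollary \ref{coro-normal-form-framed-blob-mon}), so nothing further is required.
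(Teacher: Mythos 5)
Your proposal is correct and follows the same route as the paper, which derives the corollary by combining the isomorphism of Theorem \ref{theo-iso-AlgFrblob-DiagFrblob} with the two cardinality formulas of Theorem \ref{theo-card-Fblob-mon-first} and Theorem \ref{theo-alternative-formula-diag-frBlob}. The alternative algebraic-side argument you sketch is a reasonable bonus but not needed; the paper treats this as the bookkeeping step you describe.
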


\begin{corollary}
The following equation holds.
\begin{equation}
  \sum_{k=0}^{n}\Omega_k^{(n)}d^k=\sum_{k=1}^{n}\chi_{k}^{(n)}(1+d)^k.
    \end{equation}
\end{corollary}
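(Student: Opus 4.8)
The plan is to obtain the identity by equating the two independent evaluations of the cardinality of the framed blob monoid and then cancelling the common factor. First I would apply the preceding corollary, which merges Theorem~\ref{theo-card-Fblob-mon-first}, Theorem~\ref{theo-alternative-formula-diag-frBlob} and the isomorphism $Bl_{d,n}\cong\mathfrak{Bl}_{d,n}$ of Theorem~\ref{theo-iso-AlgFrblob-DiagFrblob}, to write, for every positive integer $d$,
\begin{equation*}
d^{n}\sum_{k=0}^{n}\Omega_k^{(n)}d^{k}=|Bl_{d,n}|=d^{n}\sum_{k=1}^{n}\chi_{k}^{(n)}(1+d)^{k}.
\end{equation*}
Cancelling the nonzero factor $d^{n}$ from the outer terms leaves
\begin{equation*}
\sum_{k=0}^{n}\Omega_k^{(n)}d^{k}=\sum_{k=1}^{n}\chi_{k}^{(n)}(1+d)^{k},
\end{equation*}
a numerical equality valid at each of the infinitely many points $d=1,2,3,\dots$.

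Next I would promote this to the asserted identity. Both sides are polynomials in $d$ of degree at most $n$, whose coefficients are fixed integers built from the $\Omega_k^{(n)}$ and the $\chi_k^{(n)}$. Since they agree at infinitely many values of $d$, their difference is a polynomial with infinitely many roots and hence is identically zero; thus the two expressions coincide as polynomials in $d$, which is exactly the claim.

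There is no genuine obstacle here, as the substance has already been carried out in the two cardinality theorems; the only point deserving a line of justification is the passage from \emph{equality for all positive integers $d$} to \emph{equality of polynomials}, which uses only that a nonzero polynomial of degree at most $n$ has at most $n$ roots. If one wishes to bypass the polynomial argument, the same identity admits a purely combinatorial reading: expanding $(1+d)^{k}=\sum_{r}\binom{k}{r}d^{r}$ and matching the coefficient of $d^{r}$ on each side reduces the statement to
\begin{equation*}
\Omega_r^{(n)}=\sum_{k=1}^{n}\binom{k}{r}\chi_{k}^{(n)},\qquad r\in[0,n],
\end{equation*}
which I would verify by noting, via Remark~\ref{remark-relation-blobrank-and-blobbed-arcs}, that an indexing matrix of blob-rank $r$ corresponds to a blobbed TL-diagram with exactly $r$ blobbed arcs; such a diagram is built by choosing a TL-diagram with $k$ arcs exposed to the left side and then selecting which $r$ of them carry a blob, giving $\binom{k}{r}$ choices and, after summing over $k$ (recall $\binom{k}{r}=0$ for $k<r$), precisely the right-hand side above.
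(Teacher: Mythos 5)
Your main argument is exactly the paper's: equate the two expressions for $|\mathfrak{Bl}_{d,n}|$ from Theorem \ref{theo-card-Fblob-mon-first} and Theorem \ref{theo-alternative-formula-diag-frBlob} and cancel the factor $d^{n}$, so the proposal is correct and follows the paper's route. The detour through ``equality at infinitely many integers implies equality of polynomials'' is superfluous here, since $d$ is a fixed positive integer throughout and the claimed equation is just the numerical identity obtained after cancellation; your supplementary combinatorial identity $\Omega_r^{(n)}=\sum_{k}\binom{k}{r}\chi_k^{(n)}$ is a valid and pleasant bijective refinement, but it is not needed for the statement.
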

\begin{proof}
    It follows directly from Theorem \ref{theo-card-Fblob-mon-first} and Theorem \ref{theo-alternative-formula-diag-frBlob}.
\end{proof}
\begin{remark}\rm
Note that in a TL-diagram, each arc exposed to the left side can \emph{cover} a certain number of arcs (the not exposed to the left side ones. For example, in the following TL-diagram, we have colored the arcs exposed to the left side as: the red arc cover one arc, the blue arc cover four arcs, while the green and the purple arcs do not cover any other arcs.

\begin{equation}\label{eq-tikz-TL-diagram-w-color}
\begin{tikzpicture}[xscale=0.3,yscale=0.3]
\node[] at (-1,1.5) {$D=$};
\draw[thick,red] (1,0) to [out=90,in=90](4,0);
\draw[thick,black] (2,0) to [out=90,in=90](3,0);
\draw[thick,green] (5,0) to [out=90,in=90](6,0);
\draw[thick,blue] (7,0) to [out=90,in=270](3,3);
\draw[thick,black] (8,0) to [out=90,in=270](8,3);
\draw[thick,purple] (2,3) to [out=270,in=270](1,3);
\draw[thick,black] (4,3) to [out=270,in=270](7,3);
\draw[thick,black] (5,3) to [out=270,in=270](6,3);
\draw[thick,black] (9,0) to [out=90,in=270](9,3);


\end{tikzpicture} 
\end{equation}
\end{remark}

In order to have  explicit formulae for the $\chi_k^{(n)}$'s, we will use the so called {\it
planar $2-$partition.} Put $K=\{1,2,\dots,k,k+1,\dots ,2k\},$ a planar $2-$partition of $K$ is a partition $P=\{P_1,P_2,\dots,P_k\}$ of $K$ such that each block $P_j$ contains exactly two elements of $K$ and for each three elements of $K,$ $a<b<c$ we have that if $P_j=\{a,c\}$ is one of the blocks of $P,$ then there is a $b'\neq b$ such that $a<b'<c$ and $P_i=\{b,b'\}$ is also a block of $P.$ It is well known that the number of  possible planar $2-$partitions of $K$ is equal to the Catalan number $\mathrm{C}_k$. Notice that a planar $2-$partition can be represented by a diagram consisting of $k$ non-intersecting arcs, with endpoints located on two parallel lines and numbered clockwise. Note that the number of points on each line does not need to be the same; it could even be that such a diagram has all points on the same line.
For example, if $k=3$, then the following diagrams are the $\mathrm{C}_3=5$ possible planar $2-$partitions of $K=\{1,2,\dots,6\}$.
\begin{equation}\label{eq-ex-diag-catalan}
    \begin{tikzpicture}[xscale=0.5,yscale=0.3]
\draw[thick] (1,0) to [out=90,in=270](1,3);
\draw[thick] (2,0) to [out=90,in=270](2,3);
\draw[thick] (3,0) to [out=90,in=90](4,0);
\node[above] at (1,3) {$1$};
\node[above] at (2,3) {$2$};
\node[below] at (1,0) {$6$};
\node[below] at (2,0) {$5$};
\node[below] at (3,0) {$4$};
\node[below] at (4,0) {$3$};
\node[] at (5,0.5) {$;$};
\draw[thick] (6,0) to [out=90,in=90](7,0);
\draw[thick] (6,3) to [out=270,in=270](7,3);
\draw[thick] (8,0) to [out=90,in=90](9,0);
\node[above] at (6,3) {$1$};
\node[above] at (7,3) {$2$};
\node[below] at (6,0) {$6$};
\node[below] at (7,0) {$5$};
\node[below] at (8,0) {$4$};
\node[below] at (9,0) {$3$};
\node[] at (10,0.5) {$;$};
\draw[thick] (11,0) to [out=90,in=90](14,0);
\draw[thick] (12,0) to [out=90,in=90](13,0);
\draw[thick] (11,3) to [out=270,in=270](12,3);
\node[above] at (11,3) {$1$};
\node[above] at (12,3) {$2$};
\node[below] at (11,0) {$6$};
\node[below] at (12,0) {$5$};
\node[below] at (13,0) {$4$};
\node[below] at (14,0) {$3$};
\node[] at (15,0.5) {$;$};
\draw[thick] (16,0) to [out=90,in=90](17,0);
\draw[thick] (16,3) to [out=270,in=90](18,0);
\draw[thick] (17,3) to [out=270,in=90](19,0);
\node[above] at (16,3) {$1$};
\node[above] at (17,3) {$2$};
\node[below] at (16,0) {$6$};
\node[below] at (17,0) {$5$};
\node[below] at (18,0) {$4$};
\node[below] at (19,0) {$3$};
\node[] at (20,0.5) {$;$};
\draw[thick] (21,0) to [out=90,in=270](21,3);
\draw[thick] (22,0) to [out=90,in=90](23,0);
\draw[thick] (24,0) to [out=90,in=270](22,3);
\node[above] at (21,3) {$1$};
\node[above] at (22,3) {$2$};
\node[below] at (21,0) {$6$};
\node[below] at (22,0) {$5$};
\node[below] at (23,0) {$4$};
\node[below] at (24,0) {$3$};
\end{tikzpicture} 
\end{equation}
From now on we set $\mathrm{C}_0:=1.$ Consider the TL-diagram of
(\ref{eq-tikz-TL-diagram-w-color}), and we wonder, how many TL-diagrams can we form from it, leaving all the exposed arcs on the left fixed? The blue arc is seen to generate $\mathrm{C}_4$ different configurations for the $4$ arcs covered by it. Analogously, we can say that the red arc generate $\mathrm{C}_1$ configurations, and the purple and the green only $\mathrm{C}_0$ configurations,  respectively. Therefore, there are exactly $\mathrm{C}_0\mathrm{C}_4\mathrm{C}_0\mathrm{C}_1$ different TL-diagrams with the same four arcs exposed to the left side, as the one in (\ref{eq-tikz-TL-diagram-w-color}). Note that we expressed the number $\mathrm{C}_0\mathrm{C}_4\mathrm{C}_0\mathrm{C}_1$ this way in order to impose a clockwise order on the arcs exposed to the left. Now note that if we change the order of the factors, for example as $\mathrm{C}_4\mathrm{C}_0\mathrm{C}_1\mathrm{C}_0$ we can also construct a TL-diagrams with the first exposed arc, covering $4$ arcs, the second exposed arc covering $0$ arcs, the third one covering $1$ arc and the last one covering $0$ arcs. For example:

\begin{equation}\label{eq-tikz-TL-diagram-w-color2}
\begin{tikzpicture}[xscale=0.3,yscale=0.3]
\node[] at (-1,1.5) {$D'=$};
\draw[thick,red] (1,0) to [out=90,in=90](2,0);
\draw[thick,black] (2,3) to [out=270,in=270](3,3);
\draw[thick,green] (3,0) to [out=90,in=90](6,0);
\draw[thick,blue] (7,0) to [out=90,in=90](8,0);
\draw[thick,black] (9,3) to [out=270,in=270](8,3);
\draw[thick,purple] (9,0) to [out=90,in=270](1,3);
\draw[thick,black] (4,3) to [out=270,in=270](7,3);
\draw[thick,black] (5,3) to [out=270,in=270](6,3);
\draw[thick,black] (4,0) to [out=90,in=90](5,0);


\end{tikzpicture} 
\end{equation}
Now, if we left fixed the exposed arcs in the diagram of (\ref{eq-tikz-TL-diagram-w-color2}), we can check that there are exactly $\mathrm{C}_4\mathrm{C}_0\mathrm{C}_1\mathrm{C}_0$ possible configurations.
More generally, note that in the  diagrams of (\ref{eq-tikz-TL-diagram-w-color}) and (\ref{eq-tikz-TL-diagram-w-color2}), we have $9$ arcs, where exactly four of them are exposed to the left. Therefore, we have five covered arcs, this gives the possibility to build other TL-diagrams with four exposed and five covered arcs. For example, in $D''$ below, we have that the first exposed arc, covers $3$ arcs, the second exposed arc covers one arc, the third exposed arc covers $0$ arcs and the last exposed arc, covers $1$ arc.
\begin{equation}\label{eq-tikz-TL-diagram-w-color3}
\begin{tikzpicture}[xscale=0.3,yscale=0.4]
\node[] at (-1,1.5) {$D''=$};
\draw[thick,red] (1,0) to [out=90,in=90](4,0);
\draw[thick,black] (2,3) to [out=270,in=270](3,3);
\draw[thick,green] (5,0) to [out=90,in=90](6,0);
\draw[thick,blue] (7,0) to [out=90,in=270](9,3);
\draw[thick,purple] (1,3) to [out=270,in=270](8,3);
\draw[thick,black] (4,3) to [out=270,in=270](7,3);
\draw[thick,black] (5,3) to [out=270,in=270](6,3);
\draw[thick,black] (2,0) to [out=90,in=90](3,0);
\draw[thick,black] (8,0) to [out=90,in=90](9,0);


\end{tikzpicture} 
\end{equation}
Thus, we can generate exactly $\mathrm{C}_3\mathrm{C}_1\mathrm{C}_0\mathrm{C}_1$ different TL-diagrams with the same chosen exposed arcs as in the diagram of (\ref{eq-tikz-TL-diagram-w-color3}). The above analysis helps to deduce the following corollary:

\begin{lemma}\label{lemma-formula-chi-numbers}
For $k\in[1,n]$, we have:
 \begin{equation}\label{eq-formula-chi-numbers}
        \chi_{k}^{(n)}=\sum_{\bi\in\calQ(n,k)}\mathrm{C}_{i_1}\cdots \mathrm{C}_{i_k},
    \end{equation}
    where
    \begin{equation*}
         \calQ(n,k):=\{(i_1,\dots,i_k)\in [0, n-1]^k\, ;\, i_1+\dots+i_k=n-k\}.
    \end{equation*}
\end{lemma}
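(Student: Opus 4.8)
The plan is to identify $\chi_k^{(n)}$ with a count of non-crossing matchings and then to decompose such matchings along their outermost arcs, formalizing the picture developed in the discussion preceding the statement. First I would make precise the identification between TL-diagrams on $n$ points and non-crossing perfect matchings of the $2n$ boundary points: traversing the boundary of the frame $R$ and cutting it open along the left unbounded region turns a TL-diagram into a non-crossing matching of $2n$ points placed on a single line, with all arcs drawn on one side. Under this linearization I would then verify the key geometric fact that an arc is \emph{exposed to the left side} of $D$ if and only if it is \emph{outermost} in the matching, i.e. it is not covered by (nested beneath) any other arc. This is the step I expect to be the main obstacle, since it requires translating the topological notion ``borders the left unbounded region'' into the purely combinatorial notion ``maximal under the nesting order''; the examples \eqref{eq-tikz-TL-diagram-w-color}--\eqref{eq-tikz-TL-diagram-w-color3} make this plausible, but a clean argument must track how cutting at the left identifies the outer face of the linearized picture with the left region of $D$.

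Next, fixing $k$, I would record that a diagram with exactly $k$ exposed arcs has exactly $k$ outermost arcs $A_1,\dots,A_k$, listed in clockwise order. These arcs partition the remaining arcs: writing $i_j$ for the number of arcs covered by $A_j$, every non-exposed arc is covered by exactly one $A_j$, so $i_1+\cdots+i_k=n-k$; since $k\ge 1$ we have $i_j\le n-k\le n-1$, hence the ordered tuple $\bi=(i_1,\dots,i_k)$ lies in $\calQ(n,k)$. Conversely, the collection of exposed arcs together with the clockwise profile $\bi$ determines the skeleton of the diagram, and the $i_j$ arcs enclosed by $A_j$ form an arbitrary planar $2$-partition of the $2i_j$ points in the pocket cut off by $A_j$; by the Catalan count for planar $2$-partitions recalled before the statement there are exactly $\mathrm{C}_{i_j}$ of these, with the convention $\mathrm{C}_0=1$ accounting for an empty pocket.

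Finally I would assemble the count. For a fixed profile $\bi\in\calQ(n,k)$ the choices inside the $k$ pockets are mutually independent, so they produce $\mathrm{C}_{i_1}\cdots\mathrm{C}_{i_k}$ pairwise distinct TL-diagrams, all having exactly $k$ arcs exposed to the left with coverage profile $\bi$. Distinct profiles give disjoint families (as already illustrated by contrasting the orders $\mathrm{C}_0\mathrm{C}_4\mathrm{C}_0\mathrm{C}_1$ and $\mathrm{C}_4\mathrm{C}_0\mathrm{C}_1\mathrm{C}_0$), and every diagram with $k$ exposed arcs arises exactly once. Summing over all profiles yields
\[
\chi_k^{(n)}=\sum_{\bi\in\calQ(n,k)}\mathrm{C}_{i_1}\cdots\mathrm{C}_{i_k},
\]
as claimed. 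Beyond the highlighted obstacle, the only remaining points needing care are that the clockwise ordering of the exposed arcs is well defined and that re-ordering the factors genuinely corresponds to realizable distinct skeletons; both follow directly from the linearization established in the first step.
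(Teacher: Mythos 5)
Your proposal is correct and takes essentially the same approach as the paper's proof: order the exposed (outermost) arcs clockwise, record the coverage profile $\bi=(i_1,\dots,i_k)\in\calQ(n,k)$, and count the $\mathrm{C}_{i_1}\cdots\mathrm{C}_{i_k}$ independent configurations inside the pockets. The paper merely packages the same bijection as two inequalities (an explicit algorithm producing one diagram per profile for $\geq$, and the assignment of a profile to an arbitrary diagram for $\leq$), and, like you, treats the identification of ``exposed to the left'' with ``not covered'' as immediate from the definitions.
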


\begin{proof}
   Let $\bi=(i_1,\dots,i_k)\in \calQ(n,k).$ We will show that we can always construct $\mathrm{C}_{i_1}\cdots \mathrm{C}_{i_k}$ different TL diagrams with $k$ exposed arcs and such that the $j-$th arc (in the order explained above) covers exactly $i_j$ arcs. We start by defining an algorithm to build one of them. (Recall that the edge points are enumerated clockwise, as in (\ref{eq-ex-diag-catalan})).
   \begin{enumerate}
       \item[(i)] First, we start by drawing an arc connecting the points $1$ and $2+2i_1$. We then proceed to connect the points from $2$ to $1+2i_1$, taking consecutive pairs, from an even number to an odd number.
       \item[(ii)] Second, assuming that we already have drawn the first $j-1$ exposed arcs, (in clockwise order), covering, respectively, $i_1,\dots,i_{j-1}$ arcs. Thus, the greatest connected point at this moment is $\sum_{r=1}^{j-1}(2+2i_r).$ Then we draw the $j-$th exposed arc connecting the points $1+\sum_{r=1}^{j-1}(2+2i_r)$  and  $\sum_{r=1}^{j}(2+2i_r).$  We then connect points from $2+\sum_{r=1}^{j-1}(2+2i_r)$ to $\sum_{r=1}^{j}(2+2i_r)-1,$ by taking consecutive pairs, from an even number to an odd number.
   \end{enumerate}
 Using the above algorithm recursively, we get a $D$ satisfying our requirements.
 Note that for each $j\in[1,k]$ we have $\mathrm{C}_{i_j}$ possible configurations for the arcs covered by the $j-$th exposed arc, we just have chosen one of them, but we can access any other by modifying the way we choose the connected points for each covered arc. More generally, for $\bi=(i_1,\dots,i_k)$ there are exactly $\mathrm{C}_{i_1}\cdots \mathrm{C}_{i_k}$ possible TL-diagrams with the same exposed arcs as the diagram obtained by our algorithm.
   Since the above construction can be done for any arbitrary element $\bi\in\calQ(n,k),$ we deduce that
   \begin{equation*}       \chi_{k}^{(n)}\geq\sum_{\bi\in\calQ(n,k)}\mathrm{C}_{i_1}\cdots \mathrm{C}_{i_k}.
   \end{equation*}
Finally, the lemma follows from the fact that any TL-diagram can be obtained following the  algorithm above, or one of the possible modifications of it. In fact, If $D'$ is a TL-diagram, with $k$ arcs exposed to the left, we can associate an element $\bi=(i_1,\dots,i_k)\in\calQ(n,k)$ simply by defining $i_j$ as the number of arcs covered by the $j-$th exposed arc. Now $D'$ is one of the $C_{i_1}\cdots C_{i_k}$ possible configurations associated to $\bi.$ Therefore we have:
\begin{equation*}       \chi_{k}^{(n)}\leq\sum_{\bi\in\calQ(n,k)}\mathrm{C}_{i_1}\cdots \mathrm{C}_{i_k}.
   \end{equation*}
And we are done.
\end{proof}

\begin{example}
Here we show how the algorithm given in Lemma \ref{lemma-formula-chi-numbers} works, for
     $n=9,k=4$ and $\bi=(3,1,0,1)\in \calQ(9,4)$.
The first step is to connect the point $1$ with $8=2+2i_1$, and then to connect the points from $2$ to $7$ by taking consecutive pairs, i.e.,
        \begin{equation}\label{eq-tikz-TL-diagram-w-color4}
\begin{tikzpicture}[xscale=0.5,yscale=0.5]
\node[] at (-1,1.5) {$D=$};
\draw[thick,black] (2,3) to [out=270,in=270](3,3);
\draw[thick,purple] (1,3) to [out=270,in=270](8,3);
\draw[thick,black] (6,3) to [out=270,in=270](7,3);
\draw[thick,black] (4,3) to [out=270,in=270](5,3);

\draw[fill] (1,0) circle [radius=0.05];
\draw[fill] (2,0) circle [radius=0.05];
\draw[fill] (3,0) circle [radius=0.05];
\draw[fill] (4,0) circle [radius=0.05];
\draw[fill] (5,0) circle [radius=0.05];
\draw[fill] (6,0) circle [radius=0.05];
\draw[fill] (7,0) circle [radius=0.05];
\draw[fill] (8,0) circle [radius=0.05];
\draw[fill] (9,0) circle [radius=0.05];
\draw[fill] (9,3) circle [radius=0.05];
\node[above] at (1,3) {$1$};
\node[above] at (2,3) {$2$};
\node[above] at (3,3) {$3$};
\node[above] at (4,3) {$4$};
\node[above] at (5,3) {$5$};
\node[above] at (6,3) {$6$};
\node[above] at (7,3) {$7$};
\node[above] at (8,3) {$8$};
\node[above] at (9,3) {$9$};
\node[below] at (9,0) {$10$};
\node[below] at (8,0) {$11$};
\node[below] at (7,0) {$12$};
\node[below] at (6,0) {$13$};
\node[below] at (5,0) {$14$};
\node[below] at (4,0) {$15$};
\node[below] at (3,0) {$16$};
\node[below] at (2,0) {$17$};
\node[below] at (1,0) {$18$};

\end{tikzpicture} 
\end{equation}
The second step is to connect the point $9=1+2+2i_1$ with $12:=\sum_{j=1}^{2}(2+2i_j)$, and then to connect the points $10$ and $11$:
    \begin{equation}\label{eq-tikz-TL-diagram-w-color5}
\begin{tikzpicture}[xscale=0.5,yscale=0.5]
\node[] at (-1,1.5) {$D=$};
\draw[thick,black] (2,3) to [out=270,in=270](3,3);
\draw[thick,blue] (7,0) to [out=90,in=270](9,3);
\draw[thick,purple] (1,3) to [out=270,in=270](8,3);
\draw[thick,black] (6,3) to [out=270,in=270](7,3);
\draw[thick,black] (4,3) to [out=270,in=270](5,3);
\draw[thick,black] (8,0) to [out=90,in=90](9,0);

\draw[fill] (1,0) circle [radius=0.05];
\draw[fill] (2,0) circle [radius=0.05];
\draw[fill] (3,0) circle [radius=0.05];
\draw[fill] (4,0) circle [radius=0.05];
\draw[fill] (5,0) circle [radius=0.05];
\draw[fill] (6,0) circle [radius=0.05];
\draw[fill] (7,0) circle [radius=0.05];
\draw[fill] (8,0) circle [radius=0.05];
\draw[fill] (9,0) circle [radius=0.05];
\draw[fill] (9,3) circle [radius=0.05];
\node[above] at (1,3) {$1$};
\node[above] at (2,3) {$2$};
\node[above] at (3,3) {$3$};
\node[above] at (4,3) {$4$};
\node[above] at (5,3) {$5$};
\node[above] at (6,3) {$6$};
\node[above] at (7,3) {$7$};
\node[above] at (8,3) {$8$};
\node[above] at (9,3) {$9$};
\node[below] at (9,0) {$10$};
\node[below] at (8,0) {$11$};
\node[below] at (7,0) {$12$};
\node[below] at (6,0) {$13$};
\node[below] at (5,0) {$14$};
\node[below] at (4,0) {$15$};
\node[below] at (3,0) {$16$};
\node[below] at (2,0) {$17$};
\node[below] at (1,0) {$18$};

\end{tikzpicture} 
\end{equation}
 Following the algorithm until the end, we obtain:
   \begin{equation}\label{eq-tikz-TL-diagram-w-color6}
\begin{tikzpicture}[xscale=0.5,yscale=0.5]
\node[] at (-1,1.5) {$D=$};
\draw[thick,red] (1,0) to [out=90,in=90](4,0);
\draw[thick,black] (2,3) to [out=270,in=270](3,3);
\draw[thick,green] (5,0) to [out=90,in=90](6,0);
\draw[thick,blue] (7,0) to [out=90,in=270](9,3);
\draw[thick,purple] (1,3) to [out=270,in=270](8,3);
\draw[thick,black] (6,3) to [out=270,in=270](7,3);
\draw[thick,black] (4,3) to [out=270,in=270](5,3);
\draw[thick,black] (2,0) to [out=90,in=90](3,0);
\draw[thick,black] (8,0) to [out=90,in=90](9,0);

\draw[fill] (1,0) circle [radius=0.05];
\draw[fill] (2,0) circle [radius=0.05];
\draw[fill] (3,0) circle [radius=0.05];
\draw[fill] (4,0) circle [radius=0.05];
\draw[fill] (5,0) circle [radius=0.05];
\draw[fill] (6,0) circle [radius=0.05];
\draw[fill] (7,0) circle [radius=0.05];
\draw[fill] (8,0) circle [radius=0.05];
\draw[fill] (9,0) circle [radius=0.05];
\draw[fill] (9,3) circle [radius=0.05];
\node[above] at (1,3) {$1$};
\node[above] at (2,3) {$2$};
\node[above] at (3,3) {$3$};
\node[above] at (4,3) {$4$};
\node[above] at (5,3) {$5$};
\node[above] at (6,3) {$6$};
\node[above] at (7,3) {$7$};
\node[above] at (8,3) {$8$};
\node[above] at (9,3) {$9$};
\node[below] at (9,0) {$10$};
\node[below] at (8,0) {$11$};
\node[below] at (7,0) {$12$};
\node[below] at (6,0) {$13$};
\node[below] at (5,0) {$14$};
\node[below] at (4,0) {$15$};
\node[below] at (3,0) {$16$};
\node[below] at (2,0) {$17$};
\node[below] at (1,0) {$18$};

\end{tikzpicture} 
\end{equation}
 Below we show all the  $\mathrm{C}_3\mathrm{C}_1\mathrm{C}_0\mathrm{C}_1=5$ possible TL-diagrams with the same exposed arcs as diagram $D.$
    \begin{equation*}
\begin{tikzpicture}[xscale=0.5,yscale=0.5]
\draw[thick,red] (1,0) to [out=90,in=90](4,0);
\draw[thick,black] (2,3) to [out=270,in=270](3,3);
\draw[thick,green] (5,0) to [out=90,in=90](6,0);
\draw[thick,blue] (7,0) to [out=90,in=270](9,3);
\draw[thick,purple] (1,3) to [out=270,in=270](8,3);
\draw[thick,black] (6,3) to [out=270,in=270](7,3);
\draw[thick,black] (4,3) to [out=270,in=270](5,3);
\draw[thick,black] (2,0) to [out=90,in=90](3,0);
\draw[thick,black] (8,0) to [out=90,in=90](9,0);

\draw[fill] (1,0) circle [radius=0.05];
\draw[fill] (2,0) circle [radius=0.05];
\draw[fill] (3,0) circle [radius=0.05];
\draw[fill] (4,0) circle [radius=0.05];
\draw[fill] (5,0) circle [radius=0.05];
\draw[fill] (6,0) circle [radius=0.05];
\draw[fill] (7,0) circle [radius=0.05];
\draw[fill] (8,0) circle [radius=0.05];
\draw[fill] (9,0) circle [radius=0.05];
\draw[fill] (9,3) circle [radius=0.05];
\node[above] at (1,3) {$1$};
\node[above] at (2,3) {$2$};
\node[above] at (3,3) {$3$};
\node[above] at (4,3) {$4$};
\node[above] at (5,3) {$5$};
\node[above] at (6,3) {$6$};
\node[above] at (7,3) {$7$};
\node[above] at (8,3) {$8$};
\node[above] at (9,3) {$9$};
\node[below] at (9,0) {$10$};
\node[below] at (8,0) {$11$};
\node[below] at (7,0) {$12$};
\node[below] at (6,0) {$13$};
\node[below] at (5,0) {$14$};
\node[below] at (4,0) {$15$};
\node[below] at (3,0) {$16$};
\node[below] at (2,0) {$17$};
\node[below] at (1,0) {$18$};

\end{tikzpicture} \quad \begin{tikzpicture}[xscale=0.5,yscale=0.5]
\draw[thick,red] (1,0) to [out=90,in=90](4,0);
\draw[thick,black] (2,3) to [out=270,in=270](5,3);
\draw[thick,green] (5,0) to [out=90,in=90](6,0);
\draw[thick,blue] (7,0) to [out=90,in=270](9,3);
\draw[thick,purple] (1,3) to [out=270,in=270](8,3);
\draw[thick,black] (6,3) to [out=270,in=270](7,3);
\draw[thick,black] (3,3) to [out=270,in=270](4,3);
\draw[thick,black] (2,0) to [out=90,in=90](3,0);
\draw[thick,black] (8,0) to [out=90,in=90](9,0);

\draw[fill] (1,0) circle [radius=0.05];
\draw[fill] (2,0) circle [radius=0.05];
\draw[fill] (3,0) circle [radius=0.05];
\draw[fill] (4,0) circle [radius=0.05];
\draw[fill] (5,0) circle [radius=0.05];
\draw[fill] (6,0) circle [radius=0.05];
\draw[fill] (7,0) circle [radius=0.05];
\draw[fill] (8,0) circle [radius=0.05];
\draw[fill] (9,0) circle [radius=0.05];
\draw[fill] (9,3) circle [radius=0.05];
\node[above] at (1,3) {$1$};
\node[above] at (2,3) {$2$};
\node[above] at (3,3) {$3$};
\node[above] at (4,3) {$4$};
\node[above] at (5,3) {$5$};
\node[above] at (6,3) {$6$};
\node[above] at (7,3) {$7$};
\node[above] at (8,3) {$8$};
\node[above] at (9,3) {$9$};
\node[below] at (9,0) {$10$};
\node[below] at (8,0) {$11$};
\node[below] at (7,0) {$12$};
\node[below] at (6,0) {$13$};
\node[below] at (5,0) {$14$};
\node[below] at (4,0) {$15$};
\node[below] at (3,0) {$16$};
\node[below] at (2,0) {$17$};
\node[below] at (1,0) {$18$};

\end{tikzpicture} \quad \begin{tikzpicture}[xscale=0.5,yscale=0.5]
\draw[thick,red] (1,0) to [out=90,in=90](4,0);
\draw[thick,black] (5,3) to [out=270,in=270](6,3);
\draw[thick,green] (5,0) to [out=90,in=90](6,0);
\draw[thick,blue] (7,0) to [out=90,in=270](9,3);
\draw[thick,purple] (1,3) to [out=270,in=270](8,3);
\draw[thick,black] (4,3) to [out=270,in=270](7,3);
\draw[thick,black] (2,3) to [out=270,in=270](3,3);
\draw[thick,black] (2,0) to [out=90,in=90](3,0);
\draw[thick,black] (8,0) to [out=90,in=90](9,0);

\draw[fill] (1,0) circle [radius=0.05];
\draw[fill] (2,0) circle [radius=0.05];
\draw[fill] (3,0) circle [radius=0.05];
\draw[fill] (4,0) circle [radius=0.05];
\draw[fill] (5,0) circle [radius=0.05];
\draw[fill] (6,0) circle [radius=0.05];
\draw[fill] (7,0) circle [radius=0.05];
\draw[fill] (8,0) circle [radius=0.05];
\draw[fill] (9,0) circle [radius=0.05];
\draw[fill] (9,3) circle [radius=0.05];
\node[above] at (1,3) {$1$};
\node[above] at (2,3) {$2$};
\node[above] at (3,3) {$3$};
\node[above] at (4,3) {$4$};
\node[above] at (5,3) {$5$};
\node[above] at (6,3) {$6$};
\node[above] at (7,3) {$7$};
\node[above] at (8,3) {$8$};
\node[above] at (9,3) {$9$};
\node[below] at (9,0) {$10$};
\node[below] at (8,0) {$11$};
\node[below] at (7,0) {$12$};
\node[below] at (6,0) {$13$};
\node[below] at (5,0) {$14$};
\node[below] at (4,0) {$15$};
\node[below] at (3,0) {$16$};
\node[below] at (2,0) {$17$};
\node[below] at (1,0) {$18$};

\end{tikzpicture} 
\end{equation*}
\begin{equation*}
\begin{tikzpicture}[xscale=0.5,yscale=0.5]
\draw[thick,red] (1,0) to [out=90,in=90](4,0);
\draw[thick,black] (2,3) to [out=270,in=270](7,3);
\draw[thick,green] (5,0) to [out=90,in=90](6,0);
\draw[thick,blue] (7,0) to [out=90,in=270](9,3);
\draw[thick,purple] (1,3) to [out=270,in=270](8,3);
\draw[thick,black] (5,3) to [out=270,in=270](6,3);
\draw[thick,black] (3,3) to [out=270,in=270](4,3);
\draw[thick,black] (2,0) to [out=90,in=90](3,0);
\draw[thick,black] (8,0) to [out=90,in=90](9,0);

\draw[fill] (1,0) circle [radius=0.05];
\draw[fill] (2,0) circle [radius=0.05];
\draw[fill] (3,0) circle [radius=0.05];
\draw[fill] (4,0) circle [radius=0.05];
\draw[fill] (5,0) circle [radius=0.05];
\draw[fill] (6,0) circle [radius=0.05];
\draw[fill] (7,0) circle [radius=0.05];
\draw[fill] (8,0) circle [radius=0.05];
\draw[fill] (9,0) circle [radius=0.05];
\draw[fill] (9,3) circle [radius=0.05];
\node[above] at (1,3) {$1$};
\node[above] at (2,3) {$2$};
\node[above] at (3,3) {$3$};
\node[above] at (4,3) {$4$};
\node[above] at (5,3) {$5$};
\node[above] at (6,3) {$6$};
\node[above] at (7,3) {$7$};
\node[above] at (8,3) {$8$};
\node[above] at (9,3) {$9$};
\node[below] at (9,0) {$10$};
\node[below] at (8,0) {$11$};
\node[below] at (7,0) {$12$};
\node[below] at (6,0) {$13$};
\node[below] at (5,0) {$14$};
\node[below] at (4,0) {$15$};
\node[below] at (3,0) {$16$};
\node[below] at (2,0) {$17$};
\node[below] at (1,0) {$18$};

\end{tikzpicture} \quad \begin{tikzpicture}[xscale=0.5,yscale=0.5]
\draw[thick,red] (1,0) to [out=90,in=90](4,0);
\draw[thick,black] (2,3) to [out=270,in=270](7,3);
\draw[thick,green] (5,0) to [out=90,in=90](6,0);
\draw[thick,blue] (7,0) to [out=90,in=270](9,3);
\draw[thick,purple] (1,3) to [out=270,in=270](8,3);
\draw[thick,black] (3,3) to [out=270,in=270](6,3);
\draw[thick,black] (5,3) to [out=270,in=270](4,3);
\draw[thick,black] (2,0) to [out=90,in=90](3,0);
\draw[thick,black] (8,0) to [out=90,in=90](9,0);

\draw[fill] (1,0) circle [radius=0.05];
\draw[fill] (2,0) circle [radius=0.05];
\draw[fill] (3,0) circle [radius=0.05];
\draw[fill] (4,0) circle [radius=0.05];
\draw[fill] (5,0) circle [radius=0.05];
\draw[fill] (6,0) circle [radius=0.05];
\draw[fill] (7,0) circle [radius=0.05];
\draw[fill] (8,0) circle [radius=0.05];
\draw[fill] (9,0) circle [radius=0.05];
\draw[fill] (9,3) circle [radius=0.05];
\node[above] at (1,3) {$1$};
\node[above] at (2,3) {$2$};
\node[above] at (3,3) {$3$};
\node[above] at (4,3) {$4$};
\node[above] at (5,3) {$5$};
\node[above] at (6,3) {$6$};
\node[above] at (7,3) {$7$};
\node[above] at (8,3) {$8$};
\node[above] at (9,3) {$9$};
\node[below] at (9,0) {$10$};
\node[below] at (8,0) {$11$};
\node[below] at (7,0) {$12$};
\node[below] at (6,0) {$13$};
\node[below] at (5,0) {$14$};
\node[below] at (4,0) {$15$};
\node[below] at (3,0) {$16$};
\node[below] at (2,0) {$17$};
\node[below] at (1,0) {$18$};

\end{tikzpicture} 
\end{equation*}
 Now, letting $\bi$ to run over $\calQ(9,4),$ we can count $\chi_{4}^{(9)}=512.$
\end{example}

From Lemma \ref{lemma-formula-chi-numbers} we get the following corollary.
\begin{corollary}
$\chi_{1}^{(n)}=\mathrm{C}_{n-1},\quad  \chi_{n-1}^{(n)}=n-1,\quad \chi_n^{(n)}=1$, and
$$
\chi_{k}^{(n)}=\mathrm{C}_0\chi_{k-1}^{(n-1)}+\mathrm{C}_1\chi_{k+1}^{(n)}=\chi_{k-1}^{(n-1)}+\chi_{k+1}^{(n)},\quad  \text{for $k\in [2,n-1]$.}
$$
\end{corollary}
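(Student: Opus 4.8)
The plan is to read everything off the explicit formula $\chi_k^{(n)}=\sum_{\bi\in\calQ(n,k)}\mathrm{C}_{i_1}\cdots\mathrm{C}_{i_k}$ of Lemma \ref{lemma-formula-chi-numbers}, by dissecting the index set $\calQ(n,k)=\{(i_1,\dots,i_k)\in[0,n-1]^k : i_1+\cdots+i_k=n-k\}$. A preliminary observation that streamlines all the bookkeeping is that, since the parts $i_j$ are nonnegative and sum to $n-k$, each part is automatically at most $n-k\le n-1$; hence the upper bound in the definition of $\calQ(n,k)$ is never active, and $\calQ(n,k)$ is simply the set of weak compositions of $n-k$ into $k$ parts. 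I would record this at the outset so that none of the re-indexings below need to fuss over the range constraint.

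First I would dispatch the three boundary values. For $k=1$ the set $\calQ(n,1)$ is the singleton $\{(n-1)\}$, so $\chi_1^{(n)}=\mathrm{C}_{n-1}$. For $k=n$ the only composition of $0$ is $(0,\dots,0)$, giving $\chi_n^{(n)}=\mathrm{C}_0^{\,n}=1$. For $k=n-1$ the compositions of $1$ into $n-1$ parts are the $n-1$ tuples with a single entry equal to $1$, each contributing $\mathrm{C}_1\mathrm{C}_0^{\,n-2}=1$, so $\chi_{n-1}^{(n)}=n-1$.

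The heart of the argument is the recurrence for $k\in[2,n-1]$, which I would obtain by splitting $\calQ(n,k)$ according to the first coordinate $i_1$. When $i_1=0$ the leading factor is $\mathrm{C}_0$, and deleting $i_1$ sets up a bijection between this stratum and $\calQ(n-1,k-1)$ (the remaining parts are nonnegative and sum to $n-k=(n-1)-(k-1)$, and $k\ge 2$ keeps everything admissible), so this stratum contributes $\mathrm{C}_0\,\chi_{k-1}^{(n-1)}$. When $i_1\ge 1$ I would invoke the Segner (Catalan convolution) recurrence $\mathrm{C}_{i_1}=\sum_{a+b=i_1-1}\mathrm{C}_a\mathrm{C}_b$ on the leading factor and then re-index by the tuple $(a,b,i_2,\dots,i_k)$; since $a+b+i_2+\cdots+i_k=(i_1-1)+i_2+\cdots+i_k=n-(k+1)$, this tuple ranges exactly over $\calQ(n,k+1)$ and the product becomes $\mathrm{C}_a\mathrm{C}_b\mathrm{C}_{i_2}\cdots\mathrm{C}_{i_k}$, so the stratum contributes $\chi_{k+1}^{(n)}=\mathrm{C}_1\,\chi_{k+1}^{(n)}$. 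Adding the two strata yields the stated identity, and $\mathrm{C}_0=\mathrm{C}_1=1$ gives the simplified form.

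The only genuinely delicate point — and the step I expect to require the most care — is verifying that the substitution in the $i_1\ge 1$ stratum is a genuine bijection: every tuple $(a,b,i_2,\dots,i_k)\in\calQ(n,k+1)$ must arise from exactly one $(i_1,\dots,i_k)$ with $i_1\ge 1$, namely the one with $i_1=a+b+1$, and the range constraints must not interfere. By the preliminary observation these constraints are vacuous, so the matter reduces to the elementary fact that $i_1\mapsto(a,b)$ with $a+b=i_1-1$ is precisely the indexing appearing in the Catalan recurrence; I would spell this correspondence out explicitly to make the re-indexing airtight. As a side observation worth recording, the recurrence together with the boundary values coincides (in the range $k\in[1,n-1]$) with those established for $\theta_k^{(n)}$ in Lemma \ref{lemma-recur-theta-numbers}, so that $\chi_k^{(n)}=\theta_k^{(n)}$ there, with $\chi_n^{(n)}=1$ the sole extra value.
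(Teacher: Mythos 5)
Your proof is correct and follows the route the paper intends: the corollary is stated as an immediate consequence of Lemma \ref{lemma-formula-chi-numbers} (the paper leaves the derivation implicit), and your stratification of $\calQ(n,k)$ by the first coordinate together with the Segner convolution $\mathrm{C}_{i_1}=\sum_{a+b=i_1-1}\mathrm{C}_a\mathrm{C}_b$ is exactly the natural way to fill it in. The boundary cases and the bijection checks (including the vacuity of the upper bound $i_j\le n-1$) are all handled correctly.
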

\begin{remark}\label{Pascal2}\rm
The Corollary above  and Lemma
\ref{lemma-recur-theta-numbers} imply that $\chi_{k}^{(n)}=\theta_{k}^{(n)},$ for  $k\in[1,n-1]$ and $n\geq 3.$ That is, the number of all TL-diagrams with exactly $k$ arcs exposed to the left side is equal to the number of positive indexing matrices $C=[c_{ij}],$ such that $c_{11}=k.$ The bijection between those sets is not direct. The above recursive relations yields the following Pascal triangle.

\begin{equation}
    \begin{tikzpicture}[xscale=0.3,yscale=0.4]
\node[] at (1,4) {$\chi_k^{(n)}:$};
\node[] at (0,0) {$1$};
\node[] at (-2,-2) {$2$};
\node[] at (2,-2) {$2$};
\node[] at (-4,-4) {$5$};
\node[] at (0,-4) {$5$};
\node[] at (4,-4) {$3$};
\node[] at (-6,-6) {$14$};
\node[] at (-2,-6) {$14$};
\node[] at (2,-6) {$9$};
\node[] at (6,-6) {$4$};
\node[] at (-8,-8) {$42$};
\node[] at (-4,-8) {$42$};
\node[] at (0,-8) {$28$};
\node[] at (4,-8) {$14$};
\node[] at (8,-8) {$5$};
\node[] at (-10,-10) {$132$};
\node[] at (-6,-10) {$132$};
\node[] at (-2,-10) {$90$};
\node[] at (2,-10) {$48$};
\node[] at (6,-10) {$20$};
\node[] at (10,-10) {$6$};
\node[] at (-20,2) {$n=1$};
\node[] at (-20,0) {$n=2$};
\node[] at (-20,-2) {$n=3$};
\node[] at (-20,-4) {$n=4$};
\node[] at (-20,-6) {$n=5$};
\node[] at (-20,-8) {$n=6$};
\node[] at (-20,-10) {$n=7$};

\draw[->] (0.5,-0.5)--(1.5,-1.5);
\draw[->] (-1.5,-2.5)--(-0.5,-3.5);
\draw[->] (2.5,-2.5)--(3.5,-3.5);
\draw[->] (4.5,-4.5)--(5.5,-5.5);
\draw[->] (4.5+2,-4.5-2)--(5.5+2,-5.5-2);
\draw[->] (4.5+4,-4.5-4)--(5.5+4,-5.5-4);
\draw[->] (0.5,-4.5)--(1.5,-5.5);
\draw[->] (-3.5,-4.5)--(-2.5,-5.5);
\draw[->] (2.5,-6.5)--(3.5,-7.5);
\draw[->] (-1.5,-6.5)--(-0.5,-7.5);
\draw[->] (-5.5,-6.5)--(-4.5,-7.5);
\draw[->] (4.5,-8.5)--(5.5,-9.5);
\draw[->] (0.5,-8.5)--(1.5,-9.5);
\draw[->] (-3.5,-8.5)--(-2.5,-9.5);
\draw[->] (-7.5,-8.5)--(-6.5,-9.5);
\draw[->] (3-2,-2)--(1-2,-2);
\draw[->] (3,-4)--(1,-4);
\draw[->] (3-4,-4)--(1-4,-4);
\draw[->] (5,-6)--(3,-6);
\draw[->] (1,-6)--(-1,-6);
\draw[->] (-3,-6)--(-5,-6);
\draw[->] (7,-8)--(5,-8);
\draw[->] (3,-8)--(1,-8);
\draw[->] (-1,-8)--(-3,-8);
\draw[->] (-5,-8)--(-7,-8);
\draw[->] (9,-10)--(7,-10);
\draw[->] (5,-10)--(3,-10);
\draw[->] (1,-10)--(-1,-10);
\draw[->] (-3,-10)--(-5,-10);
\draw[->] (-7,-10)--(-9,-10);
\node[] at (1,2) {$1$};
\node[] at (3,0) {$1$};
\node[] at (5,-2) {$1$};
\node[] at (7,-4) {$1$};
\node[] at (9,-6) {$1$};
\node[] at (11,-8) {$1$};
\node[] at (13,-10) {$1$};
\draw[->] (4.5-2,0)--(3-2,0);
\draw[->] (4.5,-2)--(3,-2);
\draw[->] (6.5,-4)--(5,-4);
\draw[->] (8.5,-6)--(7,-6);
\draw[->] (10.5,-8)--(9,-8);
\draw[-> ] (12.5,-10)--(11,-10);
\end{tikzpicture} 
\end{equation}
Cf. Pascal triangle of Remark \ref{Pascal1}.
Combining with (\ref{eq-Omega-vs-theta}), we conclude that:
\begin{equation}\label{eq-Omega-vs-chi}
\Omega_{r}^{(n)}=\chi_{2r+1}^{(n+r+1)},\quad \textrm{where $r\in[1, n-1]$}.
\end{equation}
and this is another interesting equation relating sets of indexing matrices with certain TL-diagrams.
\end{remark}

\section{Connected framed blob monoids}

In this section we define another framization of the blob monoid, called the connected framed blob monoid $Bl_{d,n}^c$. We realize $Bl_{d,n}^c$ as the $d$-abacus hook monoid $\mathfrak{H}_{d,n},$ that is, the monoid generated by diagrams obtained by adding beads to the hook-diagrams defined in Subsection 2.5.3,  up to certain appropriate relations. Most of the results of this section and their proofs are analogous at the corresponding ones given in Section \ref{ssec-abacus-blob-mon}, for this reason we have decided do not write all the proofs.  At the end of this section we provide two formulas to compute the cardinality of this connected framed monoid from which we conclude that this framization of the blob monoid is not isomorphic to the one definied in Section \ref{ssec-abacus-blob-mon}.

We start with an abstract definition of $Bl_{d,n}^c.$

\begin{definition}\label{def-connect-frm-blob-mon}
   The connected framed blob monoid, denote $Bl_{d,n}^c$,   is the monoid   generated by $u_ 0,u_1,\dots ,u_{n-1}$, $ z_0,z_1,\dots,z_n$, subject to the defining relations of the blob monoid  between the $u_i$'s, the framing relations between the $z_i$'s together with the following relations:
\begin{align}\label{eq-all-rel-conn-fram-blob}
 z_iu_i = z_{i+1}u_i, &\qquad u_iz_i = u_iz_{i+1},\qquad u_i z_j =z_j u_i \quad \text{if $j\not=i,i+1$,}\\\label{fracon}
 & z_0 u_0 = u_0 z_0, \quad  u_i z_i^k u_i=u_i, \quad \text{if $i\not=0$,}
\end{align}
\end{definition}
\begin{remark}\rm  (Cf. \cite[Section 3.5]{AiJuPa2024})
Note that the abacus (resp. blob)  monoid  is  a submonoid  of $Bl_{d,n}^c$ since $z_0, z_1,\ldots , z_n$ (resp. $u_0, u_1, \ldots, u_{n}$) form a closed subsets satisfying the defining relations of $C_d^{n+1}$ (resp. $Bl_n$). Also note that relation (\ref{fracon})  says that in this case the blob generator is not an obstruction for the framing generators. This is an important difference between this monoid and the monoid $Bl_{d,n}$ of Definition \ref{def-framed-blob-mon}.
\end{remark}

An {\it abacus hook-diagram}, is a  hook-diagram, whose arcs have at most $d-1$ beads, sliding  freely on it. Given an abacus hook-diagram ${D}^{c,\circ}$, we denote by  $D^c$ its {\it base diagram},  which is obtained by erasing  the beads in ${D}^{c,\circ}.$ We  declare that two abacus hook-diagrams  are equal if their base diagrams are equal and the number of beads in each corresponding arc are the same.

\begin{definition}\label{def-abacus-hook-mon}
The abacus hook monoid $\mathfrak{H}_{d,n},$ is the monoid  formed  by
the abacus hook diagrams, on $n$ points, with the usual product by concatenation,  and  the  following rules:
\begin{enumerate}
\item[(i)] Loops, containing or not a decoration (blob and/or beads) will be erased from the diagram.
\item[(ii)] The following local relation is imposed.

\begin{equation*}
\begin{tikzpicture}[xscale=0.5,yscale=0.3]
\draw[thick] (0,0) to [out=90,in=90](1,0);
\draw[thick] (0,3) to [out=270,in=270](1,3);
\draw[thick] (0,0) to [out=90,in=270](0,3);

\draw[thick] (0,0+3) to [out=90,in=90](1,0+3);
\draw[thick] (0,3+3) to [out=270,in=270](1,3+3);
\draw[thick] (0,0+3) to [out=90,in=270](0,3+3);
\node[] at (2,3) {$=$};
\draw[thick] (3,0) to [out=90,in=90](4,0);
\draw[thick](3,0)--(3,6);
\draw[thick] (3,6) to [out=270,in=270](4,3+3);
\end{tikzpicture} {\hspace{3cm}}\begin{tikzpicture}[xscale=0.5,yscale=0.3]
\draw[thick] (0,0) to [out=90,in=90](1,0);
\draw[thick] (0,2) to [out=270,in=270](1,2);
\draw[thick] (0,0) to [out=90,in=270](0,3);
\draw[thick](1,2)--(1,4);

\draw[thick] (0,4) to [out=90,in=90](1,4);
\draw[thick] (0,3+3) to [out=270,in=270](1,3+3);
\draw[thick] (0,0+3) to [out=90,in=270](0,3+3);
\node[] at (2,3) {$=$};
\draw[thick] (3,0) to [out=90,in=90](4,0);
\draw[thick](3,0)--(3,6);
\draw[thick] (3,6) to [out=270,in=270](4,3+3);

\node[above] at (-0.5,3) {$k$};
\draw[] (0,3) circle [radius=0.25];

\node[above] at (3.5,3) {$k$};
\draw[] (3,3) circle [radius=0.25];
\end{tikzpicture} 
\end{equation*}
\item[(iii)] The number of beads on any arc is reduced modulo $d.$
\end{enumerate}
\end{definition}

For example, for $d=3$ and $n=9,$ in $\mathfrak{H}_{d,n}$ the following equation holds:
\begin{equation}\label{ex-framed-hook-diagram}
    \begin{tikzpicture}[xscale=0.5,yscale=0.6]

\draw[thick](0,0)--(0,3);
\draw[thick] (1,0) to [out=90,in=90](0,0);
\draw[thick] (4,0) to [out=90,in=90](1,0);
\draw[thick] (7,0) to [out=90,in=90](4,0);
\draw[thick] (3,3) to [out=270,in=270](0,3);

\draw[thick] (2,0) to [out=90,in=90](3,0);
\draw[thick] (5,0) to [out=90,in=90](6,0);
\draw[thick] (8,0) to [out=90,in=270](8,3);
\draw[thick] (2,3) to [out=270,in=270](1,3);
\draw[thick] (4,3) to [out=270,in=270](7,3);
\draw[thick] (5,3) to [out=270,in=270](6,3);
\draw[thick] (9,0) to [out=90,in=270](9,3);

\draw[] (0,1.5) circle [radius=0.15];
\draw[] (1.5,2.7) circle [radius=0.15];
\draw[] (2.5,0.9) circle [radius=0.15];
\draw[] (4.5,0.65) circle [radius=0.15];
\draw[] (6.5,0.65) circle [radius=0.15];
\draw[] (8,0.9) circle [radius=0.15];
\draw[] (8,1.9) circle [radius=0.15];
\node[] at (10,1.5) {$=$};
\node[below] at (0,0) {$0'$};
\node[below] at (1,0) {$1'$};
\node[below] at (2,0) {$2'$};
\node[below] at (3,0) {$3'$};
\node[below] at (4,0) {$4'$};
\node[below] at (5,0) {$5'$};
\node[below] at (6,0) {$6'$};
\node[below] at (7,0) {$7'$};
\node[below] at (8,0) {$8'$};
\node[below] at (9,0) {$9'$};
\node[above] at (0,3) {$0$};
\node[above] at (1,3) {$1$};
\node[above] at (2,3) {$2$};
\node[above] at (3,3) {$3$};
\node[above] at (4,3) {$4$};
\node[above] at (5,3) {$5$};
\node[above] at (6,3) {$6$};
\node[above] at (7,3) {$7$};
\node[above] at (8,3) {$8$};
\node[above] at (9,3) {$9$};

\end{tikzpicture} \begin{tikzpicture}[xscale=0.5,yscale=0.6]

\draw[thick](0,0)--(0,3);
\draw[thick] (1,0) to [out=90,in=90](0,0);
\draw[thick] (4,0) to [out=90,in=90](1,0);
\draw[thick] (7,0) to [out=90,in=90](4,0);
\draw[thick] (3,3) to [out=270,in=270](0,3);

\draw[thick] (2,0) to [out=90,in=90](3,0);
\draw[thick] (5,0) to [out=90,in=90](6,0);
\draw[thick] (8,0) to [out=90,in=270](8,3);
\draw[thick] (2,3) to [out=270,in=270](1,3);
\draw[thick] (4,3) to [out=270,in=270](7,3);
\draw[thick] (5,3) to [out=270,in=270](6,3);
\draw[thick] (9,0) to [out=90,in=270](9,3);

\draw[] (1.5,2.7) circle [radius=0.15];
\draw[] (1.5,2.1) circle [radius=0.15];
\draw[] (8,0.9) circle [radius=0.15];
\draw[] (8,1.9) circle [radius=0.15];
\node[below] at (0,0) {$0'$};
\node[below] at (1,0) {$1'$};
\node[below] at (2,0) {$2'$};
\node[below] at (3,0) {$3'$};
\node[below] at (4,0) {$4'$};
\node[below] at (5,0) {$5'$};
\node[below] at (6,0) {$6'$};
\node[below] at (7,0) {$7'$};
\node[below] at (8,0) {$8'$};
\node[below] at (9,0) {$9'$};
\node[above] at (0,3) {$0$};
\node[above] at (1,3) {$1$};
\node[above] at (2,3) {$2$};
\node[above] at (3,3) {$3$};
\node[above] at (4,3) {$4$};
\node[above] at (5,3) {$5$};
\node[above] at (6,3) {$6$};
\node[above] at (7,3) {$7$};
\node[above] at (8,3) {$8$};
\node[above] at (9,3) {$9$};

\end{tikzpicture} 
\end{equation}
\mbox{\,}
Define now the  elements $\mathfrak{z}_i$,    $\mathfrak{g}_0$ and $\mathfrak{u}_i$ in  $\mathfrak{H}_{d,n}$  as show  the  next figure.
\begin{figure}[h]
\begin{equation*}\label{eq-hook-generators-Frblob-mon}
    \begin{tikzpicture}[xscale=0.3,yscale=0.3]
\node[] at (-1,1.5) {$\mathfrak{z}_i=$};
\draw[thick] (1,0) to [out=90,in=270](1,3);
\draw[thick] (2,0) to [out=90,in=270](2,3);
\node[] at (3,1.5) {$\cdots$};
\draw[thick] (4,0) to [out=90,in=270](4,3);
\draw[thick] (5,0) to [out=90,in=270](5,3);
\draw[thick] (6,0) to [out=90,in=270](6,3);
\draw[thick] (7,0) to [out=90,in=270](7,3);
\node[] at (8,1.5) {$\cdots$};
\draw[thick] (9,0) to [out=90,in=270](9,3);
\node[below] at (5,0) {$i'$};
\node[below] at (1,0) {$0'$};
\node[below] at (9,0) {$n'$};
\draw[] (5,1.5) circle [radius=0.25];
\node[] at (9.5,1.4) {};
\end{tikzpicture}
\qquad
\begin{tikzpicture}[xscale=0.3,yscale=0.3]
\node[] at (-1,1.5) {$\mathfrak{g}_0=$};
\draw[thick] (1,0) to [out=90,in=270](1,3);
\draw[thick] (1,0) to [out=90,in=90](2,0);
\draw[thick] (1,3) to [out=270,in=270](2,3);
\draw[thick] (3,0) to [out=90,in=270](3,3);
\node[] at (4,1.5) {$\cdots$};
\draw[thick] (5,0) to [out=90,in=270](5,3);
\draw[thick] (6,0) to [out=90,in=270](6,3);
\draw[thick] (7,0) to [out=90,in=270](7,3);
\node[] at (8,1.5) {$\cdots$};
\draw[thick] (9,0) to [out=90,in=270](9,3);
\node[below] at (1,0) {$0'$};
\node[below] at (3.2,0) {$2'$};
\node[below] at (9,0) {$n'$};
\node[] at (9.5,1.4) {};

\end{tikzpicture}
\qquad
\begin{tikzpicture}[xscale=0.3,yscale=0.3]
\node[] at (-1,1.5) {$\mathfrak{u}_i=$};
\draw[thick] (1,0) to [out=90,in=270](1,3);
\draw[thick] (2,0) to [out=90,in=270](2,3);
\node[] at (3,1.5) {$\cdots$};
\draw[thick] (4,0) to [out=90,in=270](4,3);
\draw[thick] (5,0) to [out=90,in=90](6,0);
\draw[thick] (5,3) to [out=270,in=270](6,3);
\draw[thick] (7,0) to [out=90,in=270](7,3);
\node[] at (8,1.5) {$\cdots$};
\draw[thick] (9,0) to [out=90,in=270](9,3);
\node[below] at (5,0) {$i'$};
\node[below] at (1,0) {$0'$};
\node[below] at (9,0) {$n'$};
\node[] at (9.5,1.4) {};

\end{tikzpicture} 
\end{equation*}
\caption{The elements $\mathfrak{z}_i, \mathfrak{g}_0, \mathfrak{u}_i\in \mathfrak{H}_{d,n}$.}
\end{figure}

Analogously to Notation \ref{Not1}, we set    $\zetan^{\alphan}=\mathfrak{z}_0^{\alpha_0}\mathfrak{z}_1^{\alpha_1}\cdots \mathfrak{z}_{n}^{\alpha_n}$ for  $\alphan:=(\alpha_0,\alpha_1,\dots,\alpha_n)\in (\mathbb{Z}/d\mathbb{Z})^{n+1}$.
\begin{lemma}\label{lemma-diag-normal-form-FrHook-1}
   Each element $D^{c,\circ}\in \mathfrak{H}_{d,n}$ can be written in  the  form $\zetan^{\alphan}$ or $\zetan^{\alphan}D^{c}\zetan^{\bbeta},$ where  $D^{c}$ is  the base diagram  of $D^{c,\circ}$. In particular, $\mathfrak{H}_{d,n}$ is generated by $\mathfrak{z}_0,\mathfrak{z}_1,\dots,\mathfrak{z}_n,  \mathfrak{g}_0,\mathfrak{u}_1,\dots,\mathfrak{u}_{n-1}$.
\end{lemma}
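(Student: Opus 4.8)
The plan is to follow the proof of Lemma~\ref{lemma-diag-normal-form-FrBlob-1} almost verbatim, translating it to the hook model and exploiting that beads slide freely along the arcs of an abacus hook-diagram. The structural feature that makes the hook case even more transparent than that of $\mathfrak{Bl}_{d,n}$ is that here no arc carries a blob-obstruction: by relation~(\ref{fracon}) the framing $\mathfrak{z}_0$ commutes with the hook generator $\mathfrak{g}_0$, so each arc of a hook-diagram is a single component along which beads move without any impediment.

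First I would fix $D^{c,\circ}\in\mathfrak{H}_{d,n}$ with base diagram $D^{c}$ and observe that every bead of $D^{c,\circ}$ lies on exactly one arc of $D^{c}$. Since beads slide freely, I slide each bead to one of the two endpoints of its arc, which is a boundary point labelled either in $\{0,1,\dots,n\}$ (a top point) or in $\{0',1',\dots,n'\}$ (a bottom point); for a through-arc either endpoint may be chosen, while for a cap or a cup one chooses one of its two endpoints. Reducing modulo $d$, let $\alpha_{i}$ (resp.\ $\beta_{j}$) denote the number of beads accumulated at the top point $i$ (resp.\ the bottom point $j'$); this produces exponent vectors $\alphan=(\alpha_0,\dots,\alpha_n)$ and $\bbeta=(\beta_0,\dots,\beta_n)$ in $(\mathbb{Z}/d\mathbb{Z})^{n+1}$. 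A stack of $\alpha_i$ beads at the top point $i$ is precisely the diagram $\mathfrak{z}_i^{\alpha_i}$ placed above $D^c$, and a stack of $\beta_j$ beads at the bottom point $j'$ is $\mathfrak{z}_j^{\beta_j}$ placed below $D^c$, so the defining equality of abacus hook-diagrams (equal base diagrams and equal bead counts modulo $d$ on each arc, cf.\ Definition~\ref{def-abacus-hook-mon}) gives
\begin{equation*}
D^{c,\circ}=\zetan^{\alphan}\,D^{c}\,\zetan^{\bbeta},
\end{equation*}
with the understanding that when $D^{c}$ is the identity all beads lie on through-strands and may be gathered above, producing the shorter form $D^{c,\circ}=\zetan^{\alphan}$.

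For the last assertion I would note that $D^{c}$ is a bead-free hook-diagram, hence an element of $\mathfrak{H}_{n}$, which by Theorem~\ref{theo-iso-algblobmon-hookblobmon} is generated by $\mathfrak{g}_0,\mathfrak{u}_1,\dots,\mathfrak{u}_{n-1}$; since $\zetan^{\alphan}$ and $\zetan^{\bbeta}$ are words in $\mathfrak{z}_0,\dots,\mathfrak{z}_n$, the displayed decomposition shows that $\mathfrak{H}_{d,n}$ is generated by $\mathfrak{z}_0,\dots,\mathfrak{z}_n,\mathfrak{g}_0,\mathfrak{u}_1,\dots,\mathfrak{u}_{n-1}$. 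The only point requiring genuine care, and the step I expect to be the main obstacle, is the consistency of the bead-sliding on the hook strand incident to $0$ and $0'$: one must check that a bead pushed there is faithfully recorded by $\mathfrak{z}_0$ and that relation~(\ref{fracon}) indeed lets it pass the hook generator freely, so that the splitting into top- and bottom-framings reproduces the original bead counts exactly and no spurious obstruction appears, in contrast with the non-connected monoid $\mathfrak{Bl}_{d,n}$.
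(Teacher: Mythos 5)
Your proposal is correct and follows essentially the same route as the paper, which simply declares the proof ``analogous to Lemma~\ref{lemma-diag-normal-form-FrBlob-1}'', i.e.\ slide the beads along the arcs to the boundary points, record the top and bottom accumulations as $\zetan^{\alphan}$ and $\zetan^{\bbeta}$, and invoke Theorem~\ref{theo-iso-algblobmon-hookblobmon} for the generation statement. Your extra remarks on the hook component through $0$ and $0'$ (where the relevant freedom is the local relation of Definition~\ref{def-abacus-hook-mon}, of which~(\ref{fracon}) is the algebraic shadow) only make explicit what the paper leaves implicit.
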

\begin{proof}
Analogous to the  proof of the Lemma \ref{lemma-diag-normal-form-FrBlob-1}.
\end{proof}

\begin{lemma}\label{lemma-pre-presentation-frHook-digrams}
The map $z_i\mapsto \mathfrak{z}_i$, $u_0\mapsto \mathfrak{g}_0$, $u_i\mapsto \mathfrak{u}_i$, for $i\not=0$,  defines a monoid epimorphism  from $Bl_{d,n}^c$ onto $\mathfrak{H}_{d,n}.$
\end{lemma}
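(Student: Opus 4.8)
The plan is to split the claim into two independent parts: first that the assignment, call it $\Psi$, respects every defining relation of $Bl_{d,n}^c$ so that it extends to a well-defined monoid homomorphism, and second that it is surjective. Surjectivity is the easy half and I would dispatch it immediately: by Lemma \ref{lemma-diag-normal-form-FrHook-1} the monoid $\mathfrak{H}_{d,n}$ is generated by $\mathfrak{z}_0,\dots,\mathfrak{z}_n,\mathfrak{g}_0,\mathfrak{u}_1,\dots,\mathfrak{u}_{n-1}$, and each of these is by construction the image under $\Psi$ of a generator of $Bl_{d,n}^c$. Hence $\Psi$ is automatically onto, and the whole content of the lemma lies in checking that $\Psi$ is well defined.

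For well-definedness I would verify that the elements $\mathfrak{z}_0,\dots,\mathfrak{z}_n,\mathfrak{g}_0,\mathfrak{u}_1,\dots,\mathfrak{u}_{n-1}\in\mathfrak{H}_{d,n}$ satisfy each relation listed in Definition \ref{def-connect-frm-blob-mon}, organizing them into three families. The blob relations among $\mathfrak{g}_0,\mathfrak{u}_1,\dots,\mathfrak{u}_{n-1}$ were already established diagrammatically in Theorem \ref{theo-iso-algblobmon-hookblobmon} (where $\mathfrak{g}_0=\underline{U}_0$), since erasing the beads reduces them to identities of hook-diagrams. The framing relations $\mathfrak{z}_i^d=1$ and $\mathfrak{z}_i\mathfrak{z}_j=\mathfrak{z}_j\mathfrak{z}_i$ hold because the $\mathfrak{z}_i$ are beads sliding on pairwise distinct straight strands, i.e.\ they span a copy of $C_d^{n+1}$ inside $\mathfrak{H}_{d,n}$. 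The mixed relations of (\ref{eq-all-rel-conn-fram-blob}) are checked exactly as in Lemma \ref{lemma-pre-presentation-frblob-digrams}: the equalities $\mathfrak{z}_i\mathfrak{u}_i=\mathfrak{z}_{i+1}\mathfrak{u}_i$ and $\mathfrak{u}_i\mathfrak{z}_i=\mathfrak{u}_i\mathfrak{z}_{i+1}$ express that the cap (resp.\ cup) of $\mathfrak{u}_i$ joins the strands $i$ and $i+1$ so a bead may be pushed from one to the other, while $\mathfrak{u}_i\mathfrak{z}_j=\mathfrak{z}_j\mathfrak{u}_i$ for $j\ne i,i+1$ is immediate as those strands are untouched.

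The genuinely new relations, and the heart of the proof, are those of (\ref{fracon}). For $\mathfrak{u}_i\mathfrak{z}_i^k\mathfrak{u}_i=\mathfrak{u}_i$ with $i\ne0$, stacking the $k$ beads between the two copies of $\mathfrak{u}_i$ traps them inside a loop formed by the cup of the lower $\mathfrak{u}_i$ and the cap of the upper one; rule (i) of Definition \ref{def-abacus-hook-mon} erases this loop together with its beads, returning $\mathfrak{u}_i$. The commutation $\mathfrak{z}_0\mathfrak{g}_0=\mathfrak{g}_0\mathfrak{z}_0$ is the crucial and distinctive point: in $\mathfrak{g}_0=\underline{U}_0$ the four points $0,1,0',1'$ lie in a single connected block, so a bead placed on the top strand $0$ slides freely along this hook down to the strand $0'$, which is precisely the diagram $\mathfrak{g}_0\mathfrak{z}_0$; the local relation in Definition \ref{def-abacus-hook-mon}(ii) is what legitimizes the bead passing along the connecting segment. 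This is exactly where the hook model diverges from the abacus blob model of Section \ref{ssec-abacus-blob-mon}: unlike the blob of $Bl_{d,n}$, the hook is \emph{not} an obstruction to the framing bead, which is the structural reason $z_0$ commutes with $u_0$.

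I expect the main obstacle to be this bead-sliding relation $\mathfrak{z}_0\mathfrak{g}_0=\mathfrak{g}_0\mathfrak{z}_0$, where one must argue carefully that the connectedness of the hook really permits the bead to traverse the whole component and that the rules of Definition \ref{def-abacus-hook-mon} are being applied consistently; once that is pinned down, all remaining verifications are routine diagram manipulations mirroring the arguments of Section \ref{ssec-abacus-blob-mon}.
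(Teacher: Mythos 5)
Your proposal matches the paper's proof: the paper likewise obtains surjectivity from Lemma \ref{lemma-diag-normal-form-FrHook-1} and establishes well-definedness by a direct diagrammatic check of the defining relations, singling out $\mathfrak{g}_0\mathfrak{z}_0=\mathfrak{z}_0\mathfrak{g}_0$ (justified by the beads sliding freely along the connected hook) as the illustrative case, exactly as you do. Your write-up is simply a more detailed account of the same routine verification.
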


\begin{proof}
The proof is obtained from Lemma \ref{lemma-diag-normal-form-FrHook-1} and by  checking that the map respects
 the defining relations of $Bl_{d,n}^c$. For instance, from the fact that the beads move freely on their arcs, we obtain: $\mathfrak{g}_0\mathfrak{z}_0=\mathfrak{z}_0\mathfrak{g}_0$, that is,
 \begin{equation*}
 \begin{tikzpicture}[xscale=0.3,yscale=0.3]
\draw[thick](1,0)--(1,-1);
\draw[thick](2,0)--(2,-1);
\draw[thick](3,0)--(3,-1);
\draw[thick](5,0)--(5,-1);
\draw[thick](6,0)--(6,-1);
\draw[thick](7,0)--(7,-1);
\draw[thick](9,0)--(9,-1);
\draw[thick](1,3)--(1,4);
\draw[thick](2,3)--(2,4);
\draw[thick](3,3)--(3,4);
\draw[thick](5,3)--(5,4);
\draw[thick](6,3)--(6,4);
\draw[thick](7,3)--(7,4);
\draw[thick](9,3)--(9,4);
\draw[] (1,3.5) circle [radius=0.25];
\draw[thick] (1,0) to [out=90,in=270](1,3);
\draw[thick] (1,0) to [out=90,in=90](2,0);
\draw[thick] (1,3) to [out=270,in=270](2,3);
\draw[thick] (3,0) to [out=90,in=270](3,3);
\node[] at (4,1.5) {$\cdots$};
\draw[thick] (5,0) to [out=90,in=270](5,3);
\draw[thick] (6,0) to [out=90,in=270](6,3);
\draw[thick] (7,0) to [out=90,in=270](7,3);
\node[] at (8,1.5) {$\cdots$};
\draw[thick] (9,0) to [out=90,in=270](9,3);
\node[below] at (1,0-1) {$0'$};
\node[below] at (3.2,0-1) {$2'$};
\node[below] at (9,0-1) {$n'$};
\node[] at (10,1.4) {$=$};

\end{tikzpicture}
\begin{tikzpicture}[xscale=0.3,yscale=0.3]
\draw[thick](1,0)--(1,-1);
\draw[thick](2,0)--(2,-1);
\draw[thick](3,0)--(3,-1);
\draw[thick](5,0)--(5,-1);
\draw[thick](6,0)--(6,-1);
\draw[thick](7,0)--(7,-1);
\draw[thick](9,0)--(9,-1);
\draw[thick](1,3)--(1,4);
\draw[thick](2,3)--(2,4);
\draw[thick](3,3)--(3,4);
\draw[thick](5,3)--(5,4);
\draw[thick](6,3)--(6,4);
\draw[thick](7,3)--(7,4);
\draw[thick](9,3)--(9,4);
\draw[] (1,-0.5) circle [radius=0.25];
\draw[thick] (1,0) to [out=90,in=270](1,3);
\draw[thick] (1,0) to [out=90,in=90](2,0);
\draw[thick] (1,3) to [out=270,in=270](2,3);
\draw[thick] (3,0) to [out=90,in=270](3,3);
\node[] at (4,1.5) {$\cdots$};
\draw[thick] (5,0) to [out=90,in=270](5,3);
\draw[thick] (6,0) to [out=90,in=270](6,3);
\draw[thick] (7,0) to [out=90,in=270](7,3);
\node[] at (8,1.5) {$\cdots$};
\draw[thick] (9,0) to [out=90,in=270](9,3);
\node[below] at (1,0-1) {$0'$};
\node[below] at (3.2,0-1) {$2'$};
\node[below] at (9,0-1) {$n'$};

\end{tikzpicture} 
\end{equation*}
\end{proof}
If $D^{\circ,c}\in \mathfrak{H}_{d,n},$ then its base diagram $D^{c}$ can be seen as an element of the hook blob monoid $\mathfrak{H}_{n}$, and therefore it can be written in the form $\underline{U}(A),$ where $A$ is an indexing matrix (see Subsection \ref{ssec-Hook-blob}).

\begin{lemma}\label{lemma-normal-form-FrHook-1}
\begin{enumerate} For  $D^{\circ,c}\in \mathfrak{H}_{d,n} $, we have:
    \item[(i)] $D^{\circ,c} =\zetan^{\alphan}=\zetan^{\alphan}\underline{U}\Big[\!\!\begin{array}{c}\infty\\\infty\end{array}\!\!\Big]$ or  $D^{\circ,c}=\zetan^{\alphan}\underline{U}(A)\zetan^{\bbeta}$, where  $\alphan,\bbeta$ are exponent vectors,  $A$ is a non-degenerate indexing matrix.
    \item[(ii)] If $\zetan^{\alphan}\underline{U}(A)\zetan^{\bbeta}=\zetan^{\bgamma}\underline{U}(A')\zetan^{\kappan}$, with  $\alphan,\bbeta,\bgamma,\kappan$  are exponent vectors,  and  $A,A'$ are indexing matrices, then  $A=A'.$
    \end{enumerate}
\end{lemma}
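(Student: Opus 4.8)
The plan is to mirror, almost verbatim, the proof of Lemma \ref{lemma-normal-form-FrBlob-1}, since the two statements are structurally identical and only the underlying diagram model (abacus hook-diagrams instead of abacus blob-diagrams) has changed. The two ingredients carry over intact: a diagrammatic normal form coming from the free sliding of beads, and the equality convention for abacus hook-diagrams.

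First I would establish part (i). By Lemma \ref{lemma-diag-normal-form-FrHook-1}, every $D^{c,\circ}\in\mathfrak{H}_{d,n}$ can be written either as $\zetan^{\alphan}$ or as $\zetan^{\alphan}D^c\zetan^{\bbeta}$, where $D^c$ is the base diagram of $D^{c,\circ}$. In the first case we are done, since $\zetan^{\alphan}=\zetan^{\alphan}\underline{U}\Big[\!\!\begin{array}{c}\infty\\\infty\end{array}\!\!\Big]$ by the convention $\underline{U}\Big[\!\!\begin{array}{c}\infty\\\infty\end{array}\!\!\Big]=1$. In the second case, the base diagram $D^c$ is an element of the hook blob monoid $\mathfrak{H}_n$, so by the remark preceding this lemma (which invokes the isomorphism of Theorem \ref{theo-iso-algblobmon-hookblobmon} together with the normal form of Corollary \ref{NFBlob}) it can be written as $\underline{U}(A)$ for a unique indexing matrix $A$. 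One only needs to observe that when $D^{c,\circ}$ is not a pure framing element, the corresponding base diagram is not the identity, so the associated $A$ is non-degenerate; this yields the claimed form $\zetan^{\alphan}\underline{U}(A)\zetan^{\bbeta}$.

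Next I would prove part (ii). Suppose $\zetan^{\alphan}\underline{U}(A)\zetan^{\bbeta}=\zetan^{\bgamma}\underline{U}(A')\zetan^{\kappan}$. By the declared equality convention for abacus hook-diagrams (two are equal if and only if their base diagrams coincide and the number of beads modulo $d$ on each corresponding arc is the same), the two products have the same base diagram. But erasing all beads sends $\zetan^{\alphan}\underline{U}(A)\zetan^{\bbeta}$ to the base diagram $\underline{U}(A)$ and $\zetan^{\bgamma}\underline{U}(A')\zetan^{\kappan}$ to $\underline{U}(A')$, so $\underline{U}(A)=\underline{U}(A')$ in $\mathfrak{H}_n$. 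Since $A\mapsto\underline{U}(A)$ is a bijection between indexing matrices and $\mathfrak{H}_n$ (Theorem \ref{theo-iso-algblobmon-hookblobmon} composed with the one-to-one correspondence $A\mapsto\BBU(A)$), we conclude $A=A'$.

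I do not anticipate a genuine obstacle here, as both parts reduce immediately to results already in hand. The only point requiring a line of care is verifying that the bead-erasing map is well defined on $\mathfrak{H}_{d,n}$ and really returns the base diagram of the normal form — i.e. that the defining relation (ii) of Definition \ref{def-abacus-hook-mon} and the reduction of beads modulo $d$ are compatible with this projection. Since these relations only alter beads and never the underlying connectivity of arcs, the projection to base diagrams is a monoid homomorphism $\mathfrak{H}_{d,n}\to\mathfrak{H}_n$, which is exactly what part (ii) uses.
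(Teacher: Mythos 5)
Your proposal is correct and follows essentially the same route as the paper: the paper proves this lemma by declaring it analogous to Lemma \ref{lemma-normal-form-FrBlob-1}, whose proof derives part (i) from the hook-diagram analogue of Lemma \ref{lemma-diag-normal-form-FrBlob-1} and part (ii) from the fact that equal abacus diagrams have equal base diagrams. Your additional remarks on the bijectivity of $A\mapsto\underline{U}(A)$ and on the bead-erasing projection merely make explicit what the paper leaves implicit.
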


\begin{proof}
Analogous to  Lemma \ref{lemma-normal-form-FrBlob-1}
\end{proof}

\begin{lemma}\label{conn-zU} Set $A =\Big[\!\!\begin{array}{c}i\\j\end{array}\!\!\Big]$, where $i,j\in [1,n-1]$ and $j<i.$  Then:
\begin{enumerate}
\item[(i)]  $\mathfrak{z}_k\ulU(A) =  \ulU(A)\mathfrak{z}_k$ if and only if $0\leq k<j$ or  $k>i+1.$
\item[(ii)] $\mathfrak{z}_k\ulU(A) =  \ulU(A)\mathfrak{z}_{k+2}$ if and only if $k\in [j,i-1]$.
\item[(iii)] $\mathfrak{z}_k\ulU(A)=\mathfrak{z}_p\ulU(A)$ with $k\neq p$ if and only if
$k,p\in\{i,i+1\}$.
\item[(iv)] $\ulU(A)\mathfrak{z}_k=\ulU(A)\mathfrak{z}_p$ with $k\neq p$ if and only if
$k,p\in\{j,j+1\}$.
\end{enumerate}
\end{lemma}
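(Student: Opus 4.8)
The plan is to reproduce the diagrammatic argument behind Lemma~\ref{zU}, now performed inside $\mathfrak{H}_{d,n}$ with the hook-diagram $\underline{U}(A)$ in place of $\BBU(A)$. Recall that for $A=\Big[\!\!\begin{array}{c}i\\j\end{array}\!\!\Big]$ the base diagram $\underline{U}(A)=\underline{U}_i\underline{U}_{i-1}\cdots\underline{U}_j$ is the hook-diagram whose nontrivial arcs are: a cup joining the bottom points $j$ and $j+1$; a cap joining the top points $i$ and $i+1$; the shifted through-strands carrying top point $k$ to bottom point $k+2$ for each $k\in[j,i-1]$; and a vertical identity strand on every remaining point. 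The feature special to the connected (hook) setting is that, because $j\ge 1$, none of the factors $\underline{U}_i,\dots,\underline{U}_j$ is the blob generator $\underline{U}_0$, so the hook line joining $0$ to $0'$ is one of these untouched identity strands.

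First I would verify each equality by sliding the bead $\mathfrak{z}_k$ along the relevant arc, invoking the criterion of Definition~\ref{def-abacus-hook-mon} that two abacus hook-diagrams coincide exactly when their base diagrams agree and the bead counts modulo $d$ agree on every arc. For (i), a bead on line $k$ passes through $\underline{U}(A)$ unchanged precisely when that line is an identity strand, that is, for $k>i+1$ and for $0\le k<j$; the admissibility of $k=0$ is what sets this statement apart from Lemma~\ref{zU} and is exactly the observation above. For (ii), when $k\in[j,i-1]$ the bead enters at top point $k$ and leaves at bottom point $k+2$ along the shifted through-strand, yielding $\mathfrak{z}_k\underline{U}(A)=\underline{U}(A)\mathfrak{z}_{k+2}$. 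For (iii), the top points $i$ and $i+1$ lie on the same cap, so a bead on either sits on one and the same arc, giving $\mathfrak{z}_i\underline{U}(A)=\mathfrak{z}_{i+1}\underline{U}(A)$; dually, for (iv) the bottom points $j$ and $j+1$ lie on the same cup.

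For the converse directions I would note that the outgoing bead position determines which arc of $\underline{U}(A)$ it occupies: two distinct beads on the incoming boundary produce the same element only if they are forced onto a single arc, which happens solely at the top cap $\{i,i+1\}$ (case (iii)) and at the bottom cup $\{j,j+1\}$ (case (iv)); and a bead commutes across $\underline{U}(A)$ only along an identity strand, which pins the ranges in (i), while a bead is shifted by exactly two only along the through-strands of (ii). Feeding these observations back into the equality criterion upgrades them to honest identities in $\mathfrak{H}_{d,n}$.

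The only ingredient not already present in the blob case is the accounting for the hook line at $0$; since $j\ge 1$ this line is an identity strand and never meets the cap, the cup, or the shifted strands, so it contributes nothing beyond the single extra value $k=0$ in part~(i). I therefore expect no real obstacle: the entire argument reduces to drawing $\underline{U}(A)$ together with the three bead configurations, exactly as in Lemma~\ref{zU}.
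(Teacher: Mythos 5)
Your proposal is correct and follows essentially the same route as the paper, which proves this lemma by declaring it analogous to Lemma~\ref{zU}, itself established by drawing the products and invoking the criterion that two abacus diagrams are equal exactly when their base diagrams agree and the bead counts modulo $d$ agree on each arc. Your explicit identification of the arcs of $\underline{U}(A)$ (cup at $\{j,j+1\}$, cap at $\{i,i+1\}$, through-strands $k\mapsto k+2$) and your observation that the hook line $\{0,0'\}$ remains an identity strand because $j\geq 1$ — which is precisely what adds the value $k=0$ to part~(i) relative to Lemma~\ref{zU} — supply the details the paper leaves implicit.
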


\begin{proof}
    Analogous to Lemma \ref{zU}.
\end{proof}

\begin{lemma}\label{conn-zA0U}
Set $A =\Big[\!\!\begin{array}{c}i\\0\end{array}\!\!\Big]$, where $i\in [1,n-1]$.  Then the following relations hold in $\mathfrak{H}_{d,n} $.
\begin{enumerate}
\item[(i)] $\mathfrak{z}_k\underline{U}(A)=  \underline{U}(A)\mathfrak{z}_k $ if and only if $k>i+1$ or $k=0.$
\item[(ii)] $\mathfrak{z}_k\underline{U}(A)=  \underline{U}(A)\mathfrak{z}_{k+2} $ if and only if $k\in[1,i-1]$.
\item[(iii)] $\mathfrak{z}_k\underline{U}(A)=\mathfrak{z}_p\underline{U}(A),$ with $k\neq p$ if and only if $k,p\in\{i,i+1\}.$
\item[(iv)] $\underline{U}(A)\mathfrak{z}_k=\underline{U}(A)\mathfrak{z}_p,$ with $k\neq p$ if and only if $k,p\in \{0,1,2\}$.
\end{enumerate}
\end{lemma}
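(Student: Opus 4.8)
My plan is to establish the four equivalences by drawing the products, exactly as in Lemma \ref{zA0U}, the only genuinely new ingredients being the free passage of beads along the hook strand and the fusion at the bottom produced by the hook. First I would fix the picture of $\underline{U}(A)$ for $A=\Big[\!\!\begin{array}{c}i\\0\end{array}\!\!\Big]$: by Theorem \ref{theo-iso-algblobmon-hookblobmon} it is obtained from the blobbed diagram $\BBU\Big[\!\!\begin{array}{c}i\\0\end{array}\!\!\Big]$ by cutting the unique blobbed arc (a cup on the bottom nodes $1,2$) and joining the two loose ends to the hook block $\{0,0'\}$. Consequently $\underline{U}(A)$ consists of a through hook strand from $0$ to $0'$, the bottom endpoints $0',1',2'$ gathered into a single component, a top cup on $\{i,i+1\}$, and the propagating lines of $\BBU(A)$ (which shift the index by two) on the strands $[1,i+1]$, all untouched by the hook.

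For claims (ii) and (iii) I would simply repeat the argument of Lemma \ref{zA0U}(ii)--(iii): these concern beads on strands $1,\dots,i+1$, whose configuration coincides with that of $\BBU(A)$. Thus for $k\in[1,i-1]$ the bead $\mathfrak{z}_k$ runs down a propagating line that reappears two positions to the right, yielding $\mathfrak{z}_k\underline{U}(A)=\underline{U}(A)\mathfrak{z}_{k+2}$, while the top cup on $\{i,i+1\}$ gives the only left identification $\mathfrak{z}_i\underline{U}(A)=\mathfrak{z}_{i+1}\underline{U}(A)$. I would stress that these beads never interact with the hook strand, so the argument is verbatim that of the non-connected case.

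The hook enters only in (i) and (iv), where I would use relation (\ref{fracon}). For (i) with $k=0$: commuting $\mathfrak{z}_0$ past $\mathfrak{u}_i,\dots,\mathfrak{u}_1$ (legitimate since $0\neq j,j+1$ for each $j\geq1$) brings the bead to the hook, where $z_0u_0=u_0z_0$ sends it straight through to the bottom; hence $\mathfrak{z}_0\underline{U}(A)=\underline{U}(A)\mathfrak{z}_0$, and crucially the bead does not merge with $\mathfrak{z}_1$, which is exactly what distinguishes the connected monoid. For $k>i+1$ the strand is an untouched through-line, so commutation is immediate; the cases $1\leq k\leq i+1$ are governed by (ii)--(iii) and do not commute. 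For (iv): a bead attached on the right lands on the bottom node $k'$, and I would read off that $0',1',2'$ all lie on the single component created by the cut-and-reconnect, so $\underline{U}(A)\mathfrak{z}_0=\underline{U}(A)\mathfrak{z}_1=\underline{U}(A)\mathfrak{z}_2$, while any node $k'$ with $k\geq3$ sits on a separate component, leaving no other right identification.

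The main obstacle I anticipate is keeping the two roles of strand $0$ straight: at the top the hook strand must behave as a clean through-line on which $\mathfrak{z}_0$ merely commutes (so that $0$ is excluded from the cup identification of (iii)), while at the bottom the same hook absorbs the nodes $0',1',2'$ into one component (so that $0,1,2$ are mutually identified in (iv)). Verifying that relation (\ref{fracon}) produces precisely this asymmetry -- passage through on the left versus fusion on the right -- is the delicate point; once it is drawn correctly, each equivalence follows by reading off the connected components together with the free sliding of beads.
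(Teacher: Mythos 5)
Your proposal is correct and follows essentially the same route as the paper, which proves this lemma by declaring it analogous to Lemma \ref{zA0U}, i.e.\ by drawing the product diagrams and comparing connected components and bead counts; your explicit identification of $\underline{U}(A)$ as the hook block $\{0,0',1',2'\}$ together with the top cap $\{i,i+1\}$, the two-shifted propagating lines and the untouched verticals is exactly the picture needed, and the component count then yields all four equivalences. The only caveat is at the level of the statement rather than your argument: since $\mathfrak{z}_0\underline{U}(A)=\underline{U}(A)\mathfrak{z}_0=\underline{U}(A)\mathfrak{z}_2$, item (ii) should be read with $k$ ranging over $[1,n]$, as your own analysis of the hook strand already makes clear.
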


\begin{proof}
    Analogous to Lemma \ref{zA0U}.
\end{proof}

\begin{proposition}\label{prop-interaction-z-BBU-conn}
If $j\in [1,n]$ and $A=[B|C]$ is a non-degenerate indexing matrix, then in the product $\mathfrak{z}_j\underline{U}(A)$ one, and only one, of the following cases occurs, depending on the product $\mathfrak{z}_j\underline{U}(A)$:
    \begin{enumerate}
          \item[(i)] \label{sub-critical-case1-coro-interaction-z-BBU-conn} $j=1$ and $b_1=0.$ In this case we have $\mathfrak{z}_j\underline{U}(A)=\mathfrak{z}_0\underline{U}(A)=\underline{U}(A)\mathfrak{z}_0.$
         \item[(ii)] \label{critical-case1-coro-interaction-z-BBU-conn}  There is  $s \in [1,r]$ and  $k\in \{1,2\}$ such that:
        \begin{equation*}
        \mathfrak{z}_j\underline{U}(A)=\underline{U}\left[\begin{matrix}
                b_1\\
                0
            \end{matrix}\right]\cdots \underline{U}\left[\begin{matrix}
                b_s\\
                0
    \end{matrix}\right]\mathfrak{z}_k\underline{U}\left[\begin{matrix}
         b_{s+1}\\
                0
            \end{matrix}\right]\cdots \underline{U}\left[\begin{matrix}
                b_r\\
                0
            \end{matrix}\right]\underline{U}(C).
        \end{equation*}
        In this case we have  $\mathfrak{z}_j\underline{U}(A)=\mathfrak{z}_0\underline{U}(A)=\underline{U}(A)\mathfrak{z}_0.$
          \item[(iii)]
          There is $i\in[1,n] $ s.t.  $i\neq j$ and $\mathfrak{z}_j\underline{U}(A)=\mathfrak{z}_i\underline{U}(A).$
         \item[(iv)] 
         There is $k\in[1, n]$ s.t.  $\mathfrak{z}_j\underline{U}(A)=\underline{U}(A)\mathfrak{z}_k.$
    \end{enumerate}
Furthermore, in  the product $\underline{U}(A)\mathfrak{z}_j$  one, and only one, of the following situations occurs, depending on the product $\underline{U}(A)\mathfrak{z}_j$ in the monoid $B_{d,n}^c.$
    \begin{enumerate}
        \item[(v)] \label{critical-case2-coro-interaction-z-BBU-conn}  There is $s\in[1,r]$ and $k\in \{1,2\}$ such that:
        \begin{equation}
        \underline{U}(A)\mathfrak{z}_j=\underline{U}\left[\begin{matrix}
                b_{1}\\
                0
    \end{matrix}\right]\cdots \underline{U}\left[\begin{matrix}
                b_s\\
                0
    \end{matrix}\right]\mathfrak{z}_k\underline{U}\left[\begin{matrix}
                b_{s+1}\\
                0
    \end{matrix}\right]\cdots\underline{U}\left[\begin{matrix}
                b_{r}\\
                0
    \end{matrix}\right]\underline{U}(C).
        \end{equation}
        In this case we have: $\underline{U}(A)\mathfrak{z}_j=\mathfrak{z}_0\underline{U}(A)=\underline{U}(A)\mathfrak{z}_0.$
          \item[(vi)] \label{Not-critical-case2-coro-interaction-z-BBU-conn} There is $i\in[1,n]$ such that $i\neq j$ and $\underline{U}(A)\mathfrak{z}_j=\underline{U}(A)\mathfrak{z}_i.$
         \item[(vii)] \label{Not2-critical-case2-coro-interaction-z-BBU-conn} There is  $k\in[1,n]$ such that $\underline{U}(A)\mathfrak{z}_j=\mathfrak{z}_k\underline{U}(A).$
    \end{enumerate}
\end{proposition}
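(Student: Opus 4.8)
The plan is to reproduce the induction on the length $l(A)$ used in the proof of Proposition \ref{prop-interaction-z-BBU}, replacing each use of Lemma \ref{zU} and Lemma \ref{zA0U} by its connected analogue Lemma \ref{conn-zU} and Lemma \ref{conn-zA0U}. The splitting of outcomes for $\mathfrak{z}_j\underline{U}(A)$ into (i)--(iv) and for $\underline{U}(A)\mathfrak{z}_j$ into (v)--(vii), together with their mutual exclusivity, is formal and transfers with the same shape, since the connected single-column lemmas differ from their non-connected versions only in the clauses mentioning the index $0$. Thus the sole genuinely new point to establish is the collapse recorded in the critical cases (i), (ii) and (v), namely the identity $\mathfrak{z}_j\underline{U}(A)=\mathfrak{z}_0\underline{U}(A)=\underline{U}(A)\mathfrak{z}_0$.

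I would first isolate the diagrammatic fact that drives this collapse. In the hook realization every left-exposed blobbed arc is joined to the block $\{0,0'\}$ — this is precisely how the isomorphism of Theorem \ref{theo-iso-algblobmon-hookblobmon} identifies $\mathfrak{Bl}_n$ with $\mathfrak{H}_n$ — so the component of $\underline{U}(A)$ passing through the point $0$ is exactly the union of all blobbed pieces. Two consequences follow. First, $\mathfrak{z}_0$ is central for $\underline{U}(A)$: it commutes with every $\mathfrak{u}_i$, $i\neq 0$, for trivial diagrammatic reasons and with $\mathfrak{g}_0$ by relation (\ref{fracon}), whence $\mathfrak{z}_0\underline{U}(A)=\underline{U}(A)\mathfrak{z}_0$ for all $A$. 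Second, a bead placed at one of the positions $0,1,2$ on the right of a blobbed factor sits on that distinguished component, so it may be slid onto the $0$-strand; for a single column this is exactly Lemma \ref{conn-zA0U}(iv), and for a blobbed prefix of arbitrary blob-rank the same interchangeability is carried along by the induction.

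With these in hand, the base case $l(A)=1$ is read off Lemma \ref{conn-zU} and Lemma \ref{conn-zA0U}: the only length-one instance of (i) or (ii) is $A=\Big[\!\!\begin{array}{c}0\\0\end{array}\!\!\Big]$ with $j=1$, where $\mathfrak{z}_1\underline{U}(A)=\mathfrak{z}_0\underline{U}(A)=\underline{U}(A)\mathfrak{z}_0$ holds at once. For the inductive step I would factor $\underline{U}(A)=\underline{U}(A')\underline{U}(A'')$ with $A''$ the last column, apply the hypothesis to $\mathfrak{z}_j\underline{U}(A')$, observe that the outcomes (i)--(iii) are inherited, and resolve the remaining subcase — where the hypothesis gives (iv), $\mathfrak{z}_j\underline{U}(A')=\underline{U}(A')\mathfrak{z}_k$ — by feeding $\mathfrak{z}_k\underline{U}(A'')$ into the connected single-column lemmas exactly as in Proposition \ref{prop-interaction-z-BBU}. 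Whenever the bead is funneled into a blobbed column — which is precisely the situation producing outcome (i) or (ii) — the trapped $\mathfrak{z}_k$ with $k\in\{1,2\}$, read as a right-bead on the blobbed prefix, collapses to $\mathfrak{z}_0$ by the interchangeability above, and centrality then supplies both $\mathfrak{z}_j\underline{U}(A)=\mathfrak{z}_0\underline{U}(A)$ and $=\underline{U}(A)\mathfrak{z}_0$. The product $\underline{U}(A)\mathfrak{z}_j$ is handled symmetrically and yields case (v) in the same manner.

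The main obstacle I anticipate is not conceptual but a matter of keeping the bookkeeping honest: one must confirm that the index $k\in\{1,2\}$ delivered by the recursion genuinely lands on a blobbed component (so that its replacement by $\mathfrak{z}_0$ is legitimate), that the interchangeability of the positions $0,1,2$ really propagates from a single blobbed column to blobbed prefixes of higher rank, and that no product is simultaneously of two of the listed types. The last point is inherited from the mutual exclusivity already proved for the connected single-column lemmas, while the first two are controlled by the observation that in the hook model all blobbed arcs belong to the single component through $0$. Hence the critical cases collapse uniformly, which is exactly the extra conclusion recorded in (i), (ii) and (v).
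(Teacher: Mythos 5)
Your proposal is correct and follows essentially the same route as the paper, whose entire proof is the one-line remark that the argument is analogous to Proposition \ref{prop-interaction-z-BBU} with Lemmas \ref{conn-zU} and \ref{conn-zA0U} in place of Lemmas \ref{zU} and \ref{zA0U}. You in fact supply more detail than the paper does on the only genuinely new content — the collapse $\mathfrak{z}_j\underline{U}(A)=\mathfrak{z}_0\underline{U}(A)=\underline{U}(A)\mathfrak{z}_0$ in the critical cases — correctly grounding it in the centrality of $\mathfrak{z}_0$ (via relation (\ref{fracon}) and the commutations with $\mathfrak{u}_i$, $i\neq 0$) and in Lemma \ref{conn-zA0U}(iv), which lets the trapped $\mathfrak{z}_k$, $k\in\{1,2\}$, be traded for $\mathfrak{z}_0$ on the distinguished component through $0$.
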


\begin{proof}
    Analogous to Proposition \ref{prop-interaction-z-BBU}
\end{proof}

\begin{example}(Cf.  Example \ref{ex-left-right-Z-matrices-V0})\label{ex-left-right-Z-matrices-V0-conn}
Let $n=7$ and consider the indexing matrix
    \begin{equation*}
        A=\left[\begin{matrix}
            1 & 2 & 3 & 6 \\
            0 & 1 & 2 & 4
        \end{matrix}\right].
    \end{equation*}
Then we have
    \begin{equation*}
       \begin{tikzpicture}[xscale=0.5,yscale=0.5]
\node[] at (-6,0) {$\underline{U}(A)=$};
\node[above] at (0,4) {$0$};
\node[above] at (1,4) {$1$};
\node[above] at (2,4) {$2$};
\node[above] at (3,4) {$3$};
\node[above] at (4,4) {$4$};
\node[above] at (5,4) {$5$};
\node[above] at (6,4) {$6$};
\node[above] at (7,4) {$7$};

\node[] at (-2,3) {$\underline{U}\binom{1}{0}$};
\draw[thick](0,4)--(0,2);
\draw[thick] (1,4) to [out=270,in=270](2,4);
\draw[thick] (1,2) to [out=90,in=90](2,2);
\draw[thick] (0,2) to [out=90,in=90](1,2);
\draw[thick] (3,4) to (3,2);
\draw[thick] (4,4) to (4,2);
\draw[thick] (5,4) to (5,2);
\draw[thick] (6,4) to (6,2);
\draw[thick] (7,4) to (7,2);
\draw[dashed] (-3,2) to (7.5,2);
\node[] at (-2,1) {$\underline{U}\binom{2}{1}$};
\draw[thick](0,2)--(0,0);
\draw[thick] (2,2) to [out=270,in=270](3,2);
\draw[thick] (1,0) to [out=90,in=90](2,0);
\draw[thick] (1,2) to (3,0);
\draw[thick] (4,2) to (4,0);
\draw[thick] (5,2) to (5,0);
\draw[thick] (6,2) to (6,0);
\draw[thick] (7,2) to (7,0);
\draw[dashed] (-3,0) to (7.5,0);
\node[] at (-2,-1) {$\underline{U}\binom{3}{2}$};
\draw[thick](0,0)--(0,-2);
\draw[thick] (3,0) to [out=270,in=270](4,0);
\draw[thick] (2,-2) to [out=90,in=90](3,-2);
\draw[thick] (1,-2) to (1,0);
\draw[thick] (2,0) to (4,-2);
\draw[thick] (5,-2) to (5,0);
\draw[thick] (6,-2) to (6,0);
\draw[thick] (7,-2) to (7,0);
\draw[dashed] (-3,-2) to (7.5,-2);

\node[] at (-2,-3) {$\underline{U}\binom{6}{4}$};
\draw[thick](0,-2)--(0,-4);
\draw[thick] (6,-2) to [out=270,in=270](7,-2);
\draw[thick] (4,-4) to [out=90,in=90](5,-4);
\draw[thick] (1,-2) to (1,-4);
\draw[thick] (2,-2) to (2,-4);
\draw[thick] (3,-2) to (3,-4);
\draw[thick] (4,-2) to (6,-4);
\draw[thick] (5,-2) to (7,-4);

\node[below] at (0,-4) {$0'$};
\node[below] at (1,-4) {$1'$};
\node[below] at (2,-4) {$2'$};
\node[below] at (3,-4) {$3'$};
\node[below] at (4,-4) {$4'$};
\node[below] at (5,-4) {$5'$};
\node[below] at (6,-4) {$6'$};
\node[below] at (7,-4) {$7'$};
\end{tikzpicture} 
    \end{equation*}
    We can easily check that:
    \begin{equation*}
        \begin{array}{ccc}
            \mathfrak{z}_1\underline{U}(A)=\mathfrak{z}_2\underline{U}(A), &  \mathfrak{z}_6\underline{U}(A)=\mathfrak{z}_7\underline{U}(A),& \mathfrak{z}_5\underline{U}(A)=\underline{U}(A)\mathfrak{z}_7, \\
            \quad &\quad& \quad \\
            \underline{U}(A)\mathfrak{z}_1=\underline{U}(A)\mathfrak{z}_6, & \underline{U}(A)\mathfrak{z}_2=\underline{U}(A)\mathfrak{z}_3, & \underline{U}(A)\mathfrak{z}_4=\underline{U}(A)\mathfrak{z}_5,\\
            \quad & \quad &\quad \\
            \mathfrak{z}_0\underline{U}(A)=\mathfrak{z}_3\underline{U}(A)=& \mathfrak{z}_4\underline{U}(A)=\underline{U}(A)\mathfrak{z}_0.
        \end{array}
    \end{equation*}

\end{example}

The following definition is analogous to Definition \ref{def-left-right-Z-matrices}, we just make some convenient changes according to the behavior of the products $\mathfrak{z}_j\underline{U}(A)$ and $\underline{U}(A)\mathfrak{z}_j$ in $\mathfrak{H}_{d,n}$ instead of $\mathfrak{z}_j\BBU(A)$ and $\BBU(A)\mathfrak{z}_j$, in $\mathfrak{Bl}_{d,n}.$

\begin{definition}
Let $A=[B|C]$ be a non degenerate indexing matrix, we define the  matrices  $L'(A)$ and $R'(A)$ of size $3\times n$, as follows.
\begin{enumerate}
\item[(i)]Depending on the product $\mathfrak{z}_j\underline{U}(A),$ the $j$-th column of $L'(A)$ is equal to:
\begin{enumerate}
\item $\big[-1\,\,\, j \,\,\, 0 \big]^t$ if we are in case (i) or (ii) of Proposition \ref{prop-interaction-z-BBU-conn}.
\item $\big[ i\,\,\, j\,\,\, 0\big]^t$ if we are in case (iii) of Proposition \ref{prop-interaction-z-BBU-conn}.
\item $\big[ j\,\,\,j \,\,\, k \big]^t$ if we are in case (iv) of Proposition \ref{prop-interaction-z-BBU-conn}.
\end{enumerate}
\item [(ii)]  Depending on the product $\underline{U}(A)\mathfrak{z}_j,$ the $j$-th column of $R'(A)$ is equal to:
\begin{enumerate}
\item  $\left[0\,\, j \,\, -1\right]^t$ if we are in case (v) of Proposition \ref{prop-interaction-z-BBU-conn}.
\item $\left[0\,\, j \,\, i\right]^t$ if we are in case (vi) of Proposition \ref{prop-interaction-z-BBU-conn}.
\item $\left[k\,\, j \,\, 0\right]^t$ if we are in case (vii) of Proposition \ref{prop-interaction-z-BBU-conn}.
\end{enumerate}
\end{enumerate}
\end{definition}
\begin{definition}\label{def-notation-corchetes}
  Let $A=[B|C]$ be a non degenerate indexing matrix $A=[B|C]$. Put  $L'(A)=[l_{ij}]$ and $R'(A)=[r_{ij}]$. We set:
$$
L'_0(A):=\{\min\{l_{1j},l_{2j}\}\,;\, j\in[1,n]\}\setminus \{-1\},
$$
$$R'_0(A):=\{0\}\cup(\{
\min\{r_{2j},r_{3j}\}\,;\, j\in[1,n]\}\setminus\{-1\}),
$$
and we define the subsets $\mathrm{Exp}_{L'}(A)$ and $\mathrm{Exp}_{R'}(A)$ of $(\mathbb{Z}/d\mathbb{Z})^{n+1}$ by:
\begin{eqnarray*}
\mathrm{Exp}_{L'}(A) &=&\{(\alpha_0,\alpha_1, \ldots, \alpha_n)\,;\,\alpha_i =0\quad  \text{if}\quad i\not\in L'_0(A) \},\\
\mathrm{Exp}_{R'}(A) &=&\{(\alpha_0,\alpha_1, \ldots, \alpha_n)\,;\,\alpha_i =0\quad  \text{if}\quad i\not\in R'_0(A) \}.
\end{eqnarray*}
Finally, for $\alphan :=(\alpha_0,\alpha_1,\ldots, \alpha_n)\in L'_0(A)$ (resp. $R'_0(A)$) we set
$$
[ \alphan ,A ] =\prod_{j\in L'_0(A)}\mathfrak{z}_j^{\alpha_j} \quad \text{(resp. $[ A, \alphan ] =\prod_{j\in R'_0(A)}\mathfrak{z}_j^{\alpha_j}$).}
$$
\end{definition}
\begin{example}(Cf. with Example \ref{ex-left-right-Z-matrices-2})\label{ex-left-right-Z-matrices-conn}
Let $n=7$ and consider the indexing matrix:
    \begin{equation*}
        A=\left[\begin{matrix}
            1 & 2 & 3 & 6 \\
            0 & 1 & 2 & 4
        \end{matrix}\right].
    \end{equation*}
    Then, according  to Example \ref{ex-left-right-Z-matrices-V0-conn}, we have:
    \begin{equation*}
        L'(A)=\left[\begin{matrix}
            2 & 1 & -1 & -1 & 5 & 7 & 6 \\
            1 & 2 & 3 & 4 & 5 & 6 & 7 \\
            0 & 0 & 0 & 0 & 7 & 0 & 0
        \end{matrix}\right],\quad\text{and}\quad  R'(A)=\left[\begin{matrix}
            0 & 0 & 0 & 0 & 0 & 0 & 5 \\
            1 & 2 & 3 & 4 & 5 & 6 & 7 \\
            6 & 3 & 2 & 5 & 4 & 1 & 0
        \end{matrix}\right].
    \end{equation*}
Then:
    \begin{equation*}
        L'_0(A)=\{1,5,6\},\quad R'_0(A)=\{0,1,2,4\}.
    \end{equation*}
For example if we take  $\alphan=(0,2,0,0,0,1,2,0)\in\mathrm{Exp}_{L'}(A) $ and $\bbeta=(2,1,2,0,1,0,0,0)\in\mathrm{Exp}_{R'}(A)$  and
    \begin{equation*}
        [ \alphan, A ] =\mathfrak{z}_1^{2}\mathfrak{z}_5{z}_6^{2},\quad [  A,\bbeta ]=\mathfrak{z}_0^2\mathfrak{z}_1\mathfrak{z}_2^{2}\mathfrak{z}_4.
    \end{equation*}
    Note that
    \begin{equation*}
        |\mathrm{Exp}_{L'}(A)|=3^3,\quad\text{and}\quad |\mathrm{Exp}_{R'}(A)|=3^4.    \end{equation*}
\end{example}

 \begin{corollary}(Cf.  Corollary \ref{coro-normal-form-framed-blob-mon})\label{coro-normal-form-framed-blob-mon-conn}
 Every element of $ \mathfrak{H}_{d,n} $ can be written in the  normal form
    $[ \alphan,A ]\underline{U}(A)[ A, \bbeta ]$,
    where $A$ is an indexing matrix,  $\alphan\in\mathrm{Exp}_{L'}(A), \bbeta\in \mathrm{Exp}_{R'}(A)$.
\end{corollary}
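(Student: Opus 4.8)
The plan is to mirror the proof of Corollary~\ref{coro-normal-form-framed-blob-mon}, replacing each ingredient used there by its connected counterpart. First I would prove existence of the writing. Given an arbitrary $D^{c,\circ}\in\mathfrak{H}_{d,n}$, Lemma~\ref{lemma-diag-normal-form-FrHook-1} allows me to write it either as $\zetan^{\alphan}$ or as $\zetan^{\alphan}D^{c}\zetan^{\bbeta}$, where $D^{c}$ is the base diagram of $D^{c,\circ}$; in the second case Lemma~\ref{lemma-normal-form-FrHook-1}(i) identifies $D^{c}=\underline{U}(A)$ for a non-degenerate indexing matrix $A$, so the element takes the shape $\zetan^{\alphan}\underline{U}(A)\zetan^{\bbeta}$. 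The purely framing case $\zetan^{\alphan}$ is handled by the convention attaching it to the degenerate indexing matrix.

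Second, I would normalize the two framing words. Starting from $\zetan^{\alphan}\underline{U}(A)\zetan^{\bbeta}$, I repeatedly invoke Proposition~\ref{prop-interaction-z-BBU-conn} to push each factor $\mathfrak{z}_j$ sitting to the left of $\underline{U}(A)$ into a canonical position: by cases (iii) and (iv) such a factor is either rewritten as $\mathfrak{z}_i\underline{U}(A)$ with $i\in L'_0(A)$ or transported across as $\underline{U}(A)\mathfrak{z}_k$, whereas cases (i)--(ii) collapse it to $\mathfrak{z}_0\underline{U}(A)=\underline{U}(A)\mathfrak{z}_0$. Symmetrically, the right-hand factors are normalized using cases (v)--(vii). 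The exponents produced by this process satisfy precisely the support conditions of Definition~\ref{def-notation-corchetes}, so the element is brought to the form $[\alphan,A]\underline{U}(A)[A,\bbeta]$ with $\alphan\in\mathrm{Exp}_{L'}(A)$ and $\bbeta\in\mathrm{Exp}_{R'}(A)$.

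Third, for the uniqueness of this writing I would appeal to the defining equality criterion for $\mathfrak{H}_{d,n}$: two abacus hook-diagrams coincide if and only if their base diagrams agree and the number of beads modulo $d$ is the same on every corresponding arc. This already forces the indexing matrix $A$ to be uniquely determined, by Lemma~\ref{lemma-normal-form-FrHook-1}(ii), after which the bead-count data on each arc pins down the reduced exponent vectors $\alphan$ and $\bbeta$ on the respective index sets $L'_0(A)$ and $R'_0(A)$, exactly as in Corollary~\ref{coro-normal-form-framed-blob-mon}.

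The step I expect to require the most care is the collapse mechanism peculiar to the connected setting, embodied in cases (i)--(ii) and (v) of Proposition~\ref{prop-interaction-z-BBU-conn}. Because the framing generator $z_0$ commutes with the blob generator through relation~(\ref{fracon}), any framing that can be migrated to position $0$ becomes genuinely central and must be collected there rather than on an interior strand. Keeping this bookkeeping straight---verifying in particular that $R'_0(A)$ always contains the index $0$ and that no strand index is counted simultaneously in the left and the right framing words---is where the argument genuinely departs from the non-connected Corollary~\ref{coro-normal-form-framed-blob-mon}, and is the crux of the proof.
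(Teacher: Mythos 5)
Your proposal is correct and follows essentially the same route the paper intends: the paper states this corollary without a written proof, relying on the direct analogy with Corollary \ref{coro-normal-form-framed-blob-mon}, whose argument combines the decomposition lemma, the interaction proposition, and the diagram-equality criterion exactly as you do (here via Lemma \ref{lemma-diag-normal-form-FrHook-1}, Lemma \ref{lemma-normal-form-FrHook-1}, Proposition \ref{prop-interaction-z-BBU-conn} and Definition \ref{def-notation-corchetes}). Your added attention to the collapse of framings onto the index $0$ through relation (\ref{fracon}), and to the fact that $0\in R'_0(A)$ by construction, is precisely the point at which the connected case departs from Corollary \ref{coro-normal-form-framed-blob-mon}, and is consistent with the paper's definitions.
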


\begin{example}
    Let ${D}^{c,\circ}$ be the diagram of (\ref{ex-framed-hook-diagram}), then its normal form is given by
    \begin{equation*}
       {D}^{c,\circ}=\mathfrak{z}_1\mathfrak{z}_8^2\left(\underline{U}\left[\begin{matrix}
           1&2&5&6\\
           0&0&2&5
       \end{matrix}\right]\right)\mathfrak{z}_0.
    \end{equation*}
\end{example}

\begin{lemma}(Cf. Proposition \ref{lemma-equation-n+r})\label{lemma-equation-n+r-conn}
 Given a non degenerated indexing matrix $A=[B|C]$ with blob-rank $r\geq 0,$ then:
    \begin{align}
  \label{eq1-lemma-equation-n+r}
        |L'_0(A)|+|R'_0(A)|= n-r+1,
        \\ \label{eq2-lemma-equation-n+r}
        |\mathrm{Exp}_{L'}(A)
        |\cdot|\mathrm{Exp}_{R'}(A)|= d^{n-r+1}.
    \end{align}
\end{lemma}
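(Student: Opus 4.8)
The plan is to reduce (\ref{eq2-lemma-equation-n+r}) to (\ref{eq1-lemma-equation-n+r}) and then prove the latter by counting, in two ways, the elements of $\mathfrak{H}_{d,n}$ whose base diagram is a fixed $\underline{U}(A)$. Since Definition \ref{def-notation-corchetes} gives $|\mathrm{Exp}_{L'}(A)|=d^{|L'_0(A)|}$ and $|\mathrm{Exp}_{R'}(A)|=d^{|R'_0(A)|}$, equation (\ref{eq2-lemma-equation-n+r}) follows at once from (\ref{eq1-lemma-equation-n+r}); so it suffices to show $|L'_0(A)|+|R'_0(A)|=n-r+1$.

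The cleanest route is diagrammatic. On one hand, by the (unique) normal form of Corollary \ref{coro-normal-form-framed-blob-mon-conn}, the elements of $\mathfrak{H}_{d,n}$ with base diagram $\underline{U}(A)$ are parametrized bijectively by $\mathrm{Exp}_{L'}(A)\times\mathrm{Exp}_{R'}(A)$, hence number $|\mathrm{Exp}_{L'}(A)|\cdot|\mathrm{Exp}_{R'}(A)|$. On the other hand, by the defining equivalence of abacus hook-diagrams (Definition \ref{def-abacus-hook-mon}) such elements correspond to assignments of a bead-count in $\mathbb{Z}/d\mathbb{Z}$ to each block of $\underline{U}(A)$, hence number $d^{\,b}$ with $b$ the number of blocks of $\underline{U}(A)$. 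I would then compute $b$: the blobbed TL-diagram $\BBU(A)$ has $n$ arcs, of which exactly $r$ are blobbed (Remark \ref{remark-relation-blobrank-and-blobbed-arcs}); the construction in the proof of Theorem \ref{theo-iso-algblobmon-hookblobmon} keeps the $n-r$ unblobbed arcs as separate two-point blocks and attaches every piece coming from a blobbed arc to the single hook block $\{0,0'\}$. Because relation (\ref{fracon}), $z_0u_0=u_0z_0$, removes the blob as an obstruction, beads slide freely over this entire hook block, which thus counts as one block. Therefore $b=(n-r)+1=n-r+1$, and equating the two counts gives both (\ref{eq2-lemma-equation-n+r}) and (\ref{eq1-lemma-equation-n+r}).

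To stay closer to Section \ref{ssec-abacus-blob-mon}, I would alternatively argue by induction on $r$ as in Proposition \ref{lemma-equation-n+r}, using Proposition \ref{prop-interaction-z-BBU-conn} and Lemmas \ref{conn-zU}, \ref{conn-zA0U} in place of their unprimed analogues. For $r=0$ the matrix $A$ is positive, so cases (i), (ii) and (v) of Proposition \ref{prop-interaction-z-BBU-conn}—each requiring a blob column—cannot occur, no entry $-1$ appears, and comparing the definitions gives $L'_0(A)=L_0(A)$ and $R'_0(A)=R_0(A)\cup\{0\}$ with $0\notin R_0(A)$; the case $r=0$ of Proposition \ref{lemma-equation-n+r} then yields $|L'_0(A)|+|R'_0(A)|=n+1$. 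For the inductive step one writes $A=[\,\begin{smallmatrix}b_1\\0\end{smallmatrix}\mid A'\,]$ with $A'$ of blob-rank $r-1$ and proves the connected analogues of Lemmas \ref{lemma-injection-psi} and \ref{lemma-almost-surj-psi}, the decisive difference being that prepending the blob column now \emph{decreases} $|L'_0|+|R'_0|$ by one: the framings falling into cases (i)/(ii) and (v) are routed through $z_0$ (relabelled $-1$ and deleted) rather than producing a new free slot.

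The main obstacle is exactly this sign. In the obstructed monoid of Section \ref{ssec-abacus-blob-mon} a new blob splits an arc into two independent slots (net $+1$, Lemma \ref{lemma-almost-surj-psi}), whereas in $\mathfrak{H}_{d,n}$ the same blob attaches an arc to the always-present $z_0$-block, contributing no new slot while suppressing the arc's own slot (net $-1$). Making this precise through the inductive bookkeeping—tracking which positions acquire the label $-1$ and checking that they yield one more deletion than insertion per blob—is the delicate point; the block count of the second paragraph is the conceptual reason the answer is $n-r+1$, and I would take it as the primary argument and the induction only as a cross-check.
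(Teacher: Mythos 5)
Your argument is correct, and your primary route is genuinely different from the one the paper intends. The paper omits the proof of this lemma: by the blanket remark opening Section 6 it is to be proved exactly as Proposition \ref{lemma-equation-n+r}, i.e.\ by induction on the blob-rank through connected analogues of Lemmas \ref{lemma-injection-psi} and \ref{lemma-almost-surj-psi} with the increment reversed — which is precisely your secondary sketch, and you correctly isolate the one substantive difference, namely that prepending a column $\Big[\!\!\begin{array}{c}b_1\\0\end{array}\!\!\Big]$ now changes $|L'_0|+|R'_0|$ by $-1$ because the new blobbed arc is absorbed into the $\{0,0'\}$ block instead of creating a second independent bead slot. Your primary argument — double-counting the abacus hook-diagrams over a fixed base $\underline{U}(A)$, once as $|\mathrm{Exp}_{L'}(A)|\cdot|\mathrm{Exp}_{R'}(A)|$ via uniqueness of the normal form and once as $d^{b}$ with $b$ the number of blocks, then computing $b=(n-r)+1$ — is shorter and more conceptual. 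It is not circular (Corollary \ref{coro-normal-form-framed-blob-mon-conn} and Lemma \ref{lemma-normal-form-FrHook-1} are established independently of the present lemma), and the per-block bead count you invoke, although Definition \ref{def-abacus-hook-mon} speaks of beads per arc, is exactly the reading forced by the local relations there and is the count the paper itself performs in the proof of Theorem \ref{theo-alternative-formula-diag-frBlob-conn}. What the paper's induction buys is uniformity with Section 3 and an argument that transfers verbatim to the abstractly presented monoid $Bl_{d,n}^c$ as needed for Corollary \ref{theo-presentation-for-fram-conn}; what your block count buys is a one-line explanation of why the exponent is $n-r+1$ here versus $n+r$ in Proposition \ref{lemma-equation-n+r}. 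Either suffices.
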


The following two theorems are the analogous versions of Theorem \ref{theo-card-Fblob-mon-first} and Theorem \ref{theo-alternative-formula-diag-frBlob}, respectively.
\begin{theorem}\label{theo-card-Fblob-mon-first-conn}
    The cardinality of the abacus hook monoid  is given by:
    \begin{equation}
    |\mathfrak{H}_{d,n} |= d^{n+1}\sum_{k=0}^{n}\Omega_k^{(n)}d^{-k}.
    \end{equation}

\end{theorem}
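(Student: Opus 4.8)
The plan is to run the argument of Theorem \ref{theo-card-Fblob-mon-first} line by line, substituting the connected analogues for each ingredient. First I would invoke the normal form of Corollary \ref{coro-normal-form-framed-blob-mon-conn}: every element of $\mathfrak{H}_{d,n}$ is written uniquely as $[\alphan, A]\underline{U}(A)[A,\bbeta]$, where $A$ is an indexing matrix, $\alphan\in\mathrm{Exp}_{L'}(A)$ and $\bbeta\in\mathrm{Exp}_{R'}(A)$. Writing $O_k$ for the set of indexing matrices of blob-rank $k$, this normal form gives a decomposition of $\mathfrak{H}_{d,n}$ as the disjoint union
\begin{equation*}
\mathfrak{H}_{d,n}=\bigcup_{k=0}^{n}\left\{[\alphan, A]\underline{U}(A)[A,\bbeta]\ :\ A\in O_k,\ \alphan\in\mathrm{Exp}_{L'}(A),\ \bbeta\in\mathrm{Exp}_{R'}(A)\right\},
\end{equation*}
the disjointness being exactly the uniqueness assertion of Corollary \ref{coro-normal-form-framed-blob-mon-conn}.

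Next I would count each piece. By Definition \ref{def-Omega-numbers} we have $|O_k|=\Omega_k^{(n)}$, and by the second equality (\ref{eq2-lemma-equation-n+r}) of Lemma \ref{lemma-equation-n+r-conn}, for a matrix $A$ of blob-rank $k$ one has $|\mathrm{Exp}_{L'}(A)|\cdot|\mathrm{Exp}_{R'}(A)|=d^{\,n-k+1}$. Since distinct normal forms yield distinct elements, the cardinalities simply add and factor, giving
\begin{equation*}
|\mathfrak{H}_{d,n}|=\sum_{k=0}^{n}\Omega_k^{(n)}\,d^{\,n-k+1}=d^{\,n+1}\sum_{k=0}^{n}\Omega_k^{(n)}d^{-k},
\end{equation*}
which is the claimed formula. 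As in the proof of Theorem \ref{theo-card-Fblob-mon-first}, I would also check the degenerate matrix separately — it has blob-rank $0$ and is counted in $\Omega_0^{(n)}=\mathrm{C}_n$ — verifying that it contributes the $k=0$ value $d^{\,n+1}$, exactly in the spirit of the remark following Proposition \ref{lemma-equation-n+r}.

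The only genuinely new input relative to Theorem \ref{theo-card-Fblob-mon-first} is the exponent $n-k+1$ in place of $n+k$, and this is precisely the content of Lemma \ref{lemma-equation-n+r-conn}, which I would treat as already established. The conceptual heart of the matter — and the reason this framization is not isomorphic to $\mathfrak{Bl}_{d,n}$ — lives there: the relation $z_0u_0=u_0z_0$ of (\ref{fracon}) means the blob no longer obstructs the free framing generator, so that adding a blobbed column to $A$ (raising the blob-rank by one) \emph{decreases} rather than increases the number of free framing positions recorded by $L'_0(A)$ and $R'_0(A)$. I therefore expect the main care to go into confirming that the bookkeeping of $L'_0(A)$ and $R'_0(A)$ faithfully produces the count $d^{\,n-k+1}$; once Lemma \ref{lemma-equation-n+r-conn} is in hand, the summation above is immediate.
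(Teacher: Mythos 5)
Your proposal is correct and follows essentially the same route as the paper: decompose $\mathfrak{H}_{d,n}$ via the normal form of Corollary \ref{coro-normal-form-framed-blob-mon-conn} into pieces indexed by the blob-rank $k$, then apply Lemma \ref{lemma-equation-n+r-conn} to count $d^{\,n-k+1}$ framing choices per matrix and sum over the $\Omega_k^{(n)}$ matrices of each rank. Your extra remarks on the degenerate matrix and on why the exponent decreases with $k$ are consistent with the paper and add no gap.
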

\begin{proof}
 Denote $O_k$ the set of all the indexing matrices with blob-rank equal to $k,$ then Corollary \ref{coro-normal-form-framed-blob-mon-conn} implies that $\mathfrak{H}_{d,n}$ can be decomposed, as set,  in the following disjoint union:
    \begin{equation*}
        \mathfrak{H}_{d,n}=\bigcup_{k=0}^{n}\left\{[ \alphan, A]\underline{U}(A)[ A,{\bbeta}]: A\in O_k, \alphan\in \textrm{Exp}_{L'}(A),\bbeta\in\textrm{Exp}_{R'}(A)
        \right\}.
    \end{equation*} From Proposition \ref{lemma-equation-n+r-conn} we obtain
    \begin{equation*}
|\mathfrak{H}_{d,n}|=\sum_{k=0}^{n}\Omega_{k}^{(n)}|\textrm{Exp}_{L'}(A)||\textrm{Exp}_{R'}(A)|=\sum_{k=0}^{n}\Omega_{k}^{(n)}d^{n-k+1}.
    \end{equation*}
Hence, the proof is finished.
\end{proof}

\begin{theorem}\label{theo-alternative-formula-diag-frBlob-conn}
The cardilnality of $\mathfrak{H}_{d,n},$ is given by
    \begin{equation}\label{EQ-lemma-lower-bound-for-diag-frBlob-conn}
       |\mathfrak{H}_{d,n}|=  d^{n+1}\sum_{k^=1}^{n}\chi_{k}^{(n)}d^{-k}(1+d)^{k}.
    \end{equation}
\end{theorem}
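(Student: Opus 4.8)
The plan is to transport the combinatorial argument of Theorem \ref{theo-alternative-formula-diag-frBlob} to the connected setting, adjusting only the count of bead-components, and then to cross-check the outcome against Theorem \ref{theo-card-Fblob-mon-first-conn}. First I would fix $k\in[1,n]$ and recall from Definition \ref{def-chi-numbers} that there are $\chi_k^{(n)}$ TL-diagrams on $n$ points with exactly $k$ arcs exposed to the left. Passing to hook-diagrams through the isomorphism $\mathfrak{Bl}_n\cong\mathfrak{H}_n$ of Theorem \ref{theo-iso-algblobmon-hookblobmon}, for each $j\in[0,k]$ there are exactly $\chi_k^{(n)}\binom{k}{j}$ base hook-diagrams with $k$ exposed arcs, $j$ of which carry a blob. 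The task then reduces to counting the bead configurations supported by such a base diagram.

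The decisive difference from the non-connected case is that in $Bl_{d,n}^c$ the blob is \emph{not} an obstruction: relation (\ref{fracon}), $z_0u_0=u_0z_0$, together with the local relations of Definition \ref{def-abacus-hook-mon}, means a blobbed arc is no longer split in two by its blob. Instead, in the hook realization all $j$ blobbed arcs are attached to the common block $\{0,0'\}$ and merge with it into a single arc-component on which beads slide freely (and are reduced modulo $d$). Consequently a base diagram with $j$ blobbed arcs carries exactly $n-j+1$ free components: the $n-j$ unblobbed arcs, each a component of its own, plus the single component formed by the $j$ blobbed arcs together with the $0$-arc. This gives $d^{\,n-j+1}$ distinct bead placements, which I would corroborate with Corollary \ref{coro-normal-form-framed-blob-mon-conn} and Lemma \ref{lemma-equation-n+r-conn}, since for a diagram of blob-rank $r=j$ these yield $|\mathrm{Exp}_{L'}(A)|\cdot|\mathrm{Exp}_{R'}(A)|=d^{\,n-r+1}$.

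Summing over $j$ and $k$ then produces the claimed identity:
\begin{equation*}
|\mathfrak{H}_{d,n}|=\sum_{k=1}^{n}\chi_k^{(n)}\sum_{j=0}^{k}\binom{k}{j}d^{\,n-j+1}=d^{n+1}\sum_{k=1}^{n}\chi_k^{(n)}\bigl(1+d^{-1}\bigr)^{k}=d^{n+1}\sum_{k=1}^{n}\chi_k^{(n)}d^{-k}(1+d)^{k},
\end{equation*}
where the last equality uses $\bigl(1+d^{-1}\bigr)^{k}=d^{-k}(1+d)^{k}$. As an independent verification, I would note the quicker algebraic route: the identity $\sum_{k=0}^{n}\Omega_k^{(n)}x^{k}=\sum_{k=1}^{n}\chi_k^{(n)}(1+x)^{k}$ established at the end of Section 5 holds for every positive integer $d$, hence as an identity of polynomials in $x$; specializing it at $x=1/d$ and multiplying by $d^{n+1}$ turns Theorem \ref{theo-card-Fblob-mon-first-conn}, namely $|\mathfrak{H}_{d,n}|=d^{n+1}\sum_{k=0}^{n}\Omega_k^{(n)}d^{-k}$, precisely into the asserted formula.

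The \emph{main obstacle} is the second paragraph: rigorously justifying that in the connected hook diagrams the blobbed arcs genuinely coalesce with the $0$-block into one single bead-bearing component, rather than splitting as they do in $\mathfrak{Bl}_{d,n}$. This is exactly the point where the commutation $z_0u_0=u_0z_0$ must be invoked, and it must be combined carefully with the loop-erasure and bead-reduction relations of Definition \ref{def-abacus-hook-mon} to guarantee that the number of independent bead slots is $n-j+1$ and not something larger; everything else is the routine binomial manipulation shown above.
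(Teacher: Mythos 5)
Your argument is correct and follows essentially the same route as the paper's proof: count base diagrams via $\chi_k^{(n)}\binom{k}{j}$, observe that in the hook realization the $j$ blobbed arcs coalesce with the $\{0,0'\}$ block into a single bead-bearing component so that there are $n-j+1$ components and hence $d^{\,n-j+1}$ bead placements, and then sum the binomial series. Your additional cross-check, specializing the polynomial identity $\sum_k\Omega_k^{(n)}x^k=\sum_k\chi_k^{(n)}(1+x)^k$ at $x=1/d$ and invoking Theorem \ref{theo-card-Fblob-mon-first-conn}, is a valid and quicker alternative derivation, though the paper does not take that route.
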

\begin{proof}
    Let $ k\in [1,n] $. Recall that $\chi_{k}^{(n)}$ is the number of TL-diagrams at $n$ points, with exactly $k$ arcs exposed on the left, see Definition \ref{def-chi-numbers}. Then, for each $j\in [0,k]$ there are  $\chi_{k}^{(n)}\binom{k}{j}$ different TL-diagrams with $k$ arcs exposed to the left and with $j$ of them decorated by a blob.
    Now, by the  Theorem \ref{theo-iso-algblobmon-hookblobmon}, we know that  all blobbed arcs of a blobbed TL-diagram become a single arc, connected with the points $0$ and $0'$ in the corresponding hook-diagram.
    Further, each arc in a hook-diagram can contain at most $d-1$ beads.
Then, proceeding as in the proof of Theorem \ref{theo-alternative-formula-diag-frBlob}, the cardinality of $\mathfrak{H}_{d,n}$ can be  calculated as follows:
\begin{equation}
        \sum_{k=1}^{n}\chi_{k}^{(n)}\sum_{j=0}^{k}\binom{k}{j}d^{n-j+1}=\sum_{k=1}^{n}d^{n-k+1}\chi_{k}^{(n)}\sum_{j=0}^{k}\binom{k}{j}d^{k-j}=d^{n+1}\sum_{k=1}^{n}d^{-k}\chi_{k}^{(n)}(1+d)^k.
    \end{equation}
Thus the proof is concluded.
\end{proof}

\begin{corollary}\label{theo-presentation-for-fram-conn}
    The epimorphism $\psi$  of Lemma \ref{lemma-pre-presentation-frHook-digrams} is  an isomorphism. In particular, the defining presentation of  $B_{d,n}^c$ is a presentation of $\mathfrak{H}_{d,n}$.
\end{corollary}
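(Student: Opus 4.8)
The plan is to mirror the proof of Theorem \ref{theo-iso-AlgFrblob-DiagFrblob}. By Lemma \ref{lemma-pre-presentation-frHook-digrams} the map $\psi$ is already a monoid epimorphism, so it suffices to prove the reverse inequality $|Bl_{d,n}^c|\leq |\mathfrak{H}_{d,n}|$; together with surjectivity this forces $|Bl_{d,n}^c|=|\mathfrak{H}_{d,n}|$, hence bijectivity, which is exactly the assertion that the defining presentation of $Bl_{d,n}^c$ presents $\mathfrak{H}_{d,n}$.

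To bound $|Bl_{d,n}^c|$ from above I would first establish, purely from the defining relations (\ref{eq-all-rel-conn-fram-blob})--(\ref{fracon}), an abstract normal form for the elements of $Bl_{d,n}^c$, the exact analogue of the diagrammatic Corollary \ref{coro-normal-form-framed-blob-mon-conn}. Concretely, using $z_iu_i=z_{i+1}u_i$, $u_iz_i=u_iz_{i+1}$ and $u_iz_i^ku_i=u_i$ (for $i\neq 0$) together with $z_0u_0=u_0z_0$, one shows as in Lemma \ref{lemma-normal-form-FrBlob-2} that every element can be brought to the shape $Z^{\alphan}U(A)Z^{\bbeta}$ with $A$ an indexing matrix. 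Then one needs the abstract counterparts of Lemma \ref{conn-zU}, Lemma \ref{conn-zA0U} and Proposition \ref{prop-interaction-z-BBU-conn}, proved from the relations exactly as Lemma \ref{zUAbs}, Lemma \ref{zA0Abs} and Proposition \ref{prop-abstract-interaction-z-BBU} were proved in the non-connected setting; these let me push each framing generator to its canonical side and reach the normal form $[\alphan,A]U(A)[A,\bbeta]$ with $\alphan\in\mathrm{Exp}_{L'}(A)$ and $\bbeta\in\mathrm{Exp}_{R'}(A)$.

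With the abstract normal form in hand the count is immediate. Grouping by blob-rank $k$ and invoking Lemma \ref{lemma-equation-n+r-conn}, which gives $|\mathrm{Exp}_{L'}(A)|\cdot|\mathrm{Exp}_{R'}(A)|=d^{n-k+1}$ for every indexing matrix $A$ of blob-rank $k$, I obtain
\begin{equation*}
|Bl_{d,n}^c|\leq \sum_{k=0}^{n}\Omega_k^{(n)}d^{n-k+1}=d^{n+1}\sum_{k=0}^{n}\Omega_k^{(n)}d^{-k}=|\mathfrak{H}_{d,n}|,
\end{equation*}
the last equality being Theorem \ref{theo-card-Fblob-mon-first-conn}. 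Combined with the surjectivity of $\psi$ this yields $|Bl_{d,n}^c|=|\mathfrak{H}_{d,n}|$, so $\psi$ is an isomorphism.

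The main obstacle is the abstract normal-form step, and specifically making the relation (\ref{fracon}) do the work that the blob does diagrammatically. In $\mathfrak{H}_{d,n}$ a bead that slides into a blobbed arc is funneled onto the single node $\{0,0'\}$, i.e.\ it becomes a power of $\mathfrak{z}_0$; the abstract reflection of this is that in every critical case (cases (i), (ii) and (v) of Proposition \ref{prop-interaction-z-BBU-conn}) one must use $z_0u_0=u_0z_0$ to rewrite the absorbed framing as a commuting power of $z_0$, rather than annihilating it through relations of the type $u_1z_1^ku_0u_1=u_1$ that hold in $Bl_{d,n}$ but are absent in $Bl_{d,n}^c$. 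Verifying that this rewriting is forced and consistent --- in particular that it reproduces precisely the index sets $L'_0(A)$ and $R'_0(A)$ of Definition \ref{def-notation-corchetes}, and hence the exponent $n-k+1$ in the count above --- is where the real care lies; the remaining manipulations are the routine bookkeeping already carried out for $Bl_{d,n}$.
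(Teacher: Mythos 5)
Your proposal is correct and follows exactly the paper's strategy: use the epimorphism from Lemma \ref{lemma-pre-presentation-frHook-digrams}, derive an abstract normal form $[\alphan,A]U(A)[A,\bbeta]$ in $Bl_{d,n}^c$ by mimicking the diagrammatic lemmas, and conclude $|Bl_{d,n}^c|\leq|\mathfrak{H}_{d,n}|$ via the count of Theorem \ref{theo-card-Fblob-mon-first-conn}. Your added remark about the role of $z_0u_0=u_0z_0$ in the critical cases is a correct reading of where the connected case diverges from $Bl_{d,n}$, and is consistent with what the paper leaves implicit.
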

\begin{proof}The proof follows the same strategy used to prove the Theorem \ref{theo-iso-AlgFrblob-DiagFrblob}. Thus we  define, as in Definition \ref{def-basis-matrix-blob}, the corresponding elements ${U}(A)\in B_{d,n}^c$ for each indexing matrix $A$, and then
imitating the  Definition \ref{def-notation-corchetes} we arrive finding a normal forms $[\alphan,A]{U}(A)[A,\bbeta]$ for the elements of $B_{d,n}^c$, where now  $\alphan\in \textrm{Exp}'_{L}(A)$ and  $\bbeta\in \textrm{Exp}'_{R}(A).$ We therefore conclude that $|B_{d,n}^c|\leq |\mathfrak{H}_{d,n}|.$ From which the proof follows.
\end{proof}

We close the paper with the following remark:

\begin{remark}
    Note that for $d\geq 2,$ Theorems \ref{theo-card-Fblob-mon-first} and \ref{theo-card-Fblob-mon-first-conn} show that the cardinalities of the monoids $\mathfrak{Bl}_{d,n}$ and $\mathfrak{H}_{d,n}$ are not the same. Therefore we conclude that $\mathfrak{Bl}_{d,n}$ and $\mathfrak{H}_{d,n}$ are two non isomorphic framizations of the blob monoid.
\end{remark}

\end{document}